\newcommand{\un}{\mathbf{1}}
\newcommand{\A}{\mathbf{A}}
\renewcommand{\L}{\mathbb{L}}
\renewcommand{\P}{\mathbf{P}}
\newcommand{\Z}{\mathbb{Z}}
\newcommand{\sA}{\mathcal{A}}
\newcommand{\sB}{\mathcal{B}}
\newcommand{\sC}{\mathcal{C}}
\newcommand{\sD}{\mathcal{D}}
\newcommand{\sE}{\mathcal{E}}
\newcommand{\sH}{\mathcal{H}}
\newcommand{\sK}{\mathcal{K}}
\newcommand{\sP}{\mathcal{P}}
\newcommand{\sR}{\mathcal{R}}
\newcommand{\sS}{\mathcal{S}}
\newcommand{\sT}{\mathcal{T}}
\newcommand{\sU}{\mathcal{U}}
\newcommand{\sX}{\mathcal{X}}
\newcommand{\sY}{\mathcal{Y}}
\newcommand{\sZ}{\mathcal{Z}}
\newcommand{\Mod}{\operatorname{Mod}\hbox{--}}
\newcommand{\Cor}{\operatorname{\mathbf{Cor}}}
\newcommand{\ulSigma}{\ul{\Sigma}}
\newcommand{\Cone}{\operatorname{Cone}}
\newcommand{\ul}[1]{{\underline{#1}}}
\newcommand{\Sq}{{\operatorname{\mathbf{Sq}}}}
\newcommand{\PST}{{\operatorname{\mathbf{PST}}}}
\newcommand{\NST}{\operatorname{\mathbf{NST}}}
\newcommand{\Chow}{\operatorname{\mathbf{Chow}}}
\newcommand{\DM}{\operatorname{\mathbf{DM}}}
\newcommand{\DR}{\operatorname{\mathbf{\underline{M}DM}}}
\newcommand{\ulMDM}{\operatorname{\mathbf{\underline{M}DM}}}
\newcommand{\MDM}{\operatorname{\mathbf{MDM}}}
\newcommand{\Hom}{\operatorname{Hom}}
\newcommand{\uHom}{\operatorname{\underline{Hom}}}
\newcommand{\Ker}{\operatorname{Ker}}
\newcommand{\IM}{\operatorname{Im}}
\renewcommand{\Im}{\operatorname{Im}}
\newcommand{\Coker}{\operatorname{Coker}}
\newcommand{\Pic}{\operatorname{Pic}}
\newcommand{\Spec}{\operatorname{Spec}}
\newcommand{\Sm}{\operatorname{\mathbf{Sm}}}
\newcommand{\Sch}{\operatorname{\mathbf{Sch}}}
\newcommand{\Ab}{\operatorname{\mathbf{Ab}}}
\newcommand{\by}{\xrightarrow}
\newcommand{\yb}{\xleftarrow}
\newcommand{\iso}{\by{\sim}}
\newcommand{\osi}{\yb{\sim}}
\newcommand{\tr}{{\operatorname{tr}}}
\newcommand{\proper}{{\operatorname{prop}}}
\newcommand{\eff}{{\operatorname{eff}}}
\newcommand{\fin}{{\operatorname{fin}}}
\newcommand{\amb}{{\operatorname{fin}}}
\renewcommand{\o}{{\operatorname{o}}}
\newcommand{\op}{{\operatorname{op}}}
\newcommand{\Nis}{{\operatorname{Nis}}}
\newcommand{\inj}{\hookrightarrow}
\newcommand{\id}{{\operatorname{Id}}}
\newcommand{\Tot}{\operatorname{Tot}}
\renewcommand{\lim}{\operatornamewithlimits{\varprojlim}}
\newcommand{\colim}{\operatornamewithlimits{\varinjlim}}
\newcommand{\ol}{\overline}
\newcommand{\car}{\operatorname{char}}
\renewcommand{\phi}{\varphi}
\renewcommand{\epsilon}{\varepsilon}
\newcommand{\gm}{{\operatorname{gm}}}
\newcommand{\MV}{\operatorname{MV}}
\newcommand{\ulMV}{\operatorname{\underline{MV}}}
\newcommand{\MNST}{\operatorname{\mathbf{MNST}}}
\newcommand{\MSm}{\operatorname{\mathbf{MSm}}}
\newcommand{\MCor}{\operatorname{\mathbf{MCor}}}
\newcommand{\MP}{\operatorname{\mathbf{MSm}}}
\newcommand{\MPST}{\operatorname{\mathbf{MPST}}}
\newcommand{\Bl}{{\mathbf{Bl}}}
\newcommand{\Cb}{\operatorname{\mathbf{Cube}}}
\newcommand{\ECb}{\operatorname{\mathbf{ECube}}}
\newcommand{\Sets}{\operatorname{\mathbf{Sets}}}
\newcommand{\bcube}{{\ol{\square}}}
\newcommand{\Mb}{{\overline{M}}}
\newcommand{\Nb}{{\overline{N}}}
\def\rmapo#1{\overset{#1}{\longrightarrow}}
\newcommand{\ulMSm}{\operatorname{\mathbf{\underline{M}Sm}}}
\newcommand{\ulMP}{\operatorname{\mathbf{\underline{M}Sm}}}
\newcommand{\ulMNS}{\operatorname{\mathbf{\underline{M}NS}}}
\newcommand{\ulMPS}{\operatorname{\mathbf{\underline{M}PS}}}
\newcommand{\ulMPST}{\operatorname{\mathbf{\underline{M}PST}}}
\newcommand{\ulMNST}{\operatorname{\mathbf{\underline{M}NST}}}
\newcommand{\ulMCor}{\operatorname{\mathbf{\underline{M}Cor}}}
\newcommand{\ulomega}{\underline{\omega}}
\newcommand{\Comp}{\operatorname{\mathbf{Comp}}}
\newcommand{\ulMEt}{\operatorname{\mathbf{\underline{M}Et}}}
\newcommand{\MEt}{\operatorname{\mathbf{MEt}}}
\newcommand{\OD}{\mathrm{OD}}
\newcounter{spec}
\newenvironment{thlist}{\begin{list}{\rm{(\roman{spec})}}%
{\usecounter{spec}\labelwidth=20pt\itemindent=0pt\labelsep=10pt}}%
{\end{list}}%
\newtheorem{Th}{Theorem}
\newtheorem{lemma}{Lemma}[subsection]
\newtheorem{thm}[lemma]{Theorem}
\newtheorem{prop}[lemma]{Proposition}
\newtheorem{proposition}[lemma]{Proposition}
\newtheorem{cor}[lemma]{Corollary}
\theoremstyle{definition}
\newtheorem{def-prop}[lemma]{Definition-Proposition}
\newtheorem{defn}[lemma]{Definition}
\newtheorem{definition}[lemma]{Definition}
\newtheorem{hyp}[lemma]{Hypothesis}
\theoremstyle{remark}
\newtheorem{qn}[lemma]{Question}
\newtheorem{rk}[lemma]{Remark}
\newtheorem{remark}[lemma]{Remark}
\newtheorem{remarks}[lemma]{Remarks}
\newtheorem{ex}[lemma]{Example}
\newtheorem{claim}[lemma]{Claim}
\numberwithin{equation}{section}
\begin{document}
\title[Motives with modulus, III]{Motives with modulus, III: \\ The categories of motives}
\author[B. Kahn]{Bruno Kahn}
\address{IMJ-PRG\\Case 247\\
4 place Jussieu\\
75252 Paris Cedex 05\\
France}
\email{bruno.kahn@imj-prg.fr}
\author[H. Miyazaki]{Hiroyasu Miyazaki}
\address{RIKEN iTHEMS, Wako, Saitama 351-0198, Japan}
\email{hiroyasu.miyazaki@riken.jp}
\author[S. Saito]{Shuji Saito}
\address{Graduate School of Mathematical Sciences\\
University of Tokyo\\
3-8-1 Komaba, \\ Tokyo 153-8941\\ Japan}
\email{sshuji@msb.biglobe.ne.jp}
\author[T. Yamazaki]{Takao Yamazaki}
\address{Institute of Mathematics\\ Tohoku University\\ Aoba\\ Sendai 980-8578\\ Japan}
\email{ytakao@math.tohoku.ac.jp}
\date{July 23, 2021}
\thanks{The first author acknowledges the support of Agence Nationale de la Recherche (ANR) under reference ANR-12-BL01-0005. 
The second author is supported by RIKEN Interdisciplinary Theoretical and Mathematical Sciences Program, and by JSPS KAKENHI Grant (19K23413).
The third author is supported by JSPS KAKENHI Grant (15H03606).
The fourth author is supported by JSPS KAKENHI Grant (15K04773). 
}

\begin{abstract}
We construct and study a triangulated category
of motives with modulus $\MDM_\gm^\eff$
over a field $k$, that
extends Voevodsky's category $\DM_\gm^\eff$
in such a way as to encompass non-homotopy
invariant phenomena. 
In a similar way as $\DM_\gm^\eff$ is constructed out of 
smooth $k$-varieties,
$\MDM_\gm^\eff$ is constructed out of 
\emph{proper modulus pairs}, introduced in Part I of this work.
To such a modulus pair 
we associate its motive  in $\MDM_\gm^\eff$.
In some cases, the $\Hom$ group in $\MDM_\gm^\eff$
between the motives of two modulus pairs
can be described
in terms of Bloch's higher Chow groups.
\end{abstract}

\subjclass[2010]{19E15 (14F42, 19D45, 19F15)}

\maketitle

\tableofcontents

\enlargethispage*{20pt}

\section*{Introduction} In this paper, we construct triangulated categories of ``motives with modulus'' over a field $k$, in parallel with Voevodsky's construction of triangulated categories of motives in \cite{voetri}. Our motivation comes from the reciprocity sheaves studied in \cite{rec}; the link between the present theory and \cite{rec}  is established in  \cite{modrec}.

Let $\Sm$ be the category of smooth separated $k$-schemes of finite type. Voevodsky's construction starts from  an additive category $\Cor$, whose objects are those of $\Sm$ and morphisms are finite correspondences. The category of effective geometric motives $\DM^\eff_\gm$ is then  defined to be the pseudo-abelian envelope of  the localisation of
the homotopy category $K^b(\Cor)$ of bounded complexes by two types of ``relations'':  
\begin{description}
\item[(HI)] $[X\times \A^1]\to [X]$, $X\in \Cor$; 
\item[(MV)] $[U\cap V]\to [U]\oplus [V]\to [X]$, $X, U, V \in \Cor$;
\end{description}
where, in the latter, $U \sqcup V \to X$
ranges over all open covers of $X$.
This makes  $\DM^\eff_\gm$ a tensor triangulated category. We denote by $M^V$ the canonical functor $\Sm\to \DM_\gm^\eff$. The following fundamental result computes some Hom groups in concrete terms:

\begin{Th}[{\cite[6.7.3]{be-vo} and \cite[Cor. 2]{allagree}}]\label{Th.Voe}
Assume that $k$ is perfect.
For $X,Y\in \Sm$, with $X$ proper of dimension $d$ and $j\in \Z$, there is a canonical isomorphism
\[\Hom_{\DM_\gm^\eff}(M^V(Y)[j],M^V(X))\simeq CH^{d}(Y\times X, j)\]
where the right hand side is Bloch's higher Chow group. 
In particular, this group is $0$ for $j<0$ 
and isomorphic to $CH^d(Y\times X)$ for $j=0$.
\end{Th}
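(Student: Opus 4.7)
The plan is to combine two standard inputs from Voevodsky's theory: Poincaré duality for smooth proper varieties and the identification of motivic cohomology with Bloch's higher Chow groups.

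First I would pass to the non-effective category $\DM_\gm$ obtained by inverting $\Z(1)$; by Voevodsky's cancellation theorem, the relevant $\Hom$ groups coincide with those computed in $\DM_\gm^\eff$. Inside $\DM_\gm$, the assumption that $X$ is smooth and proper of dimension $d$ furnishes Poincaré duality $M^V(X)^\vee \simeq M^V(X)(-d)[-2d]$. Combining this with the tensor--Hom adjunction and the multiplicativity $M^V(Y)\otimes M^V(X)=M^V(Y\times X)$, one rewrites the left-hand side as
\[
\Hom_{\DM_\gm}(M^V(Y\times X),\,\Z(d)[2d-j]) = H^{2d-j,\,d}(Y\times X,\Z),
\]
i.e.\ the motivic cohomology of $Y\times X$ in bidegree $(2d-j,d)$.

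Second, I would invoke the comparison theorem (Voevodsky, and Suslin--Friedlander in various settings) that for perfect $k$ and any smooth $W$ one has a canonical isomorphism $H^{p,q}(W,\Z)\simeq CH^{q}(W,2q-p)$. Specialising to $W=Y\times X$, $p=2d-j$, $q=d$ produces exactly $CH^{d}(Y\times X,j)$. The vanishing for $j<0$ and the identification with the classical Chow group for $j=0$ then follow from the fact that Bloch's cycle complex is concentrated in non-positive cohomological degrees.

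The main obstacle is not formal but lies in the two deep inputs. Poincaré duality in $\DM_\gm$ is nontrivial: it rests on the construction of the trace class attached to the diagonal, together with Friedlander--Suslin--Voevodsky moving lemmas for equidimensional cycles; this is precisely what is done in \cite{be-vo}. The comparison of motivic cohomology with higher Chow groups depends on Bloch's moving lemma for smooth varieties over a perfect base field, which is the source of the perfectness hypothesis and which is the content of \cite{allagree}. Given these two references, I would reduce the proof to the purely formal manipulation indicated above and appeal to loc.\ cit.\ for the genuine technical work.
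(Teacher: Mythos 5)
Your argument is correct, and it is essentially the proof underlying the two citations: the paper itself quotes this theorem without giving a proof, pointing to \cite[6.7.3]{be-vo} for Poincar\'e duality and to \cite[Cor.~2]{allagree} for the comparison of motivic cohomology with higher Chow groups, which are exactly the two ingredients you combine. Your chain
\[
\Hom_{\DM_\gm}(M^V(Y)[j],M^V(X))\cong \Hom_{\DM_\gm}(M^V(Y\times X),\Z(d)[2d-j])\cong CH^d(Y\times X,j)
\]
via dualisability of $M^V(X)$, the K\"unneth identification $M^V(Y)\otimes M^V(X)=M^V(Y\times X)$, and the indexing $H^p(W,\Z(q))\cong CH^q(W,2q-p)$ is exactly the standard route, and the passage to $\DM_\gm$ is justified by the cancellation theorem.

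One small attribution to sharpen: the reason Voevodsky's later paper is titled ``\emph{in any characteristic}'' is precisely that \cite[Cor.~2]{allagree} establishes the higher Chow comparison without assuming $k$ perfect; what actually requires perfectness in your chain of identifications is the cancellation theorem (full faithfulness of $\DM_\gm^\eff\to\DM_\gm$) and, implicitly, the structural results on $\DM^\eff$ (Voevodsky's strict $\A^1$-invariance and the equivalence $\DM_\gm^\eff\simeq\DM_{\gm,\Nis}^\eff$), rather than the moving lemma for higher Chow groups. This does not affect the correctness of your argument, only the attribution of where the perfectness hypothesis is used.
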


In the present work, we construct a tensor triangulated category $\MDM_\gm^\eff$ in a parallel way. The category $\Cor$ is replaced by a category $\MCor$ whose objects are \emph{modulus pairs}, which only played an auxiliary r\^ole in \cite{rec}: this category has been studied in \cite{kmsy1}. A modulus pair $M=(\ol{M}, M^\infty)$ consists of a proper $k$-variety $\ol{M}$ and an effective Cartier divisor $M^\infty$ such that $\ol{M} - |M^\infty| \in \Sm$. A morphism from $(\ol{M}, M^\infty)$ to $(\ol{N}, N^\infty)$ is a finite correspondence from $\ol{M}-|M^\infty|$ to $\ol{N}-|N^\infty|$ which satisfies a certain inequality on Cartier divisors (Definition-Proposition \ref{dp1}).

The category $\MCor$ enjoys a symmetric monoidal structure (Definition \ref{def:tensor-mod}). The right object replacing $\A^1$ in this context turns out to be its minimal compactification
\begin{equation}\label{eq:def-bcube}
\bcube=(\P^1,\infty),
\end{equation}
the compactification of $\A^1\simeq \P^1-\{\infty \}$ 
with reduced divisor at infinity. This provides an analogue of (HI):
\begin{description}
\item[(CI)] $[M\otimes \bcube]\to [M]$, $M\in \MCor$. 
\end{description}

To introduce an analogue\footnote{This is not quite accurate: see \S \ref{s6.1}.} of $\mathbf{(MV)}$, we use the cd-structure on $\MCor$ which was introduced in \cite{mzki2} and developed in \cite{kmsy2}.
This yields a Mayer-Vietoris condition in $K^b(\MCor)$ (\S \ref{s6.1}). We may then define a tensor triangulated category $\MDM_\gm^\eff$ in a similar fashion to Voevodsky (Definition \ref{def:MDMgm}), with a ``motive'' functor $M:\MCor\to \MDM_\gm^\eff$. 
We can also compute the Hom-groups of $\MDM_\gm^\eff$:

\begin{Th}[see Theorem \ref{thm:jLC-special}] \label{T3}
For any $\sX, \sY \in \MCor$
and $i \in \Z$, we have an isomorphism
\begin{equation}\label{eq;T3} 
\Hom_{\MDM^{\eff}_{\gm}}(M(\sX), M(\sY)[i])
\simeq \colim_{\sX' \in \ulSigma^\fin \downarrow \sX} \mathbb{H}^i_{\Nis}(\ol{\sX}', RC_*^\bcube (\sY)_{\sX'}).
\end{equation}
\end{Th}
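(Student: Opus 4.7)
The strategy follows Voevodsky's proof of Theorem \ref{Th.Voe}, with two modulus-specific complications: the interval $\A^1$ is replaced by $\bcube=(\P^1,\infty)$, and representable presheaves on $\MCor$ are not automatically Nisnevich sheaves, which is what produces the colimit on the right-hand side of \eqref{eq;T3}. First, I would embed $\MDM^{\eff}_{\gm}$ into a larger triangulated category $\MDM^{\eff}$, obtained by Bousfield-localizing the derived category $D(\MPST)$ of modulus presheaves with transfers along the classes $\mathbf{(CI)}$ and $\mathbf{(MV)}$. Since $\MDM^{\eff}_{\gm}$ sits inside $\MDM^{\eff}$ as the thick subcategory of compact objects generated by motives of modulus pairs, Hom groups between objects of $\MDM^{\eff}_{\gm}$ can be computed in the larger category. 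By the universal property of the localization,
$$\Hom_{\MDM^{\eff}}(M(\sX),M(\sY)[i])\simeq \Hom_{D(\MPST)}(\Ztr(\sX),L\Ztr(\sY)[i]),$$
where $L$ is a fibrant replacement enforcing both $\bcube$-invariance and Nisnevich descent.

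Second, I would identify $L\Ztr(\sY)$ with $RC_*^\bcube(\sY)$. The Suslin-type complex $C_*^\bcube(\sY)$ is $\bcube$-local by the standard chain-homotopy argument using the multiplication $\bcube\otimes\bcube\to \bcube$, while the derived functor $R$ (a Nisnevich-injective resolution) imposes Nisnevich descent. The delicate point is that the Nisnevich resolution must preserve $\bcube$-invariance — i.e., a strict $\bcube$-invariance theorem on $\MCor$. This is the modulus analogue of Voevodsky's strict $\A^1$-invariance theorem for homotopy-invariant Nisnevich sheaves with transfers, and is almost certainly the deepest input in the argument; its proof should combine the cube-invariant sheaf theory developed in Parts I--II with the cd-structure of \cite{kmsy2}.

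Finally, for any $\sX'\to \sX$ in $\ulSigma^\fin$, since $\Ztr(\sX')$ is representable on $\MCor$ and $RC_*^\bcube(\sY)$ is Nisnevich-local, a Yoneda-plus-derived-functor argument yields
$$\Hom_{D(\MPST)}(\Ztr(\sX'),RC_*^\bcube(\sY)[i])\simeq \bH^i_\Nis(\ol{\sX}',RC_*^\bcube(\sY)_{\sX'}).$$
The colimit $\colim_{\sX'\in \ulSigma^\fin\downarrow \sX}$ on the right-hand side of \eqref{eq;T3} then arises because Nisnevich sheafification on $\MCor$ formally inverts $\ulSigma^\fin$-morphisms (admissible modifications that are isomorphisms on the smooth open part): the Nisnevich-sheafified representable attached to $\sX$ is precisely $\colim_{\sX'\in \ulSigma^\fin\downarrow \sX}\Ztr(\sX')$. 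Combining this filtered colimit with the two preceding displays, and invoking the compact generation from the first step to commute the colimit with Hom, completes the identification.
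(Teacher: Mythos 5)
Your overall skeleton --- embed $\MDM^\eff_\gm$ into $\MDM^\eff$ via compactness, compute $\Hom_{\MDM^\eff}(M(\sX),M(\sY)[i])$ in $D(\MPST)$ against a fibrant replacement $L\Ztr(\sY)$ enforcing $\bcube$-invariance and Nisnevich descent, then translate to small Nisnevich sites --- matches the paper's architecture (Theorem \ref{thm:DMgm-DM-full-faith}, Theorem \ref{prop:DGguide}, Corollary \ref{c4.3}, Theorem \ref{thm:jLC-special}). But there is a genuine gap in your second step, the identification of the fibrant replacement with $RC_*^\bcube(\sY)$. You invoke a strict $\bcube$-invariance theorem (``the Nisnevich resolution must preserve $\bcube$-invariance'') as the deepest input. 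That theorem is \emph{not available}: extending Voevodsky's strict $\A^1$-invariance \cite[Th.\ 5.6]{voepre} to the $\bcube$-setting is explicitly listed in the Introduction as an open problem, and Example \ref{exB.1} says that only partial results exist (\cite{shuji-purity}). The paper deliberately circumvents this issue. The object $RC_*^\bcube(\sY)$ is \emph{not} the na\"\i ve Suslin complex followed by Nisnevich resolution; it is defined directly inside $D(\MNST)$ as the derived internal Hom $\uHom_{D(\MNST)}(\Ztr(\bcube_\nu^\bullet),\Ztr(\sY))$ (Definition \ref{d5.1}), so Nisnevich locality is built in and no compatibility of sheafification with $\bcube$-invariance is required. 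The actual key input is the abstract interval theory of Appendix \ref{section:interval}: Theorem \ref{t22} shows the monad $\ul{j}^\bcube\ul{L}^\bcube$ (resp.\ $j^\bcube L^\bcube$) agrees with the monad $RC_*^\bcube$, via the monadic comparison in Proposition \ref{pmon2}; the only $\mu$-dependent ingredient is the chain-homotopy computation of Proposition \ref{phot2}. Carried out as you stated it, your Step 2 would be stuck on an open problem that the construction of the derived Suslin complex is precisely engineered to avoid.

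A lesser inaccuracy concerns the colimit in \eqref{eq;T3}. It does not arise from ``Nisnevich sheafification formally inverting $\ulSigma^\fin$-morphisms.'' By Theorem \ref{t1.1}, morphisms in $\ulSigma^\fin$ are already invertible in $\ulMSm$ and hence in $\MCor$, so $\Ztr(\sX')\cong\Ztr(\sX)$ in $\MPST$ for every $\sX'\in\ulSigma^\fin\downarrow\sX$, and the colimit you describe is constant and trivially equal to $\Ztr(\sX)$. The colimit in the theorem is genuine because the individual term $\mathbb{H}^i_\Nis(\ol{\sX}',\,\cdot\,)$ depends on the choice of ambient scheme $\ol{\sX}'$, and the identification of hypercohomology on the big site with the filtered colimit over ambient presentations is the content of \cite[Prop.\ 7.4.2 and Th.\ 7.5.1]{kmsy2}, invoked in Corollary \ref{c4.3}.
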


Here $\ulSigma^\fin \downarrow \sX$ denotes a certain category of morphisms $\sX'\to \sX$ which (in particular) induce isomorphisms on the interiors (see Theorem \ref{t1.1}), $RC_*^\bcube (\sY)$ is the \emph{derived Suslin complex} of the modulus pair $\sY$ (see Definition \ref{d5.1}), 
and $RC_*^\bcube (\sY)_{\sX'}$ denotes its restriction to $\ol{\sX}'_\Nis$
(see Definition \ref{defn:smallpresheaf}).
Briefly, $RC_*^\bcube (\sY)$ is defined like the Suslin complex, 
with three differences: a) we use $\bcube$ instead of $\A^1$; b) we use a cubical version instead of Suslin-Voevodsky's simplicial version (see Remarks \ref{r8.1} and \ref{r4.1}  for an important comment on this point); c) we use derived internal Homs instead of classical internal Homs. 

Recall that a key technical tool of Voevodsky for proving Theorem \ref{Th.Voe} is to embed $\DM_\gm^\eff$ into a larger triangulated category $\DM^\eff$ of \emph{motivic complexes}. The situation is similar here:  $\MDM_\gm^\eff$ is embedded into a category $\MDM^\eff$.  This is how the derived Suslin complex $RC_*^\bcube (\sY)$ arises. 

On the other hand, there is a canonical ``forgetful'' functor $\omega:\MCor\allowbreak\to \Cor$ sending $(\ol{X}, X^\infty)$ to $\ol{X} - |X^\infty|$, whence a comparison between our theory and Voevodsky's. This is summarised in the following diagram, assuming $k$ perfect:
\begin{equation}\label{eq:intro}
\begin{CD}
\MCor @>{M}>> \MDM_\gm^\eff @>{\iota}>> \MDM^\eff\\
@V{\omega}VV @V{\omega_{\eff,\gm}}VV @V{\omega_\eff}VV
\\
\Cor @>{M^V}>> \DM_\gm^\eff @>{\iota}>> \DM^\eff,
\end{CD}
\end{equation}
in which the functors denoted $\iota$ are fully faithful. The yoga of Thomas\-on-Neeman of compactly generated categories \cite{neeman} shows that  $\omega_\eff$ has a right adjoint 
$\omega^\eff : \DM^\eff \to \MDM^\eff$, 
and we have:

\begin{Th}[See Th. \ref{t8.1} and Cor. \ref{c8.1}]\label{T4} a) Let $X$ be a smooth proper $k$-variety. Then $\omega^\eff M^V(X)=M(X,\emptyset)$.\\
b) If $p$ is the exponential characteristic of $k$, then  $\omega^\eff(\DM_\gm^\eff[1/p])\subset \MDM_\gm^\eff[1/p]$. The functor $\omega^\eff$ is symmetric monoidal.
\end{Th}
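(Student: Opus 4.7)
For part (a), my plan is to identify $\omega^\eff$ with the restriction functor $\omega^*$ at the level of localized triangulated categories. Concretely, on presheaves with transfers, the restriction $\omega^*: \PST \to \MPST$, defined by $(\omega^* F)(\sY) = F(\omega\sY)$, is right adjoint to $\omega_!: \MPST \to \PST$. Because $\omega(\bcube) = \A^1$ and $\omega$ is compatible with the Nisnevich cd-structures on $\MCor$ and $\Cor$, the functor $\omega^*$ preserves both the Nisnevich-local and the $\bcube$-local conditions, so it descends to a functor $\DM^\eff \to \MDM^\eff$ which, by uniqueness of adjoints, must agree with the Thomason--Neeman-produced $\omega^\eff$.

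The computation then reduces to the presheaf level. For $X$ smooth proper and any $\sY = (\ol\sY, \sY^\infty) \in \MCor$, any elementary finite correspondence $V$ from $\omega\sY$ to $X$ extends to a closure $\ol V \subset \ol\sY \times X$ which is proper over $\ol\sY$ by properness of $X$; the modulus inequality for $V$ to yield a morphism in $\MCor$ with target $(X, \emptyset)$ reduces on the normalization $\ol V^N$ to $p_\sY^* \sY^\infty \ge p_X^* 0 = 0$, which holds trivially. Thus $\MCor(\sY, (X, \emptyset)) = \Cor(\omega\sY, X)$, i.e., $h(X, \emptyset) = \omega^* h^V(X)$ as objects of $\MPST$. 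Passing through Nisnevich sheafification and $\bcube$-localization, this lifts to $M(X, \emptyset) = \omega^\eff M^V(X)$ in $\MDM^\eff$; the passage is clean because $\omega^*$ commutes with the cubical Suslin complex construction via the identity $(\omega^* F)(\sY \otimes \bcube^n) = F(\omega\sY \times \A^n)$, and the ``derived'' vs.\ naive issue alluded to in Theorem~\ref{T3} disappears for representable sheaves coming from $\omega^*$. As a sanity check, pairing $M(\sY)$ with both sides and applying Theorem~\ref{T3} to the left and Theorem~\ref{Th.Voe} to the right produces matching higher Chow groups $CH^{\dim X}(\omega\sY \times X, -i)$.

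For part (b), I would reduce to smooth projective $X$ using Gabber's refinement of de Jong's alteration theorem: after inverting $p$, $\DM_\gm^\eff[1/p]$ is generated, as a thick triangulated subcategory, by motives $M^V(X)$ with $X$ smooth projective. Indeed, every smooth $U$ admits a smooth projective alteration (of generic degree a power of $p$) of some compactification, and Gysin, blow-up, and Mayer--Vietoris triangles assemble this into an expression for $M^V(U)[1/p]$ in terms of smooth projective motives. By part~(a), $\omega^\eff$ sends each such generator to $M(X, \emptyset) \in \MDM_\gm^\eff$; combined with triangulatedness and thickness of $\MDM_\gm^\eff[1/p]$ in $\MDM^\eff[1/p]$, this yields $\omega^\eff(\DM_\gm^\eff[1/p]) \subset \MDM_\gm^\eff[1/p]$. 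For the monoidal assertion, the lax-monoidal comparison map $\omega^\eff A \otimes \omega^\eff B \to \omega^\eff(A \otimes B)$, evaluated on generators $(M^V(X), M^V(Y))$ with $X, Y$ smooth projective, becomes $M(X, \emptyset) \otimes M(Y, \emptyset) \to M(X \times Y, \emptyset)$, which is an isomorphism because $(X, \emptyset) \otimes (Y, \emptyset) = (X \times Y, \emptyset)$ in $\MCor$ and $M$ is symmetric monoidal; extending via triangles yields strong monoidality on all of $\DM_\gm^\eff[1/p]$.

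The main obstacle I anticipate is the verification in part~(a) that $\omega^*$ descends to the localized categories and agrees with the abstract right adjoint $\omega^\eff$. This requires careful checks that $\omega^*$ preserves Nisnevich-local and $\bcube$-local objects, leaning on the cd-structure on $\MCor$ developed in \cite{kmsy2} and on the identity $\omega(\bcube) = \A^1$. Once that compatibility is in place, part (a) is essentially the presheaf-level modulus computation, and part (b) follows from standard alteration-based reductions together with the thickness and monoidality arguments above.
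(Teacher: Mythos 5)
The proposal is correct and follows essentially the same route as the paper. For part (a), you rightly begin with the presheaf-level observation that $\MCor(\sY,(X,\emptyset))=\Cor(\omega\sY,X)$ when $X$ is proper, which is exactly the opening of the paper's proof of Theorem~\ref{t8.1}, and you then use the compatibility of $\omega^\eff$ with the localisation functors $L^{\A^1}$, $L^\bcube$ --- this is the content of Theorem~\ref{t7.1} (the base-change isomorphism \eqref{eq4.1}). For part (b), the de Jong--Gabber reduction to smooth projective generators, the thickness argument, and the verification of monoidality on generators followed by propagation, all match Lemma~\ref{l6.1} and Corollary~\ref{c8.1}.

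One inaccuracy worth flagging: your justification of the commutation of $\omega^\eff$ with the Suslin-complex construction --- namely, the presheaf identity $(\omega^*F)(\sY\otimes\bcube^n)=F(\omega\sY\times\A^n)$ together with the claim that ``the derived vs.\ naive issue disappears for representable sheaves'' --- is not quite the right argument. The identity you cite proves commutation of $\omega^*$ with the \emph{naive} Suslin complex, but the derived Suslin complex $RC_*^\bcube$ is a derived internal Hom, and commuting it past $D(\omega^\Nis)$ has nothing to do with representability or the derived/naive comparison. The paper handles it formally via Lemma~\ref{l1.5}: the monoidality of $D(\omega_\Nis)$ plus the existence of the right adjoint $D(\omega^\Nis)$ yield $\uHom_{D(\MNST)}(\Z_\tr(\bcube_\nu^\bullet),D(\omega^\Nis)K)\simeq D(\omega^\Nis)\uHom_{D(\NST)}(\Z_\tr^V((\A^1)_\nu^\bullet),K)$, from which Theorem~\ref{t7.1} and hence Theorem~\ref{t8.1} follow, independently of whether naive and derived Suslin complexes agree. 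Your overall strategy is unaffected, but the phrasing suggests a dependence on a non-issue, so it is worth making the formal monoidal argument explicit.
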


Note that $\omega^\eff$ is fully faithful (Propositions \ref{prop:eq6.2} and \ref{prop:eq6.2a}). As a corollary, we get the following partial analogue of Theorem \ref{Th.Voe}:

\begin{Th}[See Cor. \ref{c8.2}]\label{Th1.intro} 
Suppose $k$ is perfect. Let $X$ be a smooth proper $k$-variety of dimension $d$.  We have a canonical isomorphism:
\[\Hom_{\MDM_\gm^\eff}(M(\sY)[j],M(X,\emptyset))\simeq CH^{d}((\ol{\sY}-|\sY^\infty|)\times X, j)\]
for any modulus pair $\sY=(\ol{\sY},\sY^\infty)$ and $j \in \Z$. 
\end{Th}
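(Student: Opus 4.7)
The plan is to combine Theorem \ref{T4} with Voevodsky's Theorem \ref{Th.Voe} via the adjunction $\omega_\eff \dashv \omega^\eff$. The key observation is that once $M(X,\emptyset)$ has been identified with $\omega^\eff M^V(X)$, the adjunction transports the problem from $\MDM^\eff$ to $\DM^\eff$, where the Hom-group is already computed by Voevodsky.

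Concretely, I would proceed as follows. First, use the fully faithfulness of $\iota : \MDM_\gm^\eff \hookrightarrow \MDM^\eff$ from \eqref{eq:intro} to obtain
\[
\Hom_{\MDM_\gm^\eff}(M(\sY)[j], M(X,\emptyset))
\simeq \Hom_{\MDM^\eff}(M(\sY)[j], M(X,\emptyset)).
\]
Next, substitute $M(X,\emptyset) \simeq \omega^\eff M^V(X)$ by Theorem \ref{T4}(a), which applies since $X$ is smooth proper. The adjunction $\omega_\eff \dashv \omega^\eff$ then yields
\[
\simeq \Hom_{\DM^\eff}(\omega_\eff M(\sY)[j], M^V(X)).
\]
Now invoke the commutativity of the left square in \eqref{eq:intro}, which gives $\omega_\eff M(\sY) \simeq M^V(\omega \sY)$ where $\omega \sY = \ol{\sY} - |\sY^\infty| \in \Sm$ (by definition of a modulus pair). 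Using the fully faithfulness of $\iota : \DM_\gm^\eff \hookrightarrow \DM^\eff$, this becomes
\[
\simeq \Hom_{\DM_\gm^\eff}(M^V(\omega \sY)[j], M^V(X)).
\]
Since $k$ is perfect, $X$ is proper of dimension $d$ and $\omega \sY$ is smooth, Theorem \ref{Th.Voe} identifies this with $CH^d(\omega \sY \times X, j) = CH^d((\ol{\sY} - |\sY^\infty|) \times X, j)$, which is the desired formula.

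The main obstacle is essentially absent at this stage: all the real content has already been absorbed into Theorem \ref{T4}(a) (which requires the nontrivial identification $\omega^\eff M^V(X) \simeq M(X,\emptyset)$ for $X$ smooth proper) and into Voevodsky's Theorem \ref{Th.Voe}. What remains to verify is purely formal: that the left square of \eqref{eq:intro} commutes on objects of the form $(\sY)$, and that the adjunction isomorphism and the identification of Theorem \ref{T4}(a) are compatible so that the composite chain of isomorphisms makes sense. Both are essentially built into the construction of $\omega_\eff$ and $\omega^\eff$, so the proof is a short chain of formal manipulations once Theorem \ref{T4} is in hand.
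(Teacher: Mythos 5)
Your proof is correct and follows essentially the same route as the paper's proof of Corollary \ref{c8.2}: full faithfulness of $\iota_\eff$, the identification $M(X,\emptyset)\simeq\omega^\eff M^V(X)$ from Theorem \ref{t8.1}, adjunction to transport the computation into $\DM^\eff$ where $\omega_\eff M(\sY)=M^V(\sY^\o)$, and then Voevodsky's Poincar\'e duality/higher-Chow identification. The only difference is cosmetic: the paper states the more general twisted version (with $\Z(i)$, needing monoidality of $\omega^\eff$), while you directly handle the $i=0$ case asserted in the introduction.
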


Though we consider proper modulus pairs in the above, we can also construct a theory of motives with modulus for pairs $(\ol{X},X^\infty)$ with $\ol{X}$ not necessarily proper.  They come with a reasonable topology (see \cite{kmsy1}).  By a similar construction to the one above, this leads to triangulated categories $\ulMDM_\gm^\eff$, $\ulMDM^\eff$ of motives with modulus for non-proper modulus pairs. Even though $\MDM^\eff$ seems to be the central object of interest (e.g. it is the sheaf theory on \emph{proper} modulus pairs which has a relationship with reciprocity sheaves in \cite{modrec}), it is impossible to avoid developing $\ulMDM^\eff$ at the same time. This leads to a regrettably heavy exposition, for which we apologize to the reader. Besides, the non-proper version is used in \cite{shuji-purity} as an important technical tool. 
\subsection*{Relationship with previous work} This work completes the revision of \cite{motmod}, which was started in \cite{kmsy1} and \cite{kmsy2}. For the readers aware of this first version, the categories $\MDM_\gm^\eff$ and $\MDM^\eff$ are the same as in \cite{motmod}, as well as their non-proper versions. (The constructions given here are different and simpler.) The only difference with the results of \cite{motmod}  is in Theorem \ref{T3}, where the formula for the Hom group given in \cite{motmod} missed the direct limit.

\subsection*{Perspectives}

The story of motives with modulus does not stop here: there are many further things to explore, some being explored now. We quote a few:

\begin{itemize}
\item Extend more of Voevodsky's results to this modulus context. See already Section \ref{s7} (and the references therein), and \cite{shuji-purity}. In particular, extend Voevodsky's theorem on strict $\A^1$-invariance \cite[Th. 5.6]{voepre} to the $\bcube$-invariant context. See Example \ref{exB.1}, as well as \cite[Question 1]{kmsy1}, and \cite[Th. 0.6]{shuji-purity} for a partial result.
\item A comparison with the category of log-motives of Binda-Park-\O stv\ae r \cite{logmot}; see already \cite{saitolog}.
\item Versions for other topologies, notably topologies related to the \'etale topology on schemes. The theory developed here is intrinsically restricted to the Nisnevich topology, via the theory of cd-structures. One may think of the model-theoretic techniques of \cite{logmot} --- as soon as one has guessed the right definition of topologies on modulus pairs!
\item A $\bcube$-homotopy theory of modulus pairs similar to that of \cite{mv}. It should be easy to develop from the material here. 

\item Contrary to $\DM_\gm$, there is evidence that the category $\MDM_\gm$ obtained by $\otimes$-inverting the Tate object is \emph{not} rigid, see Proposition \ref{p6.2}. Another fact is that many cohomology theories, starting with higher Chow groups with modulus, satisfy, not the modulus condition, but its \emph{opposite}. This suggests that one should construct an even larger category based on ``modulus triples'' (two Cartier divisors at infinity with opposite modulus conditions). Work in this direction has been made by Binda \cite{binda} in the context of $\bcube$-homotopy theory (as above), and by Ivorra-Yamazaki \cite{iy1,iy2} in the additive context.
\item Of course, develop the various theories over a base.
\end{itemize}

\subsection*{Acknowledgements} Part of this work was done while three of the authors stayed at the University of Regensburg, supported by the SFB grant ``Higher Invariants". Another part was done in a Research in trio in CIRM, Luminy. 
Yet another part was done while the fourth author was visiting IMJ-PRG, supported by Fondation Sciences Ma\-th\'e\-ma\-ti\-ques de Paris. We are grateful to the support and hospitality received in all places. 

Independently of our work, Moritz Kerz conjectured in \cite{Kerz} the existence of 
a category of motives with modulus and gave a list of expected properties.
His conjectures are closely related to our construction. We thank him for communicating his ideas, which were very helpful to us. 

We thank Joseph Oesterl\'e for his help in the proof of Lemma \ref{lem:oesterle}, Joseph Ayoub for Remarks \ref{r4.3} and \ref{r8.1}, and Shane Kelly for useful discussions on \S \ref{sec-tec}. 
Thanks are also due to the referees for their careful reading and valuable comments.

We would like to dedicate this work to the memories of Michel Raynaud and Laurent Gruson, whose article \cite{rg} was essential for developing this theory (as it was previously for that of Voevodsky).

\enlargethispage*{20pt}

\subsection*{Organisation of this paper} In Section \ref{sec-basics}, we review part of our previous work on modulus pairs which will be used here. (More reminders are inserted in the sequel at appropriate places.) In Section \ref{s.tensor}, we introduce a new ingredient: the tensor structure. In Section \ref{sec-motmod}, we give an elementary construction of the triangulated categories of motives with modulus in the spirit of \cite[\S 4.2]{birat-tri}, and prove their basic properties in Theorem \ref{thm:DMgm-DM-full-faith}. In Section \ref{s.bg}, we describe $\MDM^\eff$ and $\ulMDM^\eff$ in terms of sheaves with transfers: this yields in particular a first computation of Hom groups in terms of Nisnevich cohomology (Corollary \ref{c4.3}). In Section \ref{s.sc}, we use the theory of intervals from Appendix \ref{section:interval} to reformulate this computation in more concrete terms (Theorem \ref{thm:jLC-special}); we also prove that the natural functor $\MDM^\eff\to \ulMDM^\eff$ is fully faithful (Th. \ref{t5.1}). In Section \ref{s.comp}, we compare our categories with Voevodsky's categories, as well as with the category of Chow motives. The most important result there is Theorem \ref{t8.1}. In Section \ref{s7}, we prove various results on $\MDM^\eff$ and $\ulMDM^\eff$ similar to those of Voevodsky in \cite{voetri}, and include for the reader's convenience some which were proven by other authors.

There are two appendices. Appendix \ref{sect:app} gathers new technical categorical results. Appendix \ref{section:interval} is central to the results of Section \ref{s.sc}: it generalises Voevodsky's theory of intervals (in a cubical version) to the case of symmetric monoidal categories. Its most important results are Theorems \ref{thm:10.34} and \ref{t7.1} (the latter is used in the proofs of Theorems \ref{t5.1} and \ref{t8.1}).

\subsection*{Notation and terminology}
Throughout this paper, we fix a base field $k$.
We denote by $\Sch$ the category of separated schemes of finite type over $k$, and by $\Sm$ the full subcategory of $\Sch$ consisting of those objects which are smooth over $k$.
For any Cartier divisor $D$ on a scheme $X$, the support of $D$ is denoted by $|D|$.
We write $\Cor$ for Voevodsky's 
category of finite correspondences \cite{voetri}.

Let $\Sq$ be the square of the category  $\{ 0 \to 1 \}$, depicted as
\[
\xymatrix{
00 \ar[r] \ar[d] &01 \ar[d]\\
10 \ar[r] & 11.
}
\]
For any category $\sC$, denote by $\sC^\Sq$ for the category of  functors from $\Sq$ to $\sC$.
A functor $f : \sC \to \sC'$  induces a functor $f^\Sq : \sC^\Sq \to {\sC'}^\Sq$.

A \emph{$\otimes$-category} is a unital symmetric monoidal category; a \emph{$\otimes$-functor} $F$ between $\otimes$-categories is a strong unital symmetric monoidal functor (this means that the maps $F(A)\otimes F(B)\to F(A\otimes B)$ are all isomorphisms).

\section{Review of modulus pairs} \label{sec-basics}

\subsection{The categories $\ulMCor$, $\MCor$, $\ulMSm$ and $\MSm$}

\begin{defn}\label{def:modpair}
A \emph{modulus pair} is a pair $M=(\ol{M},M^\infty)$ such that $\ol{M} \in \Sch$, $M^\infty$ is an effective Cartier divisor on $\ol{M}$, and $M^\o := \ol{M} - |M^\infty| \in \Sm$.
We call $\ol{M}$, $M^\infty$ and $M^\o$ the \emph{ambient space}, \emph{modulus} and \emph{interior} of $M$, respectively. 
A modulus pair $M$ is called \emph{proper} if its ambient space $\ol{M}$ is proper over $k$.
\end{defn}

The ambient space $\ol{M}$ is reduced for any modulus pair $M$ \cite[Rem. 1.1.2 (3)]{kmsy1}.

As in Voevodsky's theory \cite{voetri}, 
we define two types of categories with the same objects
out of modulus pairs.
One, analogous to $\Sm$, is used as a support for Grothendieck topologies. The other, analogous to $\Cor$, is used to define a transfer structure on (pre)sheaves. We start with the latter:

\begin{def-prop} \label{dp1}
Let $M$ and $N$ be modulus pairs.  An \emph{elementary modulus correspondence} $\alpha : M \to N$ is an elementary correspondence $\alpha^\o : M^\o \to N^\o$ between the interiors which satisfies the following properties:
\begin{description}
\item[Properness condition] Let $\ol{\alpha}$ be the scheme-theoretic closure of $\alpha^\o$ in $\ol{M} \times \ol{N}$. Then $\ol{\alpha}$ is proper over $\ol{M}$. (This is automatic  if $N$ is a proper modulus pair.)
\item[Modulus condition] Let $\nu:\ol{\alpha}^N \to \ol{M} \times \ol{N}$ be the composition of the normalisation $\ol{\alpha}^N \to \ol{\alpha}$ 
with the inclusion $\ol{\alpha} \hookrightarrow \ol{M} \times \ol{N}$. Then we have $\nu^\ast (M^\infty \times \ol{N}) \geq \nu^\ast (\ol{M} \times N^\infty )$ (inequality of Cartier divisors).
We say $\alpha$ is \emph{admissible} if this condition is satisfied.
\end{description}
A \emph{modulus correspondence} $\alpha : M \to N$ is a formal $\Z$-linear sum of elementary modulus correspondences. 
By \cite[Def. 1.3.1]{kmsy1}, the composition of modulus correspondences 
(regarded as finite correspondences \cite[\S 2.1]{voetri}) 
is again a modulus correspondence. 
Therefore, modulus pairs and modulus correspondences define the \emph{category of modulus correspondences}, denoted by $\ulMCor$.
The full subcategory of $\ulMCor$ consisting of proper modulus pairs is denoted by $\MCor$.
\end{def-prop}

\begin{defn} \label{def-prop1.1.3} We denote by $\ulMSm$ the category whose objects are modulus pairs and in which a morphism $f:M\to N$ is a morphism $f^\o\in \Sm(M^\o, N^\o)$ whose graph defines an elementary modulus correspondence as in Definition \ref{dp1}; we write $\MSm$ for the full subcategory of $\ulMSm$ consisting of proper modulus pairs.
\end{defn}

There is a commutative diagram of natural functors
\begin{equation}\label{diag1}
\begin{gathered}
\xymatrix{
 \MSm \ar[rd]^{\omega_s} \ar[dd]_{\tau_s} \ar[rrr]^{c} &&& \MCor \ar[dd]^{\tau} \ar[ld]_{\omega} \\
& \Sm \ar[r]^{c^V} & \Cor \\
\ulMSm \ar[ru]_{\ulomega_s} \ar[rrr]^{\ul{c}} &&& \ulMCor \ar[lu]^{\ulomega}.
}
\end{gathered}
\end{equation}

Here, the vertical functors $\tau$ and $\tau_s$ are the full embeddings mentioned above, and the horizontal functors $c$ and $\ul{c}$ are graph functors similar to Voevodsky's graph functor $c^V$; the diagonal functors are all induced by $M\mapsto M^\o$ (retain the interior). By  \cite[Lemma 1.5.1, Th. 1.5.2 and Prop. 1.10.4]{kmsy1} (see also \cite[Cor. 2.2.5]{mzki2}), we have the following important results:

\begin{thm}\label{t2.1} The functors $\omega$, $\ulomega$ and  $\tau$ have pro-left adjoints $\omega^!$, $\lambda$ and $\tau^!$, given respectively by 
\[\omega^! X = "\lim\nolimits"_{M\in\Sigma\downarrow X} M,\quad \lambda X = (X,\emptyset),\quad \tau^! M="\lim\nolimits"_{N\in \Comp(M)} N\]
($\lambda$ is an honest left adjoint), where $\Sigma=\{u\mid \omega(u) \text{ is an isomorphism}\}$ (it admits a calculus of right fractions), and $\Comp(M)$ is the category whose objects are pairs
$(N,j)$ consisting of a modulus pair $N=(\Nb,N^\infty)\allowbreak\in\MSm$ 
equipped with a dense open immersion $j:\Mb\hookrightarrow \Nb$ such that
$N^\infty=M_N^\infty+C$
for some effective Cartier divisors $M_N^\infty, C$ on $\ol{N}$
satisfying
$\ol{N} - |C| = j(\ol{M})$
and  $j^*N^\infty = M^\infty$.

Similarly for $\omega_s$, $\ulomega_s$ and $\tau_s$ (same formulas). 
\end{thm}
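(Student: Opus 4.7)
The plan is to verify each of the three adjunctions separately, then observe that the smooth versions follow by the same arguments.

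For $\lambda$ (honest left adjoint): given $X\in\Sm$ and $N\in\ulMCor$, every integral component $Z\subset X\times N^\o$ of a finite correspondence $\alpha^\o:X\to N^\o$ is finite over $X$, hence its inclusion $Z\hookrightarrow X\times \ol{N}$ is proper, so $Z$ is already closed in $X\times\ol{N}$ and the closure $\ol{Z}=Z$. Then the properness condition is automatic, and the modulus condition with empty source modulus reduces to $Z\cap(X\times|N^\infty|)=\emptyset$, which holds by construction. This yields a bijection $\Cor(X,N^\o)\cong\Hom_{\ulMCor}(\lambda X,N)$, functorial in $N$.

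For $\tau^!$, I show the natural restriction map
\[
\colim_{N\in\Comp(M)}\Hom_{\MCor}(N,P)\longrightarrow \Hom_{\ulMCor}(M,P),\qquad P\in\MCor,
\]
is bijective. It is well-defined because $N^\o=M^\o$ and the modulus condition from $N$ to $P$ is stronger than from $M$ to $P$. Surjectivity is the crux: given $\alpha:M\to P$ in $\ulMCor$, take a Nagata compactification $j:\ol{M}\hookrightarrow\ol{N}$, pick an effective Cartier divisor $C$ supported on the boundary $\ol{N}-j(\ol{M})$, and set $N^\infty=M^\infty_N+nC$ with $M^\infty_N$ any Cartier extension of $M^\infty$ and $n\gg 0$. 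The closure $\ol{Z}_N\subset\ol{N}\times\ol{P}$ of any component of $\alpha^\o$ satisfies the stronger modulus condition: on $\ol{M}\times\ol{P}$ it reduces to the hypothesis, and on the complementary boundary components $nC$ absorbs $\ol{N}\times P^\infty$ for $n$ large. Injectivity follows from cofilteredness of $\Comp(M)$: any two compactifications are dominated by the closure of $\ol{M}$ in their product.

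For $\omega^!$, the same pattern applies, with the additional task of verifying that $\Sigma$ admits a calculus of right fractions. The Ore condition reduces, once again, to the boundary-enlargement procedure used for $\tau^!$. The pro-object $\omega^!X$ defined in the statement then represents $P\mapsto\Hom_{\Cor}(X,\omega P)$. The smooth versions ($\omega_s,\ulomega_s,\tau_s$) follow by the very same arguments, since graphs of morphisms in $\Sm$ sit inside $\ulMCor$ and $\MCor$ with no modification.

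The main obstacle is the boundary-enlargement step: one must show that increasing $n$ in $nC$ eventually dominates $\nu^*(\ol{N}\times P^\infty)$ on every normalized component of $\ol{Z}_N$ meeting the boundary. This rests on finiteness of such components (from properness of $\ol{P}$, which makes $\ol{Z}_N$ proper over $\ol{N}$) and on the standard fact that pullbacks of Cartier divisors supported in a given closed subset have bounded multiplicity on a normal variety. This is precisely the content of \cite[Lemma 1.5.1, Th. 1.5.2, Prop. 1.10.4]{kmsy1}, to which a full proof reduces.
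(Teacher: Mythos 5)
The paper itself recalls Theorem \ref{t2.1} from \cite[Lemma 1.5.1, Th. 1.5.2, Prop. 1.10.4]{kmsy1} and gives no proof of its own, so there is no in-paper argument to compare against; what you have written is a correct reconstruction of the strategy from the cited reference, and you appropriately point to the same results for the quantitative crux. The argument for $\lambda$ is complete as written: finiteness of $Z$ over $X$ together with separatedness of $X\times\ol{N}\to X$ makes $Z$ already closed in $X\times\ol{N}$, and with $M^\infty=\emptyset$ the modulus inequality degenerates to the requirement that $\ol{Z}=Z$ avoid $X\times|N^\infty|$, which is automatic since $Z\subset X\times N^\o$. For $\tau^!$, the compactify-and-enlarge mechanism is the right one, and you correctly identify that the substance is the boundedness of the multiplicity of $\nu^*(\ol{N}\times P^\infty)$ along boundary components of the normalized closure, which is what \cite[Th. 1.5.2]{kmsy1} supplies.

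Two small remarks. Injectivity of the map $\colim_{N\in\Comp(M)}\Hom_{\MCor}(N,P)\to\Hom_{\ulMCor}(M,P)$ is immediate, since every group $\Hom_{\MCor}(N,P)$ is a subgroup of $\Cor(M^\o,P^\o)$ compatibly with the transition maps; cofilteredness of $\Comp(M)$ is what makes the pro-object $\tau^!M$ well-defined, which is a separate (and also needed) point, so it is worth keeping these apart. Second, for $\omega^!$ it is cleaner to avoid a fresh treatment: since $\omega=\ulomega\circ\tau$, one has $\omega^!=\tau^!\circ\lambda$, and the formula in the statement follows by observing that $\Comp((X,\emptyset))$ is cofinal in $\Sigma\downarrow X$; this dispenses with a separate verification of the Ore condition for $\Sigma$, which your sketch leaves somewhat hand-waved.
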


\begin{thm}\label{thm:fiberprod-mod}
Let $f_1 : M_1 \to N$ and $f_2 : M_2 \to N$ be two morphisms in $\ulMSm$ (resp. $\MSm$), and assume that $M_1^\o \times_{N^\o} M_2^\o$ belongs to $\Sm$.
Then the fibre product $M_1 \times_N M_2$ exists in $\ulMSm$ (resp. in $\MSm$). (In other terms, $\omega_s$ and $\ulomega_s$ reflect fibre products.) Moreover, $\ulMSm$ and $\MSm$ have the final object $\un:=(\Spec k,\emptyset)$. In particular, finite products exist in these categories.
\end{thm}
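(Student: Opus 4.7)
The plan splits in two. The final object $\un = (\Spec k, \emptyset)$ is straightforward: a morphism $M \to \un$ in $\ulMSm$ is a morphism $M^\o \to \Spec k$ in $\Sm$, which exists uniquely; the properness condition holds because the graph closure coincides with $\ol{M}$, mapping to itself, and the modulus inequality becomes $\nu^*(M^\infty) \geq 0$, trivially true. Thus $\un$ is terminal in $\ulMSm$ (hence in $\MSm$). Combined with the existence of fibre products, this yields finite products by taking fibre products over $\un$.

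For the fibre product, set $P^\o := M_1^\o \times_{N^\o} M_2^\o$, which is smooth by hypothesis. I would construct an ambient space $\ol{P}$ equipped with a proper morphism $\ol{P} \to \ol{M_1}\times_{\ol{N}}\ol{M_2}$ that is an isomorphism over $P^\o$, together with an effective Cartier divisor $P^\infty$ on $\ol{P}$ with support $\ol{P}\setminus P^\o$. The essential point is that $P^\infty$ must be arranged so that on any morphism from a normal scheme, its pullback computes the \emph{maximum} of the pullbacks of $p_1^*M_1^\infty$ and $p_2^*M_2^\infty$, rather than their sum. The naive choice $P^\infty := p_1^*M_1^\infty + p_2^*M_2^\infty$ makes the projections $p_i : P \to M_i$ admissible, but fails the universal property: for a diagonal-like pair $(g_1, g_2)$, the resulting pullback inequality involves a sum that need not be dominated by $\nu^*(Q^\infty\times\ol{P})$, whereas individual admissibility of each $g_i$ only controls each summand. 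One circumvents this by replacing $\ol{M_1}\times_{\ol{N}}\ol{M_2}$ by a suitable modification (typically a blow-up along the intersection of the pullback divisors) on which a Cartier divisor realising the max exists.

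Given such a $P$, admissibility of $p_i$ is immediate from the max property (pulling $P^\infty$ back to $\ol{P}^N$ dominates $\nu^*p_i^*M_i^\infty$). For the universal property, given compatible $g_i : Q \to M_i$, the scheme morphism $h^\o := (g_1^\o, g_2^\o) : Q^\o \to P^\o$ is the unique candidate lift. Properness of the closure $\ol{h}$ of $\Gamma_{h^\o}$ in $\ol{Q}\times\ol{P}$ over $\ol{Q}$ follows by realising $\ol{h}$ as a closed subscheme of $\ol{g_1}\times_{\ol{Q}}\ol{g_2}$, which is proper over $\ol{Q}$ by admissibility of the $g_i$. The modulus inequality $\nu^*(Q^\infty\times\ol{P}) \geq \nu^*(\ol{Q}\times P^\infty)$ is obtained by pulling the individual admissibility inequalities for $g_1, g_2$ back to $\ol{h}^N$ along the natural maps $\ol{h}^N \to \ol{g_i}^N$, and combining them via the max structure of $P^\infty$. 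The main obstacle is globally realising the max as a Cartier divisor through a careful choice of modification of $\ol{M_1}\times_{\ol{N}}\ol{M_2}$; in the proper case, properness of $\ol{P}$ is preserved throughout, so the construction lands in $\MSm$ as required.
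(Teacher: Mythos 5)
Your proposal tracks the construction that the paper relies on. The paper itself does not prove Theorem~\ref{thm:fiberprod-mod}; it cites \cite[Lemma 1.5.1, Th.~1.5.2 and Prop.~1.10.4]{kmsy1} for it. But the internal evidence about what that proof is — Remarks~\ref{r2.1} and \ref{w2.1}, and especially the proof of Lemma~\ref{lem:tensor-cd}(1) — makes the shape clear: one modifies $\ol{M_1}\times_{\ol{N}}\ol{M_2}$ (blow-up along the scheme-theoretic intersection of the two pulled-back moduli) so that $E:=\ol p_1^*M_1^\infty\times_{\ol L}\ol p_2^*M_2^\infty$ is an effective Cartier divisor, and sets $L^\infty=\ol p_1^*M_1^\infty+\ol p_2^*M_2^\infty-E$. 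This is precisely the ``supremum'' divisor you describe, and your reasons for why the naive sum fails the universal property, and why $D_1+D_2-E$ fixes it, are correct. So the approach is essentially the same.

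Two points to tighten. First, a genuine step you elide: $\ol{M_1}\times_{\ol N}\ol{M_2}$ only makes sense if the $f_i$ are \emph{ambient}, i.e.\ extend to morphisms $\ol{M_i}\to\ol N$, and a general morphism in $\ulMSm$ need not. One must first invoke Theorem~\ref{t1.1} (calculus of right fractions for $\ulSigma^\fin$) to replace each $M_i$ by a proper modification $M_i'\to M_i$ over which $f_i$ becomes ambient, carry out the construction there, and then check that the resulting object represents the fibre product in $\ulMSm$ independently of these choices. As stated, your construction does not parse without this reduction. Second, a phrasing caution: what one puts on $\ol P$ is not literally the divisorial supremum $\max(p_1^*M_1^\infty,p_2^*M_2^\infty)$ (which need not be Cartier) but the Cartier divisor $p_1^*M_1^\infty+p_2^*M_2^\infty-E$; the inequality $\nu^*\ol h^*P^\infty\le\max(\nu^*g_1^*M_1^\infty,\nu^*g_2^*M_2^\infty)$ on the normalised graph closure of a would-be lift then follows because the pullback of $E$ is again Cartier and contains the divisorial infimum. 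Your ``max'' language captures the idea, but the actual argument should be run with $E$ and Cartier pullbacks to avoid assuming any factoriality.
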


\subsection{The category $\ulMSm^\fin$}

There is another category, which plays a technically important r\^ole:

\begin{defn}\label{def:1.1.2} We write $\ulMSm^\fin$ for the subcategory of $\ulMSm$ with the same objects and such that $f\in \ulMSm(M,N)$ belongs to $\ulMSm^\fin(M,N)$ if and only if the rational map $\ol{M}\to \ol{N}$ defined by $f^\o$ is a \emph{morphism}. We say that $f$ is \emph{ambient}.\\
We write $\ul{b}_s:\ulMSm^\fin \to \ulMSm$ for the inclusion functor. 
\end{defn}

Note that, for an ambient morphism $f$, the properness condition is trivial and the modulus condition simplifies to
\begin{equation}\label{eq1.3}
\nu^\ast M^\infty \geq \nu^\ast \ol{f}^\ast N^\infty .
\end{equation}
where $\nu : \ol{M}^N \to \ol{M}$ is the normalisation. 

\begin{defn}\label{deff}
We say that $f$ is \emph{minimal} if there is equality in \eqref{eq1.3}. 
\end{defn}

By  \cite[Prop. 1.9.2]{kmsy1}, we have

\begin{thm}\label{t1.1} Let $\ulSigma^\amb$ be the class of minimal morphisms $f : M \to N$ in $\ulMSm^\amb$ such that  $f^\o : M^\o \to N^\o$ is an isomorphism in $\Sm$ and $\ol{f}:\ol{M}\to \ol{N}$ is proper. Then $\ulSigma^\amb$ admits a calculus of right fractions, any morphism in $\ulSigma^\amb$ becomes invertible in $\ulMSm$ and the induced functor $(\ulSigma^\amb)^{-1}\ulMSm^\amb\to \ulMSm$ is an equivalence of categories.
\end{thm}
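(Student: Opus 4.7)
My plan is to prove the three assertions in sequence: invertibility of $\ulSigma^\amb$ in $\ulMSm$, the Ore conditions for a right calculus of fractions, and the equivalence of categories, essentially following the blowup/graph-closure strategy that underlies \cite[Prop. 1.9.2]{kmsy1}.

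First, given $s : M'\to M$ in $\ulSigma^\amb$, I would construct the inverse explicitly as the elementary modulus correspondence $s^{-1}:M\to M'$ whose interior is $(s^\o)^{-1}:M^\o\iso M'^\o$. The closure of its graph in $\ol{M}\times \ol{M}'$ is the transpose of $\Gamma_{\ol{s}}\cong \ol{M}'$; properness of $\ol{s}$ gives the properness condition, and minimality of $s$ gives equality of the two relevant pullbacks of Cartier divisors, hence the modulus condition for $s^{-1}$. The identities $s\circ s^{-1}=\id$ and $s^{-1}\circ s=\id$ are then forced by the analogous equalities on interiors.

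Next, for the Ore condition, given $s:M'\to M$ in $\ulSigma^\amb$ and $f:N\to M$ in $\ulMSm^\amb$, I form the morphism $N^\o \to M'^\o$ given by $(s^\o)^{-1}\circ f^\o$, take the closure $\Gamma$ of its graph inside the fibre product $\ol{N}\times_{\ol{M}}\ol{M}'$ (which is proper over $\ol{N}$ because $\ol{s}$ is), and normalise to obtain $\ol{N}'=\Gamma^N$. Equipping $\ol{N}'$ with the pullback divisor $t^*N^\infty$ yields a minimal morphism $t:N'\to N$ in $\ulSigma^\amb$ whose interior is the identity, together with an ambient $g:N'\to M'$ whose modulus condition follows by combining the modulus condition for $f$ and minimality of $s$. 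Pushing the construction a bit further, a small diagram chase handles the cancellation property: if $s\circ g_1=s\circ g_2$ with $s\in \ulSigma^\amb$, then $g_1^\o=g_2^\o$, and since ambient morphisms are determined by their interiors, $g_1=g_2$ already in $\ulMSm^\amb$. This establishes the right calculus of fractions.

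Finally, for the equivalence $(\ulSigma^\amb)^{-1}\ulMSm^\amb \to \ulMSm$: the functor is the identity on objects, so essential surjectivity is immediate. For fullness, given $\phi\in \ulMSm(M,N)$, I take $\Gamma$ to be the closure of the graph of $\phi^\o$ in $\ol{M}\times\ol{N}$ (which is proper over $\ol{M}$ precisely by the properness condition in Definition-Proposition \ref{dp1}), normalise to get $\ol{M}'=\Gamma^N$, and give it the modulus $s^*M^\infty$ where $s:\ol{M}'\to\ol{M}$ is the first projection. The second projection $g:M'\to N$ is ambient, its modulus condition coincides with that of $\phi$, and $\phi = g\circ s^{-1}$ in $\ulMSm$. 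Faithfulness reduces, via the standard Ore-fraction bookkeeping, to checking that two ambient representatives $g_1,g_2$ with the same interior are equal in $\ulMSm^\amb$, which holds because ambient morphisms are determined by the underlying maps on the dense open interiors. The main subtlety to watch throughout is ensuring that normalisations behave well under the various pullbacks of Cartier divisors; this is where I expect to lean most heavily on \cite[Prop. 1.9.2]{kmsy1} rather than reproving everything from scratch.
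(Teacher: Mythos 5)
The paper offers no proof of this theorem: the text reads ``By \cite[Prop.~1.9.2]{kmsy1}, we have'' and then states the result, so there is nothing in this paper to compare your argument against. Your three-part plan --- invertibility of $\ulSigma^\amb$ in $\ulMSm$, the right Ore conditions, and the equivalence via graph closures and normalizations --- is the correct shape for a proof of the cited proposition, and the invertibility and fullness steps are handled correctly (both ultimately rest on the fact that morphisms in $\ulMSm$ and in $\ulMSm^\amb$ are determined by their interiors, and that minimality of $s$ is exactly what makes the transposed graph admissible).

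The one step where you assert more than you justify is the modulus condition for $g$ in the Ore step. Minimality of $s$ gives $\nu_{M'}^\ast M'^\infty = \nu_{M'}^\ast \ol{s}^\ast M^\infty$ on $\ol{M'}^N$, and the modulus condition for $f$ together with the fact that $\ol{t}\colon \ol{N'}\to \ol{N}$ is dominant from a normal scheme gives $\ol{t}^\ast N^\infty \ge \ol{t}^\ast\ol{f}^\ast M^\infty = \ol{g}^\ast\ol{s}^\ast M^\infty$. To conclude $\ol{t}^\ast N^\infty \ge \ol{g}^\ast M'^\infty$ you still need $\ol{g}^\ast(M'^\infty - \ol{s}^\ast M^\infty)=0$, but $\ol{g}$ need not be dominant (since $f^\o$ need not be), so you cannot simply factor $\ol{g}$ through $\ol{M'}^N$ and pull back the minimality equality. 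What is actually required is the separate commutative-algebra fact that a Cartier divisor $D$ on $\ol{M'}$ with $\nu_{M'}^\ast D=0$ pulls back to zero along \emph{any} morphism from a normal integral scheme for which the pullback is defined; one checks this at codimension-one points $\eta$ of $\ol{N'}$, using that $\sO_{\ol{N'},\eta}$ is a discrete valuation ring, hence integrally closed, so that a uniformizer cannot become a unit after the finite base change by the normalisation of $\ol{M'}$. You do flag exactly this kind of subtlety at the end, but note that leaning on \cite[Prop.~1.9.2]{kmsy1} for it would be circular, since that proposition is precisely the statement you are re-deriving; what you need is this pullback lemma, not the proposition itself.
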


\begin{rk} Contrary to Theorem \ref{thm:fiberprod-mod}, fibre products (or even cartesian products) are not representable in $\ulMSm^\fin$ in general.
\end{rk}

\subsection{Topologies on modulus pairs}\label{sec:topmod}

The categories $\ulMSm$ and $\MSm$ have Grothendieck topologies which are derived from the Nisnevich topology on $\Sm$.
We recall their definitions from \cite{kmsy1} and \cite{mzki2}.

First, we consider the ``naive'' topology on $\ulMSm^\amb$.

\begin{defn}\label{d1.3.1}
An \emph{$\ulMV^\amb$-cover} is an ambient morphism $f : U \to M$ in $\ulMSm^\amb$ which is minimal and such that the underlying morphism $\ol{f} : \ol{U} \to \ol{M}$ in $\Sch$ is a Nisnevich cover. 
The Grothendieck topology on $\ulMV^\amb$ generated by $\ulMV^\amb$-covers is called the \emph{$\ulMV^\amb$ topology}.
\end{defn}

There is another characterisation of this topology from the ``cd-structure'' point of view (see \cite{cdstructures} for the definitions and properties of cd-structures).

\begin{defn}
Let $P_{\ulMV^\amb}$ be the cd-structure on $\ulMSm^\amb$ consisting of commutative squares in $\ulMSm^\amb$
\[\xymatrix{
W \ar[r] \ar[d] & V \ar[d] \\
U \ar[r] & M
}\]
such that all arrows are minimal and the underlying square of schemes 
\[\xymatrix{
\ol{W} \ar[r] \ar[d] & \ol{V} \ar[d] \\
\ol{U} \ar[r] & \ol{M}
}\]
is a distinguished Nisnevich square.
(We always take $\ol{U} \to \ol{M}$ to be an open immersion.)
An element of $P_{\ulMV^\amb}$ is called an \emph{$\ulMV^\amb$-sqaure}.
\end{defn}

By ``transport of structure'' of the Nisnevich cd-structure, which is complete, regular and bounded in the sense of \cite{cdstructures}, we obtain 

\begin{prop}\label{prop:MVamb}
The $\ulMV^\amb$ topology coincides with the topology associated with the cd-structure $P_{\ulMV^\amb}$.
Moreover, the latter is complete, regular and bounded.
\end{prop}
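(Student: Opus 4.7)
The plan is to establish Proposition \ref{prop:MVamb} by transporting the known properties of the Nisnevich cd-structure on $\Sch$ along the ambient-scheme functor $\pi : \ulMSm^\amb \to \Sch$ sending $M \mapsto \ol{M}$, $f \mapsto \ol{f}$. The key preliminary observation is a bijection: for any fixed modulus pair $M$, the set of $P_{\ulMV^\amb}$-squares with lower-right corner $M$ is in canonical bijection with the set of distinguished Nisnevich squares on $\ol{M}$. Given such a Nisnevich square, one pulls back $M^\infty$ along each arrow into $\ol{M}$ to endow $\ol{U}, \ol{V}, \ol{W}$ canonically with effective Cartier divisors; the resulting ambient morphisms are automatically minimal in the sense of Definition \ref{deff}, so the pulled-back square lies in $P_{\ulMV^\amb}$. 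The inverse direction is by applying $\pi$ corner-wise.

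First I would use this bijection to show that the topology generated by $P_{\ulMV^\amb}$ coincides with the $\ulMV^\amb$ topology of Definition \ref{d1.3.1}. Each $\ulMV^\amb$-cover $f : U \to M$ is a minimal ambient morphism with $\ol{f}$ a Nisnevich cover; by completeness of the Nisnevich cd-structure on $\Sch$, $\ol{f}$ admits a refinement by covers coming from distinguished Nisnevich squares, which lift via the bijection to $P_{\ulMV^\amb}$-refinements of $f$. Conversely, the two-element cover associated to a $P_{\ulMV^\amb}$-square is visibly an $\ulMV^\amb$-cover.

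Next I would transport each of the three properties individually. Completeness reduces to completeness of the Nisnevich cd-structure on $\Sch$ via the bijection and the refinement argument just sketched. Boundedness follows by equipping $\ulMSm^\amb$ with the density structure pulled back from the standard dimension filtration on $\Sch$; bounded decomposition then reduces to the scheme-theoretic case dimension by dimension. Regularity, which requires that the canonical square built from the fibre product of the étale and open-immersion sides of a $P_{\ulMV^\amb}$-square be again a $P_{\ulMV^\amb}$-square (with the diagonal a monomorphism), is the step requiring care.

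The main obstacle is precisely this regularity axiom, because as noted in the remark after Theorem \ref{t1.1} fibre products are not representable in $\ulMSm^\amb$ in general, so one cannot invoke a formal transfer lemma. Instead I would verify directly that each fibre product entering the regularity condition exists in $\ulMSm^\amb$---this uses that one leg of each such product is an open immersion on ambient spaces, so the pullback modulus structure is obtained simply by restricting $M^\infty$---and that the resulting square maps under $\pi$ to the scheme-theoretic regularity square of the Nisnevich cd-structure on $\ol{M}$. Combined with regularity of that Nisnevich cd-structure and the bijection of Step 1, this yields the regularity of $P_{\ulMV^\amb}$.
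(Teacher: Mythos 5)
Your proposal is essentially the paper's own argument: the paper's proof is the single sentence "by transport of structure of the Nisnevich cd-structure," and you have simply unfolded what that means (a corner-wise bijection with distinguished Nisnevich squares on the ambient scheme, then transport of completeness, boundedness, regularity). The overall structure is correct.

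One small imprecision in the regularity step is worth noting. You justify the existence of the fibre products appearing in the regularity axiom by saying "one leg of each such product is an open immersion on ambient spaces." But the product that actually needs to be formed is of the shape $V \times_M V$ with both legs \'etale (the open immersion in that picture is the diagonal $V \to V \times_M V$, not a leg of the fibre product). What actually makes the construction go through is that every arrow in a $P_{\ulMV^\amb}$-square is \emph{minimal}: the moduli on $U$, $V$, $W$ are pullbacks of $M^\infty$, so the modulus on $\ol V\times_{\ol M}\ol V$ is unambiguously the pullback of $M^\infty$ (the two pullbacks along the projections agree), and likewise for $\ol W\times_{\ol U}\ol W$. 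With that correction the regularity square is again built from minimal ambient arrows over a distinguished Nisnevich square, hence lies in $P_{\ulMV^\amb}$, exactly as needed. This is a local repair; it does not affect the architecture of your argument.
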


Next, we introduce topologies on $\ulMSm$ and $\MSm$.
The former is easily defined as follows.

\begin{defn}\label{def:uMVsquare}
Let $P_{\ulMV}$ be the ``smallest'' cd-structure on $\ulMSm$ which contains the images of all squares in $P_{\ulMV^\amb}$ under the functor $\ul{b}_s:\ulMSm^\amb \to \ulMSm$ of Theorem \ref{t1.1}.
In other words, a commutative square in $\ulMSm$ belongs to $P_{\ulMV}$ if and only if it is isomorphic in $\ulMSm^\Sq$ to the image of an $\ulMV^\amb$-square (see \textbf{Notation} for the definition of $\Sq$). 
An element of $P_{\ulMV}$ is called an \emph{$\ulMV$-square}.

The Grothendieck topology on $\ulMSm$ associated with the cd-structure $P_{\ulMV}$ is called the \emph{$\ulMV$ topology}.
\end{defn}

\begin{remark}
For any $\ulMV^\fin$-square or $\ulMV$-square $S$, the square $S^\o$ in $\Sm^\o$ is a distinguished Nisnevich square. 
\end{remark}

By \cite[Prop. 3.2.2]{kmsy1}, we have

\begin{thm}
The cd-structure $P_{\ulMV}$ is complete and regular. 
\end{thm}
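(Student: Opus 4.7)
The plan is to bootstrap from the analogous properties of $P_{\ulMV^\amb}$ (already established in Proposition \ref{prop:MVamb}) by exploiting the localisation description $\ulMSm = (\ulSigma^\amb)^{-1}\ulMSm^\amb$ given by Theorem \ref{t1.1}, and the fact that $\ulSigma^\amb$ admits a calculus of right fractions. The rough idea is that every $\ulMV$-square in $\ulMSm$ is (up to isomorphism) $\ul{b}_s$ applied to an $\ulMV^\amb$-square, so both properties should descend from the ``ambient'' level provided morphisms can be lifted back to $\ulMSm^\amb$ after refining by elements of $\ulSigma^\amb$.

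For completeness, given an $\ulMV$-square $Q$ over $X \in \ulMSm$ and a morphism $f\colon Y \to X$, I would first use Definition \ref{def:uMVsquare} to find an $\ulMV^\amb$-square $Q_0$ over some $X_0 \in \ulMSm^\amb$, together with $X_0 \to X$ in $\ulSigma^\amb$, such that $\ul{b}_s(Q_0) \simeq Q$. By the calculus of right fractions for $\ulSigma^\amb$, I can then find $Y_0 \to Y$ in $\ulSigma^\amb$ and a lift $f_0\colon Y_0 \to X_0$ of $f$ in $\ulMSm^\amb$. Completeness of $P_{\ulMV^\amb}$ now supplies a distinguished refinement of the pullback of $Q_0$ along $f_0$; its image under $\ul{b}_s$, composed with $Y_0 \to Y$, gives the distinguished refinement over $Y$ required by the completeness criterion.

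For regularity, I would verify the three standard conditions on each distinguished square (that it is cartesian, that the bottom arrow from the ``open immersion'' corner is a monomorphism, and that the diagonal on the upper-left corner is an isomorphism) by reducing to the corresponding assertions on an $\ulMV^\amb$-lift, which hold by Proposition \ref{prop:MVamb}. Since $\ulSigma^\amb$-morphisms are minimal, ambient, and isomorphisms on interiors, they preserve the ambient schemes up to proper birational modifications that do not affect the Nisnevich-square structure; and the fibre products in the relevant squares (open immersion vs.\ étale cover) are representable by Theorem \ref{thm:fiberprod-mod}. Pulling back by such $\ulSigma^\amb$-morphisms therefore sends $\ulMV^\amb$-squares to $\ulMV^\amb$-squares and preserves pullback-ness and monomorphism-ness after localising.

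The main obstacle is that $\ulMSm$ does not admit arbitrary fibre products (as noted after Theorem \ref{t1.1}), so one cannot naively pull back an $\ulMV$-square along an arbitrary morphism. The calculus of right fractions in Theorem \ref{t1.1} is what circumvents this: it lets us replace $Y \to X$ by a lift $Y_0 \to X_0$ in $\ulMSm^\amb$, where the relevant fibre products with open immersions and étale maps do exist by Theorem \ref{thm:fiberprod-mod}. The careful part of the proof will be checking that the various choices of lifts in $\ulSigma^\amb$ can be made compatibly with the squares under consideration, and that the regularity conditions, which are stated at the level of squares in $\ulMSm$, can indeed be checked after pulling back to $\ulMSm^\amb$ along elements of $\ulSigma^\amb$.
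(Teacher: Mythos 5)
The paper does not give its own proof of this statement: it is quoted directly from \cite[Prop.~3.2.2]{kmsy1}, so there is nothing in the text to compare against line by line. Assessing your proposal on its own terms, the overall strategy — descending completeness and regularity of $P_{\ulMV}$ from Proposition~\ref{prop:MVamb} via the localisation $\ulMSm\simeq(\ulSigma^\amb)^{-1}\ulMSm^\amb$ of Theorem~\ref{t1.1} — is the natural one, and the completeness half is essentially sound: lift $f\colon Y\to X$ to an ambient $f_0\colon Y_0\to X_0$ with $Y_0\to Y$ in $\ulSigma^\amb$ using the calculus of right fractions, pull back the ambient lift $Q_0$ of $Q$ along $f_0$, and note that the resulting distinguished cover of $Y_0$ also refines the pulled-back cover of $Y$ since $Y_0\to Y$ becomes an isomorphism in $\ulMSm$.

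The regularity half, however, has a genuine gap. Regularity is a statement about fibre products and monomorphisms computed in $\ulMSm$: for a distinguished square with étale leg $V\to X$ and open leg $U\to X$, one needs that the square is cartesian in $\ulMSm$, that $U\to X$ is a monomorphism in $\ulMSm$, and that a derived square built from the diagonals $V\to V\times_X V$ and $W\to W\times_U W$ is again distinguished. Proposition~\ref{prop:MVamb} only gives you these conditions for the fibre products that exist in $\ulMSm^\amb$, and $\ul{b}_s$ is not a functor for which preservation of fibre products is automatic — indeed $\ulMSm^\amb$ lacks general fibre products (the remark after Theorem~\ref{t1.1}), and $\ul{b}_s$ becomes a localisation. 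Your phrase ``preserves pullback-ness and monomorphism-ness after localising'' is asserting exactly what needs to be proved. What is actually required is a compatibility statement: that the ambient fibre products $\ol V\times_{\ol X}\ol V$ (with the minimal pulled-back modulus) represent the fibre products in $\ulMSm$ supplied by Theorem~\ref{thm:fiberprod-mod}. This has to come from the explicit construction of fibre products of modulus pairs in \cite[\S 2.2]{mzki2}, not from Proposition~\ref{prop:MVamb} combined with formal properties of localisations, and you don't address it. A smaller issue: your phrasing of the third regularity condition (``the diagonal on the upper-left corner is an isomorphism'') is not correct; it is the distinguished-ness of the derived square of diagonals, which is precisely why the fibre-product compatibility cannot be side-stepped.
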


The definition of the topology on $\MSm$ is a bit tricky. 
It is designed to satisfy completeness and regularity, and to be compatible with the $\ulMV$ topology. 
First we need to recall the ``off-diagonal functor'' from \cite{mzki2}.

\begin{defn}
Let $\ulMEt$ be the category whose objects are morphisms $f : U \to M$ in $\ulMSm$ such that $f^\o$ is \'etale, a morphism from $f:U \to M$ to $g:U' \to M'$ being given by a pair of morphisms $s : U \to U'$, $t : M \to M'$ which are compatible with $f$ and $g$ and such that $s^\o$ and $t^\o$ are open immersions. 
Let $\MEt$ be the full subcategory of $\ulMEt$ consisting of those $f:M \to N$ with $M,N \in \MSm$.
\end{defn}

By \cite[Th. 3.1.3]{mzki2}, we have

\begin{prop}
There exists a functor $\OD : \ulMEt \to \ulMSm$
together with a natural isomorphism 
\[
U \sqcup \OD (f) \iso U \times_M U,
\]
for each $(f : U \to M) \in \ulMEt$, 
where $\sqcup$ denotes coproduct and the right hand side denotes the fiber product in $\ulMSm$.
This functor restricts to a functor $\MEt \to \MSm$.
\end{prop}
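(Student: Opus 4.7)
The plan is to construct $\OD(f)$ as the off-diagonal complement inside the fibre product $U\times_M U$ formed in $\ulMSm$. Because $f^\o:U^\o\to M^\o$ is étale, its diagonal is both open and closed, giving a canonical decomposition $U^\o\times_{M^\o}U^\o\cong U^\o\sqcup V^\o$ with $V^\o\in\Sm$. Smoothness of this interior is exactly the hypothesis needed to apply Theorem \ref{thm:fiberprod-mod}, which produces the fibre product $P:=U\times_M U$ in $\ulMSm$ with $P^\o=U^\o\sqcup V^\o$.

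The next and most delicate step is to promote this decomposition of interiors to a decomposition of $P$ as a modulus pair. The ambient scheme $\ol P$ furnished by Theorem \ref{thm:fiberprod-mod} contains the reduced closures $\ol{U^\o}$ and $\ol{V^\o}$, which may meet along the boundary $\ol P\setminus P^\o$. Using the equivalence $(\ulSigma^\amb)^{-1}\ulMSm^\amb\simeq\ulMSm$ of Theorem \ref{t1.1}, I would replace $\ol P$ by a suitable admissible modification (for instance the blow-up along $\ol{U^\o}\cap\ol{V^\o}$ followed by taking reduced irreducible components); this modification lies in $\ulSigma^\amb$, so it is an isomorphism in $\ulMSm$, and its target has ambient scheme a literal disjoint union $\ol{P}_1\sqcup\ol{P}_2$ with $\ol{P}_i$ the closure of $U^\o$ (respectively $V^\o$). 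Set
\[
\OD(f) := (\ol{P}_2,\; P^\infty|_{\ol{P}_2})\in\ulMSm.
\]
The diagonal $\Delta_f:U\to P$ induced by $(\id_U,\id_U)$ is ambient with interior $U^\o$, and identifies $U$ with the other summand $(\ol{P}_1,P^\infty|_{\ol{P}_1})$ in $\ulMSm$ (both projections from $P$ pull the modulus of $P$ back to $U^\infty$ along the diagonal, forcing equality). This yields the required isomorphism $U\sqcup\OD(f)\iso U\times_M U$.

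Naturality is then automatic: a morphism $(s,t):(f:U\to M)\to(g:U'\to M')$ in $\ulMEt$ induces a morphism of fibre products $U\times_M U\to U'\times_{M'}U'$ in $\ulMSm$, and the relation $(s\times_t s)\circ\Delta_f=\Delta_g\circ s$ forces it to preserve the canonical decomposition, yielding $\OD(s,t):\OD(f)\to\OD(g)$ compatibly with identities and composition. When $U,M\in\MSm$, the ambient scheme $\ol U\times_{\ol M}\ol U$ is proper, so $\ol P$ and each summand $\ol{P}_i$ are proper, giving the restricted functor $\MEt\to\MSm$.

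The main obstacle I expect is the separation performed in Step~2: the admissible modification realising $\ol{P}=\ol{P}_1\sqcup\ol{P}_2$ is not canonical inside $\ulMSm^\amb$, so functoriality really has to be formulated after passing to the localisation $\ulMSm=(\ulSigma^\amb)^{-1}\ulMSm^\amb$, where the ambiguity is killed. The calculus of right fractions provided by Theorem \ref{t1.1} should allow one to absorb this ambiguity by taking common refinements, but verifying that every morphism $(s,t)\in\ulMEt$ genuinely induces a well-defined morphism between the separated ambient schemes (possibly only after a further compatible modification) is the point that needs real technical care.
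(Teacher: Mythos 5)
The paper does not prove this proposition but quotes it from \cite[Th.~3.1.3]{mzki2}, so there is no internal proof to compare against; I will evaluate the attempt on its own merits. Your overall strategy --- use \'etaleness of $f^\o$ to split $U^\o\times_{M^\o}U^\o = U^\o\sqcup V^\o$, form the fibre product $P := U\times_M U$ in $\ulMSm$ via Theorem~\ref{thm:fiberprod-mod}, and carve out the off-diagonal summand of an ambient model --- is the natural one and in the right direction.

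The execution has two gaps. First, the blow-up along $\ol{U^\o}\cap\ol{V^\o}$ is asserted, without argument, to separate the two closures; this is not a general fact about blow-ups, and it is also unnecessary. What you mention only in passing --- replacing $\ol P$ by the disjoint union of its reduced irreducible components --- already does the work on its own: this map is finite (hence proper), an isomorphism over the dense smooth open $P^\o$ (each connected component of $P^\o$ is unibranch, hence lies on a unique irreducible component of $\ol P$), and minimal with the pulled-back modulus, so it belongs to $\ulSigma^\amb$; its source is a literal disjoint union of integral schemes that one partitions into $\ol{P}_1\sqcup\ol{P}_2$ according to where the generic points lie. Dropping the blow-up also dissolves most of your concern about non-canonicity: the construction is then canonical, and the genuine content of naturality is that $s^\o\times_{t^\o}s^\o$ carries the off-diagonal to the off-diagonal, which is guaranteed because $s^\o$ is required to be an open immersion (hence injective), so that its fibre square pulls back the diagonal of $U'^\o$ exactly to the diagonal of $U^\o$; this should be stated rather than left implicit. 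Second, you never verify that $\OD(f) := (\ol{P}_2,\, P^\infty|_{\ol{P}_2})$ is actually a modulus pair: one needs $P^\infty|_{\ol{P}_2}$ to be an effective Cartier divisor. This holds here because each component of $\ol{P}_2$ is integral and meets $P^\o$, hence is not contained in $|P^\infty|$, but it is a necessary step of the argument.
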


\begin{defn}\label{def:MV-sq}
Let $P_{\MV}$ be the cd-structure on $\MSm$ consisting of those commutative squares $T$ of the form
\begin{equation}\label{eq:T}\begin{gathered}\xymatrix{
T(00) \ar[r] \ar[d]_q & T(01) \ar[d]^p \\
T(10) \ar[r] & T(11)
}\end{gathered}\end{equation}
which satisfy the following properties:
\begin{enumerate}
\item $T$ is cartesian in $\MSm$.
\item There exists an $\ulMV$-square $S$ and a morphism $\iota  : S \to T$ in $\ulMSm^\Sq$ such that $\iota (11) : S(11) \to T(11)$ is an isomorphism in $\ulMSm$, and $\iota (ij)^\o : S(ij)^\o \to T(ij)^\o$ is an isomorphism in $\Sm$ for any $(ij) \in \Sq$.
In particular, the square $S^\o \cong T^\o$ in $\Sm^\Sq$ is a distinguished Nisnevich square. 
\item The morphism $\OD (q) \to \OD (p)$, which is induced by the funtoriality of $\OD$, is an isomorphism.
\end{enumerate}
An element of $P_{\MV}$ is called an \emph{$\MV$-square}.
The topology associated with the cd-structure $P_{\MV}$ is called the \emph{$\MV$ topology}.
\end{defn}

\begin{ex}\label{kp3} Let $X$ be proper and let $D,D_1,D_2,D'$ be effective Cartier divisors on $X$ such that
\begin{gather*}
X-D \text{ is smooth }\label{keqd}\\
D\le D_i\le D'\label{keqa}\\
|D_1-D|\cap |D_2-D| =\emptyset\label{keqb}\\ 
D'-D_2 =D_1-D.\label{keqc}
\end{gather*}
Then 
\[
T=\begin{CD}
(X,D')@>>> (X,D_1)\\
@VVV @VVV\\
(X,D_2)@>>> (X,D)
\end{CD}\] 
is an $\MV$-square. Indeed, (1) holds by \cite[Lemma 1.10.1 and Prop. 1.10.4]{kmsy1}. Let $\ol{S}(01)=X-|D_1-D|$, $\ol{S}(10) = X-|D_2-D|$, $\ol{S}(00) = X-|D'-D|$ and $S(ij)^\infty = j(ij)^*T(ij)^\infty$ where $j(ij)$ is the inclusion $\ol{S}(ij)\inj X$. This yields a square $S$ as in (2), and (3) is trivial since $T^\o$ is a Zariski square.
\end{ex}

By  \cite[Th. 4.3.1, 4.4.1]{mzki2}, we have

\begin{thm}
The cd-structure $P_{\MV}$ is complete and regular.
\end{thm}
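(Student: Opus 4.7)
The plan is to derive both properties by transferring them from the already-established completeness and regularity of $P_{\ulMV}$ on $\ulMSm$, using the correspondence between $\MV$-squares and $\ulMV$-squares encoded in Definition \ref{def:MV-sq}, together with the compactification data supplied by Theorem \ref{t2.1}.

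For completeness, I would verify Voevodsky's criterion that every $P_\MV$-covering sieve on an object $M\in\MSm$ contains one generated by a single $\MV$-square. Given such a sieve, I restrict along the full embedding $\tau_s:\MSm\hookrightarrow \ulMSm$ to a $P_\ulMV$-covering sieve on $\tau_s M$ and invoke completeness of $P_\ulMV$ to obtain a refining $\ulMV$-square $S$. To upgrade $S$ to an $\MV$-square $T$ living in $\MSm$, I apply the pro-left adjoint $\tau^!$ from Theorem \ref{t2.1} levelwise and pass to a suitable finite stage; clause (2) of Definition \ref{def:MV-sq} then holds tautologically, clause (1) follows from Theorem \ref{thm:fiberprod-mod} because the interior square $T^\o$ is Nisnevich distinguished and thus the smoothness hypothesis on interiors is automatic, and clause (3) requires a direct verification using the functoriality of $\OD$ on $\MEt$.

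For regularity, the cartesianness condition is built into clause (1). The monomorphism requirements on the vertical arrows of an $\MV$-square reduce under $\omega_s$ to the same requirements for the underlying distinguished Nisnevich square, which are classical. The essential exactness --- that the diagonal square associated to $T$ (built from $T(00)\to T(01)\times_{T(11)}T(01)$ and its symmetric counterpart) again belongs to $P_\MV$ --- is precisely where clause (3) of Definition \ref{def:MV-sq} is needed: the off-diagonal components of these fibre products are controlled by $\OD(p)$ and $\OD(q)$, and the hypothesis $\OD(q)\iso\OD(p)$ is exactly what allows the diagonal squares themselves to be recognised as $\MV$-squares, so that regularity descends from the $\ulMV$-case.

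The main obstacle I expect is the interplay between compactification and the formation of fibre products. In $\MSm$, fibre products are only representable under the smoothness hypothesis of Theorem \ref{thm:fiberprod-mod}, and $\tau^!$ is merely a \emph{pro}-left adjoint, so one must verify that the relevant pro-systems stabilise to honest objects while simultaneously respecting the $\OD$-compatibility. This is the technical crux carried out in detail in \cite[Theorems 4.3.1, 4.4.1]{mzki2}, and the structure of Definition \ref{def:MV-sq} (in particular the appearance of condition (3), which would be superfluous if one only cared about cartesianness) is dictated by this analysis.
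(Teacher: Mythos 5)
The paper does not prove this theorem at all: it is stated as a citation of \cite[Th.\ 4.3.1, 4.4.1]{mzki2}, so there is no in-paper argument to compare against. Your sketch is a reasonable reconstruction of the shape of the argument in that reference, and you correctly identify the key structural point — that condition~(3) of Definition~\ref{def:MV-sq} is what makes regularity go through — which the paper itself flags in the parenthetical remark right after the theorem statement.

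That said, two steps in your outline are more gestural than you present them. First, you claim ``clause~(1) follows from Theorem~\ref{thm:fiberprod-mod} because the interior square is Nisnevich distinguished,'' but Theorem~\ref{thm:fiberprod-mod} only asserts that a fibre product \emph{exists} in $\MSm$ under the smoothness hypothesis; it does not tell you that a candidate compactified square $T$ \emph{is} cartesian. Showing that the chosen compactification of $S(00)$ actually computes $T(01)\times_{T(11)} T(10)$ in $\MSm$ requires a genuine verification, and this is exactly where the pro-object subtleties you mention (stabilising a finite stage of $\tau^!$ while preserving cartesianness and the $\OD$-compatibility simultaneously) become substantive rather than bookkeeping. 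Second, your completeness argument ``restrict the $P_\MV$-covering sieve along $\tau_s$ to get a $P_\ulMV$-covering sieve'' glosses over the fact that $\tau_s$ is not a morphism of sites in an obvious direction: the sieve one gets in $\ulMSm$ is generated by $\MSm$-morphisms and one must show it is $\ulMV$-covering, which is not a formal restriction. The more usable formulation of completeness (pulling back a distinguished square along an arbitrary morphism to the bottom-right corner and refining by a simple covering) would make your transfer-and-compactify strategy cleaner to execute. You do acknowledge that the technical crux is carried out in \cite{mzki2}, and since the paper defers to the same reference, your proposal is consistent in spirit with how the result actually enters the paper — but as written it is a plan, not a proof.
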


(Condition (3) in Definition \ref{def:MV-sq} is crucial for the proof of regularity.)

We now recall the main result of \cite{KaMi2}, its Theorem 1.5.6. It will be used in the proof of Theorem \ref{prop:DGguide} (see (iv) in \S \ref{s3.8}). Recall from \cite[Def. 1.5.3]{KaMi2} that the category $\Comp (M)$ of  Theorem \ref{t2.1} can be extended to squares of modulus pairs.

\begin{thm}\label{t1.2} Let $S\in \ulMSm^\Sq$, and let $\Comp^{\MV} (S)$ denote the full subcategory of $\Comp (S)$ consisting of those $T$ which are $\MV$-squares. Then $\Comp^{\MV} (S)$ is cofinal in $\Comp (S)$.
\end{thm}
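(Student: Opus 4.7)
The plan is to show that for every $T\in \Comp(S)$, we can produce $T'\in \Comp^{\MV}(S)$ with a morphism $T'\to T$ in $\Comp(S)$. Since the category $\Comp(S)$ is cofiltered (established in \cite{KaMi2}), this cofinality-type statement reduces to a single existence claim. Throughout, the interior square $T^\o$ coincides with $S^\o$, and we may assume $S$ is itself an $\ulMV$-square so that $S^\o$ is a distinguished Nisnevich square --- this is what allows $\Comp^{\MV}(S)$ to be nonempty at all.

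First, I would make $T$ cartesian in $\MSm$. Starting from $T\in \Comp(S)$, Theorem \ref{thm:fiberprod-mod} guarantees that the fibre product $T(01)\times_{T(11)}T(10)$ exists in $\MSm$, because the interior square is already cartesian in $\Sm$. Replacing $T(00)$ with this fibre product gives an object $T_1 \in \Comp(S)$ with $T_1\to T$ and $T_1$ cartesian in $\MSm$; this handles condition (1) of Definition \ref{def:MV-sq}.

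Second, I would construct the ambient data witnessing condition (2). By Theorem \ref{t1.1}, the $\ulMV$-square $S$ admits an ambient representative $S^\amb \in P_{\ulMV^\amb}$, so that the arrows $\ol{S^\amb(ij)}\to \ol{S^\amb(11)}$ are honest morphisms of schemes extending the maps on interiors. Applying the pro-left adjoint $\tau^!$ of Theorem \ref{t2.1} component-wise, and adjusting so that the $(11)$-vertex is $T_1(11)$ itself, produces a square $S'$ of proper modulus pairs and a morphism $\iota: S'\to T_1$ in $\ulMSm^\Sq$ whose interior is an isomorphism and whose $(11)$-component is the identity. This realises the presentation required in Definition \ref{def:MV-sq}(2).

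Third, and this is the main obstacle, I would arrange condition (3) on the off-diagonal functor. On interiors, $\OD(q)^\o = \OD(p)^\o$ because the underlying square $S^\o$ is a distinguished Nisnevich square, so $\OD(q)\to \OD(p)$ is at least an isomorphism on interiors; the question is whether the modulus parts agree. The idea is to enlarge the divisor at infinity of the vertex $T_1(00)$ (and possibly the other non-$(11)$ vertices) by pulling back along the structure maps a sufficiently large effective divisor supported on the existing boundary, so that the comparison morphism $\OD(q)\to \OD(p)$ becomes minimal on both sides and hence an isomorphism in $\ulMSm$. The key point is that such enlargements remain in $\Comp(S)$ (they do not touch the interior), are compatible with the already-established cartesian and $\ulMV$ structure, and can be organised cofinally. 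For this one uses the calculus of right fractions for $\ulSigma^\amb$ (Theorem \ref{t1.1}) together with the completeness and regularity of $P_{\MV}$.

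The technical heart lies in step three: showing that an enlargement of divisors exists which simultaneously forces the off-diagonal isomorphism and preserves cartesianness in $\MSm$. I expect this to require a delicate computation with the normalised pullbacks appearing in the modulus condition \eqref{eq1.3}, combined with a careful analysis of how $\OD$ transforms under the functor $\tau^!$, along the lines of \cite{mzki2} and \cite{KaMi2}. Once this step is carried out, the resulting $T'$ lies in $\Comp^{\MV}(S)$ and factors through $T_1\to T$, completing the cofinality argument.
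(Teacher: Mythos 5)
The paper does not contain a proof of this statement: immediately before stating Theorem~\ref{t1.2}, it says ``We now recall the main result of \cite{KaMi2}, its Theorem 1.5.6,'' i.e.\ the theorem is quoted, not proved, in the present paper. So there is no ``paper's own proof'' to compare yours against; I can only assess your proposal on its own merits.

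As a plan it is plausible --- the three conditions of Definition~\ref{def:MV-sq} must all be arranged, and massaging the modulus divisors through the cofiltered category $\Comp(S)$ is indeed the mechanism one expects --- but it is not a proof, for two reasons. First and most importantly, you explicitly defer the decisive step: you write that Step~3 ``will require a delicate computation'' with the normalised pullbacks and an analysis of how $\OD$ transforms under $\tau^!$, but you do not carry it out. Since conditions (1) and (3) of Definition~\ref{def:MV-sq} are not independent --- enlarging $T_1(00)^\infty$ to fix $\OD(q)\cong\OD(p)$ can destroy the cartesianness you arranged in Step~1, and your Step~2 witness $S'$ must also be re-verified to remain an $\ulMV$-square after those enlargements --- the steps cannot in fact be executed sequentially as written. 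The entire content of the theorem is the claim that one enlargement makes all three conditions hold at once, and that is exactly what you have left open. Second, the reduction ``we may assume $S$ is itself an $\ulMV$-square'' needs justification: the theorem is stated for arbitrary $S\in\ulMSm^\Sq$, and when $S^\o$ is not a distinguished Nisnevich square, $\Comp^{\MV}(S)$ is empty. For the theorem to remain true in that case one needs to argue either that $\Comp(S)$ is then also empty (plausible given the definition in \cite[Def.~1.5.3]{KaMi2}, but it must be checked) or to restrict the hypothesis. You flag this parenthetically but do not resolve it.

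One smaller point worth stressing: in Step~1, after replacing $T(00)$ by $T(01)\times_{T(11)}T(10)$, it is not automatic that the result still belongs to $\Comp(S)$ --- you must verify that the fibre product's ambient scheme receives $\ol{S(00)}$ as a dense open subset compatibly with the divisor decomposition required in the definition of $\Comp$ (Theorem~\ref{t2.1}). This should hold, but it is another place where a check is silently elided.
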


We shall need the following lemma in the proof of Theorem \ref{thm:DMgm-DM-full-faith} (4) below.

\begin{lemma}\label{l1.6} Let $T\in \ulMSm^\Sq$, verifying Conditions (1) and (3) of Definition \ref{def:MV-sq}. Assume that $p$ has a section $s_p$. Then $q$ has a section $s_q$, one can write $T(00) = s_q(T(10))\sqcup T'(00)$, $T(01) = s_p(T(11))\sqcup T'(11)$, and the morphism $T(00)\to T(01)$ induces an isomorphism $u:T'(00)\iso T'(01)$.
\end{lemma}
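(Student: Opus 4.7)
The plan is to prove the lemma in three stages: construct the section $s_q$ using the cartesian property, lift the open-and-closed decompositions of the interiors (coming from the étale sections) to $\MSm$, and then invoke Condition (3) of Definition \ref{def:MV-sq} to identify the two complements.

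First I would construct $s_q\colon T(10)\to T(00)$. By Condition (1), $T$ is cartesian in $\MSm$, so $T(00)=T(10)\times_{T(11)} T(01)$. Since $p\circ s_p=\id_{T(11)}$, the pair $(\id_{T(10)},\, s_p\circ(T(10)\to T(11)))$ is compatible over $T(11)$ and yields by universal property a unique morphism $s_q\colon T(10)\to T(00)$ with $q\circ s_q=\id_{T(10)}$ and $(T(00)\to T(01))\circ s_q=s_p\circ (T(10)\to T(11))$; in particular $s_q$ is a section of $q$.

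Next I would pass to interiors. By Condition (3) we must interpret $p,q\in \ulMEt$, so $p^\o$ is étale; since $\omega_s$ preserves limits, $T^\o$ is cartesian in $\Sm$, hence $q^\o$ is étale as a base change of $p^\o$. Sections of separated étale morphisms are open-and-closed immersions, giving
\[
T(01)^\o = s_p^\o(T(11)^\o)\sqcup V^\o,\qquad T(00)^\o = s_q^\o(T(10)^\o)\sqcup W^\o
\]
for open-and-closed complements $V^\o\subset T(01)^\o$ and $W^\o\subset T(00)^\o$. Lifting these decompositions to $\MSm$ by taking scheme-theoretic closures in the ambient spaces and restricting the moduli, one obtains modulus pairs $T'(01),T'(00)\in\MSm$ with interiors $V^\o,W^\o$ and decompositions $T(01)=s_p(T(11))\sqcup T'(01)$, $T(00)=s_q(T(10))\sqcup T'(00)$ in $\MSm$. (Here, to match the statement of the lemma, we write $T'(11)$ for $T'(01)$.) The morphism $T(00)\to T(01)$ respects these decompositions by construction of $s_q$, and hence restricts to a morphism $u\colon T'(00)\to T'(01)$.

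Finally I would invoke Condition (3). Using the decomposition $T(01)=s_p(T(11))\sqcup T'(01)$ and the isomorphism $s_p\colon T(11)\iso s_p(T(11))$, a direct computation gives
\[
T(01)\times_{T(11)} T(01)\cong T(11)\sqcup T'(01)\sqcup T'(01)\sqcup (T'(01)\times_{T(11)} T'(01)),
\]
with the diagonal $T(01)\inj T(01)\times_{T(11)} T(01)$ picking out $T(11)$ and the relative diagonal of $T'(01)$ in the last factor (which is open and closed since $T'(01)\to T(11)$ is étale). Hence
\[
\OD(p)\cong T'(01)\sqcup T'(01)\sqcup \bigl((T'(01)\times_{T(11)} T'(01))\setminus \Delta(T'(01))\bigr),
\]
and the analogous formula holds for $\OD(q)$ with $T'(00)$ in place of $T'(01)$. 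The functorial map $\OD(q)\to\OD(p)$ decomposes summand-wise, and on the first $T'(00)\to T'(01)$ summand it coincides with $u$. Since Condition (3) asserts that $\OD(q)\to\OD(p)$ is an isomorphism, so is $u$.

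The principal obstacle I expect is the bookkeeping in the last step: one must trace through the definition of $\OD$ and its functoriality to verify that the isomorphism of Condition (3) is compatible with the summand decompositions of $T(01)\times_{T(11)} T(01)$ and $T(00)\times_{T(10)} T(00)$, and that the induced map on the first $T'$-summand is indeed the map $u$ defined above. A secondary (routine but technical) check is that the open-and-closed decompositions of interiors lift to genuine coproduct decompositions in $\MSm$, which relies on the fact that the ambient scheme of a modulus pair is reduced and that closures of disjoint open dense pieces remain disjoint.
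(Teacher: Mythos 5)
Your proof is correct and takes essentially the same approach as the paper: constructing $s_q$ from the cartesian property, extracting the open-and-closed decompositions from the \'etaleness of $p^\o$ and $q^\o$, and then decomposing $\OD(p)$ and $\OD(q)$ into three summands (your formula $\OD(p)\cong T'(01)\sqcup T'(01)\sqcup\bigl((T'(01)\times_{T(11)}T'(01))\setminus\Delta\bigr)$ is the paper's $T'(01)\coprod T'(01)\coprod\OD(T'(01)\to T(11))$ written out) so that Condition (3) forces $u$ to be an isomorphism. The only difference is that you spell out more of the bookkeeping that the paper leaves implicit.
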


\begin{proof} The section $s_q$ is obtained from $s_p$ because $T$ is cartesian (property (1)). The decompositions exist because $p^\o$ and $q^\o$ are \'etale (cf. \cite[proof of Th. 3.1.3]{mzki2}). The morphism $u$ exists by construction; it remains to see that it is an isomorphism. But an easy computation provides decompositions
\begin{align*}
\OD(q) &\simeq T'(00)\coprod T'(00)\coprod\OD(T'(00)\to T(10)) \\
\OD(p) &\simeq T'(01)\coprod T'(01)\coprod\OD(T'(01)\to T(11)) 
\end{align*} 
respected by the isomorphism $\OD(q)\iso \OD(p)$ (property (3)). This yields the conclusion.
\end{proof}

\section{The tensor structure on modulus pairs} \label{s.tensor}

\subsection{Definition}

Recall that the tensor structure on Voevodsky's category $\DM_\gm^\eff$ comes from the product of smooth varieties. However, it turns out that the product of Theorem \ref{thm:fiberprod-mod} cannot be used to construct categories $\MDM_\gm^\eff$ and $\MDM^\eff$ with good properties. The basic reason is that the product morphism $\A^1\times \A^1\to \A^1$ used by Voevodsky to define an interval structure on $\A^1$ does not define a morphism $\bcube\times \bcube\to \bcube$, where $\bcube=(\P^1,\infty)$ (see Remark \ref{r5.2}). This and other considerations lead us to introduce another tensor structure:

\begin{defn}\label{def:tensor-mod}
For two modulus pairs $M$ and $N$, we define their \emph{tensor product} $M \otimes N$ to be
\[
M \otimes N := (\ol{M} \times \ol{N}, M^\infty \times \ol{N} + \ol{M} \times N^\infty).
\]
\end{defn}

\begin{rk}\label{r2.1}  If we pull back the modulus of Definition \ref{def:tensor-mod} by the projection $p:\Bl_{M^\infty \times N^\infty}(\ol{M} \times \ol{N})\to \ol{M} \times \ol{N}$, we get an isomorphic modulus pair with modulus $p^*(M^\infty \times \ol{N} + \ol{M} \times N^\infty)=p^\#(M^\infty \times \ol{N} + \ol{M} \times N^\infty)+2E$, where $p^\#$ denotes proper transform and $E$ is the exceptional divisor. By contrast, the cartesian product $M\times N$ is represented by $ (\Bl_{M^\infty \times N^\infty}(\ol{M} \times \ol{N}), p^\#(M^\infty \times \ol{N} + \ol{M} \times N^\infty) +E)$: this is a special case of \cite[Prop. 1.10.4 (3)]{kmsy1} and its proof. See also Remark \ref{w2.1} below.
\end{rk}

Definition \ref{def:tensor-mod} provides the categories  $\MP$, $\ulMP$, $\MCor$, $\ulMCor$ and $\ulMSm^\fin$ 
of Definitions \ref{dp1}, \ref{def-prop1.1.3} and \ref{def:1.1.2} with symmetric mon\-oid\-al structures
with unit $\un=(\Spec k,\emptyset)$, for which the various functors between them are $\otimes$-functors.  To see this, we have to check:

\begin{lemma} Let $f\in \ulMCor(M_1,N_1)$ and $g\in \ulMCor(M_2,N_2)$. Consider the tensor product correspondence $f\otimes g\in \Cor(M_1^\o\times M_2^\o,N_1^\o\times N_2^\o)$. Then $f\otimes g\in \ulMCor(M_1\otimes M_2, N_1\otimes N_2)$.
\end{lemma}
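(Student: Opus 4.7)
The plan is to verify the properness and modulus conditions of Definition-Proposition \ref{dp1} separately for each irreducible component of the support of $f\otimes g$. By $\Z$-bilinearity I reduce to the case where $f=[V]$ and $g=[W]$ are elementary, with $V\subset M_1^\o\times N_1^\o$ and $W\subset M_2^\o\times N_2^\o$ integral. Let $\ol{V}\subset \ol{M}_1\times \ol{N}_1$ and $\ol{W}\subset \ol{M}_2\times \ol{N}_2$ be their scheme-theoretic closures and $\nu_V,\nu_W$ the associated normalisations. After the swap isomorphism $(\ol{M}_1\times \ol{M}_2)\times (\ol{N}_1\times \ol{N}_2)\simeq (\ol{M}_1\times \ol{N}_1)\times (\ol{M}_2\times \ol{N}_2)$, the support of $f\otimes g$ lies in $\ol{V}\times \ol{W}$; writing $V\times_k W=\bigcup_i Z_i$ as a union of integral components, each irreducible component of $\ol{f\otimes g}$ is the closure $\ol{Z}_i$ of some $Z_i$ in $\ol{V}\times \ol{W}$.

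Properness would be immediate: $\ol{V}/\ol{M}_1$ and $\ol{W}/\ol{M}_2$ are proper by the properness conditions for $f$ and $g$, so $\ol{V}\times \ol{W}/\ol{M}_1\times \ol{M}_2$ is proper, and hence so is each closed subscheme $\ol{Z}_i$.

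For the modulus inequality I would first establish it on the product $\ol{V}^N\times \ol{W}^N$. Pulling back via $\psi:=\nu_V\times \nu_W:\ol{V}^N\times \ol{W}^N\to \ol{M}_1\times \ol{N}_1\times \ol{M}_2\times \ol{N}_2$, the two modulus conditions, namely $\nu_V^*(M_1^\infty\times \ol{N}_1)\ge \nu_V^*(\ol{M}_1\times N_1^\infty)$ on $\ol{V}^N$ and the analogous inequality on $\ol{W}^N$, combine by taking external products with $\ol{W}^N$ and $\ol{V}^N$ respectively and summing, to give
\[
\psi^*\bigl((M_1^\infty\times \ol{M}_2+\ol{M}_1\times M_2^\infty)\times \ol{N}_1\times \ol{N}_2\bigr)\ge \psi^*\bigl(\ol{M}_1\times \ol{M}_2\times (N_1^\infty\times \ol{N}_2+\ol{N}_1\times N_2^\infty)\bigr).
\]

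To transfer this to each $\ol{Z}_i^N$ I would choose an irreducible component $T$ of the fibre product $(\ol{V}^N\times \ol{W}^N)\times_{\ol{V}\times \ol{W}}\ol{Z}_i$ that surjects onto $\ol{Z}_i$ (such a $T$ exists because the morphism is finite surjective), and take its normalisation $T^N$, which is then finite surjective over $\ol{Z}_i^N$ by the universal property. Pulling the inequality back via the composite $T^N\to \ol{V}^N\times \ol{W}^N$ yields the desired effectivity on $T^N$. The main obstacle I anticipate is the final descent step: showing that this effectivity propagates along the finite surjection $T^N\to \ol{Z}_i^N$ of normal schemes. This rests on the standard fact, used throughout \cite{kmsy1}, that a Cartier divisor on a normal integral scheme is effective iff its pullback to a finite surjective cover is effective, which is verified by the valuative criterion applied to the codimension-one points of $\ol{Z}_i^N$.
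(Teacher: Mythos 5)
Your proposal is correct and rests on the same central idea as the paper's proof: reduce to elementary correspondences, take the external product of the two modulus inequalities on $\ol{V}^N\times\ol{W}^N$, and then transfer this to the normalisation of the closure of $f\otimes g$. Where you diverge is the transfer step, and there your route is needlessly indirect. The paper simply observes that the normalisation map of $\ol{V}\times\ol{W}$ (hence of each of its irreducible components) factors through $\ol{V}^N\times\ol{W}^N$, because the latter is finite and \emph{birational} over $\ol{V}\times\ol{W}$ (a product of birational finite maps is birational finite). Consequently, the inequality on $\ol{V}^N\times\ol{W}^N$ pulls back directly along a morphism $\ol{Z}_i^N\to\ol{V}^N\times\ol{W}^N$, and no descent along a finite surjection is required. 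In your version, the component $T$ of $(\ol{V}^N\times\ol{W}^N)\times_{\ol{V}\times\ol{W}}\ol{Z}_i$ dominating $\ol{Z}_i$ is unique and maps finite birationally to $\ol{Z}_i$ (again by birationality of $\ol{V}^N\times\ol{W}^N\to\ol{V}\times\ol{W}$), so in fact $T^N\to\ol{Z}_i^N$ is an isomorphism and the descent step you flag as the ``main obstacle'' is vacuous. The finite-surjective descent lemma you invoke is correct, but realising the map is birational short-circuits it.
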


\begin{proof} We may assume that $f$ and $g$ are given by integral cycles 
$Z\subset M_1^\o\times N_1^\o$ 
and $T\subset M_2^\o\times N_2^\o$.
Then $f\otimes g$ is given by the product cycle $Z\times T$. 
Let 
$\ol{Z}^N \to \ol{Z}$ 
be the normalizations of the closures $\ol{Z}$ of $Z$,
and similarly for $\ol{T}^N \to \ol{T}$.
By hypothesis, we have
\[(p_1^Z)^*M_1^\infty\ge (p_2^Z)^*N_1^\infty, \quad (p_1^T)^*M_2^\infty\ge (p_2^T)^*N_2^\infty,\]
where $p_1^Z$ is the composition
$\ol{Z}^N \to \ol{Z} 
\subset \ol{M}_1 \times \ol{N}_1 \to \ol{M}_1$,
and likewise for $p_2^Z, p_1^T, p_2^T$.
Hence:
\begin{multline*}
(p_1^Z\times p_1^T)^*(M_1^\infty\times \ol M_2+\ol M_1\times M_2^\infty)=(p_1^Z)^*M_1^\infty \times \ol T^N + \ol Z^N\times (p_1^T)^*M_2^\infty\\
\ge (p_2^Z)^*N_1^\infty \times \ol T^N + \ol Z^N\times (p_2^T)^*N_2^\infty=(p_2^Z\times p_2^T)^*(N_1^\infty\times \ol N_2+\ol N_1\times N_2^\infty).
\end{multline*}
We conclude that $Z\times T$ is admissible,
because the projection $(\ol Z\times \ol T)^N\to \ol Z\times \ol T$ factors through $\ol Z^N\times \ol T^N$. Finally, $\ol Z\times \ol T$ is obviously proper over $\Mb_1\times \Mb_2$.
\end{proof}

To conclude checking that we have indeed defined symmetric mono\-id\-al structures, we need to verify identities of the form $f\otimes (g\circ h)=(f\otimes g)\circ (f\otimes h)$, and to define associativity, commutativity and unit constraints. The first point holds because it holds in $\Cor$ and $\Sm$; for the second one, we leave to the reader the pleasure to check that the constraints of the symmetric monoidal structure on $\Cor$ are proper and admissible, hence induce similar ones on $\ulMCor$, etc., which enjoy the correct identities. The (symmetric) monoidality of the various functors is tautological.

\begin{remark}[see also Remark \protect{\ref{r2.1}}]\label{w2.1}  Given two modulus pairs $M,N$, the canonical morphisms $M\to \un$, $N\to\un$ give morphisms $M\otimes N\to M\otimes \un=M$, $M\otimes N\to \un\otimes N=N$. 
The universal property of the product then yields a natural transformation
\begin{equation}\label{eq1.1}
M\otimes N\to M\times N.
\end{equation}

Take $M=N$. The right hand side comes with a diagonal morphism $M\to M\times M$, corresponding to the diagonal morphism $\Delta:M^\o\to M^\o\times M^\o$  (indeed, products commute with $\ulomega_s$ and $\omega_s$ since they have pro-left adjoints). If \eqref{eq1.1} were an isomorphism, $\Delta$ would define a morphism $M\to M\otimes M$; but Definition \ref{def:tensor-mod} and the modulus condition show that this happens if and only if $M^\infty=0$. Conversely, it can easily be shown that \eqref{eq1.1} is an isomorphism if $M^\infty=0$ or $N^\infty=0$.
\end{remark}

\begin{prop}\label{prop:monoidal-tauomega}
All functors in Diagram \eqref{diag1} are symmetric mono\-id\-al. This also applies to the functors of Theorem \ref{t2.1}, and to $\ul{b}_s$ in Definition \ref{def:1.1.2}. 
\end{prop}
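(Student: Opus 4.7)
The plan is to treat three families of functors separately: (A) the tensor structure itself, which we must first verify restricts from $\ulMCor$ down to each of $\MCor$, $\MSm$, $\ulMSm$, $\MSm$ and $\ulMSm^\fin$; (B) the structural functors of Diagram \eqref{diag1} together with $\ul{b}_s$, which are either full embeddings, graph functors, or ``take the interior'' functors; and (C) the pro-left adjoints $\omega^!,\lambda,\tau^!$ (and their $s$-versions) of Theorem \ref{t2.1}, which are the delicate case.

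For (A), the restriction is immediate: if $M$ and $N$ are proper modulus pairs then so is $M\otimes N$, since $\ol{M}\times_k\ol{N}$ is proper over $k$. For ambient morphisms, the tensor of two honest morphisms $\ol{M}_i\to \ol{N}_i$ is again a morphism on products, so $\otimes$ restricts to $\ulMSm^\fin$. The associativity, commutativity and unit constraints are inherited directly from those of $\Sm$ (and $\Cor$) on interiors, and one checks they satisfy the pentagon, hexagon and triangle axioms using that they do so in $\Cor$ (this is essentially the verification begun after the lemma preceding the Proposition).

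For (B), the full embeddings $\tau$ and $\tau_s$ are tautologically $\otimes$-functors, as are the graph functors $c,\ul{c}$ (and $c^V$) by the lemma proved just above: the graph of a product of morphisms is the product of graphs. The interior functors $\omega,\ulomega,\omega_s,\ulomega_s$ all rest on the identity
\[
(M\otimes N)^\o \;=\; \ol{M}\times\ol{N} \;-\; \bigl(|M^\infty|\times\ol{N}\cup\ol{M}\times|N^\infty|\bigr) \;=\; M^\o\times N^\o,
\]
which provides the required natural isomorphism and respects the constraints by construction. For $\ul{b}_s$, monoidality is trivial since the tensor structure on $\ulMSm$ is defined by exactly the same formula used on $\ulMSm^\fin$.

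For (C), the functor $\lambda$ is strict monoidal on the nose, as $(X,\emptyset)\otimes(Y,\emptyset)=(X\times Y,\emptyset)$. For $\tau^!$ and $\omega^!$ (and their $s$-versions), being (pro-)left adjoints to strong monoidal functors equips them with canonical oplax monoidal comparison morphisms, and we must upgrade these to isomorphisms of pro-objects. For $\tau^!$, if $(N_i,j_i)\in\Comp(M_i)$ for $i=1,2$, then $(N_1\otimes N_2,\,j_1\times j_2)$ lies in $\Comp(M_1\otimes M_2)$: the ambient space $\ol{N}_1\times\ol{N}_2$ is proper, contains $\ol{M}_1\times\ol{M}_2$ as a dense open, and its modulus pulls back to $M_1^\infty\times\ol{M}_2+\ol{M}_1\times M_2^\infty$, which is exactly the modulus of $M_1\otimes M_2$. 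The subcategory of such product compactifications is cofinal in $\Comp(M_1\otimes M_2)$: given any $N\in\Comp(M_1\otimes M_2)$, Nagata compactification and the construction of $\Comp$ in \cite{kmsy1} produce $(N_i,j_i)\in\Comp(M_i)$ together with a dominating morphism $N_1\otimes N_2\to N$ in $\Comp(M_1\otimes M_2)$. The analogous argument, using the calculus of right fractions for $\Sigma$ and the compatibility $M_1\otimes M_2\to X_1\times X_2$ belongs to $\Sigma$ whenever each $M_i\to X_i$ does, handles $\omega^!$ and $\ulomega^!$.

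The main obstacle is the cofinality statement for $\tau^!$: while the tensor of two compactifications is a canonical compactification of the tensor, one must rule out pathological compactifications of $M_1\otimes M_2$ that cannot be dominated by products. Once this cofinality is secured, the oplax comparison map becomes an isomorphism of pro-objects, and strong monoidality of the three pro-left adjoints (and the corresponding $s$-versions, which are handled by identical arguments since the formulas in Theorem \ref{t2.1} are the same) follows at once.
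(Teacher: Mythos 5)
Your decomposition into (A), (B), (C) is sensible and the (A), (B) parts are correct and match the paper's treatment (which disposes of these cases in one sentence, ``easily deduced from the construction of the functors''). The problem is (C), and specifically the cofinality claim for $\tau^!$, which you correctly identify as ``the main obstacle'' but do not actually close: you assert that ``Nagata compactification and the construction of $\Comp$ in \cite{kmsy1} produce $(N_i,j_i)\in\Comp(M_i)$ together with a dominating morphism $N_1\otimes N_2 \to N$,'' but no argument is given, and it is far from clear that an arbitrary object of $\Comp(M_1\otimes M_2)$ can be dominated by a \emph{product} compactification at all. Since the whole burden of the $\tau^!$ case rests on exactly this point, the proposal as written is a genuine gap, and your own hedging (``once this cofinality is secured\ldots'') concedes as much.

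The paper avoids this obstacle by never trying to compare products against arbitrary compactifications. Instead it first fixes, for a chosen $N\in\Comp(M)$ with $N^\infty = M_N^\infty + C$, the sequential subcategory $\Comp(N,M)\subset\Comp(M)$ whose objects have ambient space $\ol N$ and modulus $M_N^\infty + nC$ for $n\ge 1$; this is cofinal in $\Comp(M)$ by \cite[Claim 1.8.4]{kmsy1}. Taking $N'\in\Comp(M')$ similarly, one observes that $N\otimes N'\in\Comp(M\otimes M')$ and that the ``fixed part vs.~scaled part'' decomposition of $(N\otimes N')^\infty$ is compatible with those of $N^\infty$ and ${N'}^\infty$; cofinality of $\Comp(N,M)\times\Comp(N',M')\to\Comp(N\otimes N', M\otimes M')$ then reduces to comparing $\Z_{>0}\times\Z_{>0}$ with $\Z_{>0}$, which is trivial. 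This two-step reduction entirely bypasses the question you flagged. Similarly, for $\omega^!$ the paper does not invoke the calculus of right fractions for $\Sigma$ at all: it uses the concrete sequential system $(M^{(n)})_{n\ge 1}$ from \cite[Def. 1.4.1, Lemma 1.7.4 b)]{kmsy1}, for which monoidality of $(-)^{(n)}$ is immediate. If you want to repair your proposal, the right move is to replace your attempted domination argument by this reduction to concrete cofinal $\Z_{>0}$-indexed subsystems.
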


\begin{proof}
For Diagram \eqref{diag1} and $\ul{b}_s$, it is easily deduced from the construction of the functors. For the functors $\omega^!$ and $\omega_s^!$, recall from \cite[Def. 1.4.1]{kmsy1} the functors $(-)^{(n)}$ given by $M^{(n)} = (\ol M,nM^\infty)$. Clearly, $(-)^{(n)}$ is monoidal. On the other hand, by \cite[Lemma 1.7.4 b)]{kmsy1}, an inverse system defining $\omega^! X$ for $X\in \Sm$ is given by $(M^{(n)})_{n\ge 1}$ for any $M$ such that $\omega(M)=X$; this proves the claim in this case, and similarly for $\omega_s^!$.

We now show the monoidality of $\tau^!$, arguing as in the case of $\omega^!$ (although we cannot quite use the functors $(-)^{(n)}$ here). Let $M\in \ulMCor$. We use the category $\Comp(M)$ of Theorem \ref{t2.1}. Take $N\in \Comp(M)$ and write $N^\infty = M^\infty_N +C$ as in loc. cit. Define $\Comp(N,M)$ as the full subcategory of $\Comp(M)$ consisting of those $P$ such that $\ol{P}=\Nb$ (compatibly with the open immersions $\Mb\inj \Nb$, $\Mb\inj \ol P$) and $P^\infty = M^\infty_N +nC$ for some $n>0$.
(Strictly speaking, $\Comp(N, M)$ depends on the choice of the decomposition $N^\infty = M^\infty_N +C$.) The proof of \cite[Claim 1.8.4]{kmsy1} shows that $\Comp(N,M)$ is cofinal in $\Comp(M)$. If $M'\in \ulMCor$ is another object and  $N'\in \Comp(M')$ with a decomposition ${N'}^\infty={M'}^\infty_{N'}+C'$, then $N\otimes N' \in \Comp(M\otimes M')$ as
$(N\otimes N')^\infty= (M^\infty_N \times \ol{N'}+ \ol{N} \times {M'}^\infty_{N'})+ (C\times \ol{N'}+ \ol{N} \times C')$, and it is easy to see that the obvious functor
\[\Comp(N,M)\times \Comp(N',M')\to \Comp(N\otimes N',M\otimes M')\]
is cofinal. The same proof applies to $\tau_s^!$.
\end{proof}

\enlargethispage*{30pt}

\subsection{Tensor product and cd-structures}
The following lemma will be used later to define tensor structures for motives with modulus (see Theorem \ref{thm:DMgm-DM-full-faith} (3)). 

\begin{lemma}\label{lem:tensor-cd} \ 
\begin{enumerate}
\item 
If $S\in \ulMSm^\Sq$ is cartesian, so is $S \otimes M$ for any $M \in \ulMSm$.

\item 
If $T\in \MSm^\Sq$ is cartesian, so is $S \otimes M$ for any $M \in \MSm$.

\item 
If $S$ is an $\ulMV^\amb$-square, then so is $S \otimes M$ for any $M \in \ulMSm^\amb$.

\item 
If $S$ is an $\ulMV$-square, then so is $S \otimes M$ for any $M \in \ulMSm$.

\item 
If $T$ is an $\MV$-square, then so is $T \otimes M$ for any $M \in \MSm$.
\end{enumerate}
\end{lemma}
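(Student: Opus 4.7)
The plan is to establish (3) first as the base case in the ambient setting, then to verify (1) and (2) by a direct check of the universal property of a cartesian square, and finally to deduce (4) and (5) by reducing to the previously treated cases using the definitions of the respective cd-structures and the monoidality of $\ul{b}_s$ and $\OD$.

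For (3), I take an $\ulMV^\amb$-square $S$ and $M \in \ulMSm^\amb$ and check that $S \otimes M$ belongs to $P_{\ulMV^\amb}$. Its morphisms have the form $f \otimes \id_M$, so the underlying map of ambient spaces is $\ol f \times \id_{\ol M}$; in particular the morphism is ambient. Minimality of $f \otimes \id_M$ reduces to minimality of $f$: the target modulus pulls back to $\ol f^* N^\infty \times \ol M + \ol U \times M^\infty$ and the source modulus is $U^\infty \times \ol M + \ol U \times M^\infty$, so after restriction to the normalisation of $\ol U \times \ol M$, the equality follows from pulling back $\nu^\ast U^\infty = \nu^\ast \ol f^\ast N^\infty$ along the natural map to $\ol U^N$. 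The ambient square is a Nisnevich-distinguished square times $\ol M$, which is again distinguished Nisnevich.

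For (1), I use the description of a morphism $P \to N \otimes M$ in $\ulMSm$ as a pair $(f : P^\o \to N^\o,\, g : P^\o \to M^\o)$ satisfying the joint modulus inequality $\pi_P^* P^\infty \ge \pi_N^* N^\infty + \pi_M^* M^\infty$ on the normalisation of the graph closure in $\ol P \times \ol N \times \ol M$. Given $P$ with compatible morphisms to $S(01)\otimes M$, $S(10)\otimes M$ and $S(11)\otimes M$, the second coordinates coincide in a common $g$, and the first coordinates glue, by cartesianness of $S^\o$ in $\Sm$ (which holds because $\ulomega_s$, having a pro-left adjoint, preserves limits), into $f : P^\o \to S(00)^\o$. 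That the pair $(f,g)$ satisfies the joint modulus condition for $S(00)\otimes M$ then follows from the joint conditions at the three corners and the fact that $S(00)$ represents the fibre product in $\ulMSm$. Statement (2) is the same argument restricted to proper modulus pairs, using that $\MSm$ is stable under $\otimes$.

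For (4), an $\ulMV$-square $S$ is by definition isomorphic in $\ulMSm^\Sq$ to $\ul b_s(S')$ for some $\ulMV^\amb$-square $S'$; since $\ul b_s$ is the identity on objects and monoidal (Proposition \ref{prop:monoidal-tauomega}), $S \otimes M \cong \ul b_s(S' \otimes M)$, and $S' \otimes M$ is an $\ulMV^\amb$-square by (3). For (5), I verify the three conditions of Definition \ref{def:MV-sq} for $T \otimes M$: condition (1) follows from (2); for condition (2), tensoring the covering $\iota : S \to T$ with $\id_M$ yields the required covering by the $\ulMV$-square $S \otimes M$ (via (4)), the isomorphism at vertex $(11)$ and on interiors being preserved by functoriality of $\otimes$; and for condition (3), the identity $U \sqcup \OD(f) \iso U \times_N U$ tensored with $M$, combined with preservation of the fibre product by $-\otimes M$ (from (2)), yields $\OD(f \otimes \id_M) \cong \OD(f) \otimes M$, so the isomorphism $\OD(q)\iso \OD(p)$ transports to $\OD(q \otimes \id_M) \iso \OD(p \otimes \id_M)$.

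The main obstacle is (1), because $\otimes$ is \emph{not} the categorical product in $\ulMSm$ (Remark \ref{w2.1}), so verifying the universal property of a fibre product under $-\otimes M$ requires careful bookkeeping of the joint modulus inequality on the graph closure; once this is established, the remaining items follow as a routine cascade.
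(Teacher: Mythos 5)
Your handling of parts (3), (4), and (5) is sound and matches the paper's structure, though you cite (2) where (1) is the one being used when you tensor the fibre product $U \times_N U$ in part (5). The problem is in part (1), which you correctly flag as the crux and which you do not actually prove.

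You propose verifying the universal property of the fibre product: take compatible maps $\phi_1 \colon P \to S(01)\otimes M$, $\phi_2 \colon P \to S(10)\otimes M$, extract the pair $(f,g)$ on interiors, and then assert that the joint modulus inequality $\pi_P^* P^\infty \ge \pi_{S(00)}^* S(00)^\infty + \pi_M^* M^\infty$ on the graph closure ``follows from the joint conditions at the three corners and the fact that $S(00)$ represents the fibre product in $\ulMSm$.'' This is exactly the step that requires work, and it does not follow formally from the universal property of $S(00)$: that universal property produces a morphism $P \to S(00)$ whose modulus inequality lives on the graph closure in $\ol P \times \overline{S(00)}$, not the joint inequality on the graph closure in $\ol P \times \overline{S(00)} \times \ol M$, and since $\otimes$ is not the categorical product you cannot combine this with $P \to M$ to conclude. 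To close the argument you would have to (a) pass to the explicit description of the modulus on $S(00)$ as $\bar p_1^* S(01)^\infty + \bar p_2^* S(10)^\infty - E$ with $E$ the scheme-theoretic intersection (which is what the paper invokes from \cite[\S 2.2]{mzki2} by reducing to that normal form at the outset), and (b) carry out a divisor computation on the normalisation of the graph closure showing that $\max$ of the pulled-back corner moduli dominates the pulled-back $S(00)^\infty$. The paper does the equivalent computation concretely via its Claim that $E' = \bar\pi^* E + \ol L' \times M^\infty$, which simultaneously establishes that $E'$ is an effective Cartier divisor (so the fibre product with the $\otimes M$-modulus is even well-defined by the construction of \cite[\S 2.2]{mzki2}) and that $L'^\infty$ equals the $\otimes$-modulus. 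Your proposal identifies the obstacle but stops short of resolving it, so part (1) — and hence the whole cascade — is incomplete as written.
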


\begin{proof}
(1): By the construction of fiber products in \cite[\S 2.2]{mzki2}, we may assume that the square $S$ is of the form
\[S: \quad \begin{gathered}\xymatrix{
L \ar[r]^{p_2} \ar[d]_{p_1} & M_2 \ar[d]^{f_2} \\
M_1 \ar[r]_{f_1} & N
}\end{gathered}\]
where all arrows are ambient, $E:=\ol{p}_1^\ast M_1^\infty \times_{\ol{L}} \ol{p}_2^\ast M_2^\infty$ is an effective Cartier divisor on $\ol{L}$, and $L^\infty = \ol{p}_1^\ast M_1^\infty + \ol{p}_2^\ast M_2^\infty - E$.

Set $S':=S \otimes M$ and write
\[S': \quad \begin{gathered}\xymatrix{
L' \ar[r]^{p'_2} \ar[d]_{p'_1} & M'_2 \ar[d]^{f'_2} \\
M'_1 \ar[r]_{f'_1} & N',
}\end{gathered}\]
where the arrows are obviously ambient, and there is a natural morphism $S' \to S$ in $(\ulMSm^{\amb})^\Sq$. 
In particular, we have an ambient morphism $\pi : L' \to L$.

(1): Set $E' := \ol{p}_1^{\prime \ast} M_1^{\prime \infty} \times_{\ol{L}'} \ol{p}_2^{\prime \ast} M_2^{\prime \infty}$.
Then $E'$ is an effective Cartier divisor on $\ol{L}'$. Indeed:

\begin{claim}
$E' = \ol{\pi}^\ast E + \ol{L}' \times M^\infty$.
\end{claim}

\begin{proof}
 Let $I_1$, $I_2$, $I$ be the ideals of definition of $\ol{p}_1^{\prime *} (M_1^\infty \times \ol{M})$, $\ol{p}_2^{\prime *} (\ol{M}_1 \times M^\infty)$, $\ol{L}' \times M^\infty$, respectively.
Then the ideal of definition of $E'$ is given by $I \cdot I_1 + I\cdot I_2 = I \cdot (I_1 + I_2)$.
Since $E=\ol{p}_1^\ast M_1^\infty \times_{\ol{L}} \ol{p}_2^\ast M_2^\infty$ by definition, $I_1 + I_2$ is the ideal of definition of $\ol{\pi}^* E$, hence the claim.
\end{proof}

This also shows the following: by definition we have $L^{\prime \infty} 
 = L^\infty \times \ol{M} + \ol{L}' \times M^\infty$. Thus we obtain:
\begin{align*}
L^{\prime \infty} 
&= L^\infty \times \ol{M} + \ol{L}' \times M^\infty \\
&= (\ol{p}_1^\ast M_1^\infty + \ol{p}_2^\ast M_2^\infty - E) \times \ol{M} + \ol{L}' \times M^\infty \\
&= \ol{p}_1^{\prime \ast} M_1^{\prime \infty} + \ol{p}_2^{\prime \ast} M_2^{\prime \infty} - \pi^\ast E - \ol{L}' \times M^\infty \\
&= \ol{p}_1^{\prime \ast} M_1^{\prime \infty} + \ol{p}_2^{\prime \ast} M_2^{\prime \infty} - E',
\end{align*}
which implies that $S'$ is cartesian.

(2): This is a direct consequence of (1).

(3): This is obvious by the definition of $\ulMV^\amb$-squares. 

(4): Let $S$ be an $\ulMV$-square. By definition, there exists an $\ulMV^\amb$-square $S'$ which is isomorphic to $S$ in $\ulMSm^\Sq$. 
Then we have $M \otimes S \cong M \otimes S'$ in $\ulMSm^\Sq$, and $M \otimes S'$ is an $\ulMV^\amb$-square by (3).

(5): Since $T$ is an $\MV$-square, it is cartesian, there exist an $\ulMV$-square $S$ and a morphism $S \to T$ in $\ulMSm^\Sq$ such that $S(11) = T(11)$ and, finally, $\OD (q) \cong \OD (p)$ (see \eqref{eq:T} for the notation).
Set  $S' := S \otimes M$ and $T' := T \otimes M$. 
Then $T'$ is cartesian by (1), $S'$ is an $\ulMV$-square by (4), and $S'(11) = T'(11)$.

It remains to show that $\OD (q') \cong \OD (p')$, where $p':=p \otimes M$, $q':=q \otimes M$.
For this, it suffices to show that for any $(f:U \to N) \in \ulMEt$, we have $\OD (f) \otimes M \cong \OD (f \otimes M)$. 

We use a similar argument to the one in the proof of \cite[Prop. 3.1.4]{mzki2}.
Set $f':=f\otimes M, U':=U \otimes M, N':=N \times N$.
By construction of $\OD$, we have canonical isomorphisms 
\begin{align*}
U' \sqcup \OD (f') &\cong   U' \times_{N'} U',  \\
U \sqcup \OD (f) &\cong   U \times_N U.
\end{align*}
Therefore, we obtain
\[
U' \sqcup (\OD (f) \otimes M) \cong (U \times_N U) \otimes M \cong^\dag U' \times_{N'} U' \cong U' \sqcup \OD (f'),
\]
where $\cong^\dag$ follows from (1).
It is easy to see that this isomorphism restricts to each component (indeed, it is the identity on the interiors).
Therefore, we conclude $\OD (f) \otimes M \cong \OD (f') = \OD (f \otimes M)$, finishing the proof.
\end{proof}

\section{Motives with modulus}\label{sec-motmod}

In this section, we construct the categories of motives with modulus $\ulMDM^\eff_\gm$ and $\MDM^\eff_\gm$ and their sheaf-theoretic versions $\ulMDM^\eff$, $\MDM^\eff$, and prove their fundamental properties. 

In the sequel, we write $K^b(\sA)$ (resp. $K(\sA)$) for the bounded (resp. unbounded) category of complexes on an additive category $\sA$, and $D(\sA)$ for its unbounded derived category when $\sA$ is abelian. We also write $(-)^\natural$ for pseudo-abelianisation. We also write $\frac{(-)}{(-)}$ for the (Verdier) localisation of a triangulated category. If $X$ is a subset of a triangulated category $\sT$, we write $\langle X \rangle$ (resp. $<X>$) for the thick (resp. localising) subcategory generated by $X$, i.e. the smallest triangulated subcategory of $\sT$ containing $X$ and closed under direct summands (resp. direct summands and infinite direct sums).

\subsection{Geometric motives} \label{s6.1}

Recall that Voevodsky's category $\DM^\eff_\gm$ is defined as 
\[
\DM^\eff_\gm = \left[ \frac{K^b (\Cor)}{\langle \mathrm{HI^V}, \mathrm{MV^V} \rangle} \right]^\natural,
\]
where $\mathrm{HI^V}$ and $\mathrm{MV^V}$ are the objects of the form
\begin{description}
\item[(HI$\mathbf{^V}$)] $[X \times \A^1] \by{1_X \times p} [X]$,
\item[(MV$\mathbf{^V}$)] $[U \cap V] \to [U] \oplus [V] \to [X]$,
\end{description}
where $U \sqcup V \to X$ runs over all elementary Zariski covers of all $X\in \Sm$.

The category which compares naturally with our constructions is a variant of this one (cf.  \cite[Definition 4.3.3]{birat-tri}):

\[
\DM^\eff_{\gm,\Nis} = \left[ \frac{K^b (\Cor)}{\langle \mathrm{HI^V}, \mathrm{MV^V_\Nis} \rangle} \right]^\natural,
\]
where 
\begin{align*}
&\mathbf{(MV^V_\Nis):} \quad [U \times_X V] \to [U] \oplus [V] \to [X], \quad X \in \Sm,
\end{align*}
in which $U \sqcup V \to X$ runs over all elementary Nisnevich covers of $X$, i.e., covers associated with distinguished Nisnevich squares. 

It is a highly nontrivial theorem of Voevodsky that the obvious functor $\DM^\eff_\gm\to \DM^\eff_{\gm,\Nis}$ is an equivalence of categories when $k$ is perfect; we shall not enter the corresponding issue for modulus pairs in this paper.

Our definitions of $\ulMDM^\eff_\gm$ and $\MDM^\eff_\gm$ faithfully mimic that of $\DM^\eff_{\gm,\Nis}$:

\begin{defn}\label{def:MDMgm}
We define 
\[
\ulMDM^\eff_\gm = \left[ \frac{K^b (\ulMCor)}{\langle \mathrm{\ul{CI}}, \ulMV \rangle} \right]^\natural, \quad 
\MDM^\eff_\gm = \left[ \frac{K^b (\MCor)}{\langle \mathrm{CI}, \MV \rangle} \right]^\natural,
\]
where  $\mathrm{\ul{CI}}$, $\mathrm{\ulMV}$, $\mathrm{CI}$, $\mathrm{\MV}$ are the  objects of the form
\begin{description}
\item[$\mathbf{(\ul{CI})}$] $[X \otimes \bcube] \by{1_{X} \otimes p} [X]$,
\item[$\mathbf{(\ul{MV})}$] $[U \times_X V] \to [U] \oplus [V] \to [X]$, 
\end{description}
in which $U \sqcup V \to X$ runs over all elementary $\ulMV$-covers of all $X\in \ulMSm$, i.e. those covers associated with 
$\ulMV$-squares (Definition \ref{def:uMVsquare}), and 
\begin{description}
\item[(CI)] $[X \otimes \bcube] \by{1_X \otimes p} [X]$,
\item[(MV)] $[U \times_X V] \to [U] \oplus [V] \to [X]$,
\end{description}
in which $U \sqcup V \to X$ runs over all elementary $\MV$-covers of all $X\in \MSm$, i.e. those covers associated with 
$\MV$-squares (Definition \ref{def:MV-sq}), and 
we write $\ul{M}_\gm:\ulMCor\to \ulMDM_\gm^\eff$, $M_\gm:\MCor\to \MDM_\gm^\eff$ for the corresponding canonical functors. Moreover, if $f:\sX\to \sY$ is a morphism in $\ulMCor$, we write $\ul{M}_\gm[f]$ for the image in $\ulMDM_\gm^\eff$ of the complex $[f]=[\sX]\by{f} [\sY]\in K^b(\ulMCor)$ with $[\sY]$ placed in degree $0$, so that we have a distinguished triangle
\[\ul{M}_\gm(\sX)\by{f_*} \ul{M}_\gm(\sY)\to \ul{M}_\gm[f]\by{+1}.\]
(This notation will only be used in the proof of Theorem \ref{t7.2}.)
\end{defn}

\subsection{Sheaf-theoretic motives} 
The sheaf-theoretic category of motives $\DM^\eff$ (unbounded version) is defined to be
\[
\DM^\eff = \frac{D(\NST)}{< \Z_\tr^V(\mathrm{HI^V}) >},
\]
where $\NST$ is the category of Nisnevich sheaves with transfers and  $\Z_\tr^V$ is the additive Yoneda functor.

In the following, we shall replace $\NST$ with the categories of \emph{modulus sheaves with transfers} which were studied in \cite{kmsy1} and \cite{kmsy2}. Let us recall them:

\begin{enumerate}
\item $\ulMPST$ (resp. $\MPST$) is the category of left modules (additive contravariant functors) on $\ulMCor$ (resp. $\MCor$).
\item $\ulMNST$ (resp. $\MNST$) is the full subcategory of $\ulMPST$ (resp. $\MPST$) of functors whose restriction to $\ulMSm$ (resp. $\MSm$) via the graph functor is a sheaf for the $\ulMV$ (resp. $\MV$) topology.
\end{enumerate}

All these categories are abelian Grothendieck: the ones of (1) as categories of left modules \cite[Th. A.10.2]{kmsy1}, and the ones of (2) by \cite[Th. 4.5.5]{kmsy1} and \cite[Th. 4.2.4]{kmsy2}.

Recall from \cite{kmsy1} the following notion:

\begin{defn}\label{def:stradd}
A functor between additive categories is \emph{strongly additive} if it preserves all direct sums.
\end{defn}

By \cite[Prop. 2.4.1 and Th. 4.5.5]{kmsy1} and \cite[Lemma-Def. 4.2.1, Th. 4.2.4 and Th. 5.1.1]{kmsy2} (see also \cite[Prop. A.4.1 b)]{kmsy1}), we have

\begin{prop}\label{p3.5}\
\begin{enumerate}
\item The inclusion functors $\ul{i}_\Nis:\ulMNST\inj \ulMPST$ and $i_\Nis:\MNST\inj \MPST$ have exact left adjoints $\ul{a}_\Nis$ and $a_\Nis$.
\item The inclusion functor $\tau:\MCor\inj \ulMCor$ of \eqref{diag1} induces fully faithful, exact, strongly additive functors
\[\tau_!:\MPST\to \ulMPST,\quad \tau_\Nis:\MNST\to \ulMNST\]
and we have an isomorphism $\tau_\Nis a_\Nis \simeq \ul{a}_\Nis \tau_!$. Moreover, $\tau_!$ and $\tau_\Nis$ have exact right adjoints $\tau^*$ and $\tau^\Nis$.
\end{enumerate}
\end{prop}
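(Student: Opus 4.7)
For part (1), the plan is to invoke the general theory of sheafification associated with a cd-structure. Since $P_{\ulMV}$ and $P_{\MV}$ are complete and regular cd-structures on $\ulMSm$ and $\MSm$ respectively (as recalled in Section \ref{sec:topmod}), standard machinery provides exact sheafification functors on presheaves. I would then lift this to presheaves with transfers by a formal verification: the pointwise sheafification of a presheaf with transfers inherits a transfer structure, because the covering squares of $P_{\ulMV}$ and $P_{\MV}$ are themselves defined in $\ulMCor$ and $\MCor$ and compatible with the graph functors $\ul{c}$, $c$ of Diagram \eqref{diag1}. Exactness is preserved since the forgetful functors to presheaves are exact.

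For part (2), I would construct $\tau_!$ explicitly using the pro-left adjoint $\tau^! : \ulMCor \to \text{pro-}\MCor$ of Theorem \ref{t2.1}. Writing $\tau^! N = \text{``}\lim_{M \in \Comp(N)}\text{''} M$, set
\[
(\tau_! F)(N) := \colim_{M \in \Comp(N)^{\op}} F(M), \qquad F \in \MPST,\ N \in \ulMCor.
\]
The adjunction $\tau_! \dashv \tau^*$ is a direct consequence of the pro-adjunction $\tau^! \dashv \tau$, and full faithfulness of $\tau_!$ amounts to the isomorphism $\tau^* \tau_! \simeq \id$, which follows from the fact that $\id_M \in \Comp(M)$ is initial when $M \in \MCor$. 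Exactness of $\tau_!$ is then immediate from the displayed formula, because $\Comp(N)$ is cofiltered (by the proof of Theorem \ref{t2.1} in \cite{kmsy1}), making the defining colimit filtered. Strong additivity is automatic since $\tau_!$ is a left adjoint, and the right adjoint $\tau^*$, given by restriction along $\tau$, is automatically exact. All of this descends to the sheaf level via sheafification, producing $\tau_\Nis$ and $\tau^\Nis$.

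The nontrivial point is the compatibility $\tau_\Nis a_\Nis \simeq \ul{a}_\Nis \tau_!$. For this, I would first show that $\tau_!$ sends $\MV$-sheaves to $\ulMV$-sheaves: given $F \in \MNST$ and an $\ulMV$-square $S$, the value of $\tau_! F$ on $S$ is a filtered colimit indexed by $\Comp(S)$; by Theorem \ref{t1.2}, the subcategory $\Comp^{\MV}(S)$ of $\MV$-square compactifications of $S$ is cofinal, so the $\ulMV$-descent condition on $\tau_! F$ reduces to the $\MV$-sheaf property of $F$. The natural comparison map $\tau_\Nis a_\Nis \to \ul{a}_\Nis \tau_!$ is then produced by the universal property of $a_\Nis$ and checked to be an isomorphism on representables, hence everywhere by colimit-preservation.

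I expect the main obstacle to be exactly this last compatibility: without the cofinality result of Theorem \ref{t1.2}, the $\ulMV$-descent of $\tau_! F$ for an $\MV$-sheaf $F$ could not be reduced to the descent of $F$ on $\MV$-squares, and the compatibility square would fail. By contrast, the exactness of $\tau_!$, which would be delicate for an arbitrary left Kan extension along a fully faithful functor, is here clean precisely because Theorem \ref{t2.1} presents $\tau_!$ as a filtered colimit of evaluations.
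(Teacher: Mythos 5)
The paper gives no proof of this proposition—it is stated by citation to \cite[Prop.~2.4.1, Th.~4.5.5]{kmsy1} and \cite[Lemma-Def.~4.2.1, Th.~4.2.4, Th.~5.1.1]{kmsy2}—so you are reconstructing arguments that live in Parts~I and~II of the series. Your overall plan (cd-structure machinery for~(1); present $\tau_!$ via the compactification category from Theorem~\ref{t2.1}; verify the sheaf-level compatibility on representables) is the right shape and matches the spirit of those references.

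There is, however, a genuine gap in your argument that $\tau_!$ sends $\MV$-sheaves to $\ulMV$-sheaves. By your own formula, the descent sequence that must be shown exact for an $\ulMV$-square $S$ involves the four groups $(\tau_! F)(S(ij)) = \colim_{M \in \Comp(S(ij))} F(M)$, each indexed by its own compactification category $\Comp(S(ij))$. You instead write the whole sequence as "a filtered colimit indexed by $\Comp(S)$" and then invoke Theorem~\ref{t1.2} to replace $\Comp(S)$ by $\Comp^{\MV}(S)$. For this to be legitimate you must first show that each projection $\Comp(S) \to \Comp(S(ij))$, $T \mapsto T(ij)$, is cofinal, so that the four defining colimits can be rewritten over the single index category $\Comp(S)$; this cofinality is not formal and is nowhere addressed in your sketch. (It amounts to saying that any compactification of a single corner of $S$ can be extended to a compactification of the entire square, compatibly.) Without it, the exactness of the termwise sequence over $T\in\Comp^{\MV}(S)$ does not directly yield exactness of the sequence of colimits that actually computes $\tau_! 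F$ on $S$. A smaller point: your claim in part~(1) that sheafification on presheaves with transfers exists by "formal verification" undersells the issue—compatibility of sheafification with transfers is the substantive content of \cite[Th.~4.5.5]{kmsy1} and \cite[Th.~4.2.4]{kmsy2}, not a routine consequence of the squares living in $\ulMCor$.

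It is also worth flagging that your route to the key compatibility goes through Theorem~\ref{t1.2} (= \cite[Th.~1.5.6]{KaMi2}), whereas the paper cites \cite[Th.~5.1.1]{kmsy2} for precisely this statement; the present paper only uses Theorem~\ref{t1.2} later, in step~(iv) of the proof of Theorem~\ref{prop:DGguide}. Your route may well be viable once the cofinality issue above is resolved, but it is not the one the references take.
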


The additive Yoneda functors
\[\Z_\tr:\ulMCor\to \ulMPST,\quad \Z_\tr:\MCor\to \MPST\]
induce triangulated functors
\begin{equation}\label{eq:KMCor-DMPST}
  K^b(\ulMCor) \to D(\ulMPST), \quad  K^b(\MCor) \to D(\MPST).
\end{equation}

\begin{lemma}\label{claim:ff}
The functors $D(\tau_!):D(\MPST)\to D(\ulMPST)$ and $D(\tau_\Nis):D(\MNST)\to D(\ulMNST)$ are fully faithful.
\end{lemma}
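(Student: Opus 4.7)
The plan is to deduce the statement from a standard principle in homological algebra: if $F \colon \sA \to \sB$ is a fully faithful exact functor between abelian categories admitting an exact right adjoint $G$, then $D(F) \colon D(\sA) \to D(\sB)$ is fully faithful on the unbounded derived category. I would first isolate this principle as the key input, then verify that the hypotheses of Proposition \ref{p3.5} supply it in both cases at hand.

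For the principle itself, I would argue as follows. Since $F$ is fully faithful, the unit $\eta \colon \id_\sA \to GF$ of the adjunction is a natural isomorphism. Because $F$ and $G$ are exact, they pass to the derived categories without the need to take derived functors: $F$ (resp.\ $G$) applied termwise to a complex defines a triangulated functor $D(F)$ (resp.\ $D(G)$), and the adjunction $(F,G)$ induces an adjunction $(D(F), D(G))$ whose unit is just $\eta$ applied termwise, hence still an isomorphism. A left adjoint whose unit is an isomorphism is fully faithful, which gives the desired conclusion.

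Now I would apply this in two steps. For the presheaf case, Proposition \ref{p3.5}(2) tells us that $\tau_! \colon \MPST \to \ulMPST$ is fully faithful and exact, with an exact right adjoint $\tau^*$, so the principle applies verbatim. For the sheaf case, the same proposition gives that $\tau_\Nis \colon \MNST \to \ulMNST$ is fully faithful and exact, with an exact right adjoint $\tau^\Nis$, so again the principle yields full faithfulness of $D(\tau_\Nis)$.

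I do not anticipate any serious obstacle: everything needed is already packaged in Proposition \ref{p3.5}, and the only subtle point is noticing that exactness of \emph{both} adjoints lets us bypass the construction of derived functors entirely and reduce to a pointwise computation of the unit. The only care to take is that we are working with \emph{unbounded} derived categories, for which the termwise extension of an exact functor to $D(-)$ is well-known to be triangulated and to preserve the adjunction coming from the abelian level.
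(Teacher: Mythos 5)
Your argument is correct and matches the paper's approach: the paper's proof cites Proposition \ref{pA.3}, whose relevant case ($G$ fully faithful, both $G$ and its right adjoint $F$ exact) is exactly the general principle you isolate and prove directly by descending the unit termwise. Your version is a self-contained proof of that special case rather than an appeal to the more general statement, but the underlying mechanism — exactness lets the adjunction and the invertible unit pass unchanged to the unbounded derived categories — is the same.
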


\begin{proof} This follows from Propositions \ref{p3.5} and \ref{pA.3}.
\end{proof}

We now slightly diverge from Voevodsky to define $\ulMDM^\eff$ and $\MDM^\eff$. We will get back to the analogues of his definiton in Theorem \ref{prop:DGguide} below.

\begin{defn}\label{def:motmod}
We define 
\[
\ulMDM^\eff = \frac{D(\ulMPST)}{< \mathrm{\ul{CI},\mathrm{\ul{MV}}} >}, \quad \MDM^\eff = \frac{D(\MPST)}{< \mathrm{CI},\mathrm{MV} >},
\]
where  ${< \mathrm{\ul{CI},\mathrm{\ul{MV}}} >}$ and ${< \mathrm{CI},\mathrm{MV} >}$ are the localising subcategories of $D(\ulMPST)$ and $D(\MPST)$ generated by the images of ${\langle \mathrm{\ul{CI}}, \ulMV \rangle}$ and ${\langle \mathrm{CI}, \MV \rangle}$ by the functors \eqref{eq:KMCor-DMPST}.
\end{defn}

\subsection{Full embeddings and tensor structures}

\begin{thm}\label{thm:DMgm-DM-full-faith}\ 
\begin{enumerate}
\item The functors \eqref{eq:KMCor-DMPST} induce triangulated functors
\begin{equation}\label{eq:DRgm-DR}
 \ul{\iota}_\eff : \ulMDM_{\gm}^{\eff} \to \ulMDM^{\eff}, \quad   \iota_\eff : \MDM_{\gm}^{\eff} \to \MDM^{\eff}.
\end{equation}
We write $\ul{M} =  \ul{\iota}_\eff\circ \ul{M}_\gm$ and $M=\iota_\eff\circ M_\gm$ (cf. Definition \ref{def:MDMgm}).
\item The functors $ \ul{\iota}_\eff$ and $\iota_\eff $  are fully faithful with dense images; their essential images consist of
the compact objects of the target categories.  In particular, $ \ulMDM^{\eff}$ and $\MDM^{\eff}$ are compactly generated.
\item The tensor structure on $\ulMCor$ induces tensor structures on $D(\ulMPST)$, $D(\MPST)$ and all categories of {\rm (1)}. The functors of \eqref{eq:DRgm-DR} are $\otimes$-triangulated.
\item The functors $\tau$ and $\tau_!$ of \eqref{diag1} and Proposition \ref{p3.5} induce $\otimes$-triangulated functors
\[\tau_{\eff,\gm}:\MDM_\gm^\eff\to \ulMDM_\gm^\eff,\quad \tau_\eff:\MDM^\eff\to\ulMDM^\eff.\]
\item The functor $\tau_\eff$ is strongly additive and has a right adjoint $\tau^\eff$.
\end{enumerate}
\end{thm}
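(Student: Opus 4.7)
The plan is to prove the five parts in order, relying throughout on Neeman's machinery of compactly generated triangulated categories \cite{neeman}. We treat the underlined and proper versions in parallel, the arguments being formally identical.

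\textbf{Parts (1) and (2).} For (1), the composite $K^b(\ulMCor) \to D(\ulMPST) \to \ulMDM^\eff$ kills $\ul{CI}$ and $\ul{MV}$ by construction, so it factors through $K^b(\ulMCor)/\langle \ul{CI}, \ul{MV}\rangle$; since $\ulMDM^\eff$ admits all small coproducts it is idempotent complete by B\"okstedt--Neeman, and the factorization extends uniquely to the pseudo-abelian envelope, giving $\ul{\iota}_\eff$. For (2), the key inputs are: (a) the representables $\Z_\tr(\sX)$ form a set of projective generators of $\ulMPST$, so $D(\ulMPST)$ is compactly generated with $D(\ulMPST)^c \simeq K^b(\ulMCor)^\natural$ and the Yoneda functor $K^b(\ulMCor) \hookrightarrow D(\ulMPST)$ is fully faithful; (b) the defining complexes of $\langle \ul{CI}, \ul{MV}\rangle$ are bounded complexes of representables, hence compact in $D(\ulMPST)$. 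Applying Neeman's localization theorem to the Bousfield localization of $D(\ulMPST)$ at the localizing subcategory $<\ul{CI}, \ul{MV}>$ yields compact generation of $\ulMDM^\eff$ by the $\ul{M}(\sX)$ and an identification
\[
(\ulMDM^\eff)^c \simeq \bigl(K^b(\ulMCor)/\langle \ul{CI}, \ul{MV}\rangle\bigr)^\natural = \ulMDM_\gm^\eff,
\]
realized by $\ul{\iota}_\eff$; in particular $\ul{\iota}_\eff$ is fully faithful with dense image. The proper case is identical.

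\textbf{Part (3).} Extend the monoidal structure on $\ulMCor$ to $\ulMPST$ by Day convolution; representables are closed under $\otimes$ and projective, so the total derived tensor product on $D(\ulMPST)$ is well-defined and may be computed on bounded complexes of representables. To descend to $\ulMDM^\eff$ one shows that $<\ul{CI}, \ul{MV}>$ is a tensor ideal. Since this localizing subcategory is generated by compact objects and $\otimes$ preserves colimits, it suffices to check stability of the generators, which is precisely Lemma \ref{lem:tensor-cd}: an $\ul{MV}$-square tensored with any $M$ remains an $\ul{MV}$-square, while $\ul{CI}$-complexes are preserved by $\otimes$ via $(X \otimes \bcube) \otimes M \simeq (X \otimes M) \otimes \bcube$. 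The proper case uses part (5) of the same lemma. Since the tensor structures on both the geometric and sheaf-theoretic versions descend canonically from that of $\ulMCor$, the functors $\ul{\iota}_\eff$ and $\iota_\eff$ are automatically $\otimes$-triangulated.

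\textbf{Parts (4) and (5).} By Propositions \ref{p3.5} and \ref{prop:monoidal-tauomega}, $\tau_!$ is exact, fully faithful, strongly additive and $\otimes$-monoidal, and so derives to $D(\tau_!)$ preserving these properties. To induce $\tau_\eff$ one must verify that $\tau_!$ sends the generators of $<\mathrm{CI}, \mathrm{MV}>$ into $<\ul{CI}, \ul{MV}>$: monoidality handles $\mathrm{CI}$, and for an $\MV$-square $T$ in $\MSm$, Definition \ref{def:MV-sq}(2) provides an $\ul{MV}$-square $S$ in $\ulMSm^\Sq$ and a comparison morphism $\iota : S \to T$ whose components induce isomorphisms on interiors; the cone of the induced morphism of \v Cech complexes, assembled from the $\mathrm{cone}(\iota(ij))$, is shown to lie in $<\ul{CI}, \ul{MV}>$ by invoking completeness and regularity of the cd-structures together with the off-diagonal condition of Definition \ref{def:MV-sq}(3). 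The analogous descent at the bounded level gives $\tau_{\eff,\gm}$. For (5), strong additivity of $\tau_\eff$ follows from that of $D(\tau_!)$ combined with strong additivity of the Verdier quotient functor $D(\ulMPST) \to \ulMDM^\eff$. Finally, since $\MDM^\eff$ is compactly generated by (2), Neeman's Brown representability theorem \cite{neeman} applied to the strongly additive functor $\tau_\eff$ produces the right adjoint $\tau^\eff$. The principal technical obstacle is the verification in (4) that $\tau_!$ maps $\MV$-complexes into $<\ul{CI}, \ul{MV}>$; this is precisely where the engineering of Definition \ref{def:MV-sq}, through its conditions (2) and (3), plays the role for which it was designed.
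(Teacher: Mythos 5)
Parts (1), (2), (3) and (5) follow the paper's route quite closely: your use of Neeman's localization theorem and the identification $K^b(\ulMCor)^\natural \simeq D(\ulMPST)^c$ is exactly the paper's argument via Theorem \ref{tA.4} and Example \ref{exA.5}; the tensor-ideal verification via Lemma \ref{lem:tensor-cd} and the associativity $(X\otimes\bcube)\otimes M \simeq (X\otimes M)\otimes\bcube$ matches; and your strong-additivity argument for (5), while phrased slightly differently (you only use essential surjectivity and strong additivity of the quotient functors rather than the double right adjoint), is correct and perfectly adequate.

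The gap is in part (4), which the paper itself flags as ``the most difficult point.'' You correctly identify that one must show $K^b(\tau)(\langle\MV\rangle) \subseteq \langle\ulMV\rangle$, and you correctly name the relevant data (the comparison $\ulMV$-square $S \to T$ from condition (2) of Definition \ref{def:MV-sq}, and the off-diagonal condition (3)). But the sentence ``the cone of the induced morphism of \v Cech complexes, assembled from the $\operatorname{cone}(\iota(ij))$, is shown to lie in $\langle\ul{CI},\ulMV\rangle$ by invoking completeness and regularity of the cd-structures together with the off-diagonal condition'' is an assertion, not an argument, and it points at the wrong tool: completeness and regularity of the cd-structures are used for the sheaf-theoretic statements (Proposition \ref{p3.4}, the Brown--Gersten property), not here. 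The actual mechanism is a ``Fubini'' computation: one forms the $\Sq\times\Sq$-object $X = S \times_{\tau_s(T(11))} \tau_s(T)$ and totalises it in two ways. Read along one axis, every row $S \times_{T(11)} T(kl)$ is an $\ulMV$-square, giving $\Tot(X) \in \langle\ulMV\rangle$. Read along the other axis, the three columns $T_{ij} = S(ij) \times_{T(11)} T$ for $(ij)\neq(11)$ are cartesian squares whose projections to $S(ij)$ are split; $T_{10}$ and $T_{00}$ degenerate because the relevant edge is a monomorphism split by the section, hence an isomorphism, while $T_{01}$ requires Lemma \ref{l1.6}, whose hypothesis (3) is verified via the stability of $\OD$ under pullback (\cite[Prop. 3.1.4]{mzki2}). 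Only the column $T_{11} = T$ survives, yielding $\Tot\,\tau_s(T) \in \langle\ulMV\rangle$. None of this is recoverable from what you wrote; in particular the role of Lemma \ref{l1.6} and of the pullback-compatibility of $\OD$ is precisely what makes condition (3) of Definition \ref{def:MV-sq} do its work, and your proposal gives no indication of how that condition is actually consumed.
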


\begin{proof} (1) is obvious by construction. For (2), apply Theorem \ref{tA.4} and Example \ref{exA.5}. Let us prove (3). To start with, Theorem \ref{t.mon} provides tensor structures on $\ulMPST$, $\MPST$, $K^b(\ulMCor)$, $K^b(\MCor)$, $D(\ulMPST)$ and $D(\MPST)$ such that the functors of \eqref{eq:KMCor-DMPST} are tensor functors. Then Lemma \ref{lem:tensor-cd} implies that $\langle \mathrm{\ul{CI}}, \ulMV \rangle$ and $\langle \mathrm{CI}, \MV \rangle$ are $\otimes$-ideals in $K^b(\ulMCor)$ and $K^b(\MCor)$, thus so are $< \mathrm{\ul{CI},\mathrm{\ul{MV}}}>$ and $< \mathrm{CI},\mathrm{MV} >$ in $D(\ulMPST)$ and $D(\MPST)$.

Let us prove (4), which is the most difficult point. Since $D(\tau_!)$ is strongly additive, it suffices to show that $K^b(\tau)(\langle \mathrm{CI} \rangle) \subseteq \langle \mathrm{\ul{CI}} \rangle$ and $K^b(\tau)(\langle \mathrm{MV}\rangle ) \subseteq \langle \mathrm{\ulMV} \rangle$. The first inclusion is obvious. The second one is a consequence of the continuity of $\tau_s$ \cite[Th. 1]{KaMi2}, but we provide a direct and concrete proof. Let $T\in \MSm^\Sq$ be an $\MV$-square, and let $S\to \tau_s(T)$ be an  associated $\ulMV$-square (property (2) of Definition \ref{def:MV-sq}). Consider the $\Sq\times \Sq$-object of $\ulMSm$ 
\[X=S\times_{\tau_s(T(11))} \tau_s(T). \]

We can compute $\Tot(X)$ in $K^b(\ulMSm)$ in two different ways (see \cite[I.2.2]{verd} for the totalisation of multicomplexes, and loc. cit., (2.2.4.1) for a ``Fubini theorem''): we drop $\tau_s$ to lighten the notation.

\begin{thlist}
\item For every $(kl)$, $S\times_{T(11)} T(kl)$ is an $\ulMV$-square. Hence $\Tot(X)\in \langle \ulMV \rangle$.
\item For every $(ij)\ne (11)$, $T_{ij}=S(ij)\times_{T(11)} T$ is a cartesian square in which the map $S(ij)\to T(ij)$ over $T(11)$ provides a section of the projection $S(ij) \times_{T(11)} T(ij)\to S(ij)$. For $(ij)=(10)$, this projection is an isomorphism on the open parts by property (2) of Definition \ref{def:MV-sq}, so is a monomorphism, hence an isomorphism. In other words, $T_{10}(10)\to T_{10}(11)$ is an isomorphism, and the same holds for its pull-back $T_{00}$. Hence $\Tot (T_{10})$ and $\Tot(T_{00})$ are contractible. 
For $(ij)=(01)$, we apply Lemma \ref{l1.6}: the hypotheses of this lemma are satisfied.
We check Condition (3): Suppose $T$ is of the form \eqref{eq:T}.
Since $T$ is an $\MV$-square by definition, we have $\OD (q) \cong \OD(p)$.
Moreover, by \cite[Prop. 3.1.4]{mzki2}, the off-diagonal $\OD$ is stable under pullbacks.
More precisely, if we set $p':=p \times_{T(11)} S(01)$ and $q':=q \times_{T(11)} S(01)$, we have $\OD(p') = \OD(p) \times_{T(11)} S(01)$ and $\OD(q') = \OD(q) \times_{T(11)} S(01)$. 
Therefore, we have $\OD(q') \cong \OD(p')$.
It follows that $\Tot(T_{01})$ is also contractible. \emph{In fine}, $\Tot (T_{ij})$ is contractible except for the square $T_{11} = T$.
\end{thlist}

This shows that $\Tot \tau_s(T)\in \langle \ulMV \rangle$ (more precisely, $\Tot \tau_s(T)$ belongs to the triangulated $\otimes$-ideal generated by $\Tot(S)$).

Finally, we prove (5). Let $\pi:D(\MPST)\to \MDM^\eff$ and $\ul{\pi}:D(\ulMPST)\to \ulMDM^\eff$ be the projection functors. By Corollary \ref{cA.1}, they have right adjoints $i$ and $\ul{i}$ which themselves have  right adjoints. In particular, all these functors are strongly additive. This, and the strong additivity of $D(\tau_!)$ easily implies that $\tau_\eff$ is strongly additive; the existence of $\tau^\eff$ then follows from (2) (compact generation of $\MDM^\eff$), Theorem \ref{tA.3} and Lemma \ref{lA.3}.
\end{proof}

\bigskip\bigskip

\begin{remarks}\label{r3.1}\
\begin{enumerate}
\item Similar to the proof of Theorem \ref{thm:DMgm-DM-full-faith} (2), Theorem  \ref{tA.4} and Example \ref{exA.5} imply that the canonical functor $\iota_\eff^V:\DM_{\gm,\Nis}^\eff\allowbreak\to \DM^\eff$ is fully faithful over any $k$ (compare \cite[p. 6801]{birat-tri}).
\item We shall see in Theorem \ref{t5.1} that  $\tau_\eff$ and $\tau_{\eff,\gm}$ are fully faithful. This seems to require the theory of intervals from Appendix \ref{section:interval}.
\end{enumerate}
\end{remarks}

\section{Brown-Gersten property}\label{s.bg}

\subsection{Main result} The adjunctions of Proposition \ref{p3.5} (1) induce adjunctions
\begin{align*}
D(\ul{a}_\Nis) = R\ul{a}_\Nis : D(\ulMPST) \leftrightarrows D(\ulMNST) : R\ul{i}_\Nis, \\
D(a_\Nis) = Ra_\Nis : D(\MPST) \leftrightarrows D(\MNST) : Ri_\Nis ,
\end{align*}
and $D(\ul{a}_\Nis)$, $D(a_\Nis)$ are localisations by Proposition \ref{pA.3}.

\begin{thm}\label{prop:DGguide}\ 

\begin{enumerate}
\item The kernel of the localisation functor \[D(\ul{a}_\Nis):D(\ulMPST) \to D(\ulMNST)\] equals $< \ulMV >$.
\item The kernel of the localisation functor \[D(a_\Nis):D(\MPST) \to D(\MNST)\]  equals $< \MV >$.
\item The localisation functor 
$D(\ulMPST) \to \ulMDM^{\eff}$
induces an equivalence of triangulated categories
\[
\frac{D(\ulMNST)}{< \ul{\mathrm{CI}}>} \cong \ulMDM^\eff.
\]
\item The localisation functor
$D(\MPST) \to \MDM^{\eff}$
induces an equivalence of triangulated categories
\[
\frac{D(\MNST)}{< \mathrm{CI} >} \cong \MDM^\eff.
\]
\item 
The categories $D(\ulMNST)$ and $D(\MNST)$ are compactly generated and inherit tensor structures from those of $D(\ulMPST)$ and $D(\MPST)$
obtained in Theorem \ref{thm:DMgm-DM-full-faith} (3). 
The functor $D(\tau_\Nis):D(\MNST)\to D(\ulMNST)$ is $\otimes$-triangulated.
\end{enumerate}
\end{thm}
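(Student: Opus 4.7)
The strategy is to establish (1) and (2) first by invoking the Brown--Gersten formalism for complete regular cd-structures, then to deduce (3) and (4) as a purely formal consequence of iterated Verdier localisation, and finally to treat (5) by combining Lemma~\ref{lem:tensor-cd} with a compact generation argument that rests on the strong additivity of the right adjoint to sheafification.

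For (1) and (2) I would argue as follows. The inclusion $\langle\ul{MV}\rangle\subseteq\ker D(\ul{a}_\Nis)$ is essentially a tautology: for any $\ul{MV}$-square $S$ the Mayer--Vietoris complex $\Z_\tr(U\times_X V)\to \Z_\tr(U)\oplus\Z_\tr(V)\to\Z_\tr(X)$ becomes cocartesian, hence acyclic, after applying the exact functor $\ul{a}_\Nis$, since the $\ul{MV}$-sheaf condition is precisely the requirement that distinguished squares be carried to cartesian squares. For the opposite inclusion, I would invoke Proposition \ref{pA.3}: the adjoint pair $(D(\ul{a}_\Nis),R\ul{i}_\Nis)$ realises $D(\ulMNST)$ as a Bousfield localisation of $D(\ulMPST)$, so the kernel is determined by the essential image of $R\ul{i}_\Nis$, and the Brown--Gersten property for complete, regular cd-structures (Proposition~\ref{prop:MVamb} and its transport to $\ulMSm$ via $\ul b_s$, together with the corresponding result for $P_\MV$) identifies this image with the right orthogonal $\langle\ul{MV}\rangle^\perp$. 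This is the step I expect to be the main obstacle: it requires a careful verification that the cd-descent isomorphism passes from $\ulMSm$ to $\ulMCor$, and it is where the prior results of \cite{kmsy1,kmsy2}, phrased in the language of cd-structures, are indispensable. The same argument applies to $\MV$, using that $P_\MV$ is complete and regular.

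Assertions (3) and (4) are then a formal consequence. By definition $\ulMDM^\eff=D(\ulMPST)/\langle\ul{CI},\ul{MV}\rangle$, and since $\langle\ul{CI},\ul{MV}\rangle$ is the localising subcategory generated by the union of $\langle\ul{CI}\rangle$ and $\langle\ul{MV}\rangle$, the standard iterated localisation lemma (cf.\ Corollary~\ref{cA.1}) gives
\[
\ulMDM^\eff \;\simeq\; \bigl(D(\ulMPST)/\langle\ul{MV}\rangle\bigr)\big/\overline{\langle\ul{CI}\rangle}
\;\simeq\; D(\ulMNST)/\langle\ul{CI}\rangle,
\]
where $\overline{\langle\ul{CI}\rangle}$ is the image of $\langle\ul{CI}\rangle$ under $D(\ul{a}_\Nis)$, and the second isomorphism uses (1). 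The argument for (4) is identical, using (2).

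For (5), compact generation follows from the strong additivity of $\ul{i}_\Nis$ (resp.\ $i_\Nis$): the sheaf condition attached to a cd-structure is a finite-limit condition, so the inclusion of sheaves into presheaves commutes with arbitrary coproducts, and combined with the fact that the $\ul{MV}$-cohomological dimension of each ambient scheme is bounded, this implies that $R\ul{i}_\Nis$ preserves coproducts. Consequently, the sheafified representables $\ul{M}(X)=R\ul{a}_\Nis\Z_\tr(X)$ remain compact and, being images of a generating family in $D(\ulMPST)$, still generate $D(\ulMNST)$; the case of $D(\MNST)$ is analogous. For the tensor structure, Lemma~\ref{lem:tensor-cd} (4) (resp.\ (5)) shows that $\ul{MV}$-squares (resp.\ $\MV$-squares) are stable under $\otimes$ with any object, so $\langle\ul{MV}\rangle$ (resp.\ $\langle\MV\rangle$) is a $\otimes$-ideal in $D(\ulMPST)$ (resp.\ $D(\MPST)$) because $\otimes$ preserves coproducts in each variable. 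Hence the tensor structures descend by Theorem~\ref{thm:DMgm-DM-full-faith}~(3) and the universal property of Verdier localisation. That $D(\tau_\Nis)$ is $\otimes$-triangulated follows from the corresponding property of $D(\tau_!)$, proved in Theorem~\ref{thm:DMgm-DM-full-faith}~(4), combined with the compatibility $\tau_\Nis a_\Nis\simeq\ul{a}_\Nis\tau_!$ of Proposition~\ref{p3.5}~(2).
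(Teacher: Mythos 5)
Your treatment of parts (3), (4), and (5) is essentially sound and close in spirit to the paper's, though the paper proves (5) more directly by combining (1)--(2) with Neeman's localisation theorem (Theorem~\ref{tA.4}), while you rely on strong additivity of $R\ul{i}_\Nis$ via a cohomological-dimension bound; both routes work.

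The serious gap is in (1) and (2). You propose to invoke the cd-structure descent theorem (Voevodsky's Theorem~3.2 in \cite{cdstructures}) directly for the cd-structures $P_{\ulMV}$ on $\ulMSm$ and $P_{\MV}$ on $\MSm$. But that theorem requires the cd-structure to be \emph{bounded}, not merely complete and regular, and boundedness is known only for $P_{\ulMV^\fin}$ on $\ulMSm^\fin$ (Proposition~\ref{prop:MVamb}). As the paper remarks explicitly (Remark~\ref{rem:bdd}), it is \emph{not known} whether $P_{\ulMV}$ or $P_{\MV}$ is bounded; so the descent theorem cannot be applied as you propose. The paper is forced to work up a chain: (i)~establish $\langle\ulMV_s^\amb\rangle^\perp=0$ on $\ulMSm^\fin$ where boundedness holds; (ii)~transport this to $\ulMSm$ using the functors $\ul{b}_{s,!},\ul{b}_s^*$ and the commutation identity of Proposition~\ref{l3.4}, the latter itself requiring strong additivity arguments that hinge on the cohomological-dimension bound; (iii)~pass to $\ulMCor$ via conservativity of $D(\ul{c}^*)$ (Lemma~\ref{l3.5}); and (iv)~pass from $\ulMCor$ to $\MCor$ via full faithfulness of $D(\tau_!)$ together with the genuinely nontrivial cofinality result of Theorem~\ref{t1.2} for $\Comp^{\MV}(S)\subset\Comp(S)$. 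You flag the transfers step (iii) as the likely obstacle, but you omit the boundedness issue entirely and do not mention step (iv) at all, which is a separate and substantial reduction: the right orthogonal to $\langle\MV\rangle$ in $D(\MPST)$ cannot be accessed through a cd-structure argument at all, only through the limit description of $\tau^!$ and the cofinality of $\MV$-squares in compactifications. Without these, the "opposite inclusion" in your sketch of (1) and (2) does not go through.
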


The proof is given in Subsection \ref{s3.8}.

\subsection{Corollaries}
\begin{cor}[cf. Hypothesis \protect{\ref{h4.1}} (i)]\label{c4.1} Via $\ul{a}_\Nis$ and $a_\Nis$, the tensor structures on $\ulMPST$ and $\MPST$ induce right exact tensor structures on $\ulMNST$ and $\MNST$.
\end{cor}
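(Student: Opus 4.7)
I treat only the proper case, the non-proper case being identical. The natural candidate is
\[ F \otimes_\Nis G := a_\Nis(i_\Nis F \otimes_\MPST i_\Nis G), \qquad \un_\Nis := a_\Nis \Z_\tr(\un), \]
where $\otimes_\MPST$ is the right exact tensor product of Theorem \ref{thm:DMgm-DM-full-faith} (3). The whole construction hinges on the following assertion: \emph{the Serre subcategory $\ker(a_\Nis)\subset \MPST$ is a tensor ideal for $\otimes_\MPST$.}

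To establish this, let $K \in \MPST$ with $a_\Nis K = 0$. Viewing $K$ as a complex concentrated in degree $0$ and using exactness of $a_\Nis$ (Proposition \ref{p3.5} (1)), we get $D(a_\Nis)(K)=0$, so $K \in \langle \MV\rangle$ by Theorem \ref{prop:DGguide} (2). For any $H \in \MPST$, Theorem \ref{thm:DMgm-DM-full-faith} (3) ensures that $\langle \MV \rangle$ is a tensor ideal in $D(\MPST)$, so $K \otimes^L H \in \langle \MV\rangle$, whence $D(a_\Nis)(K\otimes^L H) = 0$ and in particular $a_\Nis H^0(K\otimes^L H)=0$. Right exactness of $\otimes_\MPST$ identifies $H^0(K\otimes^L H) = K\otimes_\MPST H$, proving the claim.

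Granted this, the associativity, commutativity, and unit constraints on $\otimes_\Nis$ are inherited from those on $\otimes_\MPST$. Indeed, the kernel and cokernel of the unit $\eta_P : P \to i_\Nis a_\Nis P$ of $a_\Nis \dashv i_\Nis$ lie in $\ker(a_\Nis)$ for any $P \in \MPST$; tensoring $\eta_P$ with any $i_\Nis Q$ and applying the exact functor $a_\Nis$ then yields an isomorphism, which allows one to rewrite iterated tensor products such as $(F_1 \otimes_\Nis F_2) \otimes_\Nis F_3$ in the symmetric form $a_\Nis(i_\Nis F_1 \otimes_\MPST i_\Nis F_2 \otimes_\MPST i_\Nis F_3)$, from which the constraints follow.

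For right exactness, given $F_1 \to F_2 \to F_3 \to 0$ exact in $\MNST$, set $C:=\Coker^\MPST(i_\Nis F_1 \to i_\Nis F_2)$, so that $a_\Nis C = F_3$. Tensoring with $i_\Nis G$ (right exact on $\MPST$) and applying the exact functor $a_\Nis$ produces a right exact sequence
\[ F_1 \otimes_\Nis G \to F_2 \otimes_\Nis G \to a_\Nis(C\otimes_\MPST i_\Nis G) \to 0 \]
in $\MNST$. The cone of the canonical map $C \to i_\Nis F_3$ in $D(\MPST)$ is killed by $D(a_\Nis)$ and so lies in $\langle \MV \rangle$; tensoring with $i_\Nis G$ and taking $H^0 D(a_\Nis)$ exactly as in the key step identifies $a_\Nis(C\otimes_\MPST i_\Nis G)\iso F_3 \otimes_\Nis G$. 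The only genuine obstacle in the argument is the key step: once the tensor-ideal property of $\ker(a_\Nis)$ is in hand, everything else is a formal consequence of having a right exact tensor on $\MPST$ together with the exactness of $a_\Nis$.
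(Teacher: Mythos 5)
Your proof is correct and agrees with the paper's at the level of the definition, since
\[
a_\Nis(i_\Nis F \otimes_\MPST i_\Nis G) \;=\; a_\Nis H^0\bigl(i_\Nis F[0]\otimes^L_{D(\MPST)} i_\Nis G[0]\bigr) \;=\; H^0\bigl(F[0]\otimes_{D(\MNST)} G[0]\bigr),
\]
which is exactly the paper's $H_0(F[0]\otimes_{D(\MNST)}G[0])$. But you take a genuinely different route in the verifications: the paper stays entirely inside $D(\MNST)$, invoking the monoidality of $D(a_\Nis)$ from Theorem \ref{prop:DGguide} (5) and proving right exactness by checking $H_i=0$ for $i<0$ (reducing to representables $\Z_\tr(M)\otimes^L\Z_\tr(N)=\Z_\tr(M\otimes N)[0]$, which is concentrated in degree $0$). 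You instead reformulate the whole thing as a tensor-ideal statement about the Serre subcategory $\ker(a_\Nis)\subset\MPST$, which you prove cleanly by passing through $D(\MPST)$ once, via Theorem \ref{prop:DGguide} (2) and the $\otimes$-ideal property of $<\MV>$ (your $\langle\MV\rangle$ should read $<\MV>$, the \emph{localising} subcategory, and the reference is really Lemma \ref{lem:tensor-cd} as used in Theorem \ref{prop:DGguide} (5), but these are notational). Your version has the virtue of making the structural reason for the corollary explicit: $\ker(a_\Nis)$ is a $\otimes$-ideal.

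One imprecision worth flagging: for the constraints, you claim that since $\ker(\eta_P)$ and $\coker(\eta_P)$ lie in $\ker(a_\Nis)$, the map $a_\Nis(\eta_P\otimes i_\Nis Q)$ is an isomorphism. Surjectivity does follow directly from the tensor-ideal property of $\ker(a_\Nis)$ (right exactness of $\otimes_\MPST$ shows $\coker(\eta_P\otimes i_\Nis Q)$ is a quotient of $\coker(\eta_P)\otimes i_\Nis Q$), but injectivity does not: factoring $\eta_P$ through its image $I$, the kernel of $I\otimes i_\Nis Q\to i_\Nis a_\Nis P\otimes i_\Nis Q$ is a $\mathrm{Tor}_1$ term that you cannot control purely abelianly from the tensor-ideal property. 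You need the same derived-category cone argument that you do use, correctly, for right exactness: the cone of $\eta_P$ in $D(\MPST)$ lies in $<\MV>$, hence so does its derived tensor with $i_\Nis Q[0]$, and applying $H^0 D(a_\Nis)$ gives the isomorphism. Since you clearly have this tool in hand, this is a presentational slip rather than a gap in the mathematics.
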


\begin{proof}  Let $F,G\in \ulMNST$. We define
\[F\otimes_{\ulMNST} G = H_0(F[0]\otimes_{D(\ulMNST)}G[0]).\]
for the tensor structure of Theorem \ref{prop:DGguide} (5).  For its right exactness, one sees that $H_i(F[0]\otimes_{D(\ulMNST)}G[0])=0$ for $i<0$ by reducing to $F=\Z_\tr(M)$, $G=\Z_\tr(N)$. By Theorem \ref{prop:DGguide} (5), the functor $D(\ul{a}_\Nis)$ is monoidal, hence we have $\ul{a}_\Nis F_0\otimes_{\ulMNST}\ul{a}_\Nis G_0=\ul{a}_\Nis (F_0\otimes_{\ulMPST} G_0)$ for $F_0,G_0\in \ulMPST$. Same argument with $\MNST$.
\end{proof}

\begin{rk}\label{r4.3}
Proceeding as in \cite[Proof of Prop. 4.1.22]{ayoub}, 
it can be shown that $\otimes_{D(\ulMNST)}$ is actually the total left derived functor of $\otimes_{\ulMNST}$. Similarly for $\MNST$.
\end{rk}

\begin{cor}\label{c4.2}
The localisation functors 
$\ul{L}^\bcube : D(\ulMNST) \to \ulMDM^\eff$, $L^\bcube : D(\MNST) \to \MDM^\eff$ are strongly additive, symmetric monoidal and have right adjoints $\ul{j}^\bcube$, $j^\bcube$, which themselves have right adjoints. We have a natural isomorphism of functors
\begin{equation}\label{eq2.10.2}
\tau_\eff L^\bcube \simeq \ul{L}^\bcube D(\tau_\Nis).
\end{equation}
\end{cor}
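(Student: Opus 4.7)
The plan is to identify $\ul{L}^\bcube$ and $L^\bcube$ with the Verdier quotient functors of Theorem~\ref{prop:DGguide}~(3)--(4), and to reapply Corollary~\ref{cA.1}, which was already used in the proof of Theorem~\ref{thm:DMgm-DM-full-faith}~(5). I will write the argument for $\ul{L}^\bcube$; the case of $L^\bcube$ is strictly parallel.

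First I would verify that the generators $\ul{a}_\Nis\Z_\tr[X\otimes\bcube]\to\ul{a}_\Nis\Z_\tr[X]$ of $<\ul{\mathrm{CI}}>$ are compact in $D(\ulMNST)$: their images in $\ulMDM^\eff$ lie in $\ulMDM_\gm^\eff$, and by Theorem~\ref{thm:DMgm-DM-full-faith}~(2) the latter is exactly the subcategory of compact objects. Combined with the compact generation of $D(\ulMNST)$ furnished by Theorem~\ref{prop:DGguide}~(5), Corollary~\ref{cA.1} then directly delivers a right adjoint $\ul{j}^\bcube$ to $\ul{L}^\bcube$ and a further right adjoint to $\ul{j}^\bcube$. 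Being a left adjoint, $\ul{L}^\bcube$ preserves arbitrary coproducts, hence is strongly additive. For the tensor structure, I would check that $<\ul{\mathrm{CI}}>$ is a $\otimes$-ideal: on generators, $([X\otimes\bcube]\to[X])\otimes Y\simeq [X\otimes Y\otimes\bcube]\to [X\otimes Y]$ still belongs to $\ul{\mathrm{CI}}$, and this propagates to the localising closure because $(-)\otimes Y$ is strongly additive for every $Y$ (Theorem~\ref{prop:DGguide}~(5)). The induced tensor structure on $\ulMDM^\eff$ makes $\ul{L}^\bcube$ symmetric monoidal, by the same mechanism as in the proof of Theorem~\ref{thm:DMgm-DM-full-faith}~(3).

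For the isomorphism \eqref{eq2.10.2}, I would use the universal property of $L^\bcube$. Since $\tau:\MCor\to\ulMCor$ is a $\otimes$-functor (Proposition~\ref{prop:monoidal-tauomega}), $D(\tau_\Nis)$ sends the generators of $<\mathrm{CI}>$ into $<\ul{\mathrm{CI}}>$; hence the composite $\ul{L}^\bcube\circ D(\tau_\Nis)$ kills $<\mathrm{CI}>$ and factors uniquely through $L^\bcube$ via some functor $\phi:\MDM^\eff\to\ulMDM^\eff$. Using the canonical isomorphism $\tau_\Nis a_\Nis\simeq \ul{a}_\Nis\tau_!$ of Proposition~\ref{p3.5}~(2), one verifies that both $\phi$ and $\tau_\eff$ are induced by $D(\tau_!)$ and therefore agree on the family of compact generators $M(\sX)$ with $\sX\in\MCor$. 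Strong additivity of both functors then promotes this pointwise agreement to a natural isomorphism on the whole of $\MDM^\eff$.

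The main subtle point, as I see it, is not any single step but rather the careful tracking of how the three-level stack of operations (presheaves versus Nisnevich sheaves, unbounded derived category, $\bcube$-localisation) commutes with $\tau$ in the way required for the last step. Once these compatibilities are recorded, the existence of the adjoints and the monoidality reduce to formal compact-generation arguments (via the appendix) and to the $\otimes$-ideal condition on $<\ul{\mathrm{CI}}>$.
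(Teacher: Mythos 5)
Your overall plan matches the paper's proof: identify the two functors via Theorem \ref{prop:DGguide}, obtain the adjoints through Corollary \ref{cA.1}, establish monoidality via the $\otimes$-ideal condition, and deduce \eqref{eq2.10.2} by factoring through the localisation $D(a_\Nis)$. The last step in particular is exactly the paper's argument, just spelled out in more detail.

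However, the justification you offer for compactness of the generators of $<\ul{\mathrm{CI}}>$ in $D(\ulMNST)$ is backwards. You argue: their images in $\ulMDM^\eff$ lie in $\ulMDM_\gm^\eff$, which by Theorem \ref{thm:DMgm-DM-full-faith} (2) consists of the compact objects of $\ulMDM^\eff$; therefore the generators are compact in $D(\ulMNST)$. That implication is false. Theorem \ref{tA.4} tells you that compact objects of $\sT$ remain compact in $\sT/\sS$, not that a preimage of a compact object of $\sT/\sS$ is compact in $\sT$ (Verdier quotients are not conservative for compactness in that direction). The correct argument is direct: the generators of $<\ul{\mathrm{CI}}>$ are two-term complexes of objects $\ul{a}_\Nis\Z_\tr(M)$, and these are compact in $D(\ulMNST)$ because they are images under the localisation $D(\ul{a}_\Nis):D(\ulMPST)\to D(\ulMNST)$ of the objects $\Z_\tr(M)$, which are compact in $D(\ulMPST)$ by Example \ref{exA.5}, and Theorem \ref{tA.4} preserves compactness under such quotients. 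Once this is in place, your application of Corollary \ref{cA.1} is correct.

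One smaller remark on the last paragraph: "agreement on compact generators promoted by strong additivity" is not quite the right phrasing, since pointwise agreement of objects is weaker than a natural isomorphism. The cleaner formulation, which you have in hand, is that both $\phi$ and $\tau_\eff$ precompose with the localisation $L^\bcube\circ D(a_\Nis):D(\MPST)\to\MDM^\eff$ to give the same functor $\ul{L}^\bcube\circ D(\ul{a}_\Nis)\circ D(\tau_!)$, and the universal property of Verdier localisation then forces $\phi\simeq\tau_\eff$ directly, no compact-generation step needed.
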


\begin{proof} Via Theorem \ref{prop:DGguide}, strong additivity and symmetric monoidality follow from those of the localisation functors $D(\ulMPST) \to \ulMDM^\eff$ and $D(\MPST) \to \MDM^\eff$. The sequel then follows from Corollary \ref{cA.1}. The isomorphism \eqref{eq2.10.2} follows from the construction of $\tau_\eff$ in Theorem \ref{thm:DMgm-DM-full-faith} (4), and the fact that $D(\ul{a}_\Nis)$ and $D(a_\Nis)$ are localisations.
\end{proof}

For the next corollary, we recall an important notation from \cite{kmsy1}.

\begin{defn}\label{defn:smallpresheaf}
For $F \in \ulMPST$ and for $\sX \in \ulMSm$, we write $F_\sX$ for the presheaf on the small \'etale site of $\ol{\sX}$ given by
\[
F_\sX (\ol{\sU} \to \ol{\sX}) := F(\ol{\sU},\sX^\infty \times_{\ol{\sX}} \ol{\sU}).
\]
For $F \in \MPST$ and $\sX \in \MSm$, we set
\[
F_\sX := (\tau_! F)_\sX.
\]
We extend this notation to complexes of (pre)sheaves in the obvious way.
\end{defn}

\begin{cor} \label{c4.3}
For any $\sX\in \ulMCor$ and $K\in \ulMDM^\eff$,  we have an isomorphism
\[ 
\ulMDM^{\eff}(M(\sX), K)
\simeq \colim_{\sX' \in \ulSigma^\fin \downarrow \sX} \mathbb{H}^0_{\Nis}(\ol{\sX}', (\ul{j}^\bcube K)_{\sX'}).
\]
The same formula holds in  $\MDM^\eff$ if $\sX\in \MCor$ and $K\in \MDM^\eff$.
\end{cor}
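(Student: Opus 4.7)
\emph{Plan.} The strategy has two stages: reduce via the adjunction of Corollary \ref{c4.2} to a Hom-computation in $D(\ulMNST)$, and then pass from $D(\ulMNST)$ to the displayed Nisnevich hypercohomology colimit. Since $\ul{L}^\bcube$ is left adjoint to $\ul{j}^\bcube$ and, by construction, $M(\sX) = \ul{L}^\bcube(\ul{a}_\Nis \Z_\tr(\sX))$ (viewing the right-hand side as a complex concentrated in degree $0$), we obtain
\[
\ulMDM^\eff(M(\sX), K) \simeq D(\ulMNST)(\ul{a}_\Nis \Z_\tr(\sX), L), \quad L := \ul{j}^\bcube K.
\]
It therefore suffices to prove, for every $L \in D(\ulMNST)$, the general identity
\[
D(\ulMNST)(\ul{a}_\Nis \Z_\tr(\sX), L) \simeq \colim_{\sX' \in \ulSigma^\fin \downarrow \sX} \mathbb{H}^0_\Nis(\ol{\sX}', L_{\sX'}).
\]

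\emph{Ambient case.} First treat the case $\sX \in \ulMCor^\fin$: then $(\sX, \id_\sX)$ is an object of $\ulSigma^\fin \downarrow \sX$ and is cofinal, so the colimit reduces to the single term $\mathbb{H}^0_\Nis(\ol{\sX}, L_\sX)$. What remains is a big-to-small site comparison
\[
D(\ulMNST)(\ul{a}_\Nis \Z_\tr(\sX), L) = \mathbb{H}^0_\Nis(\ol{\sX}, L_\sX),
\]
which I would prove by choosing a Nisnevich-fibrant resolution $L \to L'$ (so the left-hand side computes as $H^0$ of $\Hom_{\ulMPST}(\Z_\tr(\sX), L')$), and then invoking the compatibility between the $\ulMV$ topology on $\ulMCor^\fin$ and the small Nisnevich topology on $\ol{\sX}$ encoded in Definitions \ref{d1.3.1} and \ref{defn:smallpresheaf}: Nisnevich covers $\ol{\sU} \to \ol{\sX}$ correspond exactly to $\ulMV^\fin$-covers $(\ol{\sU}, \sX^\infty|_{\ol{\sU}}) \to \sX$, and by Proposition \ref{prop:MVamb} the $\ulMV^\fin$-site above $\sX$ presents the same topos as $\ol{\sX}_\Nis$. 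Hence sections of $L'$ above $\sX$ in the big site agree with the sections of $L'_\sX$ on the small Nisnevich site of $\ol{\sX}$, yielding the claim.

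\emph{General case and $\MDM^\eff$-version.} For arbitrary $\sX \in \ulMCor$, each $(f: \sX' \to \sX) \in \ulSigma^\fin \downarrow \sX$ is invertible in $\ulMCor$, so induces an isomorphism $\ul{a}_\Nis \Z_\tr(\sX') \iso \ul{a}_\Nis \Z_\tr(\sX)$ in $\ulMNST$. Applying the ambient case to each such $\sX'$ produces, compatibly with the transition morphisms of $\ulSigma^\fin \downarrow \sX$, identifications $D(\ulMNST)(\ul{a}_\Nis \Z_\tr(\sX), L) \simeq \mathbb{H}^0_\Nis(\ol{\sX}', L_{\sX'})$; passing to the filtered colimit yields the stated formula. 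The $\MDM^\eff$-version follows by the parallel argument for $L^\bcube \dashv j^\bcube$ and transfer through $\tau_\Nis$: the full faithfulness of $D(\tau_\Nis)$ (Lemma \ref{claim:ff}), the identity $\tau_! \Z_\tr(\sX) = \Z_\tr(\sX)$ for $\sX \in \MCor$, and the convention $F_\sX := (\tau_! F)_\sX$ (Definition \ref{defn:smallpresheaf}) together reconcile the two presentations. The main obstacle is the ambient-case big-to-small comparison: although formally parallel to Voevodsky's classical identification $\Hom_{D(\NST)}(\Z_\tr(X), L) = \mathbb{H}^0_\Nis(X, L)$, one must propagate the modulus $\sX^\infty$ carefully through the sheafification and rely on Proposition \ref{prop:MVamb} to equate the $\ulMV^\fin$-site above an ambient $\sX$ with the small Nisnevich site of $\ol{\sX}$.
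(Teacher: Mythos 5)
Your first step, reducing via the adjunction $\ul{L}^\bcube \dashv \ul{j}^\bcube$ to proving
\[
D(\ulMNST)(\ul{a}_\Nis \Z_\tr(\sX), L) \simeq \colim_{\sX' \in \ulSigma^\fin \downarrow \sX} \mathbb{H}^0_\Nis(\ol{\sX}', L_{\sX'})
\]
for $L = \ul{j}^\bcube K$, matches the paper's proof, which then simply cites this identity from \cite[Prop.\ 7.4.2, Th.\ 7.5.1]{kmsy2}. Your attempt to re-derive it, however, has a gap that is not a technicality but the essential content.

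The case distinction ``$\sX \in \ulMCor^\fin$'' is vacuous, since $\ulMSm^\fin$ and $\ulMSm$ have the same objects and differ only in morphisms, so every $\sX$ is ``ambient''. More seriously, the cofinality claim is reversed: $(\sX, \id_\sX)$ is the \emph{terminal} object of $\ulSigma^\fin \downarrow \sX$, whereas the functor $(\sX', f) \mapsto \mathbb{H}^0_\Nis(\ol{\sX}', L_{\sX'})$ is \emph{contravariant} (pullback along the proper birational maps over $\ol{\sX}$), so the colimit is effectively indexed by $(\ulSigma^\fin \downarrow \sX)^\op$, in which $(\sX, \id_\sX)$ is \emph{initial}, not terminal, and hence not cofinal; the colimit genuinely does not collapse. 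Consequently the claimed identification $D(\ulMNST)(\ul{a}_\Nis \Z_\tr(\sX), L) = \mathbb{H}^0_\Nis(\ol{\sX}, L_\sX)$ is false in general (it does happen to hold for $L$ concentrated in degree~$0$, because then all terms in the colimit are isomorphic, but that is a degenerate accident). The root cause is a conflation of sites: $\ulMNST$ consists of sheaves for the $\ulMV$-topology on $\ulMSm$, not the $\ulMV^\fin$-topology on $\ulMSm^\fin$. The big-to-small comparison with $\ol{\sX}_\Nis$ that you invoke (via Proposition~\ref{prop:MVamb}) is indeed available for $\ulMV^\fin$ --- this is \cite[(2.5.1)]{kmsy1} --- but passing from $\ulMSm^\fin$ to $\ulMSm$ inverts $\ulSigma^\fin$ (Theorem~\ref{t1.1}), and the derived Homs on the $\ulMV$-site acquire a colimit coming from the left Kan extension $\ul{b}_{s,!}$, as the paper computes explicitly at the end of the proof of Proposition~\ref{l3.4}. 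A fibrant resolution $L'$ in the $\ulMV$-topology is fibrant with respect to all $\ulMV$-squares, including those arising from $\ulSigma^\fin$-modifications, so restricting $L'$ to $\ol{\sX}_\Nis$ does not compute what you want. The ``general case'' paragraph is then circular, deriving the colimit formula by plugging the collapsed (false) formula into each term.
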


\begin{proof} This is obvious by adjunction from Corollary \ref{c4.2} and \cite[Prop. 7.4.2 and Th. 7.5.1]{kmsy2}.
\end{proof}

Since $D(\ulMNST)$ and $\DR^\eff$ are compactly generated (Theorem
\ref{prop:DGguide}), Brown's representability theorem applied to their tensor structures provides them with internal Homs; similarly for $D(\MNST)$ and $\MDM^\eff$.   The following is an application of Lemma \ref{l1.5}:

\begin{prop} Let $K\in D(\ulMNST)$ and $L\in \DR^\eff$. Then we have a natural isomorphism
\[\ul{j}^\bcube \uHom_{\DR^\eff}(\ul{L}^\bcube(K),L)\simeq \uHom_{D(\ulMNST)}(K,\ul{j}^\bcube L)\]
hence, for $K',L\in \DR^\eff$, a natural isomorphism
\[\ul{j}^\bcube \uHom_{\DR^\eff}(K',L)\simeq \uHom_{D(\ulMNST)}(\ul{j}^\bcube K',\ul{j}^\bcube L).\]
Same formulas in  $D(\MNST)$ and $\MDM^\eff$, with $L^\bcube$ and $j^\bcube$. \qed
\end{prop}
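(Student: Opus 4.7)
The plan is to read off the first isomorphism as a purely formal consequence of the monoidal adjunction $\ul{L}^\bcube \dashv \ul{j}^\bcube$ furnished by Corollary \ref{c4.2}, combined with the existence of internal Homs on both sides (which, as noted in the paragraph preceding the statement, is guaranteed by Brown representability applied to the compactly generated tensor triangulated categories $D(\ulMNST)$ and $\ulMDM^\eff$). This is exactly the content of Lemma \ref{l1.5}; my intention is simply to invoke it, after checking its hypotheses, and to deduce the second isomorphism from the first by a substitution.

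Concretely, I would test the two sides of the first isomorphism against an arbitrary object $M \in D(\ulMNST)$. Chaining the adjunction $\ul{L}^\bcube \dashv \ul{j}^\bcube$, the tensor--Hom adjunction in $\ulMDM^\eff$, the monoidality isomorphism $\ul{L}^\bcube(M \otimes K) \simeq \ul{L}^\bcube M \otimes \ul{L}^\bcube K$ (Corollary \ref{c4.2}), and finally the tensor--Hom adjunction in $D(\ulMNST)$, one obtains natural isomorphisms
\[
\Hom(M, \ul{j}^\bcube \uHom_{\DR^\eff}(\ul{L}^\bcube K, L)) \simeq \Hom(\ul{L}^\bcube M \otimes \ul{L}^\bcube K, L) \simeq \Hom(M, \uHom_{D(\ulMNST)}(K, \ul{j}^\bcube L)),
\]
and Yoneda's lemma yields the desired formula. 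This is the standard projection-formula-style manipulation behind Lemma \ref{l1.5}.

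For the second isomorphism, substitute $K = \ul{j}^\bcube K'$ in the first. Since $\ul{L}^\bcube$ is a Verdier localisation, its right adjoint $\ul{j}^\bcube$ is fully faithful, so the counit $\ul{L}^\bcube \ul{j}^\bcube K' \iso K'$ is an isomorphism. Plugging this in on the left-hand side transforms the first formula into the second one. The proof for $D(\MNST)$ and $\MDM^\eff$ is verbatim identical, applied to the adjunction $L^\bcube \dashv j^\bcube$ from Corollary \ref{c4.2}; the symmetric monoidality of $L^\bcube$ and full faithfulness of $j^\bcube$ hold by the same references.

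The only point requiring any thought is the bookkeeping of hypotheses in Lemma \ref{l1.5} — namely that the two categories are closed symmetric monoidal (ensured by Brown representability, since both are compactly generated by Theorem \ref{prop:DGguide} (5)) and that the adjunction $\ul{L}^\bcube \dashv \ul{j}^\bcube$ is symmetric monoidal in the strong sense. Both of these are already in hand, so the argument is essentially formal.
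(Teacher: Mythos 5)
Your proposal is correct and follows exactly the route the paper intends: the paper states the result as "an application of Lemma \ref{l1.5}" (with $u = \ul{L}^\bcube$, $v = \ul{j}^\bcube$) and gives no further argument, and your Yoneda unwinding is precisely the content of that lemma's proof. The deduction of the second formula from the first via $K = \ul{j}^\bcube K'$ and the counit isomorphism $\ul{L}^\bcube \ul{j}^\bcube \iso \id$ is likewise the intended step.
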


\subsection{Sheaves on $\ulMSm^\fin$ and $\ulMSm$}
To prove Theorem \ref{prop:DGguide}, we have to go back to these categories. As in \cite{kmsy1}, we write $\ulMPS^\fin$ and $\ulMPS$ for the categories of presheaves of abelian groups on $\ulMSm^\fin$ and $\ulMSm$, and $\ulMNS^\fin$ and $\ulMNS$ for the corresponding categories of sheaves (for the $\ulMV^\fin$ and $\ulMV$ topology, respectively).  For general reasons, the inclusion functors $\ul{i}_{s,\Nis}^\fin:\ulMNS^\fin\inj \ulMPS^\fin$, $\ul{i}_{s,\Nis}:\ulMNS\inj \ulMPS$ have exact left adjoint sheafification functors $\ul{a}_{s,\Nis}^\fin$, $a_{s,\Nis}$. Moreover, the adjoint functors 
\[\ul{b}_{s,!}:\ulMPS^\fin\leftrightarrows\ulMPS:\ul{b}_s^* \]
associated to the functor $\ul{b}_s$ of Definition \ref{def:1.1.2} are both exact, and they preserve sheaves \cite[Lemma 4.2.3 and Prop. 4.3.3]{kmsy1}. For further reference, we record the corresponding tautological identity for the right adjoints:
\begin{equation}\label{eq:bnis-i}
\ul{b}_s^* \ul{i}_{s,\Nis} = \ul{i}_{s,\Nis}^\amb \ul{b}_{s}^\Nis.
\end{equation}

In the adjoint pair $(\ul{b}_{s,\Nis},\ul{b}_s^\Nis)$, $\ul{b}_{s,\Nis}$ is exact (but not $\ul{b}_s^\Nis$) .

\begin{defn}\label{d4.2}  Let $\Z^p:\ulMSm^\fin\to \ulMPS^\amb$, $\Z^p:\ulMSm\to \ulMPS$ denote the ``free presheaf'' functors (cf.  \cite[Proposition 2.6.1]{kmsy1}). That is, for $M, N \in \ulMSm^\amb$ (resp. $\ulMSm$), the section $\Z^p (M)(N)$ is given by the free abelian group $\Z \Hom_{\ulMSm^\amb} (N,M)$ (resp. $\Z \Hom_{\ulMSm} (N,M)$). We write $< \ulMV_s ^\amb>$ (resp. $<\ulMV_s>$) for the localizing subcategory of  $D(\ulMPS^\amb)$ (resp.  $D(\ulMPS)$) generated by the objects of the form
\[
\Z^p (U \times_M V) \to \Z^p (U) \oplus \Z^p (V) \to \Z^p (M)
\]
where $M \in \ulMSm^\amb$ (resp. $M\in \ulMSm$) and $U \sqcup V \to M$ runs over all elementary $\ulMV^\amb$-covers (resp. $\ulMV$-covers). Note that $<\ulMV_s>=D(\ul{b}_{s,!})(< \ulMV_s ^\amb>)$. 
\end{defn}

\subsection{Technical results}\label{sec-tec}

\begin{prop} \label{p3.4}\
\begin{enumerate}
\item For any $\ulMV$-square $T\in \ulMSm^\Sq$, the sequence
\[0\to \Z_\tr(T(00))\to \Z_\tr(T(01))\oplus \Z_\tr(T(10))\to \Z_\tr(T(11))\to 0\]
is exact in $\ulMNST$.
\item For any $\MV$-square $T\in \MSm^\Sq$, the sequence
\[0\to \Z_\tr(T(00))\to \Z_\tr(T(01))\oplus \Z_\tr(T(10))\to \Z_\tr(T(11))\to 0\]
is exact in $\MNST$.
\end{enumerate}
\end{prop}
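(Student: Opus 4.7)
The approach is to establish (1) first and then derive (2) via the totalization computation carried out in the proof of Theorem \ref{thm:DMgm-DM-full-faith}(4).

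For (1), by Definition \ref{def:uMVsquare} every $\ulMV$-square is isomorphic in $\ulMSm^\Sq$ to the image under $\ul{b}_s$ of an $\ulMV^\amb$-square, so we may assume $T$ arises this way. Then the square of ambient schemes $\ol{T}$ is a distinguished Nisnevich square and each $T(ij)^\infty$ is the pullback of $T(11)^\infty$. I would then check exactness in $\ulMNST$ by examining $N$-sections for $N \in \ulMCor$. Left- and middle-exactness of the presheaf sequence follow from the cartesian property of $T$ in $\ulMSm$ (Lemma \ref{lem:tensor-cd}(1)) combined with the cycle-theoretic description of $\ulMCor(N,M)$: an admissible elementary correspondence $Z \subset N^\o \times T(11)^\o$ lying simultaneously over $T(01)^\o$ and $T(10)^\o$ must lie over $T(00)^\o$, and admissibility is preserved under restriction since $T(ij)^\infty$ is pulled back from $T(11)^\infty$. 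For exactness at $\Z_\tr(T(11))$ after $\ul{a}_\Nis$-sheafification, I would adapt Voevodsky's Nisnevich-local splitting argument (cf.\ \cite{voetri}): a Nisnevich refinement $\ol{N}' \to \ol{N}$, lifted to an $\ulMV$-cover of $N$ by pullback of $N^\infty$, decomposes any admissible cycle $Z$ into a sum of pieces supported in $T(01)^\o$ and $T(10)^\o$, and admissibility of the pieces is automatic by the pullback of modulus.

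For (2), let $T$ be an $\MV$-square with associated $\ulMV$-square $S$ and morphism $\iota : S \to T$ (Definition \ref{def:MV-sq}). I would reuse the computation from the proof of Theorem \ref{thm:DMgm-DM-full-faith}(4): form the $\Sq \times \Sq$-object $X = S \times_{T(11)} T$ in $\ulMSm$ and compute $\Tot(X) \in K^b(\ulMCor)$ in two ways (Fubini). On one hand, each row is an $\ulMV$-square, so $\Tot(X) \in \langle\ulMV\rangle$. On the other hand, $\Tot(X)$ equals $\Tot(\tau_s(T))$ plus three contractible auxiliary complexes $\Tot(T_{10})$, $\Tot(T_{00})$, $\Tot(T_{01})$: the first two are contractible because $\iota$ restricts to isomorphisms on interiors (Definition \ref{def:MV-sq}(2)), and the third by Lemma \ref{l1.6} applied using the OD-isomorphism condition (Definition \ref{def:MV-sq}(3)). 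By part (1), totalizations of $\ulMV$-squares are acyclic in $D(\ulMNST)$, hence so is $\Tot(\tau_s(T))$. Since $\tau_\Nis : \MNST \to \ulMNST$ is exact and fully faithful with $\tau_\Nis \circ a_\Nis \simeq \ul{a}_\Nis \circ \tau_!$ (Proposition \ref{p3.5}), this acyclicity in $D(\ulMNST)$ is equivalent to exactness of the sequence for $T$ in $\MNST$.

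The main obstacle is the surjectivity at $\Z_\tr(T(11))$ after sheafification in (1): one must construct a Nisnevich cover that simultaneously splits the cycle-theoretic support and respects the admissibility condition. The minimality built into the definition of $\ulMV^\amb$-covers, which forces the modulus to pull back cleanly, is precisely what makes this possible; otherwise pulling back an admissible cycle along a refinement could break admissibility. For (2), the delicate point is the role played by the OD condition in Definition \ref{def:MV-sq}(3), which is essential for the contractibility of $T_{01}$ via Lemma \ref{l1.6} and thus for the passage from $\ulMV$-squares to $\MV$-squares; a direct verification bypassing the $\ulMV$-square bridge does not seem straightforward.
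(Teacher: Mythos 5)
Your proposal is correct, but takes a genuinely different route from the paper, particularly for part (2). The paper disposes of both parts by external citation: (1) is precisely \cite[Th.~4.5.7]{kmsy1}, and (2) is deduced from \cite[Cor.~5.2.7]{mzki2} (which gives exactness of the sheafified sequence in $\ulMNST$, i.e.\ after applying $\tau_\Nis$) combined with Proposition~\ref{p3.5}(2), using that $\tau_\Nis$ is exact and fully faithful, hence reflects exactness. Your sketch of (1) essentially reconstructs the argument behind the citation rather than citing it; note that the appeal to Lemma~\ref{lem:tensor-cd}(1) for cartesianness is a misattribution (that lemma is about $-\otimes M$; cartesianness of $\ulMV^\amb$-squares follows instead from Theorem~\ref{thm:fiberprod-mod} and the definition), and the claim that admissibility and properness restrict harmlessly from a correspondence into $T(11)$ to one into $T(ij)$ is more delicate than it looks, since $\ol{T(ij)}\to\ol{T(11)}$ is \'etale rather than proper; this is exactly the technical content that \cite[Th.~4.5.7]{kmsy1} supplies. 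Your route for (2) is more interesting: you substitute for the external citation a re-use of the totalization computation already carried out in the proof of Theorem~\ref{thm:DMgm-DM-full-faith}(4). That computation shows $\Tot\tau_s(T)\in\langle\ulMV\rangle$ in $K^b(\ulMCor)$; combining with (1), which makes each generator of $\langle\ulMV\rangle$ acyclic in $D(\ulMNST)$ after sheafification, gives acyclicity of the Mayer--Vietoris complex in $\ulMNST$, and the final reflection via Proposition~\ref{p3.5}(2) is then identical to the paper's. There is no circularity, since the totalization argument lives entirely in $K^b(\ulMCor)$ and Theorem~\ref{thm:DMgm-DM-full-faith}(4) precedes this proposition. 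The trade-off is that your derivation of (2) makes the present statement self-contained (given (1) and material already in the paper), at the cost of leaning on the comparatively heavy $\Sq\times\Sq$ totalization device, whereas the paper keeps (2) a one-line sheaf-level deduction from results in \cite{mzki2}.
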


\begin{proof} (1) is \cite[Th. 4.5.7]{kmsy1}. For (2), by \cite[Cor. 5.2.7]{mzki2} we have the desired exactness if we consider the terms as sheaves on $\ulMSm$; equivalently, the sequence becomes exact after applying $\tau_\Nis$. The conclusion then follows from Proposition \ref{p3.5} (2).
\end{proof}

\begin{prop}\label{l3.3} We have a naturally commutative diagram:
\begin{equation}\label{eq3.7}
\begin{gathered}
\xymatrix{
<\MV> \ar[r]\ar[d]&\sK \ar[r] \ar[d] & D(\MPST) \ar[r]^{D(a_\Nis)} \ar[d]_{D(\tau_!)} & D(\MNST) \ar[d]^{D(\tau_\Nis)} \\
< \ulMV > \ar[r] \ar[d] & \ul{\sK} \ar[r] \ar[d] & D(\ulMPST) \ar[r]^{D(\ul{a}_\Nis)} \ar[d]_{D(\ul{c}^\ast)} & D(\ulMNST) \ar[d]^{D(\ul{c}^\Nis)} \\
< \ulMV_s > \ar[r] & \ul{\sK}_s \ar[r] & D(\ulMPS) \ar[r]^{D(\ul{a}_{s,\Nis})} & D(\ulMNS) \\
< \ulMV_s^\amb> \ar[r] \ar[u] & \ul{\sK}_s^{\amb} \ar[r] \ar[u] & D(\ulMPS^\amb) \ar[r]^{D(\ul{a}^\amb_{s,\Nis})} \ar[u]^{D(\ul{b}_{s,!})} & D(\ulMNS^\amb) \ar[u]_{D(\ul{b}_{s,\Nis})},
}
\end{gathered}
\end{equation}
where $\sK$, $\ul{\sK}$, $\ul{\sK}_s$ and $\ul{\sK}_s^{\amb}$ are the kernels of $D(a_\Nis)$, $D(\ul{a}_\Nis)$, $D(\ul{a}_{s,\Nis})$ and $D(\ul{a}^\amb_{s,\Nis})$, respectively.
\end{prop}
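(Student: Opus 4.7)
The plan is to verify the commutativity of the diagram in three stages, working from the rightmost column leftward. The key inputs are Proposition \ref{p3.5} (2), Proposition \ref{p3.4}, \cite[Prop. 4.3.3]{kmsy1}, and Theorem \ref{thm:DMgm-DM-full-faith} (4).

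\emph{Rightmost squares.} The top square commutes by the natural isomorphism $\tau_\Nis a_\Nis \simeq \ul{a}_\Nis \tau_!$ of Proposition \ref{p3.5} (2). The middle square commutes because restriction along the graph functor $\ul{c}: \ulMSm \to \ulMCor$ is compatible with sheafification (the derived analogue of an identity like \eqref{eq:bnis-i}, using that $\ul{c}$ is continuous for the two $\ulMV$-topologies by construction). The bottom square commutes by \cite[Prop. 4.3.3]{kmsy1}, which asserts that $\ul{b}_{s,!}$ preserves sheaves, hence commutes with sheafification. The middle squares (between columns 2 and 3) then commute formally: each of $\sK, \ul\sK, \ul\sK_s, \ul\sK_s^\amb$ is the kernel of the corresponding sheafification functor in column 4, so the right-square commutativity forces the vertical functors in column 3 to restrict to the kernels. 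For example, if $X \in \sK$, then $D(\ul{a}_\Nis) D(\tau_!) X \simeq D(\tau_\Nis) D(a_\Nis) X = 0$, so $D(\tau_!) X \in \ul\sK$.

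\emph{Leftmost squares.} The horizontal inclusions $<\MV> \hookrightarrow \sK$, etc., are well-defined: for the first two rows this is Proposition \ref{p3.4}, and for the last two rows it follows from the standard fact that a Nisnevich Mayer--Vietoris sequence of presheaves becomes exact after sheafification. The containment $D(\tau_!)(<\MV>) \subseteq <\ulMV>$ is proven inside Theorem \ref{thm:DMgm-DM-full-faith} (4): the totalisation of $\tau_s(T)$ for any $\MV$-square $T$ lies in the triangulated ideal generated by $<\ulMV>$. The equality $D(\ul{b}_{s,!})(<\ulMV_s^\amb>) = <\ulMV_s>$ is tautological from Definition \ref{d4.2}.

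The main obstacle will be verifying $D(\ul{c}^*)(<\ulMV>) \subseteq <\ulMV_s>$. A generator $\Z_\tr(T(00)) \to \Z_\tr(T(01)) \oplus \Z_\tr(T(10)) \to \Z_\tr(T(11))$ of $<\ulMV>$ maps under $\ul{c}^*$ to the correspondence-presheaf complex with sections $\ulMCor(-, T(ij))$, which is not obviously in the $\Z^p$-generated localising subcategory $<\ulMV_s>$. My approach would be to construct a functorial resolution of $\ul{c}^*\Z_\tr(M)$ by sums of representables $\Z^p(N)$ indexed by finite correspondences $N \to M$, apply it termwise across the $\ulMV$-square, and recognise the result as a homotopy colimit of $\ulMV_s$-generators. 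Once this containment is in place, the leftmost squares commute by restriction from the middle-column argument.
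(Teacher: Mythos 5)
Your plan follows essentially the same route as the paper: verify the right-hand squares, deduce the middle column, then handle the kernel inclusions and leftmost arrows. A few points of comparison and one substantive remark.

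\textbf{Citations for the right squares.} For the middle right square the paper appeals to the exact identity $\ul{a}_{s,\Nis}\,\ul{c}^* = \ul{c}^\Nis\,\ul{a}_\Nis$ of \cite[Prop. 4.5.6]{kmsy1}, and for the bottom right square to the isomorphism $\ul{a}_{s,\Nis}\,\ul{b}_{s,!} \simeq \ul{b}_{s,\Nis}\,\ul{a}^{\fin}_{s,\Nis}$ obtained by taking left adjoints of both sides of \eqref{eq:bnis-i}; both are then carried to derived categories via the functoriality $F\mapsto D(F)$ for exact $F$. Your verbal explanations (``$\ul c$ is continuous,'' ``$\ul b_{s,!}$ preserves sheaves'') point at the right phenomena but should be replaced by those concrete references.

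\textbf{Kernel inclusions in the bottom two rows.} You justify $<\ulMV_s> \subset \ul\sK_s$ and $<\ulMV_s^\fin> \subset \ul\sK_s^\fin$ by ``the standard fact that a Nisnevich Mayer--Vietoris sequence of presheaves becomes exact after sheafification.'' The paper instead cites \cite[Lemma 2.18]{cdstructures} together with the \emph{regularity} of the cd-structures $P_{\ulMV}$ and $P_{\ulMV^\fin}$; that is the precise statement your informal fact reduces to in this modulus setting, and the regularity input is not free.

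\textbf{The arrow $<\ulMV>\to<\ulMV_s>$.} This is the one real discrepancy. You call the inclusion $D(\ul c^*)(<\ulMV>)\subseteq <\ulMV_s>$ ``the main obstacle'' and sketch a resolution of $\ul c^*\Z_\tr(M)$ by representables; the paper, by contrast, declares this arrow ``tautological from the definitions'' and offers no argument. Your instinct is correct that it is \emph{not} tautological: $\ul c^*\Z_\tr(M)$ is the presheaf $N\mapsto\ulMCor(\ul c(N),M)$, which is not a $\Z^p$-representable, and it is the \emph{left} adjoint $L\ul c_!$, not $\ul c^*$, that sends the $\Z^p$-generators to the $\Z_\tr$-generators. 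However, you should not sink effort into your proposed resolution: this arrow is not used in the proof of Theorem \ref{prop:DGguide}. That proof, in step (iii), argues via the conservativity of $D(\ul c^*)$ (Lemma \ref{l3.5}) and the adjunction isomorphism $\Hom_{D(\ulMPS)}(\Tot\Z^p(S),D(\ul c^*)x)\cong\Hom_{D(\ulMPST)}(L\ul c_!\Tot\Z^p(S),x)=\Hom_{D(\ulMPST)}(\Tot\Z_\tr(S),x)$, which uses $L\ul c_!\Z^p=\Z_\tr$ and never the containment you worry about. (The paper itself flags that the topmost left arrow $<\MV>\to<\ulMV>$ is also unused there.) So: your scepticism is well-placed and, if anything, you are being more careful than the source; but you can safely drop the arrow or state it with a caveat rather than attempting the resolution construction.
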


\begin{proof} Note that $F\mapsto D(F)$ is functorial in exact functors $F$  by \cite[Lemma A.2.4]{kmsy2}. The commutativity of the upper (resp. middle, lower) right square in the diagram follows from Proposition \ref{p3.5} (2) (resp. from $\ul{a}_{s,\Nis} \ul{c}^* = \ul{c}^\Nis \ul{a}_\Nis$, which is proven in \cite[Prop. 4.5.6]{kmsy1}, resp. from the isomorphism $\ul{a}_{s,\Nis} \ul{b}_{s,!} \simeq \ul{b}_{s,\Nis} \ul{a}_{s,\Nis}^\amb$ 
which we obtain by taking left adjoints of both sides of \eqref{eq:bnis-i}).

A fortiori, this provides the vertical functors in the second column.

We have the inclusions $< \ulMV_s > \subset \ul{\sK}_s$ and $< \ulMV_s^\fin > \subset \ul{\sK}_s^\fin$ by \cite[Lemma 2.18]{cdstructures} and the regularity of the cd-structures $P_{\ulMV}$ and $P_{\ulMV^\fin}$. 
We also have $< \ulMV > \subset \ul{\sK}$ and $< \MV > \subset \sK$  by Proposition \ref{p3.4}. 

Finally, the arrows in the left column follow tautologically from the definitions of $< \ulMV >$, $< \ulMV_s >$ and $< \ulMV_s^\amb>$, except for the top one which follows from the proof of Theorem \ref{thm:DMgm-DM-full-faith} (4) (this arrow will not be used in the proof of Theorem \ref{prop:DGguide}).
\end{proof}

\begin{rk} It would be more natural to use $D(\ul{b}_s^*)$ and $R\ul{b}_s^\Nis$ in Diagram \eqref{eq3.7}. Unfortunately, the commutation of the corresponding square would imply the exactness of $\ul{b}_s^\Nis$, which is false. This will force us to use a more indirect argument for the proof of (ii) in Subsection \ref{s3.8} below.
\end{rk}

\begin{lemma}\label{l3.5} The functor $D(\ul{c}^\ast):D(\ulMPST)\to D(\ulMPS)$ is conservative, and $<\ulMV>\to <\ulMV_s>$ is essentially surjective.
\end{lemma}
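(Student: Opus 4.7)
The two assertions are essentially independent.

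For (a), the graph functor $\ul{c}:\ulMSm\to\ulMCor$ is the identity on objects, so the restriction functor $\ul{c}^*:\ulMPST\to\ulMPS$ is exact and $(\ul{c}^*F)(M)=F(M)$ as abelian groups for every modulus pair $M$. Thus $\ul{c}^*F=0$ forces $F(M)=0$ for all $M$, hence $F=0$ in $\ulMPST$. Since $\ul{c}^*$ is exact, $D(\ul{c}^*)$ is computed termwise and commutes with cohomology, so conservativity propagates to complexes: $D(\ul{c}^*)K\simeq 0$ implies every $H^i(K)=0$ and hence $K\simeq 0$.

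For (b), the plan is to compare the canonical generators of the two localising subcategories. For a $\ulMV$-square $T$, write $C_\tr(T)$ and $C_s(T)$ for the MV-complexes built from $\Z_\tr$ and $\Z^p$ respectively, so that $\{C_\tr(T)\}_T$ generates $<\ulMV>$ and $\{C_s(T)\}_T$ generates $<\ulMV_s>$. The unit of the adjunction $(\ul{c}_!,\ul{c}^*)$, which on representables sends an ambient morphism to its graph correspondence, induces a natural morphism $\eta_T:C_s(T)\to\ul{c}^*C_\tr(T)=D(\ul{c}^*)(C_\tr(T))$ in $D(\ulMPS)$. The decisive step is to prove
\[\Cone(\eta_T)\in\ <\ulMV_s>.\]
Granting this, the distinguished triangle $C_s(T)\to\ul{c}^*C_\tr(T)\to\Cone(\eta_T)\to[1]$ simultaneously shows that $D(\ul{c}^*)$ sends $<\ulMV>$ into $<\ulMV_s>$ (since $C_s(T)$ and $\Cone(\eta_T)$ both lie in $<\ulMV_s>$) and that every generator $C_s(T)$ lies in the localising subcategory of $D(\ulMPS)$ generated by the essential image of $D(\ul{c}^*)|_{<\ulMV>}$---the desired essential surjectivity.

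\textbf{Main obstacle.} Establishing the cone statement. Termwise $\Cone(\eta_T)$ is the MV-complex of the quotient presheaves $Q_M:=\ul{c}^*\Z_\tr(M)/\Z^p(M)$, whose sections over $N\in\ulMSm$ parametrise elementary modulus correspondences $N\to M$ modulo those coming from ambient morphisms. I would construct, for each $M$, an $<\ulMV_s>$-resolution of $Q_M$ by $\Z^p$-type presheaves indexed by $\ulMV$-covers, via the modulus analogue of Voevodsky's classical Nisnevich-local splitting of finite correspondences: every elementary modulus correspondence $\alpha:N\to M$ becomes, after pullback along some $\ulMV$-cover of $N$, a $\Z$-linear combination of graphs of ambient morphisms. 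The delicate point, and the place where the modulus setting differs most from Voevodsky's, is verifying that the graphs produced by the refinement remain admissible in the sense of \eqref{eq1.3}; for this I would rely on the minimal-refinement technology of Theorem \ref{t1.1} together with the regularity and boundedness of the cd-structure $P_{\ulMV^\amb}$ (Proposition \ref{prop:MVamb}), combined with the cofinality statement for ambient refinements in Theorem \ref{t1.2}.
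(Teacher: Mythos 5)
Your argument for conservativity is correct and in fact more elementary than the paper's. You observe that $\ul{c}$ is the identity on objects, so $\ul{c}^*$ is sectionwise the identity, hence conservative on $\ulMPST$; exactness of $\ul{c}^*$ then propagates this to $D(\ul{c}^*)$ via long exact sequences. The paper instead runs the adjunction $L\ul{c}_!\dashv D(\ul{c}^*)$ through the compact projective generators $\Z^p(M)$, using $L\ul{c}_!\Z^p(M)=\Z_\tr(M)$. Both work; the paper's choice has the side benefit of establishing the identity $L\ul{c}_!\Z^p(M)=\Z_\tr(M)$ on record, which is reused verbatim in step (iii) of the proof of Theorem \ref{prop:DGguide}.

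For the second assertion, the paper's entire proof is ``trivial,'' whereas your plan is both incomplete and built on a claim that fails. Two concrete problems. First, the logic does not actually deliver what you need: from the triangle $C_s(T)\to D(\ul{c}^*)C_\tr(T)\to\Cone(\eta_T)\to[1]$, to conclude that $C_s(T)$ lies in the localising subcategory generated by the essential image of $D(\ul{c}^*)|_{<\ulMV>}$ you would need $\Cone(\eta_T)$ to lie in that same subcategory, not merely in $<\ulMV_s>$; you never establish this, so even granting your ``main obstacle'' the conclusion is not reached. Second, the route you propose for the main obstacle rests on the assertion that every elementary modulus correspondence $N\to M$ becomes, after pullback along some $\ulMV$-cover of $N$, a $\Z$-linear combination of graphs of ambient morphisms. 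This is false already in Voevodsky's setting: a finite correspondence $Z\subset N^\o\times M^\o$ has $Z\to N^\o$ finite but generally ramified, and a ramified finite cover does not split Nisnevich-locally into graphs. Accordingly, the sheafification $\ul{a}_{s,\Nis}Q_M$ of the quotient $Q_M=\ul{c}^*\Z_\tr(M)/\Z^p(M)$ is not zero, and the ``$<\ulMV_s>$-resolution of $Q_M$'' you envision does not exist. (Whatever is true here must be a Mayer--Vietoris statement about the totalised complex $\Tot Q_T$, not a local-vanishing statement for each $Q_M$ separately.) The intended triviality is much softer than any of this: the generators of $<\ulMV>$ and of $<\ulMV_s>$ are indexed by the very same $\ulMV$-squares $S$, via $\Tot\Z_\tr(S)$ on one side and $\Tot\Z^p(S)$ on the other, with $L\ul{c}_!$ carrying the latter to the former because each $\Z^p(M)$ is projective and $\ul{c}_!\Z^p(M)=\Z_\tr(M)$; no sheaf-theoretic input about splitting correspondences is needed or true.
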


\begin{proof} Let $C\in D(\ulMPST)$ be such that $D(\ul{c}^\ast)C=0$. For any $M\in \ulMSm$ and any $i\in\Z$, we have 
$c_! \Z^p(M) = \Z_\tr(M)$ by \cite[Proposition 2.6.1]{kmsy1}, where $\Z^p$ is as in Definition \ref{d4.2}. 
Moreover, the presheaf $\Z^p (M) \in \ulMPS$ is a projective object, since $\Hom_{\ulMPS} (\Z^p (M), F) = F(M)$ by definition of $\Z^p$. 
This implies $Lc_! \Z^p (M) = c_! \Z^p(M)$.
Therefore,
\[0=D(\ulMPS)(\Z^p(M),D(\ul{c}^\ast)C[i])=  D(\ulMPST)(\Z_\tr(M),C[i])\]
by adjunction.
This shows that $C=0$. 
The second statement is trivial.
\end{proof}

\begin{prop}\label{l3.4} We have the following isomorphism of functors:
\[ D(\ul{b}_{s,!})R\ul{i}_{s,\Nis}^\fin D(\ul{a}_{s,\Nis}^\fin)\simeq R\ul{i}_{s,\Nis} D(\ul{a}_{s,\Nis})D(\ul{b}_{s,!}). \]
\end{prop}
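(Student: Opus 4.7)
My plan is to reduce the statement to a more transparent comparison, construct a natural transformation in the required direction, and then verify it is an isomorphism by a Brown--Gersten style argument.

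First, I would simplify the right-hand side using the sheafification compatibility. Taking left adjoints of \eqref{eq:bnis-i} yields the natural isomorphism $\ul{a}_{s,\Nis}\ul{b}_{s,!} \simeq \ul{b}_{s,\Nis}\ul{a}_{s,\Nis}^\fin$ (used in the proof of Proposition \ref{l3.3}). Since $\ul{b}_{s,!}$, $\ul{b}_{s,\Nis}$ and both sheafifications are exact, this descends to an iso $D(\ul{a}_{s,\Nis})D(\ul{b}_{s,!}) \simeq D(\ul{b}_{s,\Nis})D(\ul{a}_{s,\Nis}^\fin)$. Substituting into the right-hand side, the claim reduces to establishing the iso of functors $D(\ulMNS^\fin)\to D(\ulMPS)$:
\[
D(\ul{b}_{s,!})\, R\ul{i}_{s,\Nis}^\fin \;\simeq\; R\ul{i}_{s,\Nis}\, D(\ul{b}_{s,\Nis}).
\]

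Next I would construct the natural transformation explicitly. The counit $D(\ul{a}_{s,\Nis}^\fin)R\ul{i}_{s,\Nis}^\fin \simeq \id$ combined with the square above provides a natural isomorphism $D(\ul{a}_{s,\Nis})\circ D(\ul{b}_{s,!})R\ul{i}_{s,\Nis}^\fin \simeq D(\ul{b}_{s,\Nis})$. Applying the unit of the adjunction $(D(\ul{a}_{s,\Nis}), R\ul{i}_{s,\Nis})$ then produces, for every $F \in D(\ulMNS^\fin)$, a canonical morphism
\[
\eta_F:\ D(\ul{b}_{s,!})\,R\ul{i}_{s,\Nis}^\fin F \longrightarrow R\ul{i}_{s,\Nis}\,D(\ul{b}_{s,\Nis}) F.
\]
On zeroth cohomology sheaves this is the elementary identity $\ul{b}_{s,!}\ul{i}_{s,\Nis}^\fin F = \ul{i}_{s,\Nis}\ul{b}_{s,\Nis}F$, valid because $\ul{b}_{s,!}$ preserves sheaves.

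Third, I would check that $\eta_F$ is an isomorphism. Via standard triangulated truncation arguments and the exactness of $\ul{b}_{s,!}$, this reduces to showing, for a single sheaf $F\in\ulMNS^\fin$ and every $n\geq 0$, that
\[
\ul{b}_{s,!}\bigl(\sH^n_{\ulMV^\fin}(-,F)\bigr) \;\simeq\; \sH^n_{\ulMV}\bigl(-,\ul{b}_{s,\Nis}F\bigr) \quad\text{in } \ulMPS.
\]
The natural approach is to exploit the compatibility of the cd-structures established in Definition \ref{def:uMVsquare}: every $\ulMV$-square is isomorphic in $\ulMSm^\Sq$ to the image under $\ul{b}_s$ of a $\ulMV^\fin$-square, and by Proposition \ref{prop:MVamb} and the remark on $P_{\ulMV}$, both cd-structures are complete and regular. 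Thus the Brown--Gersten characterization applies: a complex of presheaves lies in the essential image of $R\ul{i}_{s,\Nis}$ iff it is BG-local, i.e., sends every $\ulMV$-square to a homotopy cartesian square. One verifies that $\ul{b}_{s,!}R\ul{i}_{s,\Nis}^\fin F$ is $\ulMV$-BG-local, using that on the image of each $\ulMV^\fin$-square the functor $\ul{b}_{s,!}$ reproduces the Mayer--Vietoris property already present on $\ulMSm^\fin$.

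The hard part will be precisely this last step: since $\ul{b}_{s,!}$ is a non-trivial left Kan extension and $\ul{b}_{s,\Nis}$ does \emph{not} in general preserve injectives (the right adjoint $\ul{b}_s^\Nis$ is not exact, as noted in the excerpt), one cannot simply transport injective resolutions. The verification will instead have to proceed through the cd-structure description of cohomology and the bijection (up to isomorphism) between $\ulMV^\fin$-squares and $\ulMV$-squares, tracking carefully how $\ul{b}_{s,!}$ acts on the Cech hypercovers built from these squares.
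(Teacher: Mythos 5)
Your initial reduction coincides with the paper's: both use the commutativity of the lower right square in \eqref{eq3.7} to reduce the claim to $D(\ul{b}_{s,!})R\ul{i}_{s,\Nis}^\fin \simeq R\ul{i}_{s,\Nis}D(\ul{b}_{s,\Nis})$. From there, however, the two routes diverge genuinely. The paper does not touch cd-structures or BG-locality at this step: it establishes $R(\ul{i}_{s,\Nis}\ul{b}_{s,\Nis}) \iso R\ul{i}_{s,\Nis}\circ D(\ul{b}_{s,\Nis})$ by the composition criterion \cite[Lemma A.2.7]{kmsy2} for total right derived functors on \emph{unbounded} derived categories, which requires two inputs: that $\ul{b}_{s,\Nis}$ sends injectives to $\ul{i}_{s,\Nis}$-acyclic sheaves (cited from \cite[Lemma 4.4.3]{kmsy1}), and \emph{strong additivity} of $D(\ul{b}_{s,\Nis})$, $R\ul{i}_{s,\Nis}$, and $R(\ul{i}_{s,\Nis}\ul{b}_{s,\Nis})$ --- the last verified via the colimit formula \cite[(2.5.1)]{kmsy1} for $\ul{b}_{s,!}$ and the cohomological-dimension bound $\dim M^\o$. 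Your cd-structure route avoids the acyclicity lemma, which is a real difference; but it must pay for it elsewhere.

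There are two genuine gaps. First, reducing to a single sheaf $F\in\ulMNS^\fin$ placed in degree zero is not a ``standard truncation argument'' in the unbounded setting: you need either strong additivity of both composites (so that it suffices to test on the compact generators, which are sheaves) or a restriction to bounded-below objects followed by passage to the limit --- precisely the kind of bookkeeping the paper's proof does explicitly and you omit. Second, and more seriously, BG-locality of $\ul{b}_{s,!}R\ul{i}_{s,\Nis}^\fin F$ for $P_{\ulMV}$ is not simply ``reproduced'' by $\ul{b}_{s,!}$: for an $\ulMV$-square $S\cong\ul{b}_s(S')$, the value of $\ul{b}_{s,!}G$ at each vertex is a filtered colimit over $\ulSigma^\fin\downarrow S'(ij)$, and these four index categories are a priori unrelated. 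One would have to show that minimal proper interior-isomorphisms pull $\ulMV^\fin$-squares back to cofinal families of $\ulMV^\fin$-squares, rewrite the four colimits over a common filtered index, and then commute the filtered colimit past the homotopy-cartesian condition. This is plausible in spirit but it is exactly where your proposal stops, so as written it does not yet constitute a proof.
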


\begin{proof} In view of the commutativity of the right lower square in \eqref{eq3.7}, it suffices to show the isomorphisms 
\[
D(\ul{b}_{s,!})R(\ul{i}_{s,\Nis}^\amb) \osi R(\ul{b}_{s,!}\ul{i}_{s,\Nis}^\amb) \simeq R(\ul{i}_{s,\Nis}\ul{b}_{s,\Nis}) \iso R(\ul{i}_{s,\Nis})D(\ul{b}_{s,\Nis}).
\]
Indeed, the first isomorphism follows from the exactness of $\ul{b}_{s,!}$ by \cite[Lemma A.2.4]{kmsy2}, and the second one is tautological. 

For the third one, by \cite[Lemma A.2.7]{kmsy2}, it suffices to show that 
$\ul{b}_{s,\Nis}$ sends injectives to $\ul{i}_{s,\Nis}$-acyclic sheaves,
which  holds by \cite[Lemma 4.4.3]{kmsy1},
and that $D(\ul{b}_{s,\Nis})$, $R(\ul{i}_{s,\Nis})$ and $R(\ul{i}_{s,\Nis}\ul{b}_{s,\Nis})$ are strongly additive.  
Since $\ul{b}_{s,\Nis}$ is exact and strongly additive as a left adjoint, $D(\ul{b}_{s,\Nis})$ is strongly additive by \cite[Prop. A.2.8 a)]{kmsy2}.  
For $R(\ul{i}_{s,\Nis})$, we invoke \cite[Prop. A.2.8 c)]{kmsy2}: 
by \cite[Prop. A.2.5]{kmsy2},
$R \ul{i}_{s,\Nis}$ has the left adjoint $D(\ul{a}_{s,\Nis})$, 
which sends $\Z^{p}(M)[n]$ ($M\in \ulMSm$, $n\in \Z$) to  $\Z(M)[n]$.  The first are compact generators of $D(\ulMPS)$ by Example \ref{exA.5}, and the second are compact in $D(\ulMNS^\fin)$ by \cite[Prop. 7.1.1]{kmsy2}. 

Finally, we prove the strong additivity of $R(\ul{i}_{s,\Nis}\ul{b}_{s,\Nis})\simeq R(\ul{b}_{s,!}\ul{i}_{s,\Nis}^\amb)$. For this, we use \cite[Prop. A.2.8 b)]{kmsy2}. We must check that
\begin{itemize}
\item $R^p(\ul{b}_{s,!}\ul{i}_{s,\Nis}^\amb)$ is strongly additive for all $p\ge 0$;
\item there is a set $\sE$ of compact projective generators of $\ulMPS$ and integers $cd(E)$ for $E\in \sE$ such that $\ulMPS(E,R^p(\ul{b}_{s,!}\ul{i}_{s,\Nis}^\amb)(A))=0$ for any $p>cd(E)$ and for any $A\in \ulMNS^\fin$. 
\end{itemize}

Noting that $R^p(\ul{b}_{s,!}\ul{i}_{s,\Nis}^\amb)\simeq \ul{b}_{s,!}R^p\ul{i}_{s,\Nis}^\amb$, the first point follows from the commutation of Nisnevich cohomology with filtering direct limits. For the second one, we take for $\sE$ the collection of $\Z^p(M)$ for $M\in \ulMSm$, and claim that $cd(\Z^p(M))=\dim  M^\o$ works. Indeed:
\begin{multline*}
\ulMPS(\Z^p(M),R^p(\ul{b}_{s,!}\ul{i}_{s,\Nis}^\amb)(A)) = \ulMPS(\Z^p(M),\ul{b}_{s,!} R^p\ul{i}_{s,\Nis}^\amb(A))\\ 
=\colim_{N\in \ulSigma^\fin \downarrow M}\ulMPS^\fin(\Z^p(N),R^p\ul{i}_{s,\Nis}^\amb(A))=\colim_{N\in \ulSigma^\fin \downarrow M}H^p_\Nis(\ol{N},A_N)\\
=0 \text{ for } p>\dim M^\o,
\end{multline*}
where we used \cite[(2.5.1)]{kmsy1} for the second equality.
We are done.
\end{proof}

\subsection{Proof of Theorem \ref{prop:DGguide}}\label{s3.8}
Assertions (3) and (4) follow from (1) and (2).
Assertion (5) is a consequence of Theorem \ref{tA.4} and the fact that $D(\ulMPST)$ and $D(\MPST)$ are compactly generated (see Example \ref{exA.5}). 
The assertion on tensor structures holds since $< \ulMV >$ and $< \MV >$ are $\otimes$-ideals by Lemma \ref{lem:tensor-cd} (cf. the proof of Theorem \ref{thm:DMgm-DM-full-faith} (3)). 

It remains to prove (1) and (2). Let $<\MV >^\perp$ (resp. $< \ulMV >^\perp$) denote the right orthogonal of $< \MV >$ in $\sK$ (resp. of $< \ulMV >$ in $\ul{\sK}$), and define $<\ulMV_s>^\perp$, $<\ulMV_s^\fin>^\perp$ similarly. By  Theorem \ref{tA.5}, we have $<\MV>=\sK$ $\iff$ $<\MV>^\perp=0$, etc. We shall play with these equivalences. More precisely, the layout is:
\begin{thlist}
\item $<\ulMV_s^\fin>^\perp=0$.
\item $<\ulMV_s>=\ul{\sK}_s$.
\item $< \ulMV >^\perp=0$ (i.e. (1)).
\item $<\MV >^\perp=0$ (i.e. (2)).
\end{thlist}

\begin{proof}[Proof of (i)] It follows from \cite[Th. 3.2]{cdstructures},  since the cd-structure $\ulMV^\amb$ is complete and bounded (Proposition \ref{prop:MVamb}).
\end{proof}

\begin{proof}[Proof of (ii)] Let $x \in \ul{\sK}_s$.  Since $\ul{b}_{s,!}$ is exact, the functor $R\ul{b}_s^\ast$ is right adjoint to $D(\ul{b}_{s,!})$ by Proposition \ref{pA.3}. Consider the distinguished triangle in $D(\ulMPS^\amb)$:
\begin{equation}\label{eq:triangle}
R\ul{b}_s^\ast  x\by{f} R\ul{i}^\amb_{s,\Nis}D(\ul{a}^\amb_{s,\Nis})R\ul{b}_s^\ast  x \to z \by{+1},
\end{equation}
where $f$ is the unit of  the adjunction $(D(\ul{a}^\amb_{s,\Nis}),R(\ul{i}^\amb_{s,\Nis}))$ and $z$ is a cone of $f$. Applying $D(\ul{b}_{s,!})$ to \eqref{eq:triangle}, we obtain the following distinguished triangle in $D(\ulMPS)$:
\begin{equation}\label{eq:triangle2}
D(\ul{b}_{s,!})R\ul{b}_s^\ast  x\by{f} D(\ul{b}_{s,!})R\ul{i}^\amb_{s,\Nis}D(\ul{a}^\amb_{s,\Nis})R\ul{b}_s^\ast  x \to D(\ul{b}_{s,!})z \by{+1}.
\end{equation}

By Proposition \ref{l3.4}, the second term of \eqref{eq:triangle2} is isomorphic to $R\ul{i}_{s,\Nis} D(\ul{a}_{s,\Nis})D(\ul{b}_{s,!})R\ul{b}_s^\ast  x$, and we have $D(\ul{b}_{s,!}) R\ul{b}_{s}^\ast = \id$ by Proposition \ref{pA.3} and by the full faithfulness of $\ul{b}_{s}^\ast$ \cite[Prop. 2.5.1]{kmsy1}.
Hence the first term is isomorphic to $x$, and the second term is $0$ by $x \in \ul{\mathcal{K}}_s$. We thus get an isomorphism $x\simeq  D(\ul{b}_{s,!})z[-1]$. Moreover, $z\in \ul{\sK}_s^{\amb}$ as one sees by applying $D(\ul{a}^\amb_{s,\Nis})$ to \eqref{eq:triangle}. By (i) and Proposition \ref{l3.3}, this implies that $x\in <\ulMV_s>$ as requested. 
\end{proof}

\begin{proof}[Proof of (iii)] 
Let $x \in <\ulMV>^\perp$: we must prove that $x=0$. Since $D(\ul{c}^*)$ is conservative by Lemma \ref{l3.5}, it suffices to show $D(\ul{c}^*)x=0$. Since $<\ulMV_s>^\perp =0$ by (ii), it is enough to prove that $D(\ul{c}^*)x \in <\ulMV_s>^\perp$. By Definition \ref{d4.2}, $<\ulMV_s>$ is generated by complexes of the form $\Tot \Z^p (S)$ for $\ulMV$-squares $S$. Therefore, it suffices to prove that $\Hom_{D(\ulMPS)} (\Tot \Z^p (S), D(\ul{c}^*)x) = 0$ for any such $S$. We compute:
\begin{align*}
\Hom_{D(\ulMPS)} (\Tot \Z^p (S), D(\ul{c}^*)x) 
&\cong
\Hom_{D(\ulMPST)} (L(\ul{c}_!)\Tot \Z^p (S), x) \\
&= \Hom_{D(\ulMPST)} (\Tot \Z_\tr (S), x) \\
&=0,
\end{align*}
where the first isomorphism follows from Lemma \ref{lA.5}, since the left derived functor $L(\ul{c}_!)$ is defined at the bounded complex $\Tot \Z^p (S)$ of projective objects.\footnote{In fact, $Lc_!$ is everywhere defined by \cite[Th. 14.4.3]{ks}.}  The second equality follows from the equality $L(\ul{c}_!)\Z^p (M) = \Z_\tr (M)$ for any modulus pair $M$ (this was already used in the proof of Lemma \ref{l3.5}), and the third equality follows from $\Tot \Z_\tr (S) \in <\ulMV>$. This finishes the proof. 
\end{proof}

\begin{proof}[Proof of (iv)]
Since $D(\tau_!)$ is fully faithful by Lemma \ref{claim:ff}, we are reduced by (iii) to proving
\begin{equation}\label{eq:perp}
 D(\tau_!) (< \MV >^\perp ) \subset < \ulMV >^\perp.
\end{equation}

Take any $x \in < \MV >^\perp$. 
It suffices to prove that the abelian group 
\[
\ul{\sK} (\Tot \Z_\tr (S),D(\tau_!)(x)[i]) = D(\ulMPST)(\Tot \Z_\tr (S),D(\tau_!)(x)[i])
\]
is $0$ for any $\ulMV^\amb$-square $S$  and any $i\in\Z$, where $\Tot$ denotes totalisation. 
For each $(ij) \in \Sq$, set $S^N (ij) := (\ol{S(ij)}^N, \pi_{ij}^\ast S(ij)^\infty)$, where $\pi_{ij} : \ol{S(ij)}^N \to \ol{S(ij)}$ is normalisation.
Then the edges $S(ij) \to S(i'j')$ in $S$ uniquely lift to $S^N(ij) \to S^N(i'j')$, and we obtain a new square $S^N$. 
The maps $\pi_{ij}$ induce a morphism $S^N \to S$ in $\ulMSm^\Sq$, which is an isomorphism since normalisation is proper. 
Moreover, the \'etaleness of the edges in $S$ implies that $\ol{S^N (ij)} = \ol{S(ij)} \times_{\ol{S(11)}} \ol{S^N(11)}$, and therefore that $S^N$ is again an $\ulMV^\amb$-square. 
In the following, replacing $S$ with $S^N$, we may assume that the ambient space $\ol{S(ij)}$ is normal for all $(ij) \in \Sq$. 

Now, we compute for $i\in \Z$:
\begin{align*}
&D(\ulMPST) (\Tot \Z_\tr (S),D(\tau_!)(x)[i]) \\
&\cong^1 K(\ulMPST) (\Tot \Z_\tr (S),\tau_! (x)[i]) \\
&= H^0 \Hom^\bullet_{\mathrm{Ch}(\ulMPST)} (\Tot \Z_\tr (S),\tau_!(x)[i]) \\
&\cong^2 \varinjlim_{T \in \Comp (S)} H^0 \Hom^\bullet_{\mathrm{Ch}(\MPST)} (\Tot \Z_\tr (T),x[i]) \\
&=\varinjlim_{T \in \Comp (S)} K(\MPST) (\Tot \Z_\tr (T),x[i]) \\
&\cong^3 \varinjlim_{T \in \Comp (S)} D(\MPST) (\Tot \Z_\tr (T),x[i]) \\
&\cong^4 \varinjlim_{T \in \Comp^{\MV} (S)} D(\MPST) (\Tot \Z_\tr (T),x[i])\\ 
&=^5 0,
\end{align*}
where $\mathrm{Ch}(-)$ denotes the category of chain complexes, and $\Hom^\bullet$ denotes the Hom complex. 
Here $\Comp (S)$  and $\Comp^{\MV} (S)$ are as in Theorem \ref{t1.2}. 

The isomorphisms $\cong^1$ and $\cong^3$ hold because each component of $ \Z_\tr (S)$ and $\Z_\tr (T)$ is projective, and $\cong^2$ follows from the formula in Theorem \ref{t2.1} for the pro-left adjoint of $\tau^!$ of $\tau_!$.
Moreover, $\cong^4$ follows from Theorem \ref{t1.2}.
Finally, the assumptions $x \in < \MV >^\perp$ and $\Tot \Z_\tr (T) \in < \MV >$ imply $=^5$.
\end{proof}

This completes the proof of Theorem \ref{prop:DGguide}.

\begin{remark}\label{rem:bdd} This proof rests fundamentally on the fact that the cd-structure $P_{\ulMV^\fin}$ on $\ulMSm^\fin$ is bounded; an easier but similar proof shows that the kernel of the localisation functor $D(a_\Nis^V):D(\PST)\to D(\NST)$ equals $<\MV_\Nis>$, cf. \cite[Proposition in \S 4.2.1]{be-vo}. The main reason why the boundedness of $P_{\ulMV^\fin}$  is sufficient here seems to be that  sheaves in $\ulMNST$ and $\MNST$ also have cohomological dimensions bounded by the dimension of the total space of a modulus pair \cite[Cor. 2.2.10 and 5.1.4]{kmsy2}. 

We do not know whether the cd-structures $P_{\ulMV}$ and $P_{\MV}$ are themselves bounded. 
\end{remark}

\section{The derived Suslin complex}\label{s.sc}

\subsection{$\bcube$-invariance}\label{s9.1} We start with:

\begin{lemma}\label{l7.1} Let $\bcube=(\P^1, \infty) \in \MP$.  
The interval structure of $\A^1\simeq \P^1-\{ \infty \}\in \Sm$ from \cite{H1} 
induces an interval structure on $\bcube$ for the $\otimes$-structure of Definition \ref{def:tensor-mod}.
\end{lemma}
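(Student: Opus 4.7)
The plan is to transport the interval data on $\A^1$ from \cite{H1}---consisting of the structure map $p^\o: \A^1 \to \Spec k$, the endpoints $i_0^\o, i_1^\o : \Spec k \to \A^1$ at $0$ and $1$, and the multiplication $\mu^\o : \A^1 \times \A^1 \to \A^1$, $(x,y) \mapsto xy$---to $\bcube$ in $\MCor$. The structure map extends to $p : \bcube \to \un$ (ambient, trivially admissible), and the endpoints extend to $i_0, i_1 : \un \to \bcube$ (ambient, with trivially admissible: the images of $0,1$ avoid the divisor $\{\infty\}$, so the pulled-back modulus on the source is zero).

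The whole content of the lemma is that $\mu^\o$ extends to an admissible finite correspondence $\mu : \bcube \otimes \bcube \to \bcube$ in $\MCor$. Here it is crucial that we use the \emph{tensor} product $\bcube \otimes \bcube = (\P^1 \times \P^1, \{x=\infty\} + \{y=\infty\})$ and not the cartesian product, since the latter would force a blow-up at $(\infty,\infty)$ (Remark \ref{r2.1}) and ruin the calculation below. My plan is to take $\ol{Z} \subset \P^1 \times \P^1 \times \P^1$ to be the closure of the graph of $\mu^\o$, cut out by the tri-linear equation $bde = acf$ in standard homogeneous coordinates $([a:b], [c:d], [e:f])$, verify by a straightforward chart-by-chart computation that $\ol{Z}$ is smooth (so $\ol{Z}^N = \ol{Z}$, and $\nu$ is just the inclusion $\ol{Z} \hookrightarrow (\P^1)^3$), note that properness over $\P^1 \times \P^1$ is automatic, and then check the modulus inequality
\[
\nu^*(\{a = 0\} + \{c = 0\}) \;\geq\; \nu^*(\{e = 0\})
\]
on $\ol{Z}$ in each affine chart touching one of the boundary divisors.

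The decisive case is the chart around $(\infty,\infty,\infty)$, where setting $b = d = f = 1$ the equation becomes $e = ac$, giving the equality
\[
\nu^*\{e = 0\} \;=\; \{a = 0\} + \{c = 0\} \;=\; \nu^*(\{a = 0\} + \{c = 0\}),
\]
i.e.\ the minimal modulus case. The remaining charts either contain no boundary divisor (trivial) or only one of $\{a=0\}, \{c=0\}, \{e=0\}$, in which case the inequality holds, often strictly (e.g.\ in the chart $b = c = e = 1$ with equation $d = af$, both sides are zero over the locus of interest). This is the main technical point; once $\mu$ is shown admissible, the interval-structure identities ($p i_0 = p i_1 = \id_\un$, $\mu (i_1 \otimes \id_\bcube) = \id_\bcube$, $\mu (i_0 \otimes \id_\bcube) = i_0 p$, and the symmetric identities) are inherited for free from the corresponding identities in $\Sm$, since they can be checked after applying the interior functor $\omega$, which is faithful on Hom-sets by construction of $\MCor$.

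The principal obstacle is thus confined to the admissibility verification for $\mu$, and more precisely to the normality/smoothness analysis of $\ol{Z}$; the charts where normality could conceivably fail (those involving at least two of $a,c,e$ equal to zero) are exactly the ones where one can check directly in local coordinates that $\ol{Z}$ is cut out by a single nondegenerate equation of the form $e = ac$, $y - uz = 0$, etc., and hence is smooth.
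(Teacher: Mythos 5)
Your plan matches the paper's proof in substance: both reduce everything to checking admissibility of $\mu$ by taking the closure $\ol{Z}$ of the graph of $\mu^\o$ in $(\P^1)^3$, observing it is smooth (so normalisation is trivial), and then comparing the pullbacks of the modulus divisors. The paper gets the same conclusion by recognising $\ol{Z}$ as $\Bl_{(0,\infty),(\infty,0)}(\P^1\times\P^1)$ and writing out $p_1^*(M^\infty\times\P^1)$ and $p_2^*(\P^1\times\P^1\times\infty)$ globally as explicit sums of curves; your explicit trilinear hypersurface $bde=acf$ and chart-by-chart verification is a perfectly good alternative rendering of the same computation, and your observation that the interval identities descend for free along the faithful functor $\omega$ is the same remark the paper makes implicitly.

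One small inaccuracy in your case analysis: it is not true that charts other than $b=d=f=1$ see at most one of the three boundary divisors. For instance, in the chart $a=d=f=1$ the equation is $c=be$, and \emph{both} $\{c=0\}$ and $\{e=0\}$ meet $\ol{Z}$ nontrivially; there $\nu^*(\{a=0\}+\{c=0\})-\nu^*\{e=0\}=(\{b=0\}+\{e=0\})-\{e=0\}=\{b=0\}\ge 0$, and symmetrically in $b=c=f=1$. So there are two "mixed" charts beyond the decisive one and the trivial ones, and they should be listed; the inequality still holds in them (strictly), so the conclusion is unaffected, but the enumeration as written does not account for them. It would also be worth making explicit that $Z(bde-acf)$ is irreducible (e.g.\ because it restricts to the irreducible $e=ac$ on a dense affine, and is connected by Lefschetz since $(1,1,1)$ is ample), so that it really \emph{is} the graph closure and not merely contains it.
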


\begin{proof} 
We need to check that the structure maps $p,i_0,i_1,\mu$ are morphisms in $\MCor$. The unit object is $(\Spec k,\emptyset)$, so  $i_0,i_1$ and $p$ are clearly admissible. As for $\mu$, its points of indeterminacy in $\P^1\times \P^1$ are $(0,\infty)$ and $(\infty,0)$; the closure $\Gamma$ of its graph in $\P^1\times \P^1\times \P^1$ is isomorphic to $\Bl_{(0,\infty),(\infty,0)}(\P^1\times \P^1)$, where the two exceptional divisors are given by $0\times \infty\times \P^1$ and $\infty\times 0\times \P^1$. In particular, $\Gamma$ is smooth. Then
\[p_2^*\infty= \P^1\times \infty \times \infty + \infty \times \P^1\times \infty\]
while
\[p_1^*(\P^1\times \infty+\infty \times \P^1)=\P^1\times \infty\times \infty + 0\times \infty \times \P^1+ \infty \times \P^1\times \infty + \infty \times 0\times \P^1\]
which completes the proof.
\end{proof}

\begin{rk} \label{r5.2} Lemma \ref{l7.1} is false if we replace the $\otimes$-structure of Definition \ref{def:tensor-mod} by the cartesian product structure: indeed, $\mu$ does not factor through the morphism $\bcube\otimes \bcube\to\bcube\times \bcube$ of \eqref{eq1.1}. Conversely, the diagonal $\A^1\to \A^1\times \A^1$ obviously yields a diagonal morphism $\bcube\to \bcube\times \bcube$, but the latter does not factor through \eqref{eq1.1} either.
\end{rk}

The following definition will not be used in the sequel, except in Theorem \ref{t8.2}, but is key to \cite{modrec}.

\begin{definition}\label{def:cube-inv}
We say $F \in \ulMPST$
(resp. $F \in \MPST$) is \emph{$\bcube$-invariant}
if the projection map $p : M \otimes \bcube \to M$ 
induces an isomorphism
$p^* :F(M) \iso F(M \otimes \bcube)$
for any $M \in \ulMP$ (resp. $M \in \MP$). Equivalently, $F\iso \uHom(\Z_\tr(\bcube),F)$.
\end{definition}

\subsection{The derived Suslin complex}\label{s6.2} We shall need:

\begin{prop}\label{prop:tensor1}\ Consider the tensor structures on $D(\ulMNST)$ and $D(\MNST)$ given by Theorem \ref{prop:DGguide} (5). 
Then the interval structure on $\bcube \in \MP$ from
Lemma \ref{l7.1} yields categories with interval 
$(D(\ulMNST),\Z_\tr(\bcube))$,  $(D(\MNST),\Z_\tr(\bcube))$
which verify hypotheses \ref{hB.1} and \ref{h4.1}.
\end{prop}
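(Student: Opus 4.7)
The strategy is to transport the interval structure on $\bcube \in \MCor$ along a chain of symmetric monoidal functors, and then to verify the two sets of axioms essentially one by one, relying on results already established in the paper.

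First I would make precise the functorial transfer. The Yoneda embeddings $\Z_\tr:\MCor\to \MPST$ and $\Z_\tr:\ulMCor\to \ulMPST$ are $\otimes$-functors by Theorem \ref{t.mon} (as used in the proof of Theorem \ref{thm:DMgm-DM-full-faith} (3)); the sheafifications $a_\Nis$ and $\ul{a}_\Nis$ are monoidal by the construction of the tensor product in Corollary \ref{c4.1} (i.e.\ by Theorem \ref{prop:DGguide} (5) already cited in the statement); and the inclusions into $D(\MNST)$, $D(\ulMNST)$ are $\otimes$-functors by the same result. Composing, I get symmetric monoidal functors $\MCor\to D(\MNST)$ and $\ulMCor\to D(\ulMNST)$. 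Since the structure maps $p,i_0,i_1,\mu$ of Lemma \ref{l7.1} live in $\MCor$ and the axioms of an interval object are a finite collection of identities of such maps, their images endow $\Z_\tr(\bcube)$ with an interval structure in both $D(\MNST)$ and $D(\ulMNST)$. Note also that the compatibility with $D(\tau_\Nis)$ in Theorem \ref{prop:DGguide} (5) guarantees that the two interval structures correspond under $D(\tau_\Nis)$, a point I would record for later use in Section \ref{s.sc}.

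Next I would address Hypothesis \ref{h4.1}. Part (i), as it was invoked in Corollary \ref{c4.1}, is right exactness of $\otimes$ on $\ulMNST$ (resp.\ $\MNST$), which has already been verified there; passing to $D(-)$ gives a tensor structure with the expected exactness properties on each variable. The remaining parts of the hypothesis (strong additivity, compact generation by a family stable under tensor product, and the fact that $\Z_\tr(\bcube)$ is compact and $\otimes$-stable on compacts) are covered by Theorem \ref{prop:DGguide} (5) (compact generation and tensor structure), by Example \ref{exA.5} (explicit compact generators $\Z_\tr(M)$), and by the elementary observation $\Z_\tr(M)\otimes \Z_\tr(N)\cong \Z_\tr(M\otimes N)$ (monoidality of the generating functor).

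Finally I would turn to Hypothesis \ref{hB.1}, which is the pure interval-object axiomatics reviewed in Appendix \ref{section:interval}. Each axiom is an equation between morphisms built from $p,i_0,i_1,\mu$ and structural isomorphisms of the tensor category; since such equations hold for the concrete interval $\bcube$ in $\MCor$ (Lemma \ref{l7.1}), they automatically hold for its image under any $\otimes$-functor, hence in both categories of interest. The one subtlety that deserves attention is the distinction between the $\otimes$-structure of Definition \ref{def:tensor-mod} and the cartesian product flagged in Remark \ref{r5.2}: the interval axioms must be interpreted with respect to $\otimes$, and it is precisely because Lemma \ref{l7.1} was stated for $\otimes$ that the transfer is automatic.

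I expect the hardest step, in practice, to be matching the precise wording of Hypotheses \ref{hB.1} and \ref{h4.1}; everything substantive is already in the bag, but one must check that the appendix's formulation does not require extra structure (for instance, that the interval need not be a strict ind-object of compacts, only a compact object, which here is the case since $\bcube\in \MCor$).
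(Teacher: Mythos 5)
There is a genuine gap, stemming from a misreading of what Hypotheses \ref{hB.1} and \ref{h4.1} actually assert. Hypothesis \ref{hB.1} is \emph{not} the interval-object axiomatics; the interval axioms are Definition \ref{d4.1}, and they are already supplied by Lemma \ref{l7.1} together with monoidality of the functor $\MCor\to D(\MNST)$, exactly as you say in your first paragraph. Hypothesis \ref{hB.1} is a structural assumption on the ambient category: strong biadditivity of $\otimes$ and preservation of compacts by $-\otimes I$. What you list in your second paragraph as ``the remaining parts'' of Hypothesis \ref{h4.1} (strong additivity, compact generation, $\Z_\tr(\bcube)$ compact) is in fact the content of \ref{hB.1}, not of \ref{h4.1}; that verification is correct in substance, just misfiled.

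The result of this swap is that parts (ii) and (iv) of Hypothesis \ref{h4.1} --- the only non-formal assertions in the proposition --- are never touched. Part (ii) asks that the localisation functor $\lambda:K(\ulMNST)\to D(\ulMNST)$ be lax monoidal relative to the termwise tensor product on $K(\ulMNST)$ and the tensor product $\otimes_{D(\ulMNST)}$ from Theorem \ref{prop:DGguide}~(5). This is not automatic, because $\otimes_{D(\ulMNST)}$ was \emph{not} defined as the derived functor of $\otimes_{\ulMNST}$ but as the descent of $\otimes_{D(\ulMPST)}$ along the quotient $D(\ul a_\Nis)$; whether it is the derived tensor is only noted as Remark \ref{r4.3} and not proved. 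The paper's argument is genuinely delicate here: it starts from the honest lax monoidality $\ul\lambda_P C\otimes_{D(\ulMPST)}\ul\lambda_P D\Rightarrow\ul\lambda_P(C\otimes_{K(\ulMPST)}D)$ that exists because $\otimes_{D(\ulMPST)}$ \emph{is} a total derived functor (Theorem \ref{t.mon}~d)), pushes it through $D(\ul a_\Nis)$ using exactness and monoidality of sheafification, and then descends along $K(\ul a_\Nis)$ because that functor is a localisation. Part (iv) then asserts that $(\Z_\tr(\bcube))^{\otimes_{D(\ulMNST)}n}\iso \Z_\tr(\bcube)^{\otimes_{\ulMNST}n}[0]$, which the paper proves by observing that $\Z_\tr(\bcube)$ is representable (hence projective in $\ulMPST$, so no derivation is needed there) and applying the exact functor $\ul a_\Nis$. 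Your closing observation that $\Z_\tr(M)\otimes\Z_\tr(N)\cong\Z_\tr(M\otimes N)$ is adjacent but does not by itself address the agreement between derived and underived tensor powers. You should redo the verification of \ref{h4.1}~(ii) and (iv); the reduction to $D(\ulMPST)$ via $D(\ul a_\Nis)$ is the key idea you are missing.
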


\begin{proof} We do the proof for $D(\ulMNST)$, the case of $D(\MNST)$ being identical.

Since $\otimes_{D(\ulMPST)}$ is the total derived functor of $\otimes_{\ulMPST}$ 
by  Theorem \ref{t.mon} d),
there is a canonical natural transformation
\[\ul{\lambda}_P C\otimes_{D(\ulMPST)} \ul{\lambda}_P D\Rightarrow \ul{\lambda}_P (C\otimes_{K(\ulMPST)} D)\]
for $(C,D)\in K(\ulMPST)\times K(\ulMPST)$, where $\ul{\lambda}_P:K(\ulMPST)\to D(\ulMPST)$ is the localisation functor. Applying $D(\ul{a}_\Nis)$ to it, we get a natural transformation
\begin{multline*}
\ul{\lambda}_N K(\ul{a}_\Nis) C\otimes_{D(\ulMNST)} \ul{\lambda}_N K(\ul{a}_\Nis) D\\
\simeq D(\ul{a}_\Nis)\ul{\lambda}_P C\otimes_{D(\ulMNST)} D(\ul{a}_\Nis)\ul{\lambda}_P D\simeq D(\ul{a}_\Nis)(\ul{\lambda}_P C\otimes_{D(\ulMPST)} \ul{\lambda}_P D)\\
\Rightarrow D(\ul{a}_\Nis)\ul{\lambda}_P (C\otimes_{K(\ulMPST)} D)\simeq \ul{\lambda}_N K(\ul{a}_\Nis) (C\otimes_{K(\ulMPST)} D)\\
\simeq \ul{\lambda}_N (K(\ul{a}_\Nis) C\otimes_{K(\ulMNST)} K(\ul{a}_\Nis)D)
\end{multline*}
where $\ul{\lambda}_N:K(\ulMNST)\to D(\ulMNST)$ is the localisation functor. Since $K(\ul{a}_\Nis)$ is a localisation, this yields by \cite[Lemma A.3.3]{kmsy1} the desired natural transformation
\begin{equation}\label{eq5.2}
\ul{\lambda}_N C'\otimes_{D(\ulMNST)} \ul{\lambda}_N D'\Rightarrow \ul{\lambda}_N (C'\otimes_{K(\ulMNST)} D')\end{equation}
for $(C',D')\in K(\ulMNST)\times K(\ulMNST)$.

It remains to check properties (iii) and (iv) of Hypothesis \ref{h4.1}: (iii) is obvious by construction, and (iv) is true because it is already true in $D(\ulMPST)$ by the representability of $\Z_\tr(\bcube)$, and $\ul{a}_\Nis$ is exact.
\end{proof}

In the next theorem,
we use the functors $\ul{L}^\bcube, L^\bcube$ from Corollary \ref{c4.2}.

\begin{thm} \label{t5.1} 
The base change morphism 
\begin{equation}\label{eq:Ltau}
L^\bcube \circ D(\tau^\Nis) \Rightarrow \tau^\eff \ul{L}^\bcube
\end{equation}
as in \eqref{eq4.1} is an isomorphism; the functors $\tau_{\eff,\gm}$ and $\tau_\eff$ of Theorem \ref{thm:DMgm-DM-full-faith} (4) are fully faithful.
\end{thm}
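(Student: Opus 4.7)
The plan is to first prove that the base change morphism \eqref{eq:Ltau} is an isomorphism; the full faithfulness of $\tau_\eff$ and $\tau_{\eff,\gm}$ then follows formally. For the reformulation, note that $L^\bcube j^\bcube \simeq \mathrm{id}$ since $L^\bcube$ is a Bousfield localisation with right adjoint $j^\bcube$. Taking right adjoints of the isomorphism \eqref{eq2.10.2} gives $j^\bcube \tau^\eff \simeq D(\tau^\Nis) \ul{j}^\bcube$, and composing with $L^\bcube$ on the left yields $\tau^\eff \simeq L^\bcube D(\tau^\Nis) \ul{j}^\bcube$. Under this identification, a diagram chase shows that the base change \eqref{eq:Ltau} is the map
\[
L^\bcube D(\tau^\Nis) K \longrightarrow L^\bcube D(\tau^\Nis) \ul{j}^\bcube \ul{L}^\bcube K
\]
induced by the unit $K \to \ul{j}^\bcube \ul{L}^\bcube K$. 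Since the cone of this unit lies in $< \mathrm{\ul{CI}} >$ (the kernel of $\ul{L}^\bcube$), the base change is an isomorphism if and only if $D(\tau^\Nis)\bigl(< \mathrm{\ul{CI}} >\bigr) \subseteq < \mathrm{CI} >$.

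I would establish this last containment through the interval machinery of Appendix \ref{section:interval}. By Proposition \ref{prop:tensor1}, both $(D(\ulMNST), \Z_\tr(\bcube))$ and $(D(\MNST), \Z_\tr(\bcube))$ are categories with interval verifying Hypotheses \ref{hB.1} and \ref{h4.1}. The $\otimes$-adjunction $(D(\tau_\Nis), D(\tau^\Nis))$ (Theorem \ref{prop:DGguide}~(5)) sends $\Z_\tr(\bcube)$ to $\Z_\tr(\bcube)$ and thus fits into the setup of Theorem \ref{t7.1}; invoking that theorem yields the desired base change isomorphism via a cubical Suslin-type resolution computing $\bcube$-localisation that commutes with the right adjoint $D(\tau^\Nis)$. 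This is the hard part of the argument: a right adjoint such as $D(\tau^\Nis)$ need not preserve a localising subcategory on generators, so one cannot verify the containment directly on cones of $\Z_\tr(\sX \otimes \bcube) \to \Z_\tr(\sX)$, and the abstract interval formalism of Appendix \ref{section:interval} is exactly what makes the transfer work.

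Granted the base change iso, I would conclude as follows. For any $K \in D(\MNST)$,
\[
\tau^\eff \tau_\eff L^\bcube K
\;\overset{\eqref{eq2.10.2}}{\simeq}\; \tau^\eff \ul{L}^\bcube D(\tau_\Nis) K
\;\simeq\; L^\bcube D(\tau^\Nis) D(\tau_\Nis) K
\;\simeq\; L^\bcube K,
\]
where the middle step is the base change and the last uses $D(\tau^\Nis) D(\tau_\Nis) \simeq \mathrm{id}$, a consequence of $\tau_\Nis$ being fully faithful (Proposition \ref{p3.5}~(2)) together with the exactness of $\tau_\Nis$ and $\tau^\Nis$. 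Because $L^\bcube$ is essentially surjective, the unit $\mathrm{id} \Rightarrow \tau^\eff \tau_\eff$ is an isomorphism on $\MDM^\eff$, so $\tau_\eff$ is fully faithful. Finally, the embeddings $\iota_\eff, \ul{\iota}_\eff$ of Theorem \ref{thm:DMgm-DM-full-faith}~(2) are fully faithful and satisfy $\ul{\iota}_\eff \tau_{\eff,\gm} = \tau_\eff \iota_\eff$; the right-hand composite is fully faithful, and since $\ul{\iota}_\eff$ is fully faithful, the same holds for $\tau_{\eff,\gm}$.
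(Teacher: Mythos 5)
Your proposal is correct and takes essentially the same approach as the paper: the base change isomorphism is obtained by invoking Theorem \ref{t7.1} in the setting of Proposition \ref{prop:tensor1}, and full faithfulness of $\tau_\eff$ (hence of $\tau_{\eff,\gm}$ via the commuting square with $\iota_\eff, \ul{\iota}_\eff$) follows from the chain $L^\bcube \simeq L^\bcube D(\tau^\Nis) D(\tau_\Nis) \simeq \tau^\eff\tau_\eff L^\bcube$ together with the fact that $L^\bcube$ is a localisation. Your intermediate reformulation to the containment $D(\tau^\Nis)(< \mathrm{\ul{CI}} >) \subseteq < \mathrm{CI} >$ and your remark that a right adjoint cannot be tested on generators correctly explain why the interval formalism of Appendix \ref{section:interval} is needed, but the substance of the argument coincides with the paper's.
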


\begin{proof}
The first claim follows from Theorem \ref{t7.1}.
By Lemma \ref{claim:ff}, $D(\tau_\Nis)$ is fully faithful, hence\footnote{This is actually part of the proof, see Proposition \ref{pA.3}.} the unit map
\[\id \Rightarrow D(\tau^\Nis) D(\tau_\Nis) \]
is an isomorphism.
Applying $L^\bcube$, we obtain a natural isomorphism 
\[L^\bcube\iso L^\bcube D(\tau^\Nis) D(\tau_\Nis).\]

On the other hand,  \eqref{eq2.10.2} and \eqref{eq:Ltau} yield natural isomorphisms
\[L^\bcube D(\tau^\Nis) D(\tau_\Nis)
 \iso \tau^\eff \ul{L}^\bcube D(\tau_\Nis)\iso \tau^\eff \tau_\eff L^\bcube\]
and one checks that their composition $ L^\bcube\Rightarrow \tau^\eff \tau_\eff L^\bcube$ is induced by the unit of the adjunction $(\tau_\eff,\tau^\eff)$. Since $L^\bcube$ is a localisation, we conclude that this unit is an isomorphism. This implies the full faithfulness of $\tau_\eff$, which in turn implies that of $\tau_{\eff,\gm}$ by Theorem \ref{thm:DMgm-DM-full-faith} (2).
\end{proof}

\begin{defn}[cf. Definition \protect{\ref{def:i-equiv}}]\label{d5.1} For any $K\in D(\ulMNST)$, we set
\[RC_*^\bcube (K)=\uHom(\Z_\tr(\bcube_\nu^\bullet),K)\in D(\ulMNST);\]
this is the \emph{derived Suslin complex of $K$}. Similarly for $K\in D(\MNST)$.  For $\sX\in \ulMCor$ or $\MCor$, we abbreviate $RC_*^\bcube (\Z_\tr(\sX)[0])$ to $RC_*^\bcube (\sX)$.
\end{defn}

Recall from Corollary \ref{c4.2} that
$\ul{L}^\bcube$ and $L^\bcube$ have right adjoints
 $\ul{j}^\bcube, j^\bcube $.
As a consequence of Theorem \ref{t22}, we have:

\begin{thm}\label{thm:jLC-special}
For any $K \in D(\ulMNST)$, we have an isomorphism
\[ \ul{j}^\bcube \ul{L}^\bcube(K) \simeq RC_*^\bcube (K).
\]
Similarly, we have an isomorphism for any $K \in D(\MNST)$
\[ j^\bcube L^\bcube(K) \simeq RC_*^\bcube (K).
\]
In particular, the isomorphisms of Corollary \ref{c4.3} translate as
\begin{align*} 
\ulMDM^{\eff}(M(\sX), \ul{L}^\bcube K)&\simeq \colim_{\sX' \in \ulSigma^\fin \downarrow \sX} \mathbb{H}^0_{\Nis}(\ol{\sX}', (RC_*^\bcube (K))_{\sX'})\\
\MDM^{\eff}(M(\sX), L^\bcube K)&\simeq \colim_{\sX' \in \ulSigma^\fin \downarrow \sX} \mathbb{H}^0_{\Nis}(\ol{\sX}', (RC_*^\bcube (K))_{\sX'})
\end{align*}
for $(\sX,K)\in \ulMCor\times D(\ulMNST)$ (resp. $(\sX,K)\in \MCor\times D(\MNST)$).
\end{thm}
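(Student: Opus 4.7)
The plan is to reduce the first isomorphism to the abstract interval formalism of Appendix \ref{section:interval}, and then derive the Hom formulas by combining with Corollary \ref{c4.3}.

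First, I would invoke Proposition \ref{prop:tensor1}, which tells us that $(D(\ulMNST),\Z_\tr(\bcube))$ and $(D(\MNST),\Z_\tr(\bcube))$ are $\otimes$-triangulated categories with interval satisfying Hypotheses \ref{hB.1} and \ref{h4.1}. Next I would identify the ``interval localisation'' of $D(\ulMNST)$ in the sense of Appendix \ref{section:interval} with $\ul{L}^\bcube$: the localising subcategory $\langle \ul{\mathrm{CI}}\rangle\subset D(\ulMNST)$ by which we quotient to form $\ulMDM^\eff$ (via Theorem \ref{prop:DGguide} (3)) is precisely the one generated by the cones of $\Z_\tr(M\otimes \bcube)\to \Z_\tr(M)$ for $M\in\ulMCor$, which is the $\bcube$-contractibility ideal produced by the interval structure. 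In particular $\ul{L}^\bcube$ is the $\bcube$-localisation in the sense of the appendix, so that its right adjoint $\ul{j}^\bcube$ is the embedding of $\bcube$-local objects.

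Once this identification is made, the first isomorphism $\ul{j}^\bcube\ul{L}^\bcube(K)\simeq RC_*^\bcube(K)$ should be an immediate consequence of Theorem \ref{t22}, which is exactly the abstract statement that, under Hypotheses \ref{hB.1} and \ref{h4.1}, the $\bcube$-localisation functor composed with its right adjoint is computed by the derived Suslin complex $\uHom(\Z_\tr(\bcube_\nu^\bullet),-)$ associated to the cubical object $\bcube_\nu^\bullet$. The same argument applied to $(D(\MNST),\Z_\tr(\bcube))$ yields the analogous formula for $j^\bcube L^\bcube(K)$.

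For the Hom formulas, I would simply combine the isomorphism just proved with Corollary \ref{c4.3}. That corollary gives
\[
\ulMDM^{\eff}(M(\sX),\ul{K})\simeq \colim_{\sX'\in \ulSigma^\fin\downarrow \sX}\mathbb{H}^0_{\Nis}(\ol{\sX}',(\ul{j}^\bcube \ul{K})_{\sX'})
\]
for any $\ul{K}\in\ulMDM^\eff$. Setting $\ul{K}=\ul{L}^\bcube K$ and substituting $\ul{j}^\bcube\ul{L}^\bcube(K)\simeq RC_*^\bcube(K)$ yields the stated formula, and likewise in the proper case.

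The main conceptual obstacle is entirely hidden in Theorem \ref{t22}; the work to be done here is essentially bookkeeping, namely checking that the $\bcube$-localisation of Appendix \ref{section:interval} coincides with the $\ul{\mathrm{CI}}$-localisation defining $\ulMDM^\eff$. This amounts to comparing generators of the two localising subcategories, which is immediate from the definition of $\langle \ul{\mathrm{CI}}\rangle$ in Definition \ref{def:MDMgm} (and its sheaf-theoretic counterpart in Definition \ref{def:motmod} together with Theorem \ref{prop:DGguide} (3)).
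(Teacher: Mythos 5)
Your proposal is correct and follows essentially the same route as the paper, which itself presents the theorem as an immediate consequence of Theorem \ref{t22} after Proposition \ref{prop:tensor1} (verifying Hypotheses \ref{hB.1} and \ref{h4.1}) and Corollary \ref{c4.3}. One small point you call ``immediate'' but should spell out: Definition \ref{def:r_i} takes $\sR_I$ to be the localising subcategory generated by $\Cone(X\otimes I\to X)$ for \emph{all} $X\in\sT$, whereas $<\ul{\mathrm{CI}}>$ in $D(\ulMNST)$ is generated only by the images of $\Z_\tr(M\otimes\bcube)\to\Z_\tr(M)$ with $M\in\ulMCor$; these coincide because the $\Z_\tr(M)$ compactly generate $D(\ulMNST)$ and $X\mapsto\Cone(X\otimes \Z_\tr(\bcube)\to X)$ is a strongly additive triangulated functor, so the collection of $X$ whose cone lands in $<\ul{\mathrm{CI}}>$ is a localising subcategory containing the generators.
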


\begin{remark}\label{r8.1} 
Theorem \ref{t22} also yields a version of Voevodsky's results for $\DM^\eff$ and $D(\NST)$ \cite{voetri,mvw}, where he uses simplicial objects rather than cubical objects. Comparing the two, we get an \emph{a posteriori} proof that for any $K\in D(\NST)$ the two ``Suslin'' complexes $RC_*^{\A^1}(K)$ based on simplicial or cubical sets are quasi-isomorphic. 
Hopefully this can be proven by an explicit chain computation.

On the other hand, the theory of intervals does not yield a simplicial theory in the case of $\ulMCor$ and $\MCor$, see Remark \ref{r4.1}. 
\end{remark}

\section{Comparisons}\label{s.comp}

\subsection{Relationship with Voevodsky's categories}\label{s6.1a} 

We start by comparing $\ulMDM^\eff$ with $\DM^\eff$. 
As usual, the functor $\ulomega:\ulMCor \to \Cor$
from \eqref{diag1} defines an adjunction 
\begin{equation}\label{eq5.1}
\ulomega_!:\ulMPST\leftrightarrows \PST:\ulomega^*.
\end{equation}

Thus $\ulomega_!$ is right exact and strongly additive; it is even exact, as the right adjoint of $\lambda_!$ (see Theorem \ref{t2.1} for $\lambda$). 

Since $\lambda$ is fully faithful, so is $\ulomega^*=\lambda_*$ \cite[Prop. 2.3.1]{kmsy1}.

Recall the category $\DM^\eff_{\gm,\Nis}$ from Subsection \ref{s6.1}. Since $\ulomega$ obviously sends $\mathrm{(\ul{CI})}$ to $\mathrm{(HI^V)}$ and $\mathrm{(\ul{MV})}$ to $\mathrm{(MV_\Nis^V)}$,  we get functors $\ulomega_{\eff,\gm}$, $\ulomega_\eff$  in the following commutative diagrams:
\begin{equation}\label{eq6.4}
\begin{CD}
K^b(\ulMCor) @>{K^b(\ulomega)}>> K^b(\Cor) \\
@VVV @VVV \\
\DR_{\gm}^{\eff} @>\ulomega_{\eff,\gm}>> \DM^{\eff}_{\gm,\Nis},
\end{CD}
\quad
\begin{CD}
D(\ulMPST) @>{D(\ulomega_!)}>> D(\PST) \\
@VVV @VVV \\
\DR^{\eff} @>\ulomega_{\eff}>> \DM^{\eff},
\end{CD}
\end{equation}
where the vertical arrows are localisation functors. 

As in the proof of Theorem \ref{thm:DMgm-DM-full-faith} (5), we deduce from the strong additivity of $\ulomega_!$ that $\ulomega_\eff$ is strongly additive and has a right adjoint $\ulomega^\eff$. Recall from \cite[Prop. 6.2.1 d)]{kmsy2} that  the adjunction \eqref{eq5.1} induces an adjunction 
\begin{equation}\label{eq5.3}
\ulomega_\Nis:\ulMNST\leftrightarrows \NST:\ulomega^\Nis
\end{equation}
where $\ulomega_\Nis, \ulomega^\Nis$ are both exact and $\ulomega^\Nis$ is fully faithful. The same picture holds for $\omega_\Nis = \ulomega_\Nis\circ \tau_\Nis$ and its right adjoint $\omega^\Nis$  \cite[Prop. 6.2.1 c)]{kmsy2}.

\begin{proposition}\label{prop:eq6.2}
The functors $\ulomega_{\eff}$ and $\ulomega^{\eff}$
fit in commutative diagrams
\begin{equation}\label{eq6.2}
\begin{CD}
D(\ulMNST) @>{D(\ulomega_{\Nis})}>> D(\NST)
@. \quad
D(\ulMNST) @<{D(\ulomega^{\Nis})}<< D(\NST)
\\
@V{\ul{L}^\bcube}VV @VV{L^{\A^1} }V 
@A{\ul{j}^\bcube }AA @AA{j^{\A^1} }A
\\
\DR^{\eff} @>>{\ulomega_{\eff}}> \DM^{\eff},
@.
\DR^{\eff} @<<{\ulomega^{\eff}}< \DM^{\eff}.
\end{CD}
\end{equation}
Moreover, $\ulomega^{\eff}$ is strongly additive and fully faithful, while 
$\ulomega_{\eff}$ is a localisation  and is symmetric monoidal.
\end{proposition}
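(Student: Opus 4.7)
The plan is to first establish the commutativity of the two diagrams in \eqref{eq6.2}, and then deduce the remaining assertions (full faithfulness, localisation, strong additivity, monoidality) by formal adjunction arguments together with compact-generator preservation.

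For the left square, the starting point is the identity $\ulomega_\Nis \circ \ul{a}_\Nis = a_\Nis^V \circ \ulomega_!$ at the abelian level (this is essentially the content of \cite[Prop.~6.2.1]{kmsy2}: $\ulomega_\Nis$ is \emph{defined} by sheafifying $\ulomega_!$). Since all four functors here are exact, this identity passes directly to derived categories as an isomorphism $D(\ulomega_\Nis)\circ D(\ul{a}_\Nis)\simeq D(a_\Nis^V)\circ D(\ulomega_!)$. On the other hand, by construction \eqref{eq6.4}, $\ulomega_\eff$ is the unique functor fitting in a commutative square with $D(\ulomega_!)$ and the canonical localisations $D(\ulMPST)\to \DR^\eff$ and $D(\PST)\to \DM^\eff$. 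Using Theorem~\ref{prop:DGguide} to factor these localisations as $\ul{L}^\bcube\circ D(\ul{a}_\Nis)$ and $L^{\A^1}\circ D(a_\Nis^V)$ respectively, and invoking the essential surjectivity of $D(\ul{a}_\Nis)$ (as a Verdier quotient), the isomorphism $\ulomega_\eff\circ \ul{L}^\bcube\simeq L^{\A^1}\circ D(\ulomega_\Nis)$ drops out.

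The right square of \eqref{eq6.2} then follows by passing to right adjoints: $\ul{j}^\bcube$ and $j^{\A^1}$ are the right adjoints of $\ul{L}^\bcube$ and $L^{\A^1}$ by Corollary~\ref{c4.2}; $\ulomega^\eff$ is the right adjoint of $\ulomega_\eff$ (established in the paragraph preceding the proposition, using Thomason--Neeman); and $D(\ulomega^\Nis)$ is the derived right adjoint of $D(\ulomega_\Nis)$ since $\ulomega_\Nis$ is exact. From the right square, full faithfulness of $\ulomega^\eff$ follows formally: $D(\ulomega^\Nis)$ is fully faithful because $\ulomega^\Nis$ is (stated after \eqref{eq5.3}), and $j^{\A^1}$ is fully faithful as the right adjoint of a Verdier localisation. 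Hence $D(\ulomega^\Nis)\circ j^{\A^1}\simeq \ul{j}^\bcube\circ \ulomega^\eff$ is fully faithful; since $\ul{j}^\bcube$ itself is fully faithful (right adjoint of $\ul{L}^\bcube$), we conclude that $\ulomega^\eff$ is fully faithful. The statement that $\ulomega_\eff$ is a localisation then follows from the general fact that a left adjoint with a fully faithful right adjoint is a Verdier quotient functor.

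Strong additivity of $\ulomega^\eff$ follows from the fact that $\ulomega_\eff$ sends the compact generators $M(\sX)$ ($\sX\in \ulMCor$) of $\ulMDM^\eff$ to $M^V(\ulomega\sX)$, which is compact in $\DM^\eff$; so $\ulomega_\eff$ preserves compacts and its right adjoint therefore commutes with arbitrary coproducts (as already used in the proof of Theorem~\ref{thm:DMgm-DM-full-faith}(5)). For symmetric monoidality of $\ulomega_\eff$, note that $\ulomega:\ulMCor\to \Cor$ is a $\otimes$-functor by construction of the tensor structure (Definition~\ref{def:tensor-mod}), whence $\ulomega_!$ is symmetric monoidal as a left Kan extension via Day convolution (Theorem~\ref{t.mon}); this descends to $D(\ulomega_!)$ and then, via the monoidal quotients provided by Theorem~\ref{thm:DMgm-DM-full-faith}(3), to $\ulomega_\eff$. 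The only genuinely non-formal ingredient in all of this is the sheafified compatibility $\ulomega_\Nis\circ \ul{a}_\Nis = a_\Nis^V\circ \ulomega_!$ used in the first step, which in turn rests on the fact that $\ulomega$ transports $\ulMV$-covers to Nisnevich covers; once this is granted (from \cite{kmsy2}), the entire proposition is essentially a diagram chase in triangulated adjunctions.
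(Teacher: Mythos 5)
Your proof is correct and follows essentially the same skeleton as the paper's: establish commutativity of the left square by observing that the second diagram of \eqref{eq6.4} factors through Nisnevich sheaves (Theorem \ref{prop:DGguide} and Remark \ref{rem:bdd}), pass to right adjoints to get the right square, deduce full faithfulness of $\ulomega^\eff$ from the full faithfulness of $D(\ulomega^\Nis)$, $j^{\A^1}$, and $\ul{j}^\bcube$, and then obtain that $\ulomega_\eff$ is a localisation. Two of your steps differ in route from the paper's, both legitimately. For strong additivity of $\ulomega^\eff$ you argue that $\ulomega_\eff M(\sX)=M^V(\sX^\o)$ is compact, so that $\ulomega_\eff$ preserves a set of compact generators and hence its right adjoint preserves coproducts; the paper instead reads strong additivity off the right square of \eqref{eq6.2}, using that $D(\ulomega^\Nis)$ is strongly additive (citing \cite[Prop.\ 6.2.1 d)]{kmsy2}) and that $\ul{j}^\bcube$, $j^{\A^1}$ are strongly additive and fully faithful. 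Your route is self-contained and arguably more robust (it does not need the strong additivity of $\ulomega^\Nis$); the paper's is shorter because it reuses the diagram already in hand. For monoidality of $\ulomega_\eff$, you descend directly from $D(\ulomega_!)$ through the monoidal localisations of Theorem \ref{thm:DMgm-DM-full-faith}(3), i.e.\ via the presheaf-level square \eqref{eq6.4}; the paper instead descends through the sheaf-level square \eqref{eq6.2}, using the monoidality of $\ul{L}^\bcube$ and $L^{\A^1}$ (Corollary \ref{c4.2}) and then reducing $D(\ulomega_\Nis)$ to $D(\ulomega_!)$ ``by the same trick.'' These are two ways of invoking the same general fact that a monoidal functor compatible with monoidal Verdier quotients induces a monoidal functor on quotients, and both are fine. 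One small gap worth flagging: when you assert the abelian-level identity $\ulomega_\Nis\circ\ul{a}_\Nis = a_\Nis^V\circ\ulomega_!$, what is really needed (and what the paper implicitly uses) is that $\ulomega_!$ carries local isomorphisms for the $\ulMV$-topology to local isomorphisms for the Nisnevich topology, so that $D(\ulomega_!)$ kills $\ker D(\ul{a}_\Nis)$ and descends; your final paragraph does acknowledge this as ``the only genuinely non-formal ingredient,'' so the argument is complete in spirit even if that sentence could be made more precise.
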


\begin{proof} The second diagram of \eqref{eq6.4} factors through the first diagram of \eqref{eq6.2}, thanks to Theorem \ref{prop:DGguide} and its analogue for $\NST$ (Remark \ref{rem:bdd}). This yields the second diagram of \eqref{eq6.2} by adjunction. 
By Proposition \ref{pA.3}, the adjunction \eqref{eq5.3} implies that 
$D(\ulomega^{\Nis}) $ 
is right adjoint to $D(\ulomega_{\Nis})$, and fully faithful.
Hence
$\ulomega^{\eff}$ is 
fully faithful,
so that its  left adjoint
$\ulomega_{\eff}$ is a localisation. Since $\ulomega^\Nis$ is strongly additive \cite[Prop. 6.2.1 d)]{kmsy2}, so is $D(\ulomega^\Nis)$, hence $\ulomega^\eff$ by the diagram.

 The symmetric monoidality of $\ulomega_\eff$ will follow from that of the three other functors in the diagram. This is already known for the vertical ones (see Corollary \ref{c4.2} for $\ul{L}^\bcube$), so we are left to show the monoidality of $D(\ulomega_\Nis)$. By the same trick, the latter is reduced to the monoidality of $D(\ulomega_!)$, which in turn follows from that of $\ulomega$ (Proposition \ref{prop:monoidal-tauomega}).
\end{proof}

Using Corollary \ref{c4.2} and the exactness of $\tau^\Nis$ (Proposition \ref{p3.5} (2)), we now get commutative diagrams
\begin{equation}\label{eq6.3}
\begin{CD}
D(\MNST) @>{D(\omega_{\Nis})}>> D(\NST)
@. \quad
D(\MNST) @<{D(\omega^{\Nis})}<< D(\NST)
\\
@V{L^\bcube}VV @VV{L^{\A^1} }V 
@A{j^\bcube }AA @AA{j^{\A^1} }A
\\
\MDM^{\eff} @>>{\omega_{\eff}}> \DM^{\eff},
@.
\MDM^{\eff} @<<{\omega^{\eff}}< \DM^{\eff},
\end{CD}
\end{equation}
where $\omega^{\eff}$ is right adjoint to $\omega_\eff$ and $\omega_{\eff}$ is symmetric monoidal. Moreover,

\begin{proposition}\label{prop:eq6.2a}
The functor $\omega^{\eff}$ is strongly additive and fully faithful,
hence $\omega_{\eff}$ is a localisation.
\end{proposition}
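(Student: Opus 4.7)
The plan is to mirror the proof of Proposition \ref{prop:eq6.2}, using the right-hand square in \eqref{eq6.3}
\[j^\bcube \circ \omega^{\eff} \simeq D(\omega^\Nis) \circ j^{\A^1}\]
as the essential tool. The strategy is to verify that the three constituent functors $j^\bcube$, $j^{\A^1}$ and $D(\omega^\Nis)$ are all fully faithful and strongly additive, and then transfer both properties across this commutative square to $\omega^{\eff}$ via the full faithfulness of $j^\bcube$.

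First I would collect the inputs. The functors $j^\bcube$ and $j^{\A^1}$ are right adjoints to the localisations $L^\bcube$ and $L^{\A^1}$, hence fully faithful, and they are strongly additive by Corollary \ref{c4.2} together with its Voevodsky-side analogue recorded in Remark \ref{rem:bdd}. For $D(\omega^\Nis)$: the underlying functor $\omega^\Nis$ is fully faithful and strongly additive by \cite[Prop.~6.2.1 c)]{kmsy2} (the analogue for $\omega$ of what was used for $\ulomega$ in Proposition \ref{prop:eq6.2}); Proposition \ref{pA.3} then lifts full faithfulness to the derived level, and strong additivity of an exact, strongly additive functor between Grothendieck abelian categories descends to the derived functor exactly as invoked at the end of the proof of Proposition \ref{prop:eq6.2}.

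Granting these inputs, the full faithfulness of $\omega^{\eff}$ is then formal: the composite $D(\omega^\Nis) \circ j^{\A^1}$ of fully faithful functors is fully faithful, so by the diagram $j^\bcube \circ \omega^{\eff}$ is fully faithful as well, and cancelling the fully faithful factor $j^\bcube$ yields the full faithfulness of $\omega^{\eff}$. Strong additivity follows by the same mechanism applied to the canonical comparison map $\bigoplus_i \omega^{\eff}(X_i) \to \omega^{\eff}(\bigoplus_i X_i)$: it becomes an isomorphism after applying $j^\bcube$ (using the diagram together with the strong additivity of $j^\bcube$, $D(\omega^\Nis)$ and $j^{\A^1}$), and fully faithfulness of $j^\bcube$ then promotes this to an isomorphism upstairs. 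Finally, a fully faithful right adjoint automatically makes its left adjoint a localisation, so $\omega_{\eff}$ is a localisation. I do not anticipate any substantive obstacle — the argument is standard adjunction yoga, with all nontrivial ingredients supplied by Corollary \ref{c4.2}, Proposition \ref{pA.3} and \cite[Prop.~6.2.1]{kmsy2}.
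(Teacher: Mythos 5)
Your proof is correct and follows essentially the same route as the paper, whose entire proof of this proposition is the sentence ``Same as for Proposition \ref{prop:eq6.2}, using the full faithfulness and strong additivity of $\omega^\Nis$'' [Prop.~6.2.1~c) of kmsy2]. You have simply unpacked what that reference to Proposition \ref{prop:eq6.2} entails: transport full faithfulness and strong additivity of $D(\omega^\Nis)\circ j^{\A^1}$ across the right-hand square of \eqref{eq6.3}, then cancel the fully faithful factor $j^\bcube$.
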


\begin{proof} Same as for Proposition \ref{prop:eq6.2}, using the full faithfulness and strong additivity of $\omega^\Nis$ \cite[Prop. 6.2.1 c)]{kmsy2}.
\end{proof}

We finally have the following commutative diagram
\begin{equation}\label{eq6.1}
\begin{CD}
 \MDM^{\eff}_{\gm} @>\iota_\eff>> \MDM^{\eff} \\
@V{\tau_{\eff, \gm}}VV @VV{\tau_{\eff}}V \\
\DR^{\eff}_{\gm} @>\ul{\iota}_\eff>> \DR^{\eff} \\
@V{\ulomega_{\eff, \gm}}VV @VV{\ulomega_{\eff}}V \\
\DM^{\eff}_{\gm,\Nis} @>\iota_\eff^V>> \DM^{\eff}
\end{CD}
\end{equation}
in which $\iota_\eff$ and $\ul{\iota}_\eff$ are from \eqref{eq:DRgm-DR}  and $\iota_\eff^V$ is given in the same way 
(see Remark \ref{r3.1} (1)).  All rows are fully faithful by  Theorem \ref{thm:DMgm-DM-full-faith} (2) and \cite[(4.5)]{birat-tri}.

\subsection{Relationship with Chow motives}\label{s.chow} 

In \cite{voetri}, Voevodsky constructs a $\otimes$-functor $\Chow^\eff\allowbreak\to \DM_\gm^\eff$, where $\Chow^\eff$ is the category of effective (covariant) Chow motives. (We refer to \cite{scholl} or \cite{andre} for Chow motives.) This functor sends the Chow motive $h(X)$ of a smooth projective variety $X$ to $M^V(X)$, where $M^V:\Cor\to \DM_\gm^\eff$ is the canonical functor,  and is shown to be fully faithful when $k$ is perfect in \cite[6.7.3]{be-vo} (see also \cite[Th. 4.4.1 (3)]{birat-tri}). 

In fact, Voevodsky's  construction lifts to a $\otimes$-functor 
\begin{equation}\label{eq6.8}
\Phi_V^\eff:\Chow^\eff\allowbreak\to \DM_{\gm,\Nis}^\eff.
\end{equation}

 Indeed, this construction is as follows: let $\sH(\Cor)$ be the homotopy category of $\Cor$; its Hom groups are $h(X,Y) = \Coker( \Cor(X\times \A^1,Y)\to \Cor(X,Y))$. Obviously, the natural functor $\Cor\to \DM_{\gm,\Nis}^\eff$ factors through $\sH(\Cor)$. There is also a map
\begin{equation}\label{eq1hc}
h(X,Y) \to CH_{\dim X}(X\times Y)
\end{equation}
which sends a finite correspondence to the corresponding cycle class. This map is an isomorphism when $X$ and $Y$ are projective \cite[Th. 7.1]{fri-vo}, hence the functor \eqref{eq6.8}.

\begin{thm}\label{thm:chow}
Write $\omega_{\eff,\gm}=\ulomega_{\eff, gm}\circ \tau_{\eff, gm}$ (see \eqref{eq6.1}).  There is  a unique functor $\Phi^\eff:\Chow^\eff\to \MDM_\gm^\eff$ sending $h(X)$ to $M(X,\emptyset)$, whose composition with  $\omega_{\eff,\gm}$ is \eqref{eq6.8}. It is symmetric monoidal.
\end{thm}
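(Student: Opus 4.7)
The plan is to emulate Voevodsky's construction of $\Phi_V^\eff$ in the modulus context, with $\bcube$ replacing $\A^1$. Let $\Cor^{\mathrm{proj}}$ denote the full subcategory of $\Cor$ consisting of smooth projective $k$-varieties. For any $X, Y \in \Cor^{\mathrm{proj}}$, every finite correspondence $\alpha \in \Cor(X, Y)$ defines a morphism $(X, \emptyset) \to (Y, \emptyset)$ in $\MCor$: the properness condition is automatic since $Y$ is proper, and the modulus condition is trivial since both moduli are empty. Hence we obtain a $\otimes$-functor $F_0 : \Cor^{\mathrm{proj}} \to \MCor$, $X \mapsto (X, \emptyset)$, which is monoidal thanks to $(X, \emptyset) \otimes (Y, \emptyset) = (X \times Y, \emptyset)$ (Definition \ref{def:tensor-mod}). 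Composing with the canonical $\MCor \to \MDM_\gm^\eff$ yields a functor $G : \Cor^{\mathrm{proj}} \to \MDM_\gm^\eff$.

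The key step is to show that $G$ factors through the homotopy category $\sH(\Cor^{\mathrm{proj}})$. Given $H \in \Cor(X \times \A^1, Y)$ with $X, Y$ smooth projective, the scheme-theoretic closure of $H$ in $X \times \P^1 \times Y$ is proper over $X \times \P^1$ (since $Y$ is proper) and trivially admissible for the source modulus $X \times \infty$ and the empty target modulus. Hence $H$ lifts to a morphism $\tilde H : (X, \emptyset) \otimes \bcube \to (Y, \emptyset)$ in $\MCor$ restricting to the two summands of the $\A^1$-homotopy under the unit sections $i_\epsilon : \un \to \bcube$ (which are morphisms in $\MCor$, cf. Lemma \ref{l7.1}). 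By the (CI) relation, $M(\id \otimes i_0) = M(\id \otimes i_1)$ in $\MDM_\gm^\eff$, both being sections of the isomorphism $M(\id \otimes p) : M((X, \emptyset) \otimes \bcube) \iso M(X, \emptyset)$. Therefore $G$ identifies $\A^1$-homotopic correspondences and factors through $\sH(\Cor^{\mathrm{proj}})$.

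Via the Friedlander-Voevodsky isomorphism \eqref{eq1hc}, the Hom groups of $\sH(\Cor^{\mathrm{proj}})$ are identified with Chow groups, making $\sH(\Cor^{\mathrm{proj}})$ equivalent to the ``pre-pseudo-abelianised'' Chow category. Since $\MDM_\gm^\eff$ is pseudo-abelian by construction (Definition \ref{def:MDMgm}), the universal property of pseudo-abelianisation yields the desired functor $\Phi^\eff : \Chow^\eff \to \MDM_\gm^\eff$. Uniqueness and the compatibility $\omega_{\eff, \gm} \circ \Phi^\eff = \Phi_V^\eff$ are tautological from the construction, using that $\Cor(X, Y) \twoheadrightarrow CH_{\dim X}(X \times Y)$ is surjective to pin down $\Phi^\eff$ on morphisms. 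The symmetric monoidal structure propagates through each step. The only substantive point is the lift of $\A^1$-homotopies to $\bcube$-homotopies: this works smoothly here precisely because the target $(Y, \emptyset)$ carries empty modulus, making both the modulus and the properness conditions automatic; I would not expect a comparable result with a general modulus on the target without further hypotheses.
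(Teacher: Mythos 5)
Your proof is correct and follows essentially the same route as the paper's: both argue that for smooth projective $X,Y$ the inclusions $\MCor((X,\emptyset),(Y,\emptyset))\subseteq\Cor(X,Y)$ and $\MCor((X,\emptyset)\otimes\bcube,(Y,\emptyset))\subseteq\Cor(X\times\A^1,Y)$ are equalities, so the $\A^1$-homotopy relation lifts to the $\bcube$-homotopy relation and the functor descends through $\sH(\Cor^{\mathrm{proj}})\simeq$ (un-pseudo-abelianised) $\Chow^\eff$ via \eqref{eq1hc}. You spell out the lifting of homotopies and the use of the universal property of pseudo-abelianisation more explicitly than the paper, but there is no substantive difference in the argument.
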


(We shall see in Corollary \ref{c8.3} that $\Phi^\eff$ is fully faithful when $k$ is perfect.)

\begin{proof} 
For $X,Y$ as above, the inclusions
\begin{align*}
\MCor((X,\emptyset),(Y,\emptyset))&\subseteq \Cor(X,Y)\\ \MCor((X,\emptyset)\otimes \bcube,(Y,\emptyset))&\subseteq \Cor(X\times \A^1,Y)
\end{align*}
are equalities since the modulus conditions become empty (by definition of the left hand sides:  \ref{dp1}). Hence we get the refined functor from the definition of $\MDM^\eff$ in Definition \ref{def:MDMgm}.  Any other such functor agrees with $\Phi^\eff$ on objects, but also on morphisms by \eqref{eq1hc}, hence the uniqueness. Its symmetric monoidality is obvious.
\end{proof}

\subsection{Empty modulus}

\begin{thm}\label{t8.1} Let $X$ be a smooth proper $k$-variety. Then we have natural isomorphisms
\begin{align*}
\ul{M}(X,\emptyset)&\iso \ulomega^\eff M^V(X)\\
M(X,\emptyset)&\iso \omega^\eff M^V(X)
\end{align*}
where $\ulomega^\eff$ and $\omega^\eff$ are the functors from \eqref{eq6.2} and \eqref{eq6.3}.
\end{thm}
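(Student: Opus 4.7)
The two isomorphisms follow parallel arguments; I describe the proof of $M(X,\emptyset) \iso \omega^\eff M^V(X)$, the other case being obtained by the same method using the left diagram in \eqref{eq6.2} in place of the left diagram in \eqref{eq6.3}.

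\emph{Step 1 (the candidate morphism).} By the commutativity of \eqref{eq6.1} and the equality $\omega(X,\emptyset)=X$, there is a canonical identification $\omega_\eff M(X,\emptyset) = M^V(X)$. The identity of $M^V(X)$ corresponds under the adjunction $(\omega_\eff,\omega^\eff)$ to a natural morphism
\[
\phi: M(X,\emptyset) \longrightarrow \omega^\eff M^V(X)
\]
and the goal is to prove that $\phi$ is an isomorphism. Similarly one defines $\ul\phi:\ul M(X,\emptyset)\to \ulomega^\eff M^V(X)$ using the adjunction $(\ulomega_\eff,\ulomega^\eff)$ and the middle row of \eqref{eq6.1}.

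\emph{Step 2 (reduction to the sheaf level).} Since $L^\bcube$ is a localisation, its right adjoint $j^\bcube$ is fully faithful (Proposition \ref{pA.3}); in particular $L^\bcube\circ j^\bcube=\id$, so $j^\bcube$ is conservative. It therefore suffices to show that $j^\bcube\phi$ is an isomorphism in $D(\MNST)$. By Theorem \ref{thm:jLC-special}, the source is
\[
j^\bcube M(X,\emptyset) = RC_*^\bcube(X,\emptyset),
\]
and by the commutativity of the right diagram in \eqref{eq6.3} together with the $\DM^\eff$-analog of Theorem \ref{thm:jLC-special} (the cubical version ensured by Theorem \ref{t22} applied to $D(\NST)$, as discussed in Remark \ref{r8.1}), the target is
\[
j^\bcube \omega^\eff M^V(X) \simeq D(\omega^\Nis)\, j^{\A^1} M^V(X) \simeq D(\omega^\Nis)\, RC_*^{\A^1,\square}(X),
\]
where $RC_*^{\A^1,\square}$ denotes the cubical Suslin complex in $D(\NST)$.

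\emph{Step 3 (the key identification).} I would compare the two complexes of sheaves above section-by-section. For $\sY = (\ol\sY,\sY^\infty)\in \MCor$ and an \'etale map $\sU\to \ol\sY$, set $\sU^\infty:=\sY^\infty\times_{\ol\sY}\sU$ and $\sU^\o:=\sU-|\sU^\infty|$. The crucial observation is that both admissibility conditions in Definition-Proposition \ref{dp1} become trivial for a morphism into $(X,\emptyset)$: the modulus condition holds because the target divisor is $0$, and the properness condition holds automatically because $X$ is proper. Hence
\[
\MCor\bigl((\sU,\sU^\infty)\otimes\bcube^n,\, (X,\emptyset)\bigr) = \Cor(\sU^\o\times \A^n, X),
\]
and this identification is compatible with the cubical face and degeneracy maps. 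Taking the associated totalisation and sheafifying yields a natural isomorphism of complexes
\[
RC_*^\bcube(X,\emptyset) \iso D(\omega^\Nis)\, RC_*^{\A^1,\square}(X)
\]
in $D(\MNST)$; by tracing through the construction of $\phi$ through the unit of the adjunction $(\omega_\eff,\omega^\eff)$, this isomorphism is precisely $j^\bcube\phi$. The case of $\ul\phi$ is identical (the properness condition being automatic for the same reason, even without $\sY$ being proper), with $\omega$ replaced by $\ulomega$ throughout.

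The main obstacle I anticipate is the bookkeeping in Step 3: verifying that the presheaf-level isomorphism coming from the vanishing of the admissibility conditions actually matches the abstract morphism $j^\bcube\phi$ produced by the adjunction, rather than differing by a nontrivial automorphism of $RC_*^{\A^1,\square}(X)$. This compatibility should follow from the naturality in $X$ of both sides (tested against $\Z_\tr^V(X)$ as an object of $\NST$) together with the fact that $\omega^\Nis$ pulls back the cubical structure used to define $RC_*^{\A^1,\square}$ to the one used to define $RC_*^\bcube$, but it is the one place where some care is needed.
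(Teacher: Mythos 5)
Your proposal correctly identifies the key observation — that both admissibility conditions in Definition-Proposition \ref{dp1} are vacuous for the target $(X,\emptyset)$, equivalently $\omega^\Nis\Z_\tr^V(X)=\Z_\tr(X,\emptyset)$ and likewise for $\ulomega^\Nis$ — and Steps 1 and 2 are sound. But the paper's own proof is much shorter: it records this sheaf identity and then applies Theorem \ref{t7.1} directly, obtaining natural isomorphisms $L^\bcube\circ D(\omega^\Nis)\iso\omega^\eff\circ L^{\A^1}$ and $\ul{L}^\bcube\circ D(\ulomega^\Nis)\iso\ulomega^\eff\circ L^{\A^1}$, which evaluated at $\Z_\tr^V(X)[0]$ give both claimed isomorphisms at once, without any detour through $j^\bcube$. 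Your route via $j^\bcube\phi$ and Theorem \ref{thm:jLC-special} is legitimate, but in effect it re-derives Theorem \ref{t7.1} in this special case, and that re-derivation is where the problem lies.

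The gap is in Step 3. The object $RC_*^\bcube(\Z_\tr(X,\emptyset))=\uHom_{D(\MNST)}(\Z_\tr(\bcube_\nu^\bullet),\Z_\tr(X,\emptyset))$ is a \emph{derived} internal Hom (Definitions \ref{d5.1} and \ref{def:i-equiv}); it is not computed degreewise from the naive Hom groups $\MCor(\sU\otimes\bcube^{\otimes n},(X,\emptyset))$. Your identification $\MCor(\sU\otimes\bcube^{\otimes n},(X,\emptyset))=\Cor(\sU^\o\times\A^n,X)$ is correct — it is simply a restatement of $\omega^\Nis\Z_\tr^V(X)=\Z_\tr(X,\emptyset)$ — but by itself it only compares the \emph{underived} (na\"ive) cubical Suslin complexes; the passage from that to $RC_*^\bcube(\Z_\tr(X,\emptyset))\cong D(\omega^\Nis)RC_*^{\A^1}(\Z_\tr^V(X))$ is exactly the delicate point which Example \ref{exB.1} warns is nontrivial in general. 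To close it you must invoke the adjunction formula for internal Hom, Lemma \ref{l1.5}, for the monoidal adjunction $(D(\omega_\Nis),D(\omega^\Nis))$, together with the fact that $D(\omega_\Nis)$ carries the cubical complex $\Z_\tr(\bcube_\nu^\bullet)$ to the one built from $\A^1$ — but this is precisely the proof of Theorem \ref{t7.1}. So Step 3 should cite Lemma \ref{l1.5} or Theorem \ref{t7.1} rather than a ``section by section'' comparison; the compatibility worry you raise at the end is real but secondary to this derived/underived distinction.
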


\begin{proof} For any $M\in \ulMCor$, the inclusion
\[ \ulMCor (M,(X,\emptyset))\subseteq \Cor (M^\o ,X)\]
is an equality by \ref{dp1}. 
Equivalently, $\ulomega^\Nis\Z_\tr^V(X)=\Z_\tr(X,\emptyset)$ and $\omega^\Nis\Z_\tr^V(X)=\Z_\tr(X,\emptyset)$. The result now follows from Theorem \ref{t7.1}, which yields natural isomorphisms
\begin{align*}
\ul{L}^\bcube \circ D(\ulomega^\Nis) &\iso  \ulomega^\eff \circ L^{\A^1}\\
L^\bcube \circ D(\omega^\Nis) &\iso  \omega^\eff\circ L^{\A^1}
\end{align*}
that we apply to $\Z_\tr^V(X)[0]$  (recall that $\ulomega^\Nis$ and $\omega^\Nis$ are exact, see \S \ref{s6.1a}.)
\end{proof}

\begin{defn}\label{d6.1}
We let $\DM^\eff_{\gm,\proper}$  (resp. $\DM^\eff_\proper$) be the thick (resp. localising) subcategory of $\DM^\eff_{\gm,\Nis}$  (resp. $\DM^\eff$) generated by the $M(X)$, where $X$ runs through the smooth proper $k$-varieties: it is closed under tensor product. 
\end{defn}

The following facts are well-known:

\begin{lemma}\label{l6.1} Suppose $k$ is perfect. Then $\DM^\eff_{\gm,\Nis}\iso \DM^\eff_\gm$. Under resolution of singularities, we have $\DM^\eff_{\gm,\proper}=\DM^\eff_\gm$. In general, we have $\DM^\eff_{\gm,\proper}[1/p]=\DM^\eff_\gm[1/p]$, where $p$ is the exponential characteristic of $k$.
\end{lemma}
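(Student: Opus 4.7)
The three assertions should be handled separately; only (3) presents any real difficulty.

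For (1), the plan is to use the ambient category $\DM^\eff$. The Zariski Mayer--Vietoris relations $\mathrm{MV^V}$ are a special case of the Nisnevich Mayer--Vietoris relations $\mathrm{MV^V_\Nis}$, so the universal property of the localisation produces a canonical ``Nisnevich-refinement'' functor $\DM^\eff_\gm \to \DM^\eff_{\gm,\Nis}$. Composing with the fully faithful embedding $\iota^V_\eff: \DM^\eff_{\gm,\Nis} \hookrightarrow \DM^\eff$ (Remark \ref{r3.1}(1)), we get a composite which, by Voevodsky's theorem \cite[6.7.3]{be-vo} (valid since $k$ is perfect), is itself fully faithful and identifies $\DM^\eff_\gm$ with the thick subcategory of $\DM^\eff$ generated by $\{M^V(X) \mid X \in \Sm\}$. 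The same holds for $\iota^V_\eff$. Since both functors have the same essential image and the refinement functor is compatible with both embeddings on generators, it is an equivalence.

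For (2), I argue by noetherian induction on the dimension of $X \in \Sm$. Resolution of singularities produces a smooth proper compactification $\bar X$ of $X$ with $D = \bar X \setminus X$ a simple normal crossings divisor whose components (and all their intersections) are smooth projective of dimension $< \dim X$. Combining Voevodsky's blow-up and Gysin distinguished triangles in $\DM^\eff_\gm$, the motive $M^V(X)$ is built as an iterated cone from $M^V(\bar X)$ and the motives $M^V(D_I)(c_I)[2c_I]$ of intersections of components, where each $D_I$ is smooth projective. By induction (and since the Tate twist $M^V(-)(1)$ is already a summand of $M^V(\bP^1)$), all these lie in $\DM^\eff_{\gm,\proper}$, hence so does $M^V(X)$. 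Since the $M^V(X)$ for $X \in \Sm$ generate $\DM^\eff_\gm$, the equality follows.

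For (3), without resolution I replace the compactification step by an application of Gabber's refinement of de Jong's alterations \cite[\S X]{illusie-laszlo-orgogozo}: every smooth quasi-projective $X$ over $k$ admits a proper surjective alteration $\pi: X' \to X$ of degree prime to $p$, with $X'$ smooth projective. Combined with the cdh-descent package of Kelly (based on Suslin--Voevodsky--Friedlander), this gives a resolution of $M^V(X)[1/p]$ in $\DM^\eff_\gm[1/p]$ by motives of smooth projective varieties --- the key point being that iterated blow-up / alteration squares become distinguished triangles after inverting $p$, and any residual ramification of degree a power of $p$ is killed by trace--restriction being multiplication by the degree. Then running the same induction as in (2) inside $\DM^\eff_\gm[1/p]$ places $M^V(X)[1/p]$ in $\DM^\eff_{\gm,\proper}[1/p]$.

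The main obstacle is step (3): unlike in (2), one does not have a direct geometric construction expressing $M^V(X)$ as a cone of smooth projective motives, so one must combine Gabber's theorem with the appropriate descent formalism (cdh or $h$-descent with $\Z[1/p]$-coefficients, as developed by Kelly) to reduce to the projective case; spelling this out cleanly is the only non-formal part of the proof.
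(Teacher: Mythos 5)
Your plan follows the same three-pronged route as the paper's (quite terse) proof: cite Voevodsky's theorem for the Zariski/Nisnevich comparison, use Gysin triangles and induction on dimension for the resolution-of-singularities case, and de Jong--Gabber alterations plus a transfer argument for the general case after inverting $p$. Parts (1) and (2) are essentially correct, though for (1) the citation should be to Voevodsky's embedding theorem \cite[Th.~3.2.6]{voetri} (whose engine is strict $\A^1$-invariance, \cite[Th.~5.6]{voepre}, which is what the paper cites), not to \cite[6.7.3]{be-vo}, which is the Chow-group computation and does not by itself give full faithfulness of $\DM^\eff_\gm \to \DM^\eff$.

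For (3) there is a genuine gap. Gabber's refinement of de Jong's theorem produces, for each prime $\ell \neq p$, an alteration of degree \emph{prime to $\ell$}; it does \emph{not} produce alterations of degree prime to $p$. Your argument then asks to divide by the degree, and for $\Z[1/p]$-coefficients this only works when the degree is a power of $p$ (or $\pm1$); a degree merely prime to $p$ is not invertible in $\Z[1/p]$. Your final clause about ``residual ramification of degree a power of $p$'' moreover contradicts the prime-to-$p$ hypothesis you start from. The actual argument (which the paper itself only gestures at, pointing to a refinement of \cite[beg.\ of App.~B]{hk}) has to work harder: one either works $\Z_{(\ell)}$-locally for every $\ell \neq p$ and reassembles to $\Z[1/p]$ (this is what Kelly's $\ell$dh-descent is set up to do), or one uses the transfer argument of Huber--Kahn directly, which handles the $p$-part of the degree via a relative Frobenius trick rather than by a single well-chosen alteration. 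As it stands, the sketch of (3) would not compile into a proof.
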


\begin{proof} The first fact was recalled in Subsection \ref{s6.1} (cf. \cite[Th. 5.7]{voepre}). The second one follows from the Gysin distinguished triangles of \cite[Prop. 3.5.4]{voetri}. The last one is proven similarly, by resolution of singularities \`a la de Jong--Gabber plus a transfer argument which refines the one in  \cite[beg. of App. B]{hk}.
\end{proof}

\begin{cor}\label{c8.1} The restriction of $\ulomega^\eff$ to $\DM^\eff_\proper$ is symmetric mo\-no\-id\-al and induces a fully faithful symmetric monoidal functor
\[\ulomega^{\eff,\gm}:\DM_{\gm,\proper}^\eff\to \DR_\gm^\eff\]
which is ``right adjoint'' to the functor $\ulomega_{\eff, \gm}:\DR_\gm^\eff\to \DM_\gm^\eff$ of \eqref{eq6.1}: namely, this right adjoint is defined on $\DM_{\gm,\proper}^\eff$, with value $\ulomega^{\eff,\gm}$. The same holds when replacing $\DR_\gm^\eff$ by $\MDM_\gm^\eff$ and $\ulomega^\eff$ by $\omega^\eff$,
yielding $\omega^{\eff,\gm} : \DM_{\gm,\proper}^\eff\to \MDM_\gm^\eff$. In particular, if $k$ is perfect we have adjoint pairs
\begin{align}
\omega_{\eff, \gm}:\MDM_\gm^\eff[1/p]&\leftrightarrows \DM_\gm^\eff[1/p]:\omega^{\eff, \gm}\label{eq6.6}\\
\ulomega_{\eff, \gm}:\DR_\gm^\eff[1/p]&\leftrightarrows \DM_\gm^\eff[1/p]:\ulomega^{\eff, \gm}\label{eq6.7},
\end{align}
where $p$ is the exponential characteristic of $k$, and the same without inverting $p$ under Hironaka resolution of singularities (in particular, if $\car k=0$).
\end{cor}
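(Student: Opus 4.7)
The plan is to exploit Theorem \ref{t8.1} together with the full faithfulness of $\ulomega^\eff$ (Proposition \ref{prop:eq6.2}) and $\omega^\eff$ (Proposition \ref{prop:eq6.2a}) to obtain the restricted functors, and then bootstrap symmetric monoidality from that of $\ulomega_\eff$ and $\omega_\eff$ on a set of generators.

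First, I would show that $\ulomega^\eff$ maps $\DM_{\gm,\proper}^\eff$ into the essential image of $\ul{\iota}_\eff:\DR_\gm^\eff\inj \DR^\eff$. Indeed, the generators $M^V(X)$ with $X$ smooth proper go to $\ul{M}(X,\emptyset)=\ul{\iota}_\eff\ul{M}_\gm(X,\emptyset)$ by Theorem \ref{t8.1}. Since $\ulomega^\eff$ is triangulated and the essential image of $\ul{\iota}_\eff$ in $\DR^\eff$ is a thick subcategory (full faithfulness of $\ul{\iota}_\eff$ plus idempotent completeness of $\DR_\gm^\eff$), this image property extends from generators to the whole thick subcategory $\DM_{\gm,\proper}^\eff$. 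Thus $\ulomega^\eff$ restricts to a functor $\ulomega^{\eff,\gm}:\DM_{\gm,\proper}^\eff\to \DR_\gm^\eff$; it is fully faithful because $\ulomega^\eff$ and $\ul{\iota}_\eff$ are. The ``right adjoint'' property is then a formal manipulation: for $B\in \DM_{\gm,\proper}^\eff$ and $A\in \DR_\gm^\eff$, commutativity of \eqref{eq6.1} gives $\ulomega_\eff\ul{\iota}_\eff A=\iota_\eff^V\ulomega_{\eff,\gm}A$, so
\[\DR_\gm^\eff(A,\ulomega^{\eff,\gm}B)\simeq\DR^\eff(\ul{\iota}_\eff A,\ulomega^\eff \iota_\eff^V B)\simeq \DM^\eff(\iota_\eff^V\ulomega_{\eff,\gm}A,\iota_\eff^V B)\simeq \DM_{\gm,\Nis}^\eff(\ulomega_{\eff,\gm}A,B),\]
using the full faithfulness of $\ul{\iota}_\eff$, the adjunction $(\ulomega_\eff,\ulomega^\eff)$, and the full faithfulness of $\iota_\eff^V$. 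The same argument, mutatis mutandis (now via the second isomorphism of Theorem \ref{t8.1} and Proposition \ref{prop:eq6.2a}), produces $\omega^{\eff,\gm}:\DM_{\gm,\proper}^\eff\to \MDM_\gm^\eff$ with the analogous adjointness property.

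For symmetric monoidality of the restricted functor, the lax monoidal structure $\ulomega^\eff(A)\otimes\ulomega^\eff(B)\to \ulomega^\eff(A\otimes B)$ is induced by the adjunction from the strong monoidality of $\ulomega_\eff$ (Proposition \ref{prop:eq6.2}). I would verify it is an isomorphism on the tensor-generating family $\{M^V(X)\}$ of smooth proper $X$: since $M^V(X)\otimes M^V(Y)=M^V(X\times Y)$ and $(X,\emptyset)\otimes (Y,\emptyset)=(X\times Y,\emptyset)$ by Definition \ref{def:tensor-mod}, both sides identify with $\ul{M}(X\times Y,\emptyset)$ via Theorem \ref{t8.1} and the monoidality of $\ul{M}$. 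An easy thick-subcategory argument (the full subcategory of objects for which the lax map is an isomorphism is closed under shifts, cones and summands, and tensoring is exact in each variable, hence preserves this) extends this to all of $\DM_{\gm,\proper}^\eff$. The exact same argument works for $\omega^{\eff,\gm}$.

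Finally, the adjoint pairs \eqref{eq6.6} and \eqref{eq6.7} follow immediately from Lemma \ref{l6.1}, which gives $\DM_{\gm,\proper}^\eff[1/p]=\DM_\gm^\eff[1/p]$ (and unconditionally in characteristic zero, or under resolution of singularities). The main technical obstacle is the symmetric monoidality step: a priori $\ulomega^\eff$ and $\omega^\eff$ are only lax monoidal as right adjoints of monoidal functors, and one genuinely needs the generators of $\DM_{\gm,\proper}^\eff$ to have empty-modulus lifts which behave well under the tensor product of Definition \ref{def:tensor-mod}; this is exactly what the identity $(X,\emptyset)\otimes(Y,\emptyset)=(X\times Y,\emptyset)$ provides (and fails for nontrivial modulus, compare Remark \ref{w2.1}).
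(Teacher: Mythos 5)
Your argument follows essentially the same route as the paper's: identify the restricted functors via Theorem \ref{t8.1}, get full faithfulness from Propositions \ref{prop:eq6.2}, \ref{prop:eq6.2a} and the full embedding of $\DM^\eff_{\gm,\Nis}$ in $\DM^\eff$, and verify that the lax monoidal comparison map for the right adjoint is an isomorphism on the generators $M^V(X)\otimes M^V(Y)=M^V(X\times Y)$ using $(X,\emptyset)\otimes(Y,\emptyset)=(X\times Y,\emptyset)$, then propagate. Your explicit adjunction chain for the ``right adjoint'' property is correct and spells out a step the paper leaves implicit.

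There is one small gap. The corollary asserts that the restriction of $\ulomega^\eff$ to the \emph{localising} subcategory $\DM^\eff_\proper$ (Definition \ref{d6.1}) is symmetric monoidal, and only then that this induces $\ulomega^{\eff,\gm}$ on the \emph{thick} subcategory $\DM^\eff_{\gm,\proper}$. Your devissage argument — closure under shifts, cones and direct summands — only reaches the thick hull of the generators, i.e.\ $\DM^\eff_{\gm,\proper}$. To descend monoidality to all of $\DM^\eff_\proper$ one must also check closure under arbitrary direct sums, and both $\ulomega^\eff(-)\otimes\ulomega^\eff(N)$ and $\ulomega^\eff(-\otimes N)$ need to commute with them; this is precisely where the \emph{strong additivity} of $\ulomega^\eff$ and $\omega^\eff$ supplied by Propositions \ref{prop:eq6.2} and \ref{prop:eq6.2a} enters, and it is what the paper invokes at this step. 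The omission is easy to repair, but as written your argument only proves monoidality on the thick part, not the first assertion of the statement.
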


\begin{proof} 
Everything
follows from Theorem \ref{t8.1} and Lemma \ref{l6.1}, 
except for the full faith\-ful\-ness of $\ulomega^{\eff,\gm}$, $\omega^{\eff,\gm}$ and the monoidality assertions. The first  follow from Propositions \ref{prop:eq6.2}, \ref{prop:eq6.2a} and the full embedding $\DM_{\gm,\Nis}^\eff\inj \DM^\eff$ of Remark \ref{r3.1} (1). Next, the monoidality of $\ulomega_\eff$ yields a natural transformation on $\DM^\eff$
\[\ulomega^\eff M\otimes \ulomega^\eff N \to \ulomega^\eff(M\otimes N)\]
and similarly for $\omega^\eff$. By Theorem \ref{t8.1}, this is an isomorphism when $M$ and $N$ are of the form $M(X)$ and $M(Y)$ for $X,Y$ smooth proper, hence in general by the strong additivity of $\omega^\eff$ and $\ulomega^\eff$ (Propositions \ref{prop:eq6.2} and \ref{prop:eq6.2a} again). 
\end{proof}

\begin{defn}\label{d6.2} Let $\Z(1):=\Phi^\eff(\L)[-2]$, where $\L\in \Chow^\eff$ is the Lefschetz motive.  For $i \geq 0$ and $M \in \DR^{\eff}_{\gm}$, we put $\Z(i)=\Z(1)^{\otimes i}$  and $M(i)=M \otimes \Z(i)$.
\end{defn}

\begin{cor}\label{c8.2} 
Assume $k$ is perfect.
Let $X$ be a smooth proper $k$-variety of dimension $d$,
$\sY\in \ulMCor$ a modulus pair,
and $i, j$ integers with $i \geq 0$.
Then we have a canonical isomorphism:
\[\Hom_{\DR_\gm^\eff}(\ul{M}_\gm(\sY),\ul{M}_\gm(X,\emptyset)(i)[j])\simeq H^{2d+j}(\sY^\o\times X,\Z(d+i))\]
where the right hand side is Voevodsky's motivic cohomology. 
In particular, this group is isomorphic to the higher Chow group $CH^{d+i}(\sY^\o\times X, 2i-j)$
and vanishes if $j>2i$, by \cite[Cor. 2]{allagree}.\\
The same formula holds in $\MDM_\gm^\eff$ if $\sY\in \MCor$ (with $M_\gm$ instead of $\ul{M}_\gm$).
\end{cor}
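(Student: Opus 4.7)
The plan is to reduce the computation to a Hom group in $\DM_\gm^\eff$ via the adjunction from Corollary \ref{c8.1}, and then invoke Voevodsky's identification of such Hom groups with higher Chow groups. I will write the argument for $\DR_\gm^\eff$; the case of $\MDM_\gm^\eff$ is identical, using $\omega^{\eff,\gm}$ in place of $\ulomega^{\eff,\gm}$.

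First, since $X$ is smooth proper, Theorem \ref{t8.1} combined with Corollary \ref{c8.1} gives a canonical isomorphism $\ul{M}_\gm(X,\emptyset)\simeq \ulomega^{\eff,\gm}M^V(X)$ in $\DR_\gm^\eff$. To extend this to $\ul{M}_\gm(X,\emptyset)(i)$, I observe that $\Z(1)\in \DR_\gm^\eff$ is, by Definition \ref{d6.2}, $\Phi^\eff(\L)[-2]$, where $\L$ is a direct summand of $h(\P^1)$; applying Theorem \ref{t8.1} to $\P^1$ and $\Spec k$, and using Theorem \ref{thm:chow} together with the $\otimes$-functoriality and strong additivity of $\ulomega^{\eff,\gm}$ (Corollary \ref{c8.1}), I get $\Z(1)_{\DR}\simeq \ulomega^{\eff,\gm}\Z(1)_{\DM}$. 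Since $\DM_{\gm,\proper}^\eff$ is closed under tensor product, $M^V(X)(i)\in \DM_{\gm,\proper}^\eff$, and monoidality of $\ulomega^{\eff,\gm}$ yields $\ul{M}_\gm(X,\emptyset)(i)\simeq \ulomega^{\eff,\gm}(M^V(X)(i))$.

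Next, I apply the adjunction $\ulomega_{\eff,\gm}\dashv \ulomega^{\eff,\gm}$ of Corollary \ref{c8.1} (valid on $\DM_{\gm,\proper}^\eff$ without inverting $p$, since the right adjoint is defined there) to rewrite
\[\Hom_{\DR_\gm^\eff}(\ul{M}_\gm(\sY),\ul{M}_\gm(X,\emptyset)(i)[j])\simeq \Hom_{\DM_\gm^\eff}(\ulomega_{\eff,\gm}\ul{M}_\gm(\sY),M^V(X)(i)[j]).\]
By the construction of $\ulomega_{\eff,\gm}$ from the functor $\ulomega:\ulMCor\to \Cor$ of \eqref{diag1}, which sends $\sY$ to $\sY^\o$, the left factor is canonically $M^V(\sY^\o)$.

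Finally, since $X$ is smooth proper of dimension $d$, Poincar\'e duality in $\DM_\gm$ gives $M^V(X)^\vee\simeq M^V(X)(-d)[-2d]$, so
\[\Hom_{\DM_\gm^\eff}(M^V(\sY^\o),M^V(X)(i)[j])\simeq \Hom_{\DM_\gm^\eff}(M^V(\sY^\o\times X),\Z(d+i)[2d+j]),\]
which is by definition $H^{2d+j}(\sY^\o\times X,\Z(d+i))$. The comparison $H^{2d+j}(\sY^\o\times X,\Z(d+i))\simeq CH^{d+i}(\sY^\o\times X,2i-j)$ together with the vanishing for $j>2i$ is exactly \cite[Cor. 2]{allagree}, applicable since $\sY^\o\times X$ is smooth over the perfect field $k$. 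The main (but essentially formal) point to verify carefully is the compatibility of Tate twists in Step 2; everything else is a direct application of the tools already set up.
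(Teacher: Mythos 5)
Your proof is correct and matches the paper's argument in all essentials: identify $\ul{M}_\gm(X,\emptyset)(i)$ with the image of $M^V(X)(i)$ under $\ulomega^{\eff,\gm}$ via Theorem~\ref{t8.1} and monoidality (Corollary~\ref{c8.1}), transfer the Hom group by the adjunction, and finish with Poincar\'e duality and Voevodsky's comparison with higher Chow groups. The only (cosmetic) difference is that the paper first passes to $\DR^\eff$ via the full embedding of Theorem~\ref{thm:DMgm-DM-full-faith}~(2), where $(\ulomega_\eff,\ulomega^\eff)$ is an honest adjoint pair, while you invoke the restricted adjunction of Corollary~\ref{c8.1} directly at the geometric level and spell out the Tate-twist identification in more detail; you should also note that your final step implicitly uses the full faithfulness of $\DM_\gm^\eff\to\DM_\gm$ (cancellation), since the dual $M^V(X)(-d)[-2d]$ is not effective --- the paper sidesteps this by citing the effective formulation of duality in \cite[Prop.\ 6.7.1]{be-vo}.
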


\begin{proof} 
By Theorem  \ref{thm:DMgm-DM-full-faith} (2), we may compute the Hom on the left hand side using $\DR^\eff$ instead of $\DR_\gm^\eff$. 
By monoidality (Corollary \ref{c8.1}), 
$\ulomega^{\eff}$ sends
$\Z(i) \in \DM^{\eff}_{\gm}$ to
$\Z(i) \in \DR^{\eff}_{\gm}$.
By adjunction and Theorem \ref{t8.1}, 
we then have an isomorphism
\[\Hom_{\DR^\eff}(\ul{M}(\sY),\ul{M}(X,\emptyset)(i)[j])
\simeq \Hom_{\DM^\eff}(M^V(\sY^\o),M^V(X)(i)[j]). 
\]
The result now follows from Poincar\'e duality for $X$ \cite[Prop. 6.7.1]{be-vo}. The case of $\MDM_\gm^\eff$ is identical.
\end{proof}

\begin{cor}\label{c8.3} 
Assume $k$ is perfect.
The functor $\Phi^\eff:\Chow^\eff\to \MDM_\gm^\eff$ from Theorem \ref{thm:chow} is fully faithful.  \qed
\end{cor}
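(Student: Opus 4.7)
The plan is to reduce the statement to Voevodsky's full faithfulness of $\Phi_V^\eff:\Chow^\eff\to \DM_{\gm,\Nis}^\eff$, exploiting the factorisation $\Phi_V^\eff = \omega_{\eff,\gm}\circ \Phi^\eff$ from Theorem \ref{thm:chow} together with the Hom computation already obtained in Corollary \ref{c8.2}.

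\textbf{Step 1 (generators).} For smooth projective $k$-varieties $X,Y$ with $d=\dim Y$, I would specialise Corollary \ref{c8.2} to the modulus pair $\sY=(X,\emptyset)\in\MCor$ and indices $i=j=0$, which yields a canonical isomorphism
\[
\MDM_\gm^\eff\bigl(M(X,\emptyset),\,M(Y,\emptyset)\bigr)\;\simeq\;CH^d(X\times Y).
\]
On the other hand, the formula \eqref{eq1hc} (which underlies the construction of $\Phi_V^\eff$) identifies $\Chow^\eff(h(X),h(Y))$ with the same group $CH^d(X\times Y)=CH_{\dim X}(X\times Y)$.

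\textbf{Step 2 (matching the identifications).} Next I would check that under these two identifications, the map induced by $\Phi^\eff$ is the identity on $CH^d(X\times Y)$. By Theorem \ref{thm:chow}, $\omega_{\eff,\gm}\circ\Phi^\eff = \Phi_V^\eff$, and the latter is fully faithful by \cite[6.7.3]{be-vo}. It therefore suffices to show that $\omega_{\eff,\gm}$ itself induces a bijection on the Hom groups in question, compatibly with Voevodsky's cycle-class identification. But the isomorphism of Corollary \ref{c8.2} is constructed by invoking the adjunction $(\omega_\eff,\omega^\eff)$ and Theorem \ref{t8.1}, which provides $M(X,\emptyset)\simeq \omega^\eff M^V(X)$ and $M(Y,\emptyset)\simeq \omega^\eff M^V(Y)$. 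Under this adjunction, the map $\omega_{\eff,\gm}$ on Hom groups coincides with the iso produced by the counit $\omega_\eff\omega^\eff M^V(Y)\iso M^V(Y)$, which is an isomorphism because $\omega^\eff$ is fully faithful (Proposition \ref{prop:eq6.2a}). Hence $\omega_{\eff,\gm}$ is an iso on these specific Hom groups, and since its composition with $\Phi^\eff$ is the iso $\Phi_V^\eff$, we conclude that $\Phi^\eff$ is an iso on $\Chow^\eff(h(X),h(Y))$.

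\textbf{Step 3 (pseudo-abelian extension).} Every object of $\Chow^\eff$ is a direct summand of some $h(X)$ cut out by an idempotent. Since $\MDM_\gm^\eff$ is pseudo-abelian by construction (Definition \ref{def:MDMgm}) and $\Phi^\eff$ is additive and symmetric monoidal (Theorem \ref{thm:chow}), splitting any two idempotents $e\in\Chow^\eff(h(X),h(X))$, $e'\in\Chow^\eff(h(Y),h(Y))$ reduces full faithfulness on the summands to the full faithfulness on the $h(X)$'s proved in Steps 1--2, via the standard identity $\Chow^\eff(h(X)e,h(Y)e') = e'\cdot\Chow^\eff(h(X),h(Y))\cdot e$ and its analogue in $\MDM_\gm^\eff$.

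\textbf{Main obstacle.} The substantive part is Step 2: one must verify that the concrete isomorphism of Corollary \ref{c8.2} matches Voevodsky's cycle-class identification after applying $\omega_{\eff,\gm}$. This is not a new piece of content but a compatibility check; it follows tautologically from the fact that the Corollary's isomorphism is \emph{defined} via the adjunction with $\omega^\eff$ and from Poincar\'e duality for the smooth projective variety $Y$ (as used in the proof of Corollary \ref{c8.2}). All other steps are either direct invocations of results stated earlier in the paper (Theorems \ref{thm:chow}, \ref{t8.1}, Corollaries \ref{c8.1}, \ref{c8.2}) or formal categorical manipulations.
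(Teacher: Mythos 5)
Your proposal is correct and matches the route the paper intends (the paper marks the corollary with a $\qed$, treating it as immediate from Theorem \ref{thm:chow}, Corollary \ref{c8.2} and the full faithfulness of $\omega^\eff$). Your Step 2 is exactly the compatibility check the paper silently relies on: since $\omega_{\eff,\gm}\circ\Phi^\eff=\Phi^\eff_V$ and the isomorphism of Corollary \ref{c8.2} is built from the $(\omega_\eff,\omega^\eff)$-adjunction with $\omega^\eff$ fully faithful, $\omega_{\eff,\gm}$ is bijective on $\Hom(M(Y,\emptyset),M(X,\emptyset))$, so $\Phi^\eff=\omega_{\eff,\gm}^{-1}\circ\Phi^\eff_V$ is bijective there; Step 3 then extends to all of $\Chow^\eff$ by splitting idempotents, which is standard since $\MDM^\eff_\gm$ is pseudo-abelian by construction.
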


\subsection{$\pi_0$-invariance} For any modulus pair $\sY\in \ulMCor$, we write $\pi_0(\sY)\allowbreak:=(\pi_0(\sY^\o),\emptyset)$, where $\pi_0(\sY^\o)$ (``scheme of constants'') is the universal \'etale $k$-scheme such that the projection $\sY^\o\to \Spec k$ factors through $\pi_0(\sY^\o)$. This factorisation induces a morphism $p_\sY:\sY\to \pi_0(\sY)$. 
In contrast to Theorem \ref{t8.1}, we have

\begin{thm}\label{t8.2} Let $X$ be smooth and quasi-affine. Then $\Z_\tr(X,\emptyset)$ is $\bcube$-invariant (Definition   \ref{def:cube-inv}) and, more strongly, ``properly $\pi_0$-invariant'': for any 
proper modulus pair $\sY\in \MCor$, we have an isomorphism in $\ulMNST$
\[\uHom(\Z_\tr(\pi_0(\sY)),\Z_\tr(X,\emptyset))\iso \uHom(\Z_\tr(\sY),\Z_\tr(X,\emptyset))\]
induced by $p_\sY$.
\end{thm}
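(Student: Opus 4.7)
The plan is to prove the stronger ``properly $\pi_0$-invariant'' statement, from which $\bcube$-invariance follows as the special case $\sY=\bcube\in\MCor$: since $\A^1$ is geometrically connected one has $\pi_0(\bcube)=(\Spec k,\emptyset)=\un$, and $p_\bcube:\bcube\to\un$ is the structural projection; via $\uHom(\Z_\tr(\un),F)\simeq F$ this identifies the resulting isomorphism with the condition of Definition~\ref{def:cube-inv}. By Yoneda it then suffices to check that for every $\sZ\in\ulMCor$ the induced map
\[ p_\sY^*:\ulMCor(\sZ\otimes\pi_0(\sY),(X,\emptyset))\to\ulMCor(\sZ\otimes\sY,(X,\emptyset)) \]
is bijective, since both sides are sheaves in $\sZ$ as representable internal Homs. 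An element on the right is a finite correspondence $W\subset\sZ^\o\times\sY^\o\times X$ whose scheme-theoretic closure $\ol W\subset\ol\sZ\times\ol\sY\times X$ is proper over $\ol\sZ\times\ol\sY$ (the modulus condition being automatic, as $(X,\emptyset)$ has no modulus).

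The heart of the proof is a rigidity lemma using the quasi-affineness of $X$. Take an elementary summand $W$, integral and finite surjective over a component $\sZ^\o_i\times\sY^\o_j$. Since $\sY$ is proper, the morphism $\ol W\to\ol\sZ$ obtained by composing $\ol W\to\ol\sZ\times\ol\sY$ with the first projection is proper; for each $z\in\ol\sZ$ the fibre is a proper $k(z)$-scheme embedded in $\ol\sY_{k(z)}\times X_{k(z)}$, and since $X_{k(z)}$ is quasi-affine over $k(z)$ its image in $X_{k(z)}$ is necessarily $0$-dimensional. Applied at the generic point $\eta$ of $\sZ^\o_i$, this forces the integral scheme $W_\eta\subset\sY^\o_{j,k(\eta)}\times X_{k(\eta)}$ to be contained in $\sY^\o_{j,k(\eta)}\times\{x\}$ for a single closed point $x\in X_{k(\eta)}$, and hence to be a component of the finite \'etale $\sY^\o_{j,k(\eta)}$-scheme $\sY^\o_{j,k(\eta)}\times_{k(\eta)}\Spec k(x)$. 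Writing $\Spec L:=\pi_0(\sY^\o_j)_{k(\eta)}$ (which equals $\pi_0(\sY^\o_{j,k(\eta)})$ because $k(\eta)/k$ is separable, $\sY^\o_j$ being smooth) and decomposing $L\otimes_{k(\eta)}k(x)=\prod_\alpha L_\alpha$ into a product of fields, the components of $\sY^\o_{j,k(\eta)}\times_{k(\eta)}\Spec k(x)=\sY^\o_{j,k(\eta)}\times_L(L\otimes_{k(\eta)}k(x))$ are precisely the $\sY^\o_{j,k(\eta)}\times_L L_\alpha$, each of which is the pullback along $p_\sY\times\id_X$ of the cycle $\Spec L_\alpha\subset\pi_0(\sY^\o_j)_{k(\eta)}\times X_{k(\eta)}$.

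Next I would globalise the preceding identification: let $W'$ be the scheme-theoretic image of $W$ in $\sZ^\o\times\pi_0(\sY^\o)\times X$, and verify that the cycle-theoretic composition $W'\circ(\id_\sZ\otimes p_\sY)$ equals $W$, using the integrality of $W$ (which makes it determined by its generic fibre) together with the rigidity at $\eta$ just established. Admissibility of $W'$ as a modulus correspondence $\sZ\otimes\pi_0(\sY)\to(X,\emptyset)$, i.e.\ properness of its closure over $\ol\sZ\times\pi_0(\sY^\o)$, follows from the properness of $\ol W$ over $\ol\sZ\times\ol\sY$ by finite pushforward along the proper morphism induced by $p_\sY$ on closures. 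This gives surjectivity of $p_\sY^*$; injectivity holds because $p_\sY^\o:\sY^\o\to\pi_0(\sY^\o)$ is smooth surjective (smooth over an \'etale base), so flat pullback of cycles along $\id\times p_\sY\times\id$ is injective.

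The main obstacle will be the globalisation step: while the fibrewise rigidity at the generic point of $\sZ^\o$ is straightforward, one must carefully justify, using the integrality of $W$ and the compatibility of $\pi_0$ with separable base change, that the generic identification upgrades to a genuine equality of integral subschemes of $\sZ^\o\times\sY^\o\times X$, and that the resulting $W'$ indeed lies in $\ulMCor(\sZ\otimes\pi_0(\sY),(X,\emptyset))$ with the correct properness of its closure.
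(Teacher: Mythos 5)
Your proposal rests on the same two pillars as the paper's proof: the quasi-affineness of $X$ forces the image of $\ol{W}$ in the relevant product with $X$ to be proper and quasi-affine over its base, hence finite; and an irreducibility/dimension count then pins down $W$ as $p_\sY^*(W')$. The genuine divergence is the handling of $\pi_0$. The paper opens with a one-line reduction --- ``we may reduce to $\sY^\o$ connected and (up to extending $k$) even geometrically connected'' --- which makes $\pi_0(\sY)=\un$; everything then lives in $\sZ^\o\times X$, the finiteness claim is a single application of \cite[Cor.~5.4.3]{EGA2}, and the surjectivity step is a two-line dimension count ($V\subset V'\times\sY^\o$ with both integral of the same dimension) with no mention of $\pi_0$ at all. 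By keeping $\pi_0(\sY^\o)$ in the picture throughout, you are forced into the fibrewise rigidity at the generic point and the subsequent globalisation that you yourself flag as the obstacle. That step is where your route is genuinely more delicate: the phrase ``finite pushforward along the proper morphism induced by $p_\sY$ on closures'' presupposes a morphism $\ol{\sY}\to\pi_0(\sY^\o)$, which is not automatic when $\ol{\sY}$ is not normal --- precisely the kind of annoyance that the paper's reduction to $\pi_0(\sY)=\un$ removes. None of this is fatal (one can normalise $\ol{\sY}$, or simply push forward to $\ol{\sZ}\times X$ as the paper does and recover the $\pi_0(\sY^\o)$-factor afterwards), but the paper's version is decidedly leaner. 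Your injectivity argument by flat pullback is a valid alternative to the paper's restriction to a closed point of $\sY^\o$, and your identification of $\bcube$-invariance as the special case $\sY=\bcube$ matches the implicit logic of the paper.
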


\begin{proof} We may reduce to $\sY^\o$ connected and (up to extending $k$) even geometrically connected. 
Take $\sZ \in \ulMCor$.
It suffices to show that the map
\begin{equation}\label{eq:blue}
p_\sY^* : \ulMCor(\sZ, (X, \emptyset))
\to \ulMCor(\sZ \otimes \sY, (X, \emptyset))
\end{equation}
induced by $p_\sY$ is an isomorphism.
For any closed point $y \in \sY^\o$,
we find that 
$\ulMCor(\sZ, (X, \emptyset))
\to 
\ulMCor(\sZ \otimes(y, \emptyset), (X, \emptyset))$
is injective,
hence \eqref{eq:blue} is injective as well.

To show its surjectivity,
let us take an elementary modulus correspondence 
$V \in \ulMCor(\sZ \otimes \sY, (X, \emptyset))$.
Let $\ol{V}$ be the closure of $V$
in $\ol{\sZ} \times \ol{\sY} \times X$.
We claim that the image $\ol{V'}$
of $\ol{V}$ in $\ol{\sZ} \times X$
is closed and finite surjective over $\ol{\sZ}$.
To prove this claim, 
consider the  commutative diagram
\[
\xymatrix{
\ol{V}
\ar@{^{(}->}[r]^-{i}
\ar[d]_{\pi'}
\ar[rd]^{\pi}
&
\ol{\sZ} \times \ol{\sY} \times X
\ar[r]^-{a}
\ar[d]^{b}
&
\ol{\sZ} \times \ol{\sY}
\ar[d]^{c}
\\
\ol{V'}
\ar@{^{(}->}[r]_-{i'}
&
\ol{\sZ} \times X
\ar[r]_-{d}
&
\ol{\sZ}.
}
\]

Since
$V \in \ulMCor(\sZ \otimes \sY, (X, \emptyset))$,
$ai$ is proper and surjective.
Since the same is true of $c$,
we find that $cai=d\pi$ is proper surjective.
This implies that $\ol{V'}$ is closed
and, combined with the surjectivity of $\pi'$,
that $di'$ is proper \cite[Cor. 5.4.3]{EGA2}.
But $di'$ is also quasi-affine
(since so is $d$), hence finite.
This proves the claim.

Now $V' := \ol{V'} \cap (\sZ^\o \times X)$
is an element of $\ulMCor(\sZ, (X, \emptyset))$.
We clearly have $V \subset V' \times \sY^\o$,
and 
$V' \times \sY^\o$ is irreducible
because $\ol{\sY}$ is geometrically irreducible.
By comparing dimensions,
we get $V=V' \times \sY^\o=p_Y^*(V')$.
This proves the surjectivity of \eqref{eq:blue}.
\end{proof}

\subsection{Inverting the Tate object}\label{s.tate} 
In this subsection, we shall abundantly use the multiplicative localisations introduced by Grothendieck for pure motives (inverting the Lefschetz motive); one may refer to \cite[A.2.4, A.2.5]{kahnzeta} for a detailed discussion, see also \S \ref{susp}. 

\begin{defn}\label{d6.3} We write $\MDM_\gm$ (resp. $\DR_\gm$) for the 
category obtained from $\MDM_\gm^\eff$ (resp. from $\DR_\gm^\eff$) by $\otimes$-inverting $\Z(1)$ (resp. $\tau_\eff \Z(1)$), see Definition \ref{d6.2}. Similarly for $\DM_\gm$ and $\DM_{\gm,\proper}$ from $\DM_\gm^\eff$ and $\DM_{\gm,\proper}^\eff$. 
\end{defn}

The $\otimes$-functor $\Phi^\eff$ of Theorem \ref{thm:chow} extends canonically to a $\otimes$-functor
\begin{equation}\label{eq6.5}
\Phi:\Chow\to \MDM_\gm
\end{equation}
where $\Chow$ is the category of (all) Chow motives. 

\begin{prop}\
\begin{enumerate}
\item The categories $\MDM_\gm$ and $\ulMDM_\gm$ are Karoubian $\otimes$-triang\-ul\-at\-ed categories.
\item The functor $\tau_\gm:\MDM_\gm\to \ulMDM_\gm$ induced by $\tau_{\eff,\gm}$ is $\otimes$-triangulated and fully faithful.
\item The functor $\Phi$ is symmetric monoidal, and fully faithful if $k$ is perfect. For any smooth projective variety $X$, the motive $M(X,\emptyset)$ is strongly dualisable in $\MDM_\gm$ and in $\ulMDM_\gm$.
\end{enumerate}
\end{prop}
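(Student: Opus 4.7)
The plan is to treat the three parts in order, leveraging the effective versions and transporting structure through the symmetric monoidal functors already available.

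First, for (1), the effective categories $\MDM_\gm^\eff$ and $\ulMDM_\gm^\eff$ are Karoubian $\otimes$-triangulated by Definition \ref{def:MDMgm} (a pseudo-abelian envelope of a localisation of $K^b$) and by Theorem \ref{thm:DMgm-DM-full-faith} (3). To ensure that $\otimes$-inverting $\Z(1)$ yields a triangulated category, I would verify Voevodsky's symmetry criterion: the cyclic permutation on $\Z(1)^{\otimes 3}$ equals the identity. Since $\Z(1)$ arises (up to a shift) as a direct summand of $M(\P^1,\emptyset)$ via $\Phi^\eff$ applied to the splitting $h(\P^1)=\un\oplus\L$ in $\Chow^\eff$, and since the cyclic permutation on $\L^{\otimes 3}$ is the identity in $\Chow^\eff$ by a standard cycle computation on $(\P^1)^3$, the monoidality of $\Phi^\eff$ transports this identity to $\MDM_\gm^\eff$, with careful sign bookkeeping for the shift $[-2]$ in Definition \ref{d6.2}. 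The standard symmetric $\otimes$-localisation construction then produces a triangulated category; the Karoubian property is preserved (or can be secured by a harmless final pseudo-abelian envelope). The same argument applies \emph{verbatim} to $\ulMDM_\gm$.

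Next, for (2), I observe that by Definition \ref{d6.3} the Tate object of $\ulMDM_\gm$ is by construction $\tau_{\eff,\gm}\Z(1)$, so $\tau_{\eff,\gm}$ sends the object being inverted on the left to the object being inverted on the right. Since $\tau_{\eff,\gm}$ is $\otimes$-triangulated by Theorem \ref{thm:DMgm-DM-full-faith} (4) and fully faithful by Theorem \ref{t5.1}, it extends canonically to a $\otimes$-triangulated functor $\tau_\gm$ between the inverted categories. Full faithfulness is then inherited: Hom groups in $\MDM_\gm$ are filtered colimits of Hom groups in $\MDM_\gm^\eff$ twisted by powers of $\Z(1)$, and $\tau_{\eff,\gm}$ is bijective at each level of the colimit thanks to full faithfulness and monoidal compatibility with $\Z(1)$.

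Finally, for (3), since $\Phi^\eff(\L)=\Z(1)[2]$ is invertible in $\MDM_\gm$ by construction, the universal property of $\Chow=\Chow^\eff[\L^{-1}]$ provides a unique symmetric monoidal extension $\Phi\colon \Chow\to \MDM_\gm$. Full faithfulness for $k$ perfect is transported from Corollary \ref{c8.3} by the same filtered-colimit argument as in (2). For strong dualisability: in $\Chow$, any motive $h(X)$ with $X$ smooth projective of dimension $d$ is strongly dualisable, with dual $h(X)\otimes\L^{-d}$ and evaluation/coevaluation arising from the cycle class of the diagonal of $X\times X$ (Grothendieck--Poincar\'e duality). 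Applying the strong symmetric monoidal functor $\Phi$ yields a dual pair in $\MDM_\gm$ exhibiting $M(X,\emptyset)$ as strongly dualisable. Since $\tau_\gm$ is strong monoidal, it preserves strong dualisability, so $\ul{M}(X,\emptyset)$ is strongly dualisable in $\ulMDM_\gm$ as well. The main obstacle will be the symmetry criterion in step (1): one must either carefully trace sign conventions while transporting the cyclic permutation identity from $\L$ to $\Z(1)$ through the shift $[-2]$, or alternatively construct a direct $\bcube$-homotopy in $\MDM_\gm^\eff$ implementing the cyclic permutation, in the spirit of Voevodsky's original argument using the $\A^1$-homotopy structure on the threefold smash of $\P^1$.
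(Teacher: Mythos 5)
Your proposal follows the paper's strategy closely, but a few points deserve attention.

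First, for (1), the paper invokes Voevodsky's criterion in the form of \cite[Prop.\ A.31]{kahnzeta}, which requires the \emph{switch} endomorphism on $\Z(1)^{\otimes 2}$ to be the identity, not the cyclic permutation on $\Z(1)^{\otimes 3}$; this stronger condition is what one actually checks in $\Chow^\eff$ (the swap of factors acts trivially on $\L^{\otimes 2}$ inside $h(\P^1\times\P^1)$), and it implies your cyclic condition. Your worry about ``careful sign bookkeeping for the shift $[-2]$'' is a non-issue: the switch on $(M[n])^{\otimes 2}$ differs from that on $M^{\otimes 2}$ only by $(-1)^{n^2}=(-1)^n$, which is $+1$ for $n=-2$.

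Second, your hedge on the Karoubian claim --- ``preserved (or can be secured by a harmless final pseudo-abelian envelope)'' --- is not actually available: $\MDM_\gm$ is \emph{defined} (Definition \ref{d6.3}) as the bare $\otimes$-inverted category, so appending an envelope would change the object under discussion. Fortunately Lemma \ref{lA.1}~(1) states precisely that $\sA[L^{-1}]$ inherits Karoubianness from $\sA$ (via the colimit formula \eqref{eqA.1} for Hom groups), so no envelope is needed. Your filtered-colimit argument for full faithfulness of $\tau_\gm$ and of $\Phi$ in parts (2) and (3) is exactly the content of Lemma \ref{lA.1}~(2), which the paper cites in place of spelling it out. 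The rest of (3) --- extension of $\Phi^\eff$ via the universal property of $\Chow=\Chow^\eff[\L^{-1}]$, dualisability from rigidity of $\Chow$, and transport to $\ulMDM_\gm$ along the strong monoidal $\tau_\gm$ --- matches the paper's argument and spells out a step ($\ulMDM_\gm$) that the paper leaves implicit.
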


\begin{proof} For (1), we use Voevodsky's sufficient condition \cite[Prop. A.31]{kahnzeta} that the switch endomorphism of $\Z(1)^{\otimes 2}$ is the identity in $\MDM_\gm^\eff$ and $\ulMDM_\gm^\eff$, which holds because this is true for the Lefschetz motive $\L$ in the category $\Chow^\eff$. The karoubian assertion follows from Lemma \ref{lA.1} (1). For (2), we apply Lemma \ref{lA.1} (2) together with Theorem \ref{t5.1}. Similarly for (3), with Corollary \ref{c8.3};  the strong dualisability statement holds because $\Chow$ is rigid.
\end{proof}

\begin{prop} \label{p6.1} The $\otimes$-functors $\ulomega_{\eff,\gm}$, $\omega_{\eff,\gm}$, $\ulomega^{\eff,\gm}$ and $\omega^{\eff,\gm}$ of Corollary \ref{c8.1} induce $\otimes$-functors $\ulMDM_\gm\by{\ulomega_\gm} \DM_\gm$, $\MDM_\gm\allowbreak\by{\omega_\gm} \DM_\gm$, $\DM_{\gm,\proper}\by{\ulomega^\gm} \ulMDM_\gm$ and $\DM_{\gm,\proper}\by{\omega^\gm} \MDM_\gm$. The functors $\ulomega^\gm$ and $\omega^\gm$ are fully faithful; when $k$ is perfect,  
the adjoint pairs \eqref{eq6.6} and \eqref{eq6.7} of Corollary \ref{c8.1} induce adjoint pairs
\begin{align}
\omega_\gm:\MDM_\gm[1/p]&\leftrightarrows \DM_\gm:\omega^\gm[1/p]\label{eq6.6a}\\
\ulomega_\gm:\DR_\gm[1/p]&\leftrightarrows \DM_\gm:\ulomega^\gm[1/p].\label{eq6.7a}
\end{align}
Under resolution of singularities in characteristic $p$, we can drop the affixes $[1/p]$.
\end{prop}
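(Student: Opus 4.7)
The plan is to derive all four assertions from Corollary \ref{c8.1} and Lemma \ref{l6.1} by verifying that $\otimes$-inversion of the Tate object commutes with the functors and adjunctions in question; the main tool is the universal property of $\otimes$-inversion recalled in \cite[A.2.4--5, Prop. A.31]{kahnzeta}. For the existence of the induced functors, I would first check that each of $\ulomega_{\eff,\gm}$, $\omega_{\eff,\gm}$, $\ulomega^{\eff,\gm}$, $\omega^{\eff,\gm}$ is a $\otimes$-functor (granted by Corollary \ref{c8.1}) that carries the Tate object on the source to the Tate object on the target. For the lower functors this follows from the identity $\omega_{\eff,\gm}\circ \Phi^\eff=\Phi_V^\eff$ of Theorem \ref{thm:chow} together with Definition \ref{d6.2}; for the upper ones, Theorem \ref{t8.1} applied to $X=\P^1$ gives $\omega^{\eff,\gm}(M^V(\P^1))=M(\P^1,\emptyset)$, whence $\omega^{\eff,\gm}(\Z(1))=\Z(1)$ via the defining triangle $\un \to M^V(\P^1)\to \Z(1)[2]\by{+1}$ and the monoidality of $\omega^{\eff,\gm}$ (same argument for the underlined versions). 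Since the switch on $\Z(1)^{\otimes 2}$ is the identity in each effective category (as invoked in the previous proposition), the four induced functors $\omega_\gm, \ulomega_\gm, \omega^\gm, \ulomega^\gm$ exist by the universal property.

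For the full faithfulness of $\ulomega^\gm$ and $\omega^\gm$, I would use the standard colimit presentation
\[
\Hom_{\DM_{\gm,\proper}}(A_0(-n),B_0(-m))=\colim_{N\ge 0}\Hom_{\DM_{\gm,\proper}^\eff}(A_0(N-n),B_0(N-m))
\]
valid for $A_0,B_0\in \DM_{\gm,\proper}^\eff$ (since the switch criterion holds, every object of $\DM_{\gm,\proper}$ has this form). The monoidality identity $\ulomega^{\eff,\gm}(A_0(N))=\ulomega^{\eff,\gm}(A_0)(N)$ combined with the full faithfulness of $\ulomega^{\eff,\gm}$ and $\omega^{\eff,\gm}$ from Corollary \ref{c8.1} then yields the desired bijection on Homs after passing to the colimit on both sides.

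For the adjunctions, I would observe that Lemma \ref{l6.1} gives $\DM_{\gm,\proper}^\eff[1/p]=\DM_\gm^\eff[1/p]$, so that \eqref{eq6.6} and \eqref{eq6.7} of Corollary \ref{c8.1} are honest adjoint pairs on effective categories after inverting $p$. The unit and counit of such an adjunction between $\otimes$-functors preserving $\Z(1)$ are natural transformations that commute with $-\otimes \Z(1)$, hence descend via the universal property to natural transformations between the Tate-inverted functors; the triangle identities are inherited, producing \eqref{eq6.6a} and \eqref{eq6.7a}. The unconditional clause under resolution of singularities follows by the same argument using the integral half of Lemma \ref{l6.1}. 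The proof is essentially formal and I do not anticipate a serious obstacle; the most delicate step is the bookkeeping of monoidal compatibility, specifically checking that the unit and counit genuinely commute with $-\otimes \Z(1)$ (rather than up to an extra twist) so that they descend cleanly.
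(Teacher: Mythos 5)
Your proposal is correct and follows essentially the same route as the paper: the paper's proof is a terse three sentences that appeals to the monoidality of the four functors from Corollary \ref{c8.1} to get the induced functors, to Lemma \ref{lA.1}~(2) (the colimit formula \eqref{eqA.1} for Hom-groups after $\otimes$-inversion, which you reprove inline) for full faithfulness, and to the preservation of adjunction identities under $\otimes$-inversion for \eqref{eq6.6a}--\eqref{eq6.7a}. The only thing you make more explicit than the paper is the verification that $\omega^{\eff,\gm}$ and its underlined variant carry $\Z(1)$ to $\Z(1)$ (via Theorem \ref{t8.1} for $\P^1$ and the split triangle off the unit), which the paper subsumes under ``monoidality of the functors of Corollary \ref{c8.1}.''
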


\begin{proof} The functors of Corollary \ref{c8.1} induce the said functors because of their monoidality, which also implies the monoidality of these functors. 
The full faithfulness of $\ulomega^\gm$ and $\omega^\gm$ is shown as in the previous proof.
The adjunction identities of \eqref{eq6.6} and \eqref{eq6.7}  are preserved by $\otimes$-inverting $\Z(1)$, which yields corresponding adjunction identities for \eqref{eq6.6a} and \eqref{eq6.7a}.  
\end{proof}

As is well-known, the $\otimes$-category $\DM_\gm[1/p]$ (more generally, the category $\DM_{\gm,\proper}$) is rigid. By contrast, there is evidence that this does not hold for $\MDM_\gm$. Unfortunately, we have to make two assumptions: one is the analogue of Voevodsky's cancellation theorem \cite{voecan} and the other is that ``the derived Suslin complex is quasi-isomorphic to the na\"\i ve Suslin complex''.

\begin{prop}\label{p6.2} If the functor $\MDM_\gm^\eff\to \MDM_\gm$ is fully faithful and if the base change morphism \eqref{eq4.1a} is an isomorphism as in the end of Example \ref{exB.1}, the $\otimes$-category $\MDM_\gm$ is not rigid. More precisely, $M(\bcube^{(2)})$ is not dualisable, with $\bcube^{(2)}=(\P^1,2\infty)$.
\end{prop}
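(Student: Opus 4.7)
The plan is to show that, under both hypotheses, strong dualisability of $L := M(\bcube^{(2)})$ would force $L \simeq \un$ in $\MDM_\gm$, and then to contradict this identification by a concrete computation of $\Hom(\un, L)$ via the naive Suslin complex.

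First I would compute $\uHom_{\MDM_\gm^\eff}(L, \un)$. For any proper modulus pair $\sY \in \MCor$, the tensor product $\bcube^{(2)} \otimes \sY$ is again in $\MCor$, and Corollary \ref{c8.2} applied with $X = \Spec k$ and $d = i = j = 0$ gives
\[
\Hom_{\MDM_\gm^\eff}(L \otimes M(\sY), \un) \;\cong\; CH^0(\A^1 \times \sY^\o) \;\cong\; \Z^{\pi_0(\sY^\o)} \;=\; \un(\sY).
\]
A direct check shows that the natural map $\un \to \uHom(L, \un)$ induced by the structure morphism $L \to \un$ realises this isomorphism, so $\uHom_{\MDM_\gm^\eff}(L, \un) \simeq \un$. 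Under the cancellation hypothesis, $\MDM_\gm^\eff \hookrightarrow \MDM_\gm$ is fully faithful, so this identification transports to $\MDM_\gm$. If $L$ were strongly dualisable with dual $L^\vee$, then $L^\vee \simeq \uHom(L, \un) \simeq \un$; reflexivity of strong duals would then force $L \simeq (L^\vee)^\vee \simeq \un^\vee \simeq \un$ in $\MDM_\gm$.

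To contradict $L \simeq \un$, I would compute $\Hom_{\MDM_\gm}(\un, L)$ using the hypothesis that the derived Suslin complex agrees with the naive one. Theorem \ref{thm:jLC-special} applied to $\sX = \un$ (for which $\ulSigma^\fin \downarrow \un$ is trivial) yields
\[
\Hom_{\MDM_\gm}(\un, L) \;\cong\; H_0\bigl(n \mapsto \MCor(\bcube^n, \bcube^{(2)})\bigr),
\]
an explicit quotient of $Z_0(\A^1)$ by the differences of face-restrictions of admissible finite correspondences $\bcube \to \bcube^{(2)}$. If $L \simeq \un$, this group must equal $\End_{\MDM_\gm}(\un) = \Z$.

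The hard part is to show the quotient is strictly larger than $\Z$. The modulus inequality $\nu^*(\{\lambda = \infty\}) \geq 2\,\nu^*(\{t = \infty\})$ on the normalisation of the closure in $\P^1 \times \P^1$ prohibits graphs of non-constant rational functions (any pole of $t$ at a finite point of $\lambda$ is forbidden, and a pole of $t$ at $\lambda = \infty$ of order $f$ requires ramification index at least $2f$ there); the admissible cycles one can explicitly construct, such as the double cover $\{t^2 = (1-\lambda)a^2\}$, yield only ``norm-style'' relations of the form $[a] + [-a] \equiv 2[0]$. Over a field supporting non-trivial K\"ahler differentials (e.g.\ any field of characteristic zero), the resulting quotient is known, via its comparison with higher Chow groups with modulus on $(\A^1, 2\infty)$ (cf.\ \cite{rec, modrec}), to be strictly larger than $\Z$. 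The principal technical step is this quantitative lower bound on $\Hom(\un, L)$, which pins down the non-$\A^1$-invariant nature of $\MDM_\gm$ and yields the required contradiction.
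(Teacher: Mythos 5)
Your proof takes a genuinely different route from the paper's, but it has a significant gap at its most important step.

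For the first stage, you aim to show $\uHom(L,\un)\simeq\un$ by testing against the generators $M(\sY)$ via Corollary \ref{c8.2}, then invoke reflexivity of strong duals to force $L\simeq(L^\vee)^\vee\simeq\un$. This is a legitimate alternative, though it quietly assumes $k$ perfect (Corollary \ref{c8.2} is stated under that hypothesis) and requires an extra verification that the natural map $\un\to\uHom(L,\un)$ really is the isomorphism you have computed. The paper instead sets $M=\text{fibre}(M(\bcube^{(2)})\to\Z)$, observes $\omega_\gm(M)=0$, and uses the monoidality of $\omega_\gm$, $\omega^\gm$ together with the adjunction of Corollary \ref{c8.1} to deduce directly that dualisability of $M$ forces $\MDM_\gm(\un,M)=0$. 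That route is shorter and cleaner, but both approaches funnel into the same arithmetic input: the nonvanishing of $\MDM_\gm(\un,M)$, equivalently $\Hom_{\MDM_\gm}(\un,L)\supsetneq\Z$.

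That input is precisely where your proof is incomplete. You identify (correctly) that, under the second hypothesis, the relevant group is $H_0$ of the naive cubical Suslin complex of $\bcube^{(2)}$ over $k$. But you do not compute it: you exhibit one family of relations via the double cover $\{t^2=(1-\lambda)a^2\}$, then appeal to unspecified ``known'' results from \cite{rec,modrec}, and add a caveat about $k$ supporting nontrivial K\"ahler differentials. That caveat is both unnecessary and incorrect (it would already fail for $k=\Q$, and differentials are not what controls this group). The paper's proof cites R\"ulling--Yamazaki \cite[Th.\ 1.1]{ry}, which identifies the degree-zero Suslin homology $H_0^S(\P^1/k,2\infty)$ with the relative Picard group $\Pic(\P^1,2\infty)\simeq\Z\oplus k$; this is strictly larger than $\Z$ over \emph{every} base field, and gives $\MDM_\gm(\un,M)\simeq k\neq 0$, which is the contradiction. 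Without this identification (or a direct proof of it), your argument does not close; this computation is the technical heart of the proposition and cannot be left to a hand-wave.
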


\begin{proof}  Let $M\in \MDM_\gm$, having a dual $M^*$. Suppose that $\omega_\gm(M)=0$. By the monoidality of $\omega_\gm$, we also have $\omega_\gm(M^*)\simeq \omega_\gm(M)^*=0$. Equivalently,
\[\MDM_\gm(M^*,\omega^\gm N)=0\quad \forall N\in \DM_\gm.\]

 Suppose that $N$ also has a dual $N^*$. Applying the above to $N^*$ instead of $N$, and using this time the monoidality of $\omega^\gm$, we find
\[0=\MDM_\gm(M^*,(\omega^\gm N)^*) = \MDM_\gm(\omega^\gm N,M).\]

Take in particular $N=\Z$; by the assumption of full faithfulness and by Theorems \ref{thm:jLC-special} and \ref{t8.1}, we get
\[\mathbb{H}^0_{\Nis}(k, RC_*^\bcube (M)_k)=0.\]

Take for example $M=\text{fibre}(M(\bcube^{(2)})\to \Z)$: clearly, $\omega_\gm(M)=0$. Under the assumption on the base change morphism \eqref{eq4.1a}, we can replace $RC_*^\bcube (M)$ by the na\"\i ve Suslin complex $C_*^\bcube (M)$ used in \cite{ry}. Applying its Theorem 1.1 with $S = \Spec k$, $\sC = \P^1$, $D = 2\infty$, we find
\begin{multline*}
\mathbb{H}^0_{\Nis}(k, RC_*^\bcube (\bcube^{(2)})_k)=\mathbb{H}^0_{\Nis}(k, C_*^\bcube (\bcube^{(2)})_k)\\
=H_0^S(\P^1/k,2\infty)=\Pic(\P^1, 2\infty)=\Z\oplus k
\end{multline*}
where the last term is the relative Picard group. Thus we get an isomorphism $\mathbb{H}^0_{\Nis}(k, RC_*^\bcube (M)_k)\simeq k$, a contradiction.  (Note that the morphism $\bcube^{(2)}\to\un$ is split by the $0$-section $\un\to \bcube^{(2)}$.)
\end{proof}

\enlargethispage*{20pt}

\section{Some computations} \label{s7}

For simplicity, we write $M$ and $\ul{M}$ for $M_\gm$ and $\ul{M}_\gm$ in this section.

\subsection{The tautological isomorphisms and distinguished triangles} These are those which come from the definitions of $\MDM_\gm^\eff$ and $\ulMDM^\eff$:

\begin{description}
\item[Mayer-Vietoris] one has a distinguished triangle in $\MDM_\gm^\eff$ (resp. $\ulMDM_\gm^\eff$):
\[M(T(00))\to M(T(01))\oplus M(T(10))\to M(T(11))\by{+1}\]
for any $\MV$-square (resp. $\ulMV$-square) $T$.
\item[Tensor product] one has canonical isomorphisms $M(\sX\otimes \sY)\simeq M(\sX)\otimes M(\sY)$ for any $\sX,\sY$ in $\MCor$ (resp. $\ulMCor$).
\item[$\bcube$-invariance] the morphism $M(\bcube)\by{M(p)} \Z=:M(\un)$ is invertible,  where $p:\bcube\to \un$ is the structural map.
\end{description}

\subsection{An elementary computation} As a special case, in the situation of Example \ref{kp3}, one has a distinguished triangle in $\MDM_\gm^\eff$
\begin{equation}\label{eq7.1}
M(X,D')\to M(X,D_1)\oplus M(X,D_2)\to M(X,D)\by{+1}
\end{equation}

Let us use this example to reduce the computation of the motive of $(\P^1,D)$ to its essential parts, where $D$ is an effective divisor. Generally, for a modulus pair $\sX\in \MCor$, let us write $\tilde \Z_\tr(\sX)=\Ker(\Z_\tr(\sX)\to \Z_\tr(\un)=\Z)$: in the presence of a $0$-cycle of degree $1$ on $\sX^\o$, this is a direct summand of $\Z_\tr(\sX)$. We define $\tilde M(\sX)$ as the class of $\tilde \Z_\tr(\sX)$ in $\MDM_\gm^\eff$, so that we have a distinguished triangle
\[\tilde M(\sX)\to M(\sX)\to \Z\by{+1}\]
split in the presence of a $0$-cycle of degree $1$.

If $\sX=(\P^1,D)$, write $\tilde M(\sX)=:m(D)$ for simplicity. Then $m(\emptyset)=\Z(1)[2]$ and $m(\infty)=0$. Let $D_1,D_2$ have disjoint supports. 
Choose $0$-cycle of degree $1$ on $\sX^\o$.
(We can take a rational point unless $k$ is finite.)
It splits off a distinguished triangle
\[m(D_1+D_2)\to m(D_1)\oplus m(D_2)\to \Z(1)[2]\by{+1}\]
from the distinguished triangle \eqref{eq7.1} with $D=\emptyset$. 
But the morphism $m(D_i)\to m(\emptyset)$ is $0$ if $k$ is perfect by Corollary \ref{c8.2} (if $D_i$ contains a rational point $p$, an elementary proof is that it factors through $m(p)\simeq m(\infty)=0$). Thus this triangle splits and yields a non-canonical isomorphism
\[m(D_1+D_2)\simeq m(D_1)\oplus m(D_2)\oplus \Z(1)[1].\]

Suppose $k$ algebraically closed, for simplicity. If $D=\sum_{i=1}^r n_ip_i$ with the $p_i$ distinct points, we get inductively an isomorphism
\[m(D)\simeq \bigoplus_{i=1}^r m(n_i\infty) \oplus (r-1)\Z(1)[1].\]

\subsection{Motives of vector bundles and projective bundles} Let $\sY\in \ulMCor$ be a modulus pair, and let $E$ be a vector bundle of rank $n>0$ on $\ol{\sY}$, with associated projective bundle $\P(E)$. We define modulus pairs $\sE$ and $\sP$ with total spaces $E$ and $\P(E)$ by pulling back $\sY^\infty$: the resulting morphisms $\sE\to \sY$, $\sP\to \sY$ are minimal in the sense of Definition \ref{deff}.

(There may be more general notions of vector and projective bundles, but we do not consider them here.)

\begin{remark}\label{c8.4} 
By applying Corollary \ref{c8.2} 
with $X=\Spec(k)$ and $j=2i$,
we get 
$CH^i(\sY^\o) \simeq 
\Hom_{\DR_\gm^\eff}(\ul{M}(\sY), \Z(i)[2i])$.
In particular, 
if
$P(t_1, \dots, t_n) \in \Z[t_1, \dots, t_n]$ 
is a homogeneous polynomial 
of weight $i$ (the weight of $t_s$ being $s$),
then the Chern classes of $E$ yield
a morphism in $\DR_\gm^\eff$
\[P(c_1(E),\dots, c_n(E)) : \ul{M}(\sY)\to \Z(i)[2i]. \]
\end{remark}

\begin{thm}\label{t7.2} 
Assume $k$ is perfect. 
Suppose $\ol{\sY}$ smooth. The projection $\bar p:\sP\to \sY$ yields a canonical isomorphism in $\DR_\gm^\eff$
\begin{equation}\label{eq7.3}
\rho_\sY:\ul{M}(\sP)\iso \bigoplus_{i=0}^{n-1} \ul{M}(\sY)(i)[2i].
\end{equation}
The same holds in $\MDM_\gm^\eff$ if $\sY\in \MCor$ (with $M$ instead of $\ul{M}$).
\end{thm}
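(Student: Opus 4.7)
The strategy is to reduce to the trivial bundle case via Zariski Mayer--Vietoris, and then to apply Voevodsky's classical projective bundle formula through the empty-modulus embedding of Theorem \ref{t8.1}.

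When $E$ is trivial, Definition \ref{def:tensor-mod} yields a canonical identification $\sP \simeq \sY \otimes (\P^{n-1}, \emptyset)$ in $\ulMCor$, since pulling back $\sY^\infty$ along $\ol\sY \times \P^{n-1} \to \ol\sY$ produces exactly $\sY^\infty \times \P^{n-1}$. The $\otimes$-monoidality of $\ul M$ and Theorem \ref{t8.1} then give
\[\ul M(\sP) \simeq \ul M(\sY) \otimes \ulomega^\eff M^V(\P^{n-1}).\]
Voevodsky's classical PBF in $\DM_{\gm,\proper}^\eff$ yields $M^V(\P^{n-1}) \simeq \bigoplus_{i=0}^{n-1} \Z(i)[2i]$; applying the $\otimes$-monoidal, strongly additive, fully faithful functor $\ulomega^{\eff,\gm}$ of Corollary \ref{c8.1}, which sends Voevodsky's $\Z(i)$ to $\Z(i)$ (as recalled in the proof of Corollary \ref{c8.2}), yields the desired decomposition in this case.

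For general $E$ and $\sY \in \ulMCor$, pick an affine Zariski cover $\ol\sY = U_1 \cup \cdots \cup U_r$ trivializing $E$. For any open $U \subseteq \ol\sY$, put $\sY_U := (U, \sY^\infty|_U) \in \ulMCor$ and $\sP_U := \sP \times_\sY \sY_U$. The Zariski squares between the $\sY_{U_j}$ and their intersections are $\ulMV$-squares (Proposition \ref{prop:MVamb}, Definition \ref{def:uMVsquare}), giving Mayer--Vietoris distinguished triangles in $\DR_\gm^\eff$. Build $\rho_\sY$ globally from the projection $p : \sP \to \sY$ and the Chern class $c_1(\sO_\sP(1)) : \ul M(\sP) \to \Z(1)[2]$ (Remark \ref{c8.4}, applicable since $\ol\sP = \P(E)$ is smooth), in such a way that its restriction to each $\sY_{U_j}$ coincides with the isomorphism from the trivial case. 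Induction on $r$ combined with the 5-lemma then shows $\rho_\sY$ is an isomorphism. For $\sY \in \MCor$, run the same argument in $\MDM_\gm^\eff$ using $\MV$-squares lifting the $\ulMV$-squares via Theorem \ref{t1.2}; alternatively, appeal to the full faithfulness of $\tau_{\eff,\gm}$ (Theorem \ref{t5.1}, together with its $\otimes$-triangulated nature from Theorem \ref{thm:DMgm-DM-full-faith}(4)) to transport the isomorphism from $\DR_\gm^\eff$ back to $\MDM_\gm^\eff$.

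The main obstacle is constructing $\rho_\sY$ in a manner natural with respect to open restrictions of $\sY$, so that the Mayer--Vietoris gluing actually works. The naive construction combining $p_*$ with $c_1(\sO_\sP(1))^i$ through a diagonal fails, because a diagonal $\sP \to \sP \otimes \sP$ does not exist in the tensor structure on modulus pairs (Remark \ref{w2.1}). The cleanest workaround is to construct the inverse morphism $\bigoplus_i \ul M(\sY)(i)[2i] \to \ul M(\sP)$ from the cycles of linear subspace incidence on $\sY^\o \times \sP^\o$ used in Voevodsky's original proof: the modulus condition on these cycles is automatic, because $p$ is minimal so $p^*\sY^\infty$ equals the pullback of $\sY^\infty$ along the first projection, reducing admissibility on the modulus pair to admissibility on the interior cycle.
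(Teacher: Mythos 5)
You correctly identify the skeleton (reduce to the trivial bundle case by a Mayer--Vietoris induction on a trivialising cover, handle the trivial case via $\sP \simeq \sY \otimes (\P^{n-1},\emptyset)$ and the empty-modulus embedding), and you correctly identify the real obstacle: the diagonal $\sP \to \sP\otimes\sP$ that Voevodsky uses to build his map does not exist in the tensor structure of Definition \ref{def:tensor-mod}. But there are two genuine gaps.

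First, your proposed workaround is not worked out and misses the paper's much simpler fix. You propose to construct an inverse morphism $\bigoplus_i \ul{M}(\sY)(i)[2i]\to \ul{M}(\sP)$ from ``incidence cycles'' and assert that admissibility is automatic because $p$ is minimal; this is not substantiated (the cycles are never defined, and minimality of $p$ does not by itself control the modulus comparison between $\sY^\infty\times\P^i$ and $p^*\sY^\infty$ on the normalised closure of an arbitrary incidence correspondence). The paper's actual remedy is to observe that while the full diagonal fails, the \emph{half-diagonal}
\[
\tilde\Delta:\ \sP \longrightarrow \sP\otimes(\P(E),\emptyset),
\]
induced by $\sP^\o\hookrightarrow \sP^\o\times\P(E)$, \emph{is} an admissible morphism of modulus pairs --- the modulus condition is trivially satisfied precisely because the second tensor factor carries the empty divisor. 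One then defines
$\rho_\sY^i$ by composing $\ul{M}(\tilde\Delta)$ with $\bar p_*\otimes c_1(\sO_{\P(E)}(1))^i$, which gives a global morphism and immediately restricts compatibly to the opens in a trivialising cover. This is the one-line idea your proposal lacks, and it is the whole point of the ``complication'' the paper flags relative to Voevodsky's argument.

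Second, even once $\rho_\sY$ is in hand, the Mayer--Vietoris induction is more delicate than a bare ``5-lemma'' invocation: you must show that the ladder of triangles (source: MV triangle for $\ul{M}(\sP)$; target: MV triangle for $\Xi(\sY)=\bigoplus_i\ul{M}(\sY)(i)[2i]$) commutes \emph{including the third square}, i.e.\ the square involving the connecting morphism $[+1]$. The left two squares commute by naturality of $\rho$, but the right square does not follow formally from naturality of a degree-zero map. The paper handles this by working in $K^b(\ulMCor)$ with the cone objects $\ul{M}[f_\sP]$, $\Xi[f]$ and a map of cones induced by $\tilde\Delta$, so that the canonical identifications $\ul{M}[f_\sP]\simeq \ul{M}(\sP)$, $\Xi[f]\simeq\Xi(\sY)$ intertwine $\rho_f$ with $\rho_\sY$. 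Without this (or an equivalent argument) the 5-lemma step is not justified, and the paper explicitly notes this subtlety was caught by a referee. Your proposal does not address it.
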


\begin{rk} If $\car k=0$ or $\dim \sY\le 3$, the assumption on $\ol{\sY}$ is innocent in view of resolution of singularities.
\end{rk}

\begin{proof} We follow the method of Voevodsky in \cite[proof of Prop. 3.5.1]{voetri}, with a simplification and a complication. The complication is that Voevodsky's construction of the corresponding morphism in $\DM_\gm^\eff$ uses diagonal maps, which cause a problem here (see Remark \ref{w2.1}). We bypass this problem by using the morphism
\begin{equation}\label{eq7.4}
\tilde \Delta: \sP\to \sP \otimes (\P(E),\emptyset) 
\end{equation}
 induced by the diagonal inclusion 
 \begin{equation}\label{eq7.5}
\sP^\o\inj \sP^\o \times  \P(E).
 \end{equation}

Here, the modulus condition is obviously verified. Using the morphisms $\ul{M}(\P(E),\emptyset)\to \Z(i)[2i]$ induced by the powers of 
$c_1(O_{\P(E)}(1))$ (see Remark \ref{c8.4}), 
we get morphisms
 \begin{equation}\label{l7.6}
 \rho_\sY^i:\ul{M}(\sP)\by{\ul{M}(\tilde \Delta)}  
 \ul{M}(\sP)\otimes \ul{M}(\P(E),\emptyset)\to \ul{M}(\sY)\otimes \Z(i)[2i],
\end{equation} 
whence $\rho_\sY$. To prove that it is an isomorphism, we first consider the case where the vector bundle $E$ is trivial. We then have an isomorphism of modulus pairs
\[\sP\simeq \sY\otimes (\P^{n-1},\emptyset)\]
hence a corresponding isomorphism of motives. By using either Theorem \ref{t8.1} or, more directly, the functor $\Phi^\eff$ of Theorem \ref{thm:chow} and the computation of the Chow motive of $\P^{n-1}$, one has a canonical isomorphism
\[\theta:\bigoplus_{i=0}^{n-1} \Z(i)[2i]\iso \ul{M}(\P^{n-1},\emptyset).\]

Tensoring it with $\ul{M}(\sY)$ and composing with $\rho_\sY$, we get a morphism
\[\bigoplus_{i=0}^{n-1} \ul{M}(\sY)(i)[2i]\to \bigoplus_{i=0}^{n-1} \ul{M}(\sY)(i)[2i]\]
which is seen to be the identity by definition of $\theta$ and $\rho_\sY$.

In general, we argue by induction on the number $m$ of terms of an open cover of $\ol{\sY}$ trivialising $E$. For notational simplicity, write $\Xi(\sY)$ for the right hand side of \eqref{eq7.3}. Write $\ol{\sY}=\ol{\sY'} \cup \ol{U}$, where $E$ is trivial over $\ol{U}$ and $\ol{\sY'}$ has an $(m-1)$-fold trivialising open cover. Provide $\ol{\sY'}$, $\ol{U}$ and $\ol{\sY'}\cap \ol{U}$ with the induced modulus structures $\sY'$, $U$, $\sY'\cap U$, and pull $\sP$ back similarly. We claim that the diagram of distinguished triangles (with obvious notation)
\begin{equation}\label{eq:deglise}\Small
\begin{gathered}
\xymatrix{
\ul{M}(\sP_{|\sY'\cap U})\ar[r]\ar[d]& \ul{M}(\sP_{|\sY'})\oplus \ul{M}(\sP_{|U})\ar[r]\ar[d]& \ul{M}(\sP) \ar[r]\ar[d] &\ul{M}(\sP_{|\sY'\cap U})[1]\ar[d]\\
\Xi(\sY'\cap U)\ar[r]& \Xi(\sY') \oplus \Xi(U)\ar[r]&  \Xi(\sY)\ar[r]&  \Xi(\sY'\cap U)[1]
}
\end{gathered}
\end{equation}
commutes, which will conclude the proof. The commutations of the left and middle square follow from the naturality of $\rho$. For the right one\footnote{We thank one of the referees for stressing this issue.}, consider the morphisms $f:\sY'\cap U\to \sY'\oplus U$ and $f_\sP:\sP_{|\sY'\cap U}\to \sP_{|\sY'}\oplus \sP_{|U}$ and their associated motives $\ul{M}[f]$, $\ul{M}[f_\sP]$ (see Definition \ref{def:MDMgm}). We also have an obviously defined motive  $\Xi[f]$, which is a canonical cone of the left bottom map. Observe now that \eqref{eq7.5} induces morphisms
\[\sP_{|\sY'}^\o\inj \sP_{|\sY'}^\o \times  \P(E), \sP_{|U}^\o\inj \sP_{|U}^\o \times  \P(E), \sP_{|\sY'\cap U}^\o\inj \sP_{|\sY'\cap U}^\o \times  \P(E)  \]
which in turn induce morphisms analogous to \eqref{eq7.4}, a morphism in $K^b(\ulMCor)$ 
\[[f_\sP]\to [f]\otimes [(\P(E),\emptyset)]\]
compatible with \eqref{eq7.4},  and finally a morphism $\rho_f:\ul{M}[f_\sP]\to \Xi[f]$ analogous to $\rho_\sY$ (see \eqref{l7.6}).
The Mayer-Vietoris property says that there are 
canonical horizontal isomorphisms 
in the 
diagram:
\[
\xymatrix{
\ul{M}[f_\sP] \ar[r]^{\sim} \ar[d]^{\rho_f} &
\ul{M}(\sP) \ar[d]^{\rho_\sY} 
\\
\Xi[f] \ar[r]^{\sim} &
\Xi(\sY).
}
\]

Since the Chern class $c_1(O_{\P(E)}(1))$ 
restricts to those of 
$c_1(O_{\P(E|_V)}(1))$ for $V= \ol{\sY}', U, \ol{\sY}' \cap U$,
this diagram commutes.
Therefore we may replace 
$\ul{M}(\sP)$ and $\Xi(\sY)$
by 
$\ul{M}[f_{\sP}]$ and $\Xi(f)$
in \eqref{eq:deglise}. 
But then the commutation is obvious.
\end{proof}

\begin{qn} When $E$ is trivial, the isomorphism $\sE\simeq \sY\otimes (\A^n,\emptyset)$ yields an isomorphism $\ul{M}(\sE)\iso \ul{M}(\sY)\otimes \ul{M}(\A^n,\emptyset)$. Can one extend this isomorphism to the general case?
\end{qn}

\subsection{Further results} In this subsection, we present results which were obtained (in anticipation to the release of this paper!) in different works.

\begin{prop}[Toric invariance \protect{\cite[Lemma 10]{KeSa}}]
For any positive integer $n \geq 1$, consider the standard closed embedding $\P^{n-1} \to \P^n$ (setting $\P^{0} := \{*\}$), and the proper modulus pair $(\P^n ,\P^{n-1})$. Then the projection $M(\P^n,\P^{n-1})\to \Z$ is an isomorphism in $\MDM_\gm^\eff$.
\end{prop}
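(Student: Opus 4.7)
The plan is to prove that the structural morphism $p:(\P^n,\P^{n-1}) \to \un$ is inverse in $\MDM_\gm^\eff$ to the zero-section $i:\un \to (\P^n,\P^{n-1})$ induced by $0 \in \A^n \subset \P^n$. Since $p \circ i = \mathrm{id}_\un$ holds tautologically, the task reduces to showing $M(i) \circ M(p) = \mathrm{id}$. For this I would exhibit a $\bcube$-homotopy between $\mathrm{id}_{(\P^n,\P^{n-1})}$ and $i \circ p$: a morphism
\[\mu \in \MCor\bigl((\P^n,\P^{n-1}) \otimes \bcube,\, (\P^n,\P^{n-1})\bigr)\]
whose restriction to interiors is scalar multiplication $(x,t) \mapsto tx : \A^n \times \A^1 \to \A^n$. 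Once $\mu$ is built, the proof closes quickly: composition with the endpoints $e_0, e_1: \un \to \bcube$ gives $\mu \circ (\mathrm{id} \otimes e_0) = i \circ p$ and $\mu \circ (\mathrm{id} \otimes e_1) = \mathrm{id}$, while $(\mathrm{CI})$ forces $M(\mathrm{id} \otimes e_0) = M(\mathrm{id} \otimes e_1)$ (both are the common inverse of the isomorphism $M((\P^n,\P^{n-1}) \otimes \bcube) \to M(\P^n,\P^{n-1})$); applying $M(\mu)$ then yields $M(i) \circ M(p) = \mathrm{id}$, as desired.

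The construction of $\mu$: the interior map extends to the rational map
\[\bar\mu : \P^n \times \P^1 \dashrightarrow \P^n,\quad ([y_0{:}\cdots{:}y_n], [s_0{:}s_1]) \mapsto [s_0 y_0 : s_1 y_1 : \cdots : s_1 y_n],\]
whose indeterminacy locus is the isolated point $(\infty, [1{:}0{:}\cdots{:}0])$ together with the subvariety $\{0\}_{\P^1} \times \P^{n-1}$. I would take $\mu$ to be the elementary correspondence defined by the cycle $\Gamma \subset \P^n \times \P^1 \times \P^n$ equal to the closure of the graph of $\bar\mu$ on interiors; it is irreducible, finite over $\A^n \times \A^1$, and proper over $\P^n \times \P^1$, so the properness condition of Definition-Proposition \ref{dp1} is automatic. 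Away from the indeterminacy locus, $\bar\mu$ is a morphism and $\bar\mu^* \P^{n-1} = \P^{n-1} \times \P^1 + \P^n \times \{\infty\}$ (since the first target homogeneous coordinate pulls back to $s_0 y_0$), so the modulus inequality there is in fact an equality.

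The crux is to verify the modulus condition on the normalization $\Gamma^N$ over the two indeterminacy loci, the main obstacle being the positive-dimensional one $\{0\} \times \P^{n-1}$. By the obvious symmetry permuting $y_1,\ldots,y_n$, it suffices to work near the point $(0,[0{:}1{:}0{:}\cdots{:}0])$. In the source chart with affine coordinates $(w_0,w_2,\ldots,w_n,t)$ (where $w_0 = y_0/y_1$, $w_i = y_i/y_1$, $t=s_1/s_0$, so that $\P^{n-1}$ is cut out by $w_0=0$) and the target chart $\{y_1 \neq 0\}$ with coordinates $(z_0,z_2,\ldots,z_n)$, the map $\bar\mu$ reads $(w,t) \mapsto (w_0/t,\, w_2,\ldots, w_n)$, and $\Gamma$ is cut out locally by $w_0 = z_0 t$ and $w_i = z_i$ for $i \geq 2$; in particular $\Gamma$ is smooth of dimension $n+1$, so $\Gamma^N = \Gamma$ in this chart. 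One then reads off
\[p^*\{w_0=0\} \;=\; \{z_0=0\} + \{t=0\} \;\geq\; \{z_0=0\} \;=\; q^*\{z_0=0\},\]
which is the required modulus inequality. An analogous (and easier) computation at the isolated indeterminacy point $(\infty,[1{:}0{:}\cdots{:}0])$, where $\Gamma$ is locally cut out by $z_0 x_1 = u$ and $z_i x_1 = x_i$ in suitable affine coordinates, handles the remaining case and completes the construction of $\mu$.
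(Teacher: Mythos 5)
Your proof is correct. The paper itself does not prove this statement but quotes it from Kelly--Saito \cite[Lemma 10]{KeSa}, and their argument runs along exactly the same lines as yours: contract $(\P^n,\P^{n-1})$ to the origin by a $\bcube$-homotopy built from scalar multiplication, with the admissibility of the correspondence $\mu$ verified by showing that the closure $\Gamma$ of the graph of $([y],[s])\mapsto[s_0y_0:s_1y_1:\cdots:s_1y_n]$ in $\P^n\times\P^1\times\P^n$ is smooth (hence its own normalisation) and that the pulled-back modulus inequality holds in local charts near the two indeterminacy loci $\P^{n-1}\times\{0\}$ and $\{0\}\times\{\infty\}$. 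Your local computations (e.g.\ $w_0=z_0t$ giving $p^*\{w_0{=}0\}=\{z_0{=}0\}+\{t{=}0\}\ge\{z_0{=}0\}=q^*\{z_0{=}0\}$) are accurate, the observation that away from indeterminacy $\bar\mu^*\P^{n-1}=\P^{n-1}\times\P^1+\P^n\times\{\infty\}$ gives equality is correct, and the formal conclusion via $\mathbf{(CI)}$ forcing $M(\mathrm{id}\otimes e_0)=M(\mathrm{id}\otimes e_1)$ is exactly how $\bcube$-homotopic correspondences are identified in $\MDM_\gm^\eff$.
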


In \cite{KeSa}, Kelly and Saito also provide a very concise proof of a modulus version of \cite[Prop. 3.5.2]{voetri}. Recall from \cite{shuji-purity} the following definition:

\begin{defn}\label{d6.4} A modulus pair $(\ol{\sX},\sX^\infty)$ is said to be \emph{log smooth} (in short: ls) if $\ol{\sX}$ is smooth and $|\sX^\infty|$ is a simple normal crossing divisor. 
\end{defn}

\begin{thm}[Smooth blowup triangle with modulus]\label{thm:smbumod}
Let $X \in \ulMCor$ be an ls  modulus pair. Let $i : Z \to X$ be an ambient minimal morphism with $\ol{i} : \ol{Z} \to \ol{X}$ a closed immersion such that $\ol{Z}$ is smooth.
Assume moreover that $i$ is transversal (see \cite[Def. 7]{KeSa} for the definition of transversality).
Let $\ol{\pi} : \Bl_{\ol{Z}}(\ol{X}) \to \ol{X}$ be the blowup of $\ol{X}$ along $\ol{Z}$, and $\ol{i}':\ol{E} \to \Bl_{\ol{Z}}(\ol{X})$ the exceptional divisor.
Set 
\[
\Bl_{Z}(X) := (\Bl_{\ol{Z}}(\ol{X}),\ol{\pi}^\ast X^\infty), \quad
E:=(\ol{E}, \ol{i}^{\prime \ast} \Bl_{Z}(X)^\infty).
\]
Note that the resulting morphisms $\pi : \Bl_{Z}(X) \to X$, $i':E \to \Bl_{Z}(X)$ and $\pi|_E : E \to Z$ are minimal, where $\pi|_E$ is the restriction of the natural morphism $E \to X$.

Then there exists a distinguished triangle in $\ulMDM^\eff$ (hence in $\ulMDM_\gm^\eff$) of the form
\[
\ul{M} (E) \xrightarrow{i' \oplus -\pi|_E} \ul{M}(\Bl_Z (X)) \oplus \ul{M}(Z) \by{\pi \oplus i} \ul{M}(X) \by{+1}.
\]
\end{thm}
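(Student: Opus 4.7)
The plan is to prove the statement in $\ulMDM^\eff$, from which the version in $\ulMDM_\gm^\eff$ follows via the full embedding $\ul{\iota}_\eff$ of Theorem \ref{thm:DMgm-DM-full-faith} (2). First I would check that the three edges of the blowup diagram are indeed morphisms in $\ulMCor$: minimality of $\pi, i, i', \pi|_E$ reduces this to admissibility, which is automatic because the modulus divisors on $\Bl_Z(X), E, Z$ are pulled back from $X^\infty$ along proper birational / closed immersion ambient maps. Since the square in the statement is commutative, we obtain a well-defined complex $[E] \to [\Bl_Z(X)] \oplus [Z] \to [X]$ in $K^b(\ulMCor)$; call $C$ its image in $\ulMDM^\eff$. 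It suffices to prove $C = 0$.

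The next step is a local-to-global reduction on $\ol{X}$ via the $\ulMV$ Mayer-Vietoris of Definition \ref{def:MDMgm}. Because $Z$ is transversal to $|X^\infty|$ (which is simple normal crossing since $X$ is ls), Zariski-locally on $\ol{X}$ one can choose étale coordinates so that $\ol{X} \cong W \times \A^c$ with $\ol{Z} = W \times \{0\}$, and the transversality hypothesis forces $X^\infty$ to be pulled back from a divisor $W^\infty$ on $W$. Applying $\ulMV$ covers iteratively reduces the problem to this local model; the key point here is that blowup commutes with Zariski localization and that pulling back $X^\infty$ commutes with all the operations involved, so the $\ulMV$ property descends to the triple $(E, \Bl_Z X, Z, X)$. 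Setting $W' := (W, W^\infty) \in \ulMCor$, the local model then reads as $X \simeq W' \otimes (\A^c, \emptyset)$, $\Bl_Z X \simeq W' \otimes (\Bl_0 \A^c, \emptyset)$, $E \simeq W' \otimes (\P^{c-1}, \emptyset)$, $Z \simeq W' \otimes \un$, where the monoidality of $\otimes$ follows from Definition \ref{def:tensor-mod} and Proposition \ref{prop:monoidal-tauomega}.

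By $\otimes$-triangulation (Theorem \ref{thm:DMgm-DM-full-faith} (3)), the triangle becomes $\ul{M}(W') \otimes$ applied to the pure local triangle with $Z = \Spec k$, $X = (\A^c, \emptyset)$. For this pure local case, the exceptional divisor is $(\P^{c-1}, \emptyset)$ and the blowup $(\Bl_0 \A^c, \emptyset)$ is the total space of the tautological line bundle on $(\P^{c-1}, \emptyset)$; Theorem \ref{t7.2} (projective bundle formula for a minimal modulus structure pulled back from the base) computes $\ul{M}(\P^{c-1}, \emptyset) \cong \bigoplus_{i=0}^{c-1} \Z(i)[2i]$, and one also gets a decomposition $\ul{M}(\Bl_0 \A^c, \emptyset) \cong \ul{M}(\A^c, \emptyset) \oplus \bigoplus_{i=1}^{c-1} \Z(i)[2i]$ by the blowup along a point combined with the Gysin / Chow motive comparison of Theorem \ref{thm:chow} applied to the compactification $(\P^c, \emptyset)$ and its blowup, then localised away from the divisor at infinity via Mayer-Vietoris. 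Plugging into the candidate triangle, all summands match up and the triangle is seen to split.

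The main obstacle is the second step: justifying that the $\ulMV$ Mayer-Vietoris reduction to a local trivialisation is compatible with the blowup and exceptional divisor. Concretely, one must verify that if $\{U_\alpha\}$ is a Zariski cover of $\ol{X}$ on which $Z$ is the zero section of a trivial bundle transversal to $X^\infty$, then the induced covers of $\ol{\Bl_Z X}$ and of $\ol{E}$ by pullback form $\ulMV$-covers whose intersections again fit this trivialised pattern, so that a Čech-style argument closes. The transversality hypothesis of \cite[Def.~7]{KeSa} is precisely what makes this compatibility clean, and in its absence the modulus could acquire components along the exceptional divisor that break the decomposition in the local step.
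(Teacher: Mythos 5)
Note first that the paper itself contains no proof of this theorem. It sits in the subsection ``Further results,'' which explicitly collects ``results which were obtained \dots in different works''; the proof, described as ``very concise,'' is attributed to Kelly--Saito~\cite{KeSa}. So there is nothing in the present text to compare against, and the question is whether your sketch would close on its own.

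You are following Voevodsky's blueprint from $\DM^\eff$ (Mayer--Vietoris reduction to a local product, then the projective bundle formula). The architecture is plausible, but the crucial local step is not a routine adaptation and your sketch does not resolve it. In $\DM^\eff$ the local case $Z=\Spec k$, $X=\A^c$ is immediate from $\A^1$-invariance: $M^V(\A^c)\simeq\Z$ and the line-bundle projection $\Bl_0\A^c\to\P^{c-1}$ is an isomorphism on motives. Neither survives in $\ulMDM^\eff$: $\bcube$-invariance is strictly weaker than $\A^1$-invariance, $(\A^c,\emptyset)$ has no reason to be $\bcube$-contractible (Theorem~\ref{t8.2} only gives $\bcube$-invariance of the representable sheaf), and the paper explicitly leaves the vector-bundle analogue of Theorem~\ref{t7.2} as an open Question. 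So the claimed decomposition $\ul{M}(\Bl_0\A^c,\emptyset)\cong\ul{M}(\A^c,\emptyset)\oplus\bigoplus_{i\geq 1}\Z(i)[2i]$ cannot be read off from the projective bundle formula. Your detour through $(\P^c,\emptyset)$ and Chow motives (Theorem~\ref{thm:chow}, Corollary~\ref{c8.3}) is a reasonable idea, but carrying it out requires an octahedral comparison of the Mayer--Vietoris triangles for the covers $\{\A^c,\,\P^c\setminus 0\}$ of $\P^c$ and of $\Bl_0\P^c$, tracking the specific arrows $i'$, $\pi|_E$, $\pi$, $i$ rather than just the objects; ``all summands match up and the triangle is seen to split'' is not a proof, because whether the candidate triangle is distinguished depends entirely on those maps. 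Finally, the local-to-global patching you flag is a real gap: transversality constrains $X^\infty$ only near $\ol Z$, so the $\ulMV$ reduction must separately peel off the contribution from $\ol X\setminus\ol Z$ (where $\pi$ is an isomorphism and the triangle is trivially exact), which needs a Noetherian-induction scaffold of the kind Voevodsky uses and which you do not set up. In short the skeleton is sensible, but both the local computation---where the absence of $\A^1$-contractibility is a genuine new obstacle---and the patching step are asserted rather than established.
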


Matsumoto established in \cite{matsumoto} the following interesting distinguished triangle in $\ulMDM^\eff$, which lifts the classical Gysin triangle when the closed subset is of codimension $1$ (see Remark \ref{r7.1}). 

\begin{thm}\label{thm:tame-gysin}
Let $(\ol{X},X^\infty)$ be an ls modulus pair, and let $\ol{Z} \subset \ol{X}$ be an effective Cartier divisor which is integral and smooth. 
Assume that $\ol{Z}$ is not contained in $X^\infty$, and that the support of the divisor $X^\infty + \ol{Z}$ is a strict normal crossing divisor on $\ol{X}$.
Set $Z^\infty := X^\infty \times_{\ol{X}} \ol{Z}$.

Then one has the following distinguished triangle in $\ulMDM^\eff$:
\begin{equation}\label{eq:tame-gysin}
\ul{M}(\ol{X},X^\infty + \ol{Z}) \to \ul{M}(\ol{X},X^\infty) \to \ul{M}(\ol{Z},Z^\infty)(1)[2] \by{+1},
\end{equation}
where $[-](1)$ denotes the Tate twist from Definition \ref{d6.2}.
\end{thm}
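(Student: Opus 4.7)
The natural morphism $\ul{M}(\ol{X}, X^\infty + \ol{Z}) \to \ul{M}(\ol{X}, X^\infty)$ is induced by the identity on $\ol{X}$ (admissible since $X^\infty + \ol{Z} \geq X^\infty$), and the goal is to identify its cofiber with $\ul{M}(\ol{Z}, Z^\infty)(1)[2]$. My approach would adapt Voevodsky's deformation to the normal cone from \cite[\S 3.5]{voetri} to the modulus setting, with the projective bundle formula of Theorem \ref{t7.2} serving as the main computational input.

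Since $\ol{Z}$ is a smooth Cartier divisor in $\ol{X}$, its normal bundle $N := N_{\ol{Z}/\ol{X}}$ is a line bundle, and $\ol{Z} \times \{0\} \subset \ol{X} \times \P^1$ is smooth of codimension $2$. The first step is to form the blowup $\widetilde{X} := \Bl_{\ol{Z} \times \{0\}}(\ol{X} \times \P^1)$ and equip it with an appropriate modulus divisor that restricts, at the two fibers of $\widetilde{X} \to \P^1$ over $\infty$ and $0$, to $(\ol{X}, X^\infty)$ on one side, and on the other side to the normal bundle completion $\P(N \oplus \sO_{\ol{Z}})$ glued to the proper transform $\Bl_{\ol{Z}}(\ol{X}) = \ol{X}$ (the latter being trivial since $\ol{Z}$ is already Cartier, so the ``interesting'' part is the exceptional $\P^1$-bundle). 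Using $\bcube$-invariance together with Mayer-Vietoris in the $\P^1$-direction of the deformation, the cofibers of the ``adding $\ol{Z}$ to the modulus'' morphisms on the two fibers become canonically isomorphic in $\ulMDM^\eff$.

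On the special fiber, Theorem \ref{t7.2} applied to the $\P^1$-bundle $\P(N \oplus \sO_{\ol{Z}})$ over $\ol{Z}$ yields a decomposition $\ul{M}(\P(N \oplus \sO_{\ol{Z}}), \pi^* Z^\infty) \simeq \ul{M}(\ol{Z}, Z^\infty) \oplus \ul{M}(\ol{Z}, Z^\infty)(1)[2]$, where the first summand is split off by either natural section. After further adding the zero section $\ol{Z} \subset \P(N \oplus \sO_{\ol{Z}})$ to the modulus and applying the trivializing argument from \S 7.2 (locally, $(\P(N \oplus \sO_{\ol{Z}}), \pi^*Z^\infty+\ol{Z})$ is of the form $(\ol{Z},Z^\infty) \otimes \bcube$, whose motive equals $\ul{M}(\ol{Z},Z^\infty)$), the contribution of $\ul{M}(\ol{Z}, Z^\infty)$ gets absorbed, leaving $\ul{M}(\ol{Z}, Z^\infty)(1)[2]$ as the cofiber.

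The main obstacle is twofold: first, to verify that the modulus structures on $\widetilde{X}$ and on its fibers are admissible and preserve the ls hypothesis---this crucially uses that $X^\infty + \ol{Z}$ has strict normal crossing support, so that the blowup and the restrictions to fibers stay within the framework of ls modulus pairs; and second, to produce the Gysin map as an actual morphism in $\ulMDM^\eff$ rather than merely as an abstract cofiber, which likely requires an explicit modulus Thom class constructed from the Chern class $c_1(\sO_{\ol{X}}(\ol{Z}))$ in the spirit of Remark \ref{c8.4} and of the computation in the proof of Theorem \ref{t7.2}.
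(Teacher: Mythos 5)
The paper does \emph{not} prove Theorem~\ref{thm:tame-gysin}: it is a result of Matsumoto~\cite{matsumoto}, which the paper cites with explicit credit. There is therefore no internal proof to compare your proposal against, and it must be judged on its own merits.

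The deformation-to-the-normal-cone strategy with the modulus projective bundle formula (Theorem~\ref{t7.2}) as the main computational input is the natural route and is in the spirit of Matsumoto's work, and you identify the two principal obstacles honestly. However, the proposal as written leaves every load-bearing step unformed, and these are precisely where the theorem's content lives. In particular: (a) the phrase ``$\bcube$-invariance together with Mayer-Vietoris in the $\P^1$-direction'' is not an argument --- the $\P^1$ of the deformation is a \emph{homotopy} parameter, not something to cover, and in Voevodsky's proof the corresponding step (deformation invariance of the quotient) requires both $\A^1$-invariance \emph{and} a Nisnevich-excision/localization argument; you must specify exactly which modulus you place on $\widetilde X$, verify it stays ls, and prove that the restriction maps to the two chosen fibers induce isomorphisms on the relevant cofibers for that modulus, none of which is automatic in the $\bcube$-setting; (b) the fiber of $\widetilde X\to\P^1$ over $0$ is the union $\ol X\cup_{\ol Z}\P(N\oplus\sO_{\ol Z})$, and although $\Bl_{\ol Z}(\ol X)=\ol X$ since $\ol Z$ is Cartier, you still need a smooth blow-up or excision triangle (cf.\ Theorem~\ref{thm:smbumod}) to isolate the exceptional $\P^1$-bundle's contribution rather than simply declaring the proper-transform component ``trivial''; (c) passing from the local model $(\ol Z,Z^\infty)\otimes\bcube$ to the global $\P^1$-bundle $\P(N\oplus\sO_{\ol Z})$ with non-trivial $N$ requires identifying precisely which summand of the decomposition in Theorem~\ref{t7.2} is killed when the zero section is added to the modulus, i.e.\ a computation involving the first Chern class term --- plausible, but it must actually be done. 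As a high-level sketch the direction is sound; as a proof it stops short of all three points.
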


\begin{remark}\label{r7.1}
Applying the triangulated functor $\ulomega_\eff$ to the distinguished triangle \eqref{eq:tame-gysin}, we recover the Gysin triangle in $\DM^\eff$:
\begin{equation}\label{eq7.2}
M^V(X^\o - |Z^\o|) \to M^V(X^\o) \to M^V(Z^\o)(1)[2] \by{+1}.
\end{equation}
\end{remark}

\begin{remark}
In \cite{matsumoto}, under the same assumption as in Theorem \ref{thm:tame-gysin}, a second lifting of \eqref{eq7.2} is  constructed  in $\ulMDM^\eff$:
\[
\ul{M}(\ol{X} - |Z^\infty|) \to \ul{M}(X) \to \mathrm{Th}(N_Z X,op) \by{+},
\]
where $\mathrm{Th}(N_Z X,op)$ is a suitable ``Thom space'' in the modulus setting, whose definition we do not recall here.
\end{remark}

\appendix

\section{Categorical toolbox, III}\label{sect:app}

\subsection{Monoidal categories \cite[VII.1]{mcl}} Recall that a monoidal category $(\sC,\otimes)$ is \emph{closed} if $\otimes$ has a right adjoint $\uHom$. 
We shall use the following lemma several times:

\begin{lemma}\label{l1.5} Let $\sC$ and $\sD$ be two closed monoidal categories, and let $u:\sC\to \sD$ be a lax $\otimes$-functor: this means that we have a natural transformation
\begin{equation}\label{eqA.6}
u X\otimes u Y\to u(X\otimes Y).
\end{equation}
Assume that $u$ has a right adjoint $v$. Then, for any $(X,Y)\in \sC\times \sD$, there is a canonical morphism
\[\uHom_\sC(X,v Y)\to v\uHom_\sD(uX,Y)\]
bivariant in $(X,Y)$, which is an isomorphism if \eqref{eqA.6} is a natural isomorphism.
\end{lemma}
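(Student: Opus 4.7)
The plan is to construct the morphism by a standard adjunction dance, then check it is an isomorphism under the strong monoidal hypothesis via Yoneda.

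First I would construct the natural morphism $\phi_{X,Y}:\uHom_\sC(X,vY)\to v\uHom_\sD(uX,Y)$ as the adjoint (with respect to $(u,v)$ and then to $(-\otimes uX,\uHom_\sD(uX,-))$ in $\sD$) of the composite
\[
u\uHom_\sC(X,vY)\otimes uX \longrightarrow u\bigl(\uHom_\sC(X,vY)\otimes X\bigr)\longrightarrow u(vY)\longrightarrow Y,
\]
where the first arrow is the lax structure \eqref{eqA.6}, the second is $u$ applied to the evaluation in $\sC$, and the third is the counit of $(u,v)$. Bivariance in $(X,Y)$ is then a formal consequence of the naturality of all the pieces involved (counit, evaluation, and the lax structure map).

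Next I would verify that $\phi_{X,Y}$ is an isomorphism when \eqref{eqA.6} is invertible. By Yoneda, it suffices to show that $\sC(Z,\phi_{X,Y})$ is a bijection for every $Z\in\sC$. Using the adjunctions $(u,v)$ in $\sC,\sD$ and $(\otimes,\uHom)$ in both $\sC$ and $\sD$, one has a chain of natural bijections
\[
\sC(Z,\uHom_\sC(X,vY))\cong \sC(Z\otimes X,vY)\cong \sD(u(Z\otimes X),Y),
\]
\[
\sD(uZ\otimes uX,Y)\cong \sD(uZ,\uHom_\sD(uX,Y))\cong \sC(Z,v\uHom_\sD(uX,Y)),
\]
and the strong monoidality hypothesis gives a natural bijection $\sD(u(Z\otimes X),Y)\cong \sD(uZ\otimes uX,Y)$ linking these two lines.

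The last step, and the only one that is not immediate, is to check that the resulting composite bijection $\sC(Z,\uHom_\sC(X,vY))\to \sC(Z,v\uHom_\sD(uX,Y))$ coincides with post-composition by $\phi_{X,Y}$. This is done by unwinding both sides starting from a morphism $f:Z\to \uHom_\sC(X,vY)$: on one side one evaluates and applies $u$ to obtain a map $uZ\otimes uX\to Y$ through the lax structure, while on the other side one computes $\phi_{X,Y}\circ f$ by the definition of $\phi$; both give the same recipe up to the triangular identities for $(u,v)$ and the $(\otimes,\uHom)$ adjunctions. The main potential obstacle is purely book-keeping: tracking the naturality squares and the two triangular identities so that the two composites genuinely agree. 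Once this is done, the isomorphism statement follows from Yoneda.
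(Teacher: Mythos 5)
Your construction of $\phi_{X,Y}$ (lax structure map, then $u$ of evaluation, then counit, then adjointing twice) and the Yoneda verification are exactly the paper's proof, just spelled out in more detail; the paper elides the lax-structure arrow in the composite $u\uHom_\sC(X,vY)\otimes uX\to Y$, which you make explicit.
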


\begin{proof} Applying $u$ to the evaluation morphism $\uHom_\sC(X,vY)\otimes X\to vY$ and using the counit of the adjunction, we get a composite morphism $u\uHom_\sC(X,vY)\otimes uX\to uvY\to Y$, hence a morphism
\[u\uHom_\sC(X,vY)\to \uHom_\sD(uX,Y) \]
and finally a morphism
\[\uHom_\sC(X,vY)\to v\uHom_\sD(uX,Y) \]
which is checked by Yoneda's lemma to be an isomorphism when \eqref{eqA.6} is.
\end{proof}

\subsection{Categories with suspension (\protect{\cite[Ex. 11.1]{ks}, \cite[A.2.4]{kahnzeta}})} \label{susp} A category $\sA$ provided with an endofunctor $L$ of $\sA$ is called a \emph{category with suspension}. They form a $2$-category as in \cite[Def. A.25]{kahnzeta}: a $1$-morphism is a functor with a natural isomorphism of commutation with the suspensions, and a $2$-morphism is the ``obvious'' notion (it will not be used in this paper). We say that $L$ is \emph{invertible} if it is a self-equivalence. By \cite[Lemma A.26]{kahnzeta}, the full embedding of the $2$-category of categories with invertible suspension into that of all categories with suspension has a $2$-left adjoint, which sends $(\sA,L)$ to $(\sA[L^{-1}],\tilde L)$ where $\sA[L^{-1}]$ has objects $(A,n)$ for $A\in \sA$, $n\in\Z$, morphisms
\begin{equation}\label{eqA.1}
\sA[L^{-1}]((A,m),(B,n))=\colim_{k+m\ge 0,k+n\ge 0}\sA(L^{k+m}A,L^{k+n}B)
\end{equation}
and $\tilde L(A;n)=(A,n+1)$. This yields:

\begin{lemma}\label{lA.1} Let $(\sA,L)$, $(\sA',L')$ be two categories with suspension.
\begin{enumerate}
\item If $\sA$ is Karoubian, so is $\sA[L^{-1}]$.
\item Let $F:(\sA,L)\to (\sA',L')$ be a $1$-morphism of categories with suspension. If $F$ is full (resp. faithful), so is the induced $1$-morphism $\tilde F:\sA[L^{-1}]\to \sA'[{L'}^{-1}]$.
\end{enumerate}
\end{lemma}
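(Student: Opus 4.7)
My plan for (1) is to split an idempotent $e : (A,m) \to (A,m)$ of $\sA[L^{-1}]$ by lifting back to $\sA$. By \eqref{eqA.1}, $e$ is represented by some $e' \in \sA(L^{k+m}A, L^{k+m}A)$ for $k$ large enough that $k+m \ge 0$. The equation $e \circ e = e$ is a single equality in a filtered colimit, so it already holds at some finite stage: there exists $j \ge 0$ with $L^{j}(e' \circ e') = L^{j} e'$ in $\sA$, i.e. $L^j e'$ is an honest idempotent of $L^{k+j+m}A$. After replacing $e'$ by $L^j e'$ and $k$ by $k+j$, Karoubianness of $\sA$ yields a splitting $e' = \iota \circ \pi$ with $\pi \circ \iota = \id_B$. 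I then claim that the classes $\tilde\iota, \tilde\pi$ of $\iota : B \to L^{k+m}A$ and $\pi : L^{k+m}A \to B$ split $e$ between the objects $(A,m)$ and $(B,-k)$ of $\sA[L^{-1}]$: indeed, at level $k$ of the relevant colimits, $\tilde\pi \circ \tilde\iota$ is represented by $\pi \circ \iota = \id_B$ and $\tilde\iota \circ \tilde\pi$ by $\iota \circ \pi = e'$.

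For (2), I will exploit the fact that $\tilde F$ is, at each level of the filtered colimit \eqref{eqA.1}, given by $F$ up to the natural commutation isomorphism $\alpha : F L \iso L' F$ built into the $1$-morphism structure. For fullness, a morphism $g : (FA,m) \to (FB,n)$ in $\sA'[{L'}^{-1}]$ is represented by some $g' : (L')^{k+m}FA \to (L')^{k+n}FB$; transported along iterates of $\alpha$, this becomes a morphism $F(L^{k+m}A) \to F(L^{k+n}B)$ in $\sA'$, which by fullness of $F$ lifts to some $h : L^{k+m}A \to L^{k+n}B$. The class of $h$ in $\sA[L^{-1}]((A,m),(B,n))$ then maps to $g$ under $\tilde F$. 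For faithfulness, suppose $f_1, f_2 : (A,m) \to (B,n)$ satisfy $\tilde F f_1 = \tilde F f_2$; represent them at a common level $k$ by $f_1', f_2' : L^{k+m}A \to L^{k+n}B$. Then $F(f_1')$ and $F(f_2')$ have the same image in the target filtered colimit, so they agree after applying some $(L')^j$; transporting through $\alpha$ and invoking faithfulness of $F$ yields $L^j f_1' = L^j f_2'$ in $\sA$, whence $f_1 = f_2$ in $\sA[L^{-1}]$.

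I do not anticipate any serious obstacle: both parts reduce routinely, via the filtered-colimit description of morphisms, to analogous assertions in $\sA$ (splitting of idempotents for (1); fullness and faithfulness of $F$ for (2)). The only mildly technical point is the bookkeeping of the commutation isomorphism $\alpha$, which mediates between $(L')^k F A$ and $F L^k A$ at every level of the colimit; this is harmless provided one is careful to state everything in terms of the natural identifications rather than strict equalities.
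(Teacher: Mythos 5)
Your proposal is correct and follows essentially the same route as the paper's own (very terse) proof: for (1), find an idempotent representative at a finite level of the filtered colimit \eqref{eqA.1}, split it in $\sA$, and use the image $(B,-k)$ as the splitting object in $\sA[L^{-1}]$; for (2), reduce level-by-level to fullness/faithfulness of $F$, mediating through the commutation isomorphism $\alpha: FL \iso L'F$. The paper labels (2) as "obvious in view of \eqref{eqA.1}" and compresses the filtered-colimit bookkeeping in (1), so you are simply supplying details the authors chose to omit.
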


\begin{proof} (2) is obvious in view of Formula \eqref{eqA.1}. For (1),  let $e=e^2$ be an endomorphism of $(A,n)\in \sA[L^{-1}]$. By \eqref{eqA.1} again, there exists $k\gg 0$ such that $n+k\ge 0$ and some $e_k=e_k^2\in \mathrm{End}(L^{n+k}A)$ mapping to $e$ via the canonical functor $\rho:\sA\to \sA[L^{-1}]$. Let $B=\IM e_k$; then $(B, -k)$ is an image of $e$.
\end{proof}

\subsection{Brown representability and compact generation}

Recall the following definitions and results of Neeman:

\begin{defn}\label{dbrown} A triangulated category $\sT$ has the \emph{Brown representability property} if 
\begin{enumerate}
\item It is cocomplete.
\item Any homological functor $H:\sT^\op\to \Ab$ which converts infinite direct sums into products is representable.
\end{enumerate}
\end{defn}

\begin{lemma}[\protect{\cite[Cor. 10.5.3]{ks}}]\label{lA.3} If $\sT$ has the Brown representability property, it is complete; a triangulated functor $F:\sT\to \sT'$ has a right adjoint $G$ if and only if it is strongly additive (Definition \ref{def:stradd}), and $G$ is triangulated.\qed
\end{lemma}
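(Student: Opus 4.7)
The plan is to prove the three assertions in the stated order, following the standard Brown-Neeman argument as in \cite{neeman}. First, for completeness of $\sT$, it suffices to construct arbitrary products (since $\sT$ is already cocomplete by Definition \ref{dbrown}). Given a family $(X_i)_{i\in I}$, I would consider the functor $H:\sT^{\op}\to \Ab$ given by $Y\mapsto \prod_i \Hom_\sT(Y,X_i)$. This is cohomological since each factor is, and it converts infinite direct sums to products because each factor does and products commute with products (Fubini for $\Hom$-sets). Brown representability then yields a representing object, which is the desired product $\prod_i X_i$.

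For the adjoint assertion, the ``only if'' direction is immediate: any functor with a right adjoint preserves all colimits, in particular infinite direct sums, so is strongly additive. For the ``if'' direction, fix $Y\in \sT'$ and form the functor $H_Y:\sT^{\op}\to\Ab$ defined by $X\mapsto \Hom_{\sT'}(F(X),Y)$. Since $F$ is triangulated, $H_Y$ is cohomological; since $F$ is strongly additive, we have $F(\bigoplus_\alpha X_\alpha)\simeq \bigoplus_\alpha F(X_\alpha)$, so $H_Y$ converts direct sums to products. Brown representability yields an object $G(Y)$ representing $H_Y$, and the usual Yoneda argument makes $Y\mapsto G(Y)$ into a functor and produces the adjunction isomorphism.

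Finally, for the triangulated structure on $G$, I would use the standard ``three out of two'' argument. Given a distinguished triangle $Y'\to Y\to Y''\by{+1}$ in $\sT'$, I complete $G(Y')\to G(Y)$ to a distinguished triangle $G(Y')\to G(Y)\to Z\by{+1}$ in $\sT$, apply $F$, and use the adjunction to obtain a morphism of triangles $F(G(Y'))\to F(G(Y))\to F(Z)\to F(G(Y'))[1]$ mapping to the original triangle in $\sT'$; the counit provides the first two vertical maps, giving by the axioms a third map $F(Z)\to Y''$. Adjunction converts this to $Z\to G(Y'')$, and the five lemma in triangulated form (applied after testing against $\Hom_\sT(W,-)$ for all $W$) shows that $Z\to G(Y'')$ is an isomorphism, since the adjunction identities imply the other two vertical arrows are identities after applying $\Hom(W,-)$.

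The main obstacle will be the last step: the verification that the induced candidate $Z\to G(Y'')$ is actually an isomorphism requires carefully composing the adjunction with the long exact sequences attached to the two triangles, and concluding via Yoneda. Everything else reduces fairly mechanically to producing a representing object via Brown representability.
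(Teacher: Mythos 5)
Your proof is correct, and you should know that the paper does not actually prove this lemma: it is stated with a citation to Kashiwara--Schapira [Cor.\ 10.5.3] and a \qed, so there is no in-text argument to compare against. What you have written is precisely the standard Brown--Neeman argument from that reference (and from Neeman's work): products representing $Y\mapsto\prod_i\Hom_\sT(Y,X_i)$, the adjoint representing $X\mapsto\Hom_{\sT'}(F(X),Y)$, and a fill-in plus five-lemma argument for the triangulated structure on $G$. One small imprecision in the last step: after applying $\Hom_\sT(W,-)$ and $\Hom_{\sT'}(F(W),-)$ to the two triangles, the first two vertical maps are the adjunction \emph{isomorphisms}, not identities; the five lemma still applies, so this does not affect the conclusion. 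You might also note, for completeness, that $G$ commutes with shift up to natural isomorphism (via $\Hom(X,G(Y[1]))\cong\Hom(F(X)[-1],Y)\cong\Hom(X[-1],G(Y))\cong\Hom(X,G(Y)[1])$ and Yoneda), which is a prerequisite for calling $G$ triangulated.
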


\begin{ex}\label{exA.4} Suppose $\sT$ is cocomplete and let $\sR\subset\sT$ be a localising subcategory: $\sR$ is triangulated and closed under direct sums. Then the inclusion functor $\sR\inj \sT$ and the localisation functor $\sT\to \sT/\sR$ are strongly additive \cite[Lemma 1.5]{bn}.
\end{ex}

\begin{defn}\label{dA.4} Let  $\sT$ be a triangulated category.\\
a) An object $X\in \sT$ is \emph{compact} if the functor $Y\mapsto \sT(X,Y)$ is strongly additive. We write $\sT^c$ for the thick subcategory of $\sT$ consisting of compact objects.\\
b) A subset $\sX$ of $Ob(\sT)$ \emph{generates $\sT$} if its right orthogonal is $0$.\\
c) $\sT$ is \emph{compactly generated} if it is cocomplete and generated by a (small) set of compact objects.\\
d) Given a subset $\sX$ of $Ob(\sT)$, the \emph{thick hull of $\sX$ in $\sT$} is the smallest triangulated subcategory of $\sT$ which contains $\sX$ and is closed under direct summands.
\end{defn}

\begin{rk}\label{rA.2} Suppose that $\sT$ is cocomplete. Then a set $\sX\subset Ob(\sT)$ of compact objects generates $\sT$ in the sense of Definition \ref{dA.4} b) if and only if the smallest localising subcategory of $\sT$ containing $\sX$ is equal to $\sT$ \cite[Lemma 2.2.1]{ss}.
\end{rk}

\begin{ex}\label{exA.5} Let $\sA$ be an essentially small additive category and $\sB=\Mod\sA$. Then $\sT=D(\sB)$ is compactly generated and $K^b(\sA)\iso \sT^c$ \cite[Prop. A.4.1]{birat-tri}.
\end{ex}

We have the following very useful result of Beilinson-Vologodsky \cite[Proposition in \S 1.4.2]{be-vo}  (see also \S 1.2 of op. cit.):

\begin{thm}\label{tA.5} Let $\sT$ be a cocomplete triangulated category and let $\sS\subseteq \sT$ be a localising subcategory which is generated by a set of compact objects of $\sT$. Then the localisation functor $\sT\to \sT/\sS$ has a right adjoint whose essential image is the right orthogonal $\sS^\perp$ of $\sS$. In particular, $\sS=\sT$ $\iff$ $\sS^\perp=0$.
\end{thm}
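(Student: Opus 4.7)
The plan is to construct a right adjoint $j$ to the localisation functor $\pi : \sT \to \sT/\sS$ by Bousfield localisation, using the Brown representability property for $\sS$. Let $\sX = \{X_i\}_{i\in I}$ be a set of compact objects of $\sT$ that generates $\sS$ (in the sense of Remark \ref{rA.2}, i.e. $\sS$ is the smallest localising subcategory of $\sT$ containing $\sX$).

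First I would verify that $\sS$ is itself compactly generated, with $\sX$ as a set of compact generators. Since $\sS$ is a localising subcategory of $\sT$, the inclusion $i : \sS \hookrightarrow \sT$ commutes with infinite direct sums (Example \ref{exA.4}); hence for $X \in \sX$ and any family $\{S_\alpha\}$ in $\sS$,
\[
\sS(X, \textstyle\bigoplus_\alpha S_\alpha) = \sT(X, \bigoplus_\alpha S_\alpha) = \bigoplus_\alpha \sT(X, S_\alpha) = \bigoplus_\alpha \sS(X, S_\alpha),
\]
so $X$ is compact in $\sS$. Thus $\sS$ is compactly generated, and in particular has the Brown representability property by the standard theorem of Neeman (as recalled around Lemma \ref{lA.3}).

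Next, by Lemma \ref{lA.3} applied to the strongly additive inclusion $i : \sS \to \sT$ (with source satisfying Brown representability), $i$ admits a right adjoint $r : \sT \to \sS$. Since $i$ is fully faithful, the unit $\id_\sS \Rightarrow ri$ is an isomorphism. For $Y \in \sT$, embed the counit in a distinguished triangle
\[
irY \to Y \to Y' \xrightarrow{+1}.
\]
I claim $Y' \in \sS^\perp$. Indeed, for any $S \in \sS$ and $n \in \Z$, applying $\sT(iS[n],-)$ and using the adjunction together with $riS[n] \cong S[n]$, the map $\sT(iS[n], irY) \to \sT(iS[n], Y)$ is identified with $\id_{\sS(S[n], rY)}$, whence $\sT(iS[n], Y') = 0$ for all $n$. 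Conversely, if $Z \in \sS^\perp$, the triangle above for $Y = Z$ shows $irZ = 0$ and $Z \to Z'$ is an isomorphism. Setting $jZ := Z$ exhibits $\sS^\perp$ as a reflective subcategory, and the standard Bousfield argument then shows that $\pi$ restricted to $\sS^\perp$ is an equivalence onto $\sT/\sS$, whose inverse (followed by the inclusion) is the desired right adjoint $j$ to $\pi$ with essential image $\sS^\perp$.

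The final equivalence is immediate: $\sS^\perp = 0$ forces $Y' = 0$ for all $Y$, hence $irY \iso Y$ and $Y \in \sS$, so $\sS = \sT$; the reverse implication is obvious. The main technical point is the first step, namely the passage from ``generated by a set of objects compact in $\sT$'' to ``$\sS$ itself is compactly generated'', since Neeman's Brown representability must be applied inside $\sS$ rather than in $\sT$ (which is not assumed compactly generated here); everything afterwards is the classical Bousfield construction.
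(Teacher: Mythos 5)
The paper does not prove this theorem itself — it cites Beilinson--Vologodsky \cite{be-vo} — so there is no internal proof to compare against. Your argument is correct and is the expected one: the key observation that you isolate, namely that $\sS$ is itself compactly generated (using that a localising subcategory of a cocomplete $\sT$ inherits coproducts, and that an object of $\sS$ compact in $\sT$ stays compact in $\sS$ because $\Hom$ out of it computes the same thing in $\sS$ as in $\sT$), is exactly what lets you apply Neeman's Brown representability \emph{inside} $\sS$ rather than in $\sT$, which is not assumed compactly generated. From there the right adjoint $r$ to the inclusion $i:\sS\hookrightarrow\sT$, the localisation triangle $irY\to Y\to Y'\to irY[1]$ with $irY\in\sS$ and $Y'\in\sS^\perp$, and the resulting equivalence $\sS^\perp\simeq\sT/\sS$ are the classical Bousfield construction; this is the same mechanism as the paper's Corollary~\ref{cA.1}, just run on $\sS$ instead of $\sT$, which is precisely why the weaker cocompleteness hypothesis on $\sT$ suffices. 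The one place where you are a little terse — Step 7, the functoriality and universality of $Y\mapsto Y'$ — is genuinely standard: $Y\to Y'$ is initial among maps from $Y$ to $\sS^\perp$ because $irY\in\sS$ forces any composite $irY\to Y\to Z$ with $Z\in\sS^\perp$ to vanish, and uniqueness comes from the cone $irY[1]\in\sS$; this gives $Y\mapsto Y'$ as the left adjoint to $\sS^\perp\hookrightarrow\sT$ without having to worry about non-functoriality of cones. So the sketch is sound.
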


The two main results on compactly generated triangulated categories are:

\begin{thm}[\protect{\cite[Th. 4.1]{neeman4}}]\label{tA.3} Any compactly generated triangulated category has the Brown representability property. In particular \cite[Th. 14.3.1]{ks}, this is the case for the unbounded derived category of a Grothendieck abelian category. 
\end{thm}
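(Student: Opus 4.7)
The plan is to follow Neeman's classical argument, constructing a representing object as the homotopy colimit of an increasing sequence of partial approximations, with compactness entering crucially to control the limit.

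Fix a set $\sG$ of compact generators of $\sT$, which we may assume closed under shifts. Let $H\colon \sT^\op\to \Ab$ be a cohomological functor sending coproducts to products. The first step is to assemble a ``universal'' starting object: set
\[Y_0 = \bigoplus_{(G,\xi)} G,\]
the sum ranging over pairs with $G\in\sG$ and $\xi\in H(G)$. The strong additivity assumption gives $H(Y_0)\simeq \prod_{(G,\xi)} H(G)$, and the collection $(\xi)_{(G,\xi)}$ provides a tautological class $\eta_0\in H(Y_0)$.

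The main step is an inductive construction killing the kernel of $\alpha\mapsto \alpha^*\eta_n$. Given $(Y_n,\eta_n)$, set
\[K(G,n)=\ker\bigl(\sT(G,Y_n)\to H(G)\bigr),\quad Z_n=\bigoplus_{G\in\sG}\bigoplus_{\alpha\in K(G,n)} G,\]
and let $Y_{n+1}$ be the cone of the canonical map $Z_n\to Y_n$. Applying $H$ to this triangle and using that $\eta_n$ restricts to $0$ on $Z_n$ by construction, one extracts a lift $\eta_{n+1}\in H(Y_{n+1})$. Set $Y=\hocolim Y_n$; by strong additivity of $H$, the Milnor short exact sequence
\[0\to {\lim}^1\, H(Y_n[1])\to H(Y)\to \lim H(Y_n)\to 0\]
lets the coherent family $(\eta_n)$ lift to some $\eta\in H(Y)$. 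The verification that $\sT(G,Y)\to H(G)$, $\alpha\mapsto\alpha^*\eta$, is bijective for $G\in\sG$ proceeds in two parts: surjectivity is built in via the summand of $Y_0$ indexed by $(G,\xi)$; for injectivity, compactness of $G$ implies any $\alpha\colon G\to Y$ factors through some $Y_n$, and if $\alpha^*\eta=0$ then (after enlarging $n$ if necessary) $\alpha\in K(G,n)$, so its composite with $Y_n\to Y_{n+1}$ vanishes and hence $\alpha=0$ in $Y$. Finally, since $\sG$ generates $\sT$ (Remark \ref{rA.2}), the full subcategory of objects on which $\sT(-,Y)\Rightarrow H$ is an isomorphism is localising and contains $\sG$, hence is all of $\sT$.

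The main obstacle is verifying the existence of the lift $\eta_{n+1}$ in Step 2 (it requires the long exact sequence of $H$ applied to the cofibre triangle, and a careful choice to obtain a coherent system) and then producing $\eta$ itself, for which the strong additivity of $H$ is essential via the Milnor $\lim^1$ sequence. For the second assertion, concerning $D(\sA)$ with $\sA$ Grothendieck abelian: this does not follow formally from the first, since such a $D(\sA)$ need not be compactly generated. Instead, one argues directly, exploiting the existence of a generator of $\sA$ and $K$-injective resolutions to reduce Brown representability to a set-theoretic smallness statement, as carried out in \cite[Th. 14.3.1]{ks}; the overall shape (build a representing object as a transfinite homotopy colimit against obstructions) is parallel.
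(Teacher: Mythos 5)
The paper offers no proof of its own here; it simply cites Neeman \cite[Th. 4.1]{neeman4} for the first assertion and Kashiwara--Schapira \cite[Th. 14.3.1]{ks} for the second. Your sketch is a faithful reconstruction of the classical Brown/Neeman argument underlying the first citation: the tautological class on $Y_0$, the inductive killing of $\ker(\sT(G,Y_n)\to H(G))$, the coherence of the $\eta_n$ (automatic because each lift restricts to the previous one), the Milnor $\lim^1$ sequence from the homotopy-colimit triangle, the use of compactness to factor a test map through a finite stage for injectivity, and the concluding ``localising subcategory containing the generators'' argument (the five lemma gives thickness, strong additivity of $H$ gives closure under coproducts). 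All of these are correct and match the cited proof. You are also right that the second assertion is logically independent: $D(\sA)$ for $\sA$ Grothendieck abelian need not be compactly generated, so the paper's ``In particular'' should be read as introducing a parallel result rather than a corollary; the Kashiwara--Schapira proof proceeds via $K$-injective resolutions and a set-theoretic smallness argument, exactly as you indicate.
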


\begin{thm}[\protect{\cite[Th. 2.1]{neeman}}]\label{tA.4} Let $\sT$ be a compactly generated triangulated category. Let $\sS\subset \sT$ be a localising subcategory generated by a set of compact objects of $\sT$. Then $\sT/\sS$ is compactly generated and compact objects of $\sT$ remain compact in $\sT/\sS$;  the induced functor $\sT^c/\sS^c\to (\sT/\sS)^c$ is fully faithful and $(\sT/\sS)^c$ is the thick hull of $\sT^c/\sS^c$ in $\sT/\sS$.
\end{thm}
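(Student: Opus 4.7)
The plan is to reduce everything to the existence and good behavior of the right adjoint to the localisation functor, which is provided by Theorem \ref{tA.5}, and then to approximate arbitrary morphisms and objects in $\sT/\sS$ by ones coming from compact objects of $\sT$.

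First I would set up the right adjoint. By Theorem \ref{tA.5}, the localisation $\pi:\sT\to \sT/\sS$ has a right adjoint $i:\sT/\sS\to \sT$ with essential image $\sS^\perp$. The first key point is that $i$ is strongly additive. For this it suffices to show that $\sS^\perp$ is closed under coproducts in $\sT$: if $(Y_j)$ lies in $\sS^\perp$ and $\{H_\alpha\}$ is a set of compact generators of $\sS$, then
\[
\sT(H_\alpha,\bigoplus_j Y_j)\simeq \bigoplus_j \sT(H_\alpha,Y_j)=0,
\]
so $\bigoplus_j Y_j\in \sS^\perp$ (the compact generators detect membership in $\sS^\perp$ because $\sS$ is the smallest localising subcategory containing them, cf. Remark \ref{rA.2}). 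Hence the equivalence $\sT/\sS\simeq \sS^\perp$ sends coproducts in $\sT/\sS$ to coproducts in $\sT$, i.e. $i$ is strongly additive.

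From this, compactness is preserved by $\pi$: for $X\in \sT^c$ and any family $(Z_j)$ in $\sT/\sS$,
\[
(\sT/\sS)(\pi X,\bigoplus_j Z_j)\simeq \sT(X,i\bigoplus_j Z_j)\simeq \sT(X,\bigoplus_j iZ_j)\simeq \bigoplus_j\sT(X,iZ_j)\simeq \bigoplus_j(\sT/\sS)(\pi X,Z_j).
\]
Moreover, if $\{G_\beta\}$ is a set of compact generators of $\sT$, then $\{\pi G_\beta\}$ generates $\sT/\sS$: any $Z\in \sT/\sS$ with $(\sT/\sS)(\pi G_\beta[n],Z)=0$ for all $\beta,n$ satisfies $\sT(G_\beta[n],iZ)=0$, hence $iZ=0$, hence $Z=0$. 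This shows that $\sT/\sS$ is compactly generated and that $\pi$ restricts to a functor $\sT^c\to (\sT/\sS)^c$.

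For full faithfulness of the induced functor $\sT^c/\sS^c\to (\sT/\sS)^c$, I would use the Verdier calculus of fractions: a morphism $\pi X\to \pi Y$ in $\sT/\sS$ with $X,Y\in \sT^c$ is represented by a roof $X\xleftarrow{s}X'\to Y$ with $\mathrm{Cone}(s)\in \sS$. The crucial approximation lemma is that one can always replace $X'$ by a compact object: since $\sS$ is compactly generated (as a triangulated category, by the $H_\alpha$), any object of $\sS$ is a filtered homotopy colimit of compact objects of $\sS$ (which are automatically compact in $\sT$, as $\sS\hookrightarrow \sT$ preserves coproducts by Example \ref{exA.4}). Applying compactness of $X$ to this expression of $\mathrm{Cone}(s)$ as a homotopy colimit lets one factor through a stage where the cone lies in $\sS^c$, producing an equivalent roof in $\sT^c$. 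The same type of argument applied to morphisms between roofs gives faithfulness.

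Finally, for the density statement, I would show every $Z\in (\sT/\sS)^c$ is a direct summand of some $\pi X$ with $X\in\sT^c$. Writing $Z$ as a homotopy colimit of a sequence whose terms come from the triangulated hull of $\{\pi G_\beta\}$ and invoking the compactness of $Z$ to split the colimit, one obtains a compact lift up to a direct summand; taking the thick hull then gives exactly $(\sT/\sS)^c$. The main obstacle is this last approximation step: combining compactness of objects in $\sT$, compactness of $Z$ in the quotient, and the Milnor/telescope presentation of homotopy colimits requires careful bookkeeping, and it is here that Neeman's argument is genuinely nontrivial.
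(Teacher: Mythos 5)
The paper cites Neeman's Theorem~2.1 directly and does not reproduce a proof, so there is no internal proof to compare against; your proposal is a reasonable reconstruction of Neeman's argument. The first half is correct and complete: the right adjoint $i$ exists by Theorem~\ref{tA.5}, $\sS^\perp$ is closed under coproducts because the compact generators $H_\alpha$ of $\sS$ detect membership in $\sS^\perp$, hence $i$ is strongly additive, and the adjunction computations show both that $\pi$ preserves compactness and that $\{\pi G_\beta\}$ generates $\sT/\sS$. One simplification you miss: once you know $\{\pi G_\beta\}$ is a set of compact generators of $\sT/\sS$, the \emph{density} statement follows at once from Lemma~\ref{lA.4}: $(\sT/\sS)^c$ is the thick hull of $\{\pi G_\beta\}$, which is contained in (hence equal to) the thick hull of $\pi(\sT^c)$, since $\sT^c$ is itself the thick hull of $\{G_\beta\}$. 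No homotopy-colimit bookkeeping is needed for that part.

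There is, however, a genuine error in your justification that compact objects of $\sS$ are compact in $\sT$. You assert this is ``automatic, as $\sS\hookrightarrow\sT$ preserves coproducts'' — but coproduct-preservation of the inclusion gives exactly the \emph{reverse} containment: if $X\in\sT^c\cap\sS$, then since $\sS$-coproducts agree with $\sT$-coproducts, $X\in\sS^c$. It says nothing about maps from an $\sS$-compact object into $\sT$-coproducts of objects that do not lie in $\sS$. The inclusion $\sS^c\subseteq\sT^c$ is true here, but the correct reason uses the hypothesis in an essential way: $\{H_\alpha\}$ is a set of compact generators of $\sS$, so by Lemma~\ref{lA.4} applied to $\sS$, $\sS^c$ is the thick hull of $\{H_\alpha\}$ (computed in $\sS$, equivalently in $\sT$), and this sits inside $\sT^c$ because $\sT^c$ is thick and contains the $H_\alpha$. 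Beyond this, the phrase ``any object of $\sS$ is a filtered homotopy colimit of compact objects of $\sS$'' is not quite right either — what one actually has is a telescope $C=\operatorname{hocolim} C_n$ where each step attaches a coproduct of compacts, and extracting a compact through which a given map $X\to C$ factors requires a further induction on the cells; this is the technical core of Neeman's proof, which you correctly flag as the hard part but do not supply.
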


\begin{cor}[cf. \protect{\cite[Th. A.2.6]{birat-tri}}]\label{cA.1} In the situation of Theorem \ref{tA.4}, the localisation functor $\sT\to \sT/\sS$ has a right adjoint, which also has a right adjoint.
\end{cor}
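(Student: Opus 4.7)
The plan is to combine the Brown--Neeman machinery already recorded in the appendix, applying it twice in succession.

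First, I would produce the right adjoint $i : \sT/\sS \to \sT$ of the localisation functor $\pi : \sT \to \sT/\sS$. This is immediate from Theorem \ref{tA.5}, whose hypothesis (that $\sS$ is generated by a set of compact objects of $\sT$) is exactly what we are given; alternatively, one can combine the strong additivity of $\pi$ (Example \ref{exA.4}) with Lemma \ref{lA.3}, using that $\sT$ has the Brown representability property by Theorem \ref{tA.3}.

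Next, to produce a right adjoint of $i$, I would apply Lemma \ref{lA.3} to $i$ itself. This requires two ingredients: (a) the source $\sT/\sS$ has the Brown representability property, and (b) $i$ is strongly additive. Ingredient (a) follows by combining Theorem \ref{tA.4} (which gives compact generation of $\sT/\sS$) with Theorem \ref{tA.3}. For (b), I would test against a set $\sX$ of compact generators of $\sT$: given an arbitrary family $\{Y_\alpha\}$ in $\sT/\sS$ and $X \in \sX$, the adjunction $(\pi, i)$, the compactness of $X$ in $\sT$, and the compactness of $\pi(X)$ in $\sT/\sS$ (by Theorem \ref{tA.4}) yield
\[
\sT\Bigl(X,\bigoplus_\alpha i(Y_\alpha)\Bigr) \cong \bigoplus_\alpha \sT(X, i(Y_\alpha)) \cong \bigoplus_\alpha (\sT/\sS)(\pi(X),Y_\alpha) \cong (\sT/\sS)\Bigl(\pi(X),\bigoplus_\alpha Y_\alpha\Bigr) \cong \sT\Bigl(X, i\bigl(\textstyle\bigoplus_\alpha Y_\alpha\bigr)\Bigr),
\]
so that the natural map $\bigoplus_\alpha i(Y_\alpha) \to i\bigl(\bigoplus_\alpha Y_\alpha\bigr)$ becomes an isomorphism after applying $\sT(X,-)$ for every $X\in\sX$. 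Since $\sX$ is a generating set of compact objects (Remark \ref{rA.2}), this forces the map itself to be an isomorphism.

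The only non-formal input is the strong additivity of $i$, whose decisive ingredient --- preservation of compact objects under $\pi$ --- is already done in Theorem \ref{tA.4}. The rest is a standard Yoneda-type verification. I expect no further obstacle.
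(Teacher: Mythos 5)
Your argument is correct and follows the expected route: use compact generation of $\sT/\sS$ (Theorem~\ref{tA.4}) plus Brown representability (Theorem~\ref{tA.3}, Lemma~\ref{lA.3}), the key non-formal point being that the right adjoint $i$ is strongly additive, which you verify by testing against compact generators via the fact that $\pi$ preserves compactness. This is essentially the same argument that underlies the cited reference; an equivalent packaging would note that $i$ identifies $\sT/\sS$ with $\sS^\perp$ (Theorem~\ref{tA.5}), that $\sS^\perp$ is closed under coproducts because $\sS$ has compact generators, and then invoke Example~\ref{exA.4} for strong additivity, but your direct computation accomplishes the same thing.
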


We shall also use the following lemma of Neeman, a special case of \cite[Lemma 4.4.5]{birat-tri}:

\begin{lemma}\label{lA.4} Let $\sT$ be a cocomplete triangulated category and let $\sX\subset Ob(\sT)$ be a set of compact objects. If $\sX$ generates $\sT$ (see Definition \ref{dA.4} and Remark \ref{rA.2}), then the thick hull of $\sX$ is $\sT^c$.
\end{lemma}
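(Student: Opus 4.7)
The inclusion $\langle \sX \rangle \subseteq \sT^c$ is immediate because $\sT^c$ is a thick subcategory containing $\sX$. For the reverse inclusion, fix $C \in \sT^c$; the plan is the classical cellular approximation of $C$ by objects built from $\sX$ using infinite coproducts and cones, combined with a double use of the compactness of $C$.

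First I would construct inductively a sequence $Y_0 \to Y_1 \to \cdots$ in $\sT$ together with compatible maps $f_n : Y_n \to C$. Set $Y_0 = \bigoplus_g X_g[k_g]$ indexed by all morphisms $g : X_g[k_g] \to C$ with $X_g \in \sX$ and $k_g \in \Z$, and let $f_0$ be assembled from the $g$'s. Given $f_n$ with cone $Z_n$, set $W_n = \bigoplus_h X_h[k_h]$ indexed by all $h : X_h[k_h] \to Z_n$; the composite $W_n \to Z_n \to Y_n[1]$ shifted by $-1$ yields a morphism $W_n[-1] \to Y_n$ whose composition with $f_n$ vanishes by the triangle axiom, so $Y_{n+1}$, defined as the cone of $W_n[-1] \to Y_n$, comes with an extension $f_{n+1} : Y_{n+1} \to C$ of $f_n$. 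An octahedron produces a distinguished triangle $W_n \to Z_n \to Z_{n+1}$, so for each $h$ the tautological composite $X_h[k_h] \to Z_n \to Z_{n+1}$ vanishes by construction.

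Next I would prove that $\hocolim_n Y_n \to C$ is an isomorphism. Letting $Z$ be its cone, one has $Z \simeq \hocolim_n Z_n$. For any $X \in \sX$ and $k \in \Z$, the compactness of $X$ gives $\sT(X[k], Z) = \colim_n \sT(X[k], Z_n)$, and every class here dies at the next stage by the previous step, so this colimit vanishes. Hence $Z$ lies in the right orthogonal of all shifts of objects of $\sX$, which is zero because $\sX$ generates $\sT$; so $C \simeq \hocolim_n Y_n$.

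Finally, compactness of $C$ gives $\sT(C, \hocolim_n Y_n) = \colim_n \sT(C, Y_n)$, so the identity of $C$ factors as $C \xrightarrow{s} Y_n \to C$ for some $n$, exhibiting $C$ as a retract of $Y_n$. But each $Y_n$ is itself a filtered colimit of ``finitary'' versions $Y_n^\alpha \in \langle \sX \rangle$, obtained by restricting the coproducts defining $Y_0, W_0, \dots, W_{n-1}$ to finite subsets of their index sets; compactness of $C$ applied once more lets us refine $s$ to a factorisation $C \to Y_n^\alpha \to Y_n$. Hence $C$ is a retract of $Y_n^\alpha$, and since $\langle \sX \rangle$ is closed under retracts we conclude $C \in \langle \sX \rangle$. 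The main obstacle is precisely this double use of compactness: a single factorisation through $\hocolim_n Y_n$ only lands in an object with infinite coproducts, so the further reduction to a genuinely finite cellular approximation is what makes the argument work.
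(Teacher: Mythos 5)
Your plan is the right one: this \emph{is} Neeman's cellular tower ("small object") argument, and everything through the identification $C\simeq\hocolim_n Y_n$ and the retraction of $C$ off a finite stage $Y_n$ is carried out correctly. (The paper itself does not prove the lemma but cites Neeman via \cite[Lemma~4.4.5]{birat-tri}, so there is no "paper's proof" to compare against; you are attempting a self-contained argument.)

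The problem is the last step, where you pass from "$C$ is a retract of $Y_n$" to "$C\in\langle\sX\rangle$." You assert that each $Y_n$ is a filtered colimit of finitary objects $Y_n^\alpha\in\langle\sX\rangle$ and that "compactness of $C$ applied once more" refines $s:C\to Y_n$ through some $Y_n^\alpha$. Neither half of this is available off the shelf. First, in an abstract cocomplete triangulated category there is no notion of a filtered colimit of objects; the $Y_n^\alpha$ are built by taking cones of restricted attaching maps, cones are not functorial, and the comparison maps $Y_n^\alpha\to Y_n$ only exist after choices, so the collection $\{Y_n^\alpha\}$ does not form a canonical filtered diagram whose "colimit" is $Y_n$. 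Second, and more importantly, compactness of $C$ only gives you that $\sT(C,-)$ commutes with coproducts (and hence with sequential homotopy colimits); it does \emph{not} by itself imply $\sT(C,Y_n)=\colim_\alpha\sT(C,Y_n^\alpha)$. That identity is exactly what needs to be proved, and it is a genuine induction on $n$: at each stage one factors the composite $C\to Y_n\to W_{n-1}$ through a finite sub-coproduct, uses an octahedron to lift $s$ into $\Cone(W_{n-1}'[-1]\to Y_{n-1})$, then applies the inductive hypothesis to the compact attaching map $W_{n-1}'[-1]\to Y_{n-1}$ and a second octahedron to descend further, and has to kill a residual obstruction class living in a cone of an inclusion of a finitary approximation of $Y_{n-1}$ by enlarging the finite data. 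Already for $n=2$ the map $Y_n^\alpha\to Y_n$ is not a split inclusion, so one cannot argue by direct-summand decompositions as one implicitly can for $n=0,1$; the octahedral bookkeeping is the real content. You correctly flag this as "the main obstacle," but the proposal stops at exactly the point where the work has to happen, so as written it is a gap rather than a proof.
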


\subsection{Unbounded derived categories: complements}

\begin{thm}\label{t.mon} Let $\sA$ be an additive category.\\
a) $\Mod\sA$ is a Grothendieck category with a set of projective generators.\\ 
b) If $\sA$ is monoidal, its tensor structure canonically extends to $\Mod\sA$ through the additive Yoneda functor, and provides $\Mod\sA$ with the structure of a closed additive monoidal category.\\
c) The $\otimes$-structure of $\sA$ extends uniquely to a $\otimes$-triangulated structure on the homotopy category $K^b(\sA)$.\\
d) The $\otimes$-structure of $\Mod\sA$ has a total left derived functor, which  is strongly additive and provides $D(\Mod\sA)$ with a closed $\otimes$-triangulated structure. \\
e)  If $u:\sA \to \sB$ is a monoidal functor, $u_!:\Mod\sA\to \Mod\sB$ is monoidal, and so are the functors $K^b(u):K^b(\sA)\to K^b(\sB)$ and $Lu_!:D(\Mod\sA)\to D(\Mod\sB)$.
\end{thm}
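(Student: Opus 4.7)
For part (a), I would note that $\Mod\sA$ is complete and cocomplete with (co)limits computed pointwise in $\Ab$, and that filtered colimits are exact because they are in $\Ab$. The additive Yoneda lemma $\Mod\sA(h_A,F) \simeq F(A)$ with $h_A := \sA(-,A)$ shows that $\{h_A\}_{A \in \sA}$ is a set of projective generators, giving both the Grothendieck property and the existence of projective generators.

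For (b), my approach is Day convolution: left Kan extend the composite $\sA \times \sA \to \sA \to \Mod\sA$ (using $\otimes_\sA$ followed by the Yoneda embedding $y$) along $y \times y$ to define the tensor product on $\Mod\sA$. Associativity, symmetry and unit constraints transport from $\sA$ by density of representables, and cocontinuity in each variable is automatic from the Kan extension construction; the adjoint functor theorem then supplies an internal $\uHom$, giving closedness. For (c), I would take the usual bicomplex $(C \otimes D)^n = \bigoplus_{p+q=n} C^p \otimes D^q$ with the graded Leibniz differential; compatibility with shifts and cones is routine, and uniqueness of the $\otimes$-triangulated structure follows from the universal property of $K^b(\sA)$.

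The crux is (d). The critical input is that Day convolution yields $h_A \otimes h_B \simeq h_{A \otimes B}$, so direct sums of representables are flat. This allows the construction of K-flat resolutions for every complex in $\mathrm{Ch}(\Mod\sA)$ (via Spaltenstein-type methods, cf.\ \cite[Ch. 14]{ks}), from which the total left derived functor $\otimes^L$ on $D(\Mod\sA)$ is defined. Strong additivity in each variable will follow because K-flat resolutions can be chosen compatibly with direct sums, and the ordinary tensor product of complexes commutes with direct sums. Since $D(\Mod\sA)$ is compactly generated by Example \ref{exA.5}, Theorem \ref{tA.3} provides Brown representability, yielding the right adjoint $\uHom$ to $- \otimes^L G$ and thus the closed $\otimes$-triangulated structure.

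For (e), I would use that $u_!$ is the left Kan extension of $y_\sB \circ u$ along $y_\sA$, so $u_!(h_A) = h_{u(A)}$; monoidality of $u$ combined with Day convolution then gives $u_!(h_A \otimes h_B) \simeq u_!(h_A) \otimes u_!(h_B)$, and cocontinuity extends this isomorphism to all objects. Monoidality of $K^b(u)$ is immediate from the bicomplex formula, and $Lu_!$ inherits monoidality from $u_!$ by applying it to K-flat resolutions, using that $u_!$ sends representables to representables and hence preserves flatness. I expect (d) to be the main obstacle: carefully handling the unbounded derived tensor product and verifying that $\otimes^L$ is both strongly additive and admits a right adjoint. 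The remaining parts are then standard or formal consequences of the setup.
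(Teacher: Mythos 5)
Your proof proposal matches the paper's overall structure and conclusions for parts (a), (b), (c), and (e); the substance is the same even though the paper mostly cites references (\cite{ak}, \cite{mvw}, \cite{KY}) where you spell out Day convolution explicitly. The genuine divergence is in part (d), where the paper and you take different technical routes.

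The paper's argument for (d) is: by part (a), $\Mod\sA$ has a set of projective generators, so by \cite[Th.~14.4.3]{ks} every object of $K(\Mod\sA)$ has a K-projective resolution, equivalently the localisation $\lambda: K(\Mod\sA)\to D(\Mod\sA)$ has a left adjoint $\gamma$. The derived tensor is then \emph{defined} by the formula $LT(C,D) := \lambda\, T(\gamma C,\gamma D)$, and strong additivity follows because $\lambda$ and $\gamma$ are both strongly additive (Example~\ref{exA.4}). The closed structure is obtained by constructing $RS$ with $S=\uHom$ dually via the right adjoint $\rho$ of $\lambda$ and checking the adjunction $LT\dashv RS$ directly; the paper mentions Brown representability only as a ``second justification''. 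You instead go through K-flat resolutions, and for these you need the representables $h_A$ to be flat. Your stated justification --- ``Day convolution yields $h_A\otimes h_B\simeq h_{A\otimes B}$, so direct sums of representables are flat'' --- is a non sequitur: knowing $h_A\otimes h_B\simeq h_{A\otimes B}$ only tells you how $h_A\otimes-$ acts on representables, not that it is left exact (one computes $h_A\otimes F = (-\otimes A)_!F$, left Kan extension along $-\otimes A$, whose exactness is not automatic for a general additive monoidal $\sA$). The flatness of representables can be established in the settings of interest, but it would require a separate argument; the paper's K-projective route, which rests directly on part (a) and needs no flatness input, is cleaner here. Similarly for (e), the paper deduces monoidality of $Lu_!$ from the universal property of derived functors as Kan extensions, while you rely on the K-flat machinery. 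So: your conclusion is right, and the strong-additivity and Brown-representability steps are fine, but you should either supply a proof that representables in $\Mod\sA$ are flat or switch to the paper's K-projective route using $\gamma$.
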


\begin{proof} a) See e.g. \cite[Prop. 1.3.6]{ak} for the first statement; the projective generators are given by $\sE=\{y(A)\mid A\in \sA\}$.  
b) is  \cite[Def. 8.2]{mvw} or \cite[A.8]{KY}. c) is easy (define $\otimes$ termwise). 
 d) This applies to any right exact covariant bifunctor $T:\Mod\sA\times \Mod\sA\to \sC$, where $\sC$ is abelian and cocomplete: by a) and \cite[Th. 14.4.3]{ks}, $K(\Mod\sA)$ has enough homotopically projective objects (K-projective in the sense of Spaltenstein \cite{spaltenstein}), which means that the localisation functor $\lambda:K(\Mod\sA)\to D(\Mod\sA)$ has a left adjoint $\gamma$. Then the formula
\[LT(C,D):=\lambda T(\gamma C, \gamma D)\]
provides the desired total left derived functor. By Example \ref{exA.4}, $\lambda$ and $\gamma$ are strongly additive; thus if $T$ is strongly additive, so is $LT$. 
Similarly, a left exact contravariant/covariant bifunctor $S$ has a total right derived functor $RS$ given by the formula
\[RS(C,D) =\lambda S(\gamma C,\rho D)\]
where $\rho$ is right adjoint to $\lambda$ (also apply a) and \cite[Th. A.2.1 b)]{kmsy2}). In the case $T=\otimes_{\Mod\sA}$, $S=\uHom_{\Mod\sA}$, these formulas immediately imply that $LT$ is left adjoint to $RS$, which gives a second justification of the strong biadditiveness of $\otimes_{\Mod\sA}$.

e) is \cite[A.12]{KY} for the first statement; the second one is easy and the third follows from the universal property of left derived functors as Kan extensions.
\end{proof}

\begin{prop}\label{pA.3} Let $G:\sA\leftrightarrows \sB:F$ be a pair of adjoint functors between Grothendieck  abelian categories, with $G$ exact. Then $RF$ is right adjoint to $RG=D(G)$.  If further $F$ is fully faithful,  then so is $RF$, and $D(G)$ is a localisation. If, on the other hand, $G$ is fully faithful and $F$ is exact, then $D(G)$ is fully faithful.
\end{prop}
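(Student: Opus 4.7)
My plan is to proceed in four steps, handling the four assertions in turn; the core input is the existence of K-injective resolutions in Grothendieck abelian categories (\cite[Th. 14.1.7]{ks}), which lets us build $RF$ and compute with it explicitly.

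\textbf{Step 1 (Existence of $RF$ and the adjunction).} Since $\sB$ is Grothendieck, every complex $B \in K(\sB)$ admits a K-injective resolution $B \to I$, and $RF(B) := F(I)$ is well-defined in $D(\sA)$. The crucial point is that \emph{$F$ sends K-injective complexes to K-injective complexes}: if $I$ is K-injective in $\sB$ and $C \in K(\sA)$ is acyclic, then by the level-wise adjunction
\[\Hom_{K(\sA)}(C,F(I)) \simeq \Hom_{K(\sB)}(G(C),I) = 0,\]
because $G$ exact forces $G(C)$ acyclic and $I$ is K-injective. Granting this, the adjunction follows by
\[\Hom_{D(\sB)}(D(G)A,B) \simeq \Hom_{K(\sB)}(G(A),I) \simeq \Hom_{K(\sA)}(A,F(I)) \simeq \Hom_{D(\sA)}(A,RF(B)),\]
where the last isomorphism uses K-injectivity of $F(I)$ just established.

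\textbf{Step 2 ($RF$ fully faithful when $F$ is).} Assuming $F$ fully faithful, the counit $GF \to \mathrm{Id}_\sB$ is a natural isomorphism of (exact!) functors. For any $B \in D(\sB)$ with K-injective resolution $B \to I$, applying the counit termwise gives a level-wise isomorphism $GF(I) \iso I$; hence
\[D(G)\,RF(B) = G(F(I)) \simeq I \simeq B\]
in $D(\sB)$, so the derived counit $D(G)\circ RF \Rightarrow \mathrm{Id}$ is an isomorphism, which is equivalent to $RF$ being fully faithful.

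\textbf{Step 3 ($D(G)$ is a localisation).} This is a purely formal consequence: a functor between triangulated categories that admits a fully faithful right adjoint is a Verdier localisation (its kernel is the left orthogonal of the image of the adjoint, and one computes Hom sets in the quotient via the adjunction). Nothing derived-categorical is needed here beyond Step~2.

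\textbf{Step 4 (Full faithfulness of $D(G)$ when $G$ is fully faithful and $F$ is exact).} With $F$ exact, $RF = D(F)$ on all of $D(\sB)$. The unit $\mathrm{Id}_\sA \Rightarrow FG$ is an isomorphism of exact functors (since $G$ is fully faithful), hence applying it termwise to any $A \in K(\sA)$ gives an isomorphism $A \iso FG(A)$ in $K(\sA)$, a fortiori in $D(\sA)$. Thus the derived unit $\mathrm{Id}_{D(\sA)} \Rightarrow RF \circ D(G) = D(F)\circ D(G)$ is an isomorphism, proving $D(G)$ fully faithful.

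The only real technical point is the preservation of K-injectivity by $F$ in Step~1; everything else then unfolds formally from the hypotheses and from the good behaviour of exact functors with respect to quasi-isomorphisms and K-injective resolutions.
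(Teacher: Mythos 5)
Your proof is correct and takes essentially the same approach as the paper: establish the derived adjunction $D(G)\dashv RF$ via K-injective resolutions and the exactness of $G$, then read off full faithfulness of $RF$ (resp.\ of $D(G)$) from the invertibility of the derived counit (resp.\ unit), which you check at the level of K-injective representatives. The paper simply replaces your explicit computations with citations to \cite[Th.\ 14.4.5]{ks} and to formal lemmas from \cite{kmsy1} and \cite{kmsy2}, whose content you have unpacked.
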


\begin{proof} The first statement is a special case of \cite[Th. 14.4.5]{ks}. By  \cite[Lemma A.3.1]{kmsy1}, the next ones are equivalent to saying that the counit morphism $D(G)RF\Rightarrow Id_\sA$ is an isomorphism, which follows from \cite[Lemma A.2.4]{kmsy2}. In the last case, the full faithfulness of $D(G)$ is proven dually since $D(FG)\Rightarrow D(F)D(G)$ is an isomorphism, again by  \cite[Lemma A.2.4]{kmsy2}.
\end{proof}

We shall also need the following elementary lemma, that we borrow from the Stacks project:
\begin{lemma}[\protect{\cite[Lemma 13.30.2]{stack}}]\label{lA.5}
Let $F:\sA\to \sB$ and $G:\sB\to \sA$ be functors of abelian categories such that $F$ is a right adjoint to $G$. Let $K\in D(\sA)$ and let $M\in D(\sB)$. If $RF$ is defined at $K$ and $LG$ is defined at $M$, then there is a canonical isomorphism
\[D(\sB)(M,RF(K))\simeq D(\sA)(LG(M),K).\]
\end{lemma}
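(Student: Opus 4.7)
The plan is to reduce the claimed isomorphism to the underived adjunction at the level of homotopy categories of complexes. Since $G \dashv F$ on the abelian categories, applying the unit and counit termwise yields a bifunctorial natural isomorphism
\[
K(\sB)(B^\bullet, F(A^\bullet)) \simeq K(\sA)(G(B^\bullet), A^\bullet)
\]
for all $A^\bullet \in K(\sA)$ and $B^\bullet \in K(\sB)$.

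Next, I would unpack the hypotheses. By the definition of ``defined at'', fix a quasi-isomorphism $s:K \to K'$ in $K(\sA)$ such that $F(K')$ represents $RF(K)$ in $D(\sB)$ (equivalently, the pro-system $\{F(K'')\}$ indexed by further quasi-isomorphisms $K' \to K''$ is essentially constant); symmetrically, fix a quasi-isomorphism $t:P \to M$ in $K(\sB)$ with $G(P)$ representing $LG(M)$ in $D(\sA)$. Since $s$ and $t$ become isomorphisms in the respective derived categories, the assertion reduces to producing a canonical isomorphism
\[
D(\sB)(P, F(K')) \simeq D(\sA)(G(P), K').
\]

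For this I would compute both sides as filtered colimits via the calculus of fractions (e.g., morphisms in $D(\sB)$ out of $P$ are the colimit of morphisms in $K(\sB)$ out of quasi-iso-sources $P' \to P$, and dually for morphisms into $K'$ in $D(\sA)$), then transport the homotopy-category adjunction through those colimits. The ``defined at'' hypotheses supply exactly what is needed to match the two systems: the essential constancy of $\{F(K'')\}_{K' \to K''}$ and $\{G(P')\}_{P' \to P}$ gives cofinal subsystems on either side that correspond under $G \dashv F$, so that the $K$-level adjunction induces a bijection on colimits. Naturality in $K$ and $M$ is then formal.

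The main obstacle lies precisely in the last step. Without assuming $G$ or $F$ preserves quasi-isomorphisms, the filtered colimits defining $D(\sB)(P, F(K'))$ and $D(\sA)(G(P), K')$ are \emph{a priori} unrelated; the ``defined at'' conditions are the only input that allows the needed cofinality/replacement argument linking them. In practice, it seems cleanest to first produce a map in one direction --- say, send a roof $P \xleftarrow{\text{q.i.}} P' \to F(K')$ to the composition $G(P') \to K'$ and then to $G(P) \to K'$ via $G(P) \simeq G(P')$ in $D(\sA)$ (an isomorphism by essential constancy) --- and then construct an inverse by the symmetric recipe on the $K'$ side, checking that the two constructions are mutually inverse by appealing to the adjunction identities on $K$-level.
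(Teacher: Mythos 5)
Your reduction and the $K$-level adjunction are the right ingredients, but the key step in your forward-map construction is flawed: for an arbitrary quasi-isomorphism $P'\to P$, the map $G(P')\to G(P)$ need \emph{not} be a quasi-isomorphism, so the asserted ``isomorphism $G(P)\simeq G(P')$ in $D(\sA)$'' does not follow from essential constancy. Concretely, take $\sA=\sB=\Ab$, $G=-\otimes_\Z\Z/2$, $M=\Z/2[0]$, $P=[\Z\xrightarrow{2}\Z]$ (so $G(P)$ represents $LG(M)$), and $P'=P\oplus A$ where $A$ is the acyclic complex $[\Z\xrightarrow{2}\Z\to\Z/2]$; the projection $P'\to P$ is a quasi-isomorphism, yet $G(A)$ has nonzero cohomology, so $G(P')\to G(P)$ is not. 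Essential constancy of the pro-object $\{G(P')\}_{P'\to M}$ only gives a morphism of pro-objects $LG(M)\to\{G(P')\}$ (compatible maps $G(P)\to G(P')$, not their inverses) together with a one-sided splitting at a single index; this does not make your recipe well-defined, and since the lemma assumes neither enough projectives nor injectives, you cannot repair it by restricting to a cofinal family of ``good'' replacements.

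The Stacks project proof that the paper cites avoids fixing $P$ and $K'$ altogether. The two ``defined at'' hypotheses are used only through the fact that $\Hom$ into an essentially constant ind-object (and, dually, out of an essentially constant pro-object) is computed by the corresponding filtered colimit, giving
\[
\Hom_{D(\sB)}(M,RF(K))=\colim_{K\to K'}\Hom_{D(\sB)}(M,F(K')),\qquad
\Hom_{D(\sA)}(LG(M),K)=\colim_{P\to M}\Hom_{D(\sA)}(G(P),K),
\]
with colimits over quasi-isomorphisms. One then expands the inner $\Hom$'s by the calculus of fractions (left roofs for the first, right roofs for the second), applies the $K(\sA)$/$K(\sB)$-level adjunction you set up, and finds that both sides equal the double colimit $\colim_{K\to K',\,P\to M}\Hom_{K(\sA)}(G(P),K')$; Fubini for filtered colimits concludes. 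The lesson is to keep both colimits live throughout and invoke essential constancy only through its effect on Hom-sets, rather than choosing a single replacement on each side and trying to build an explicit bijection.
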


\section{Cubical objects and intervals}\label{section:interval}

\subsection{Cubical objects and associated complexes}
We follow \cite{Levine} but we omit the use of
permutations and involutions.
Let $\Cb$ be the subcategory of $\Sets$
which has as objects 
$\ul{n} = \{ 0, 1\}^n$ for $n \in \Z_{\geq 0}$ 
(with $\ul{0}=*$ the terminal object of $\Sets$)
 and whose morphisms are generated by
\begin{align*}
&p^n_i : \ul{n} \to \ul{n-1} \qquad
(n \in \Z_{>0}, ~ i \in \{1, \dots, n\}),
\\
&\delta^n_{i, \epsilon} : \ul{n} \to \ul{n+1} \qquad
(n \in \Z_{\geq 0}, ~ i \in \{1, \dots, n+1\},~
\epsilon \in \{ 0, 1 \}),
\end{align*}
where
$p_i^n$ omits the $i$-th component
and $\delta^n_{i, \epsilon}$ inserts
$\epsilon$ at the $i$-th component.

\begin{defn}\label{d3.1} Let $\sA$ be a category.
A covariant (resp. contravariant)
functor $A : \Cb \to \sA$ 
is called a 
{\it co-cubical}
(resp. {\it cubical}) {object} in $\sA$;
\end{defn}

\begin{remark}
The definition of $\Cb$ 
in \cite{Levine}
is different from ours.
(It also contains other morphisms
called permutations and involutions.)
However, 
concerning the following lemma,
the same proof as in loc. cit. 
works in our more basic setting.
\end{remark}

\begin{lemma}\label{lem:levine}
Let $A: \Cb^{\op} \to \sA$ be a cubical object in 
a pseudo-abelian category $\sA$.
Put $A_n:=A(\ul{n})$.
\begin{enumerate}
\item
We have well-defined objects 
\begin{align*}
& A_n^{\deg} := 
\Im\Big(\oplus p_i^{n*}:\bigoplus_{i=1}^n A_{n-1} \to A_n\Big)
\iso
\Im\Big(\oplus \delta_{i, 1}^{(n-1)*}
:A_{n} \to \bigoplus_{i=1}^n A_{n-1}\Big),
\\
& A_n^{\nu} := 
\ker\Big(\oplus \delta_{i, 1}^{(n-1)*}
:A_{n} \to \bigoplus_{i=1}^n A_{n-1}\Big)
\iso
\Coker\Big(\oplus p_i^{n*}:\bigoplus_{i=1}^n A_{n-1} \to A_n\Big)
\end{align*}
in $\sA$,
and $A_n^{\nu} \oplus A_n^{\deg} \iso A_n$ holds.
\item
Let 
$d_n:=\sum_{i=1}^{n+1}(-1)^i
(\delta^{n *}_{i, 1}-\delta^{n *}_{i, 0})
: A_{n+1} \to A_n$.
This makes $A_\bullet$ a complex,
of which 
$A_\bullet^{\nu}$ and $A_\bullet^{\deg}$ 
are subcomplexes.
The two complexes $A_\bullet/A_\bullet^{\deg}$ and
$A_\bullet^{\nu}$ are isomorphic.

\end{enumerate}
\end{lemma}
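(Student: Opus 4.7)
The plan is to package the cubical identities into a family of pairwise commuting idempotents on $A_n$ and read everything off from them. For $i=1,\ldots,n$, set $s_i := p_i^{n*}\,\delta_{i,1}^{(n-1)*} : A_n \to A_n$. Since $p_i\delta_{i,1}=\id$ in $\Cb$, each $s_i$ is an idempotent, and using the standard $p\delta$, $pp$ and $\delta\delta$ identities one checks that for $i<j$
\[ s_is_j = p_j^{n*}\,p_i^{(n-1)*}\,\delta_{j-1,1}^{(n-2)*}\,\delta_{i,1}^{(n-1)*} = s_js_i. \]
I expect this commutation check, which requires three successive rewritings with the cubical identities, to be the most bookkeeping-heavy step; once it is in hand the rest is formal.

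With the $s_i$ commuting, I would introduce complementary idempotents $e_\emptyset := \prod_i(1-s_i)$ and $e := 1-e_\emptyset$. By pseudo-abelianness they split, yielding a canonical decomposition $A_n \iso \Im(e_\emptyset)\oplus \Im(e)$. I will identify $\Im(e_\emptyset) = A_n^\nu$ and $\Im(e) = A_n^{\deg}$. For the first, the relation $\delta_{j,1}^*s_j = \delta_{j,1}^*$ forces $\delta_{j,1}^*e_\emptyset = 0$; conversely if all $\delta_{j,1}^*x$ vanish then $s_jx=0$ for every $j$, hence $e_\emptyset x=x$. For the second, with $\sigma := \oplus p_i^*$, the identity $(1-s_j)p_j^*=0$ gives $e\sigma = \sigma$ (hence $\Im(\sigma)\subseteq\Im(e)$), and the reverse inclusion follows from expanding $e = \sum_{T\ne\emptyset}(-1)^{|T|+1}\prod_{i\in T}s_i$ and extracting $s_{i_0} = p_{i_0}^*\delta_{i_0,1}^*$ from each product to write $e$ as $\sigma\circ\tau$ for an explicit $\tau\colon A_n\to\bigoplus A_{n-1}$. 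The cokernel description of $A_n^\nu$ and the second image description of $A_n^{\deg}$ then follow formally from the direct sum decomposition, the latter via the factorisation of $\oplus\delta_{i,1}^*$ through $A_n/A_n^\nu$.

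For part (2), $d_n\circ d_{n+1}=0$ is the classical cube differential computation based on the $\delta\delta$ identity for $i<j$, after which the terms of the double sum cancel pairwise. To verify $d_n(A_{n+1}^{\deg})\subseteq A_n^{\deg}$, I would apply $d_n$ to $p_j^{(n+1)*}y$ and invoke the $p\delta$ identities termwise: the $i=j$ contribution cancels in the signed difference $\delta_{j,1}^*-\delta_{j,0}^*$ (as $\delta_{j,\epsilon}^*p_j^*=\id$ for both signs), while every other summand visibly factors through some $p_\ell^{n*}$. Dually, for $x\in A_{n+1}^\nu$ and any $k$, rewriting $\delta_{k,1}^{(n-1)*}\delta_{i,\epsilon}^{n*}$ using the appropriate form of the $\delta\delta$ identity in the two cases $k<i$ and $k\ge i$ always exposes an inner $\delta_{?,1}^{n*}$ that kills $x$, giving $d_nx\in A_n^\nu$. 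Since $A_\bullet^\nu$ and $A_\bullet^{\deg}$ are then complementary subcomplexes of $A_\bullet$, the projection induces the final isomorphism $A_\bullet/A_\bullet^{\deg}\iso A_\bullet^\nu$.
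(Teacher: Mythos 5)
Your plan follows what is, I believe, the argument in the lemmas of Levine's paper to which this statement is reduced (the paper offers no proof of its own, only the citation): build the pairwise commuting idempotents $s_i = p_i^{n*}\delta_{i,1}^{(n-1)*}$, split the complementary idempotents $e_\emptyset = \prod_i(1-s_i)$ and $e = 1-e_\emptyset$ using pseudo-abelianness, and read the decomposition off the splitting. The commutativity check, the identifications $\Im(e_\emptyset)=\ker(\oplus\delta_{i,1}^*)$ and $\Im(e)=\Im(\oplus p_i^*)$ via the factorisation $e=\sigma\tau$, the cokernel statement, and all of part (2) are handled correctly.

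The one step that does not ``follow formally'' is the isomorphism $\Im(\oplus\delta_{i,1}^{(n-1)*})\cong A_n^{\deg}$. Factoring $\delta:=\oplus\delta_{i,1}^*$ through the projection $\pi:A_n\to A_n^{\deg}$ (legitimate since $\delta e_\emptyset=0$) only exhibits $A_n^{\deg}$ as a \emph{coimage} of $\delta$; in a pseudo-abelian category that is not the same as the image until you know the factored map $\bar\delta:A_n^{\deg}\to\bigoplus_i A_{n-1}$ is a split monomorphism --- and indeed the zero map factors through $\pi$ as well, so the factorisation by itself cannot suffice. The missing retraction is produced exactly as you produced $\tau$, but mirrored: commute the factors of each $\prod_{i\in T}s_i$ in the expansion of $e$ so that $s_{j(T)}=p_{j(T)}^*\delta_{j(T),1}^*$ stands on the right, peel off $\delta_{j(T),1}^*$, and collect the leftover maps $A_{n-1}\to A_n$ over $T$; this writes $e=\rho\circ\delta$ for an explicit $\rho:\bigoplus_i A_{n-1}\to A_n$. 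With $\iota:A_n^{\deg}\hookrightarrow A_n$ the inclusion split by $\pi$, one then has $\rho\bar\delta=\rho\delta\iota=e\iota=\iota$, so $\pi\rho$ retracts $\bar\delta$, and $\Im(\delta)\cong A_n^{\deg}$ as claimed. Inserting this mirror step closes the only gap; the rest of the plan is sound.
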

\begin{proof}
See \cite[Lemmas 1.3, 1.6]{Levine}.
\end{proof}

\begin{remark}\label{rem:obvious-dual}
We have obvious dual statements of 
Lemma \ref{lem:levine} for co-cubical objects. 
We state them here for later use.
Let $A: \Cb \to \sA$ be a co-cubical object in 
a pseudo-abelian category $\sA$.
Put $A^n:=A(\ul{n})$.
\begin{enumerate}
\item
We have well-defined objects 
\begin{align*}
& A^n_{\deg} := 
\Im\Big(
\oplus \delta_{i, 1 *}^{(n-1)} :
\bigoplus_{i=1}^n A^{n-1} \to A^{n}\Big)
\iso
\Im\Big(\oplus p_{i*}^{n}:A^n 
\to \bigoplus_{i=1}^n A^{n-1}\Big),
\\
& A^n_{\nu} := 
\Ker\Big(\oplus p_{i*}^{n}:A^n 
\to \bigoplus_{i=1}^n A^{n-1}\Big)
\iso
\Coker\Big(
\oplus \delta_{i, 1 *}^{(n-1)} :
\bigoplus_{i=1}^n A^{n-1} \to A^{n}\Big),
\end{align*}
in $\sA$,
and $A^n \iso A^n_{\nu} \oplus A^n_{\deg}$ holds.
\item
Let 
$d^n:=\sum_{i=1}^{n+1}(-1)^i
(\delta^{n}_{i, 1 *}-\delta^{n}_{i, 0 *})
: A^n \to A^{n+1}$. This makes $A^\bullet$ a complex,
of which
$A^\bullet_{\nu}$ and $A^\bullet_{\deg}$
are subcomplexes. 
The two complexes 
$A^\bullet/A^\bullet_{\deg}$ and
$A^\bullet_{\nu}$ are isomorphic.
\end{enumerate}
\end{remark}

\begin{remark}\label{rem:decom} Let $\sA$ be pseudo-abelian and provided with an additive unital symmetric monoidal structure $\otimes$. 
Let $A : \Cb \to \sA$
be a co-cubical object, 
and suppose that $A$ is strict monoidal
(i.e. $A(\ul{m} \times \ul{n})=A(\ul{m}) \otimes A(\ul{n})$).
Then: \\
1) $A^0=A^0_\nu=\un$ is the unit object of $\sA$,
and $A^0_{\deg}=0$.
For $n>0$, 
combining 
$A^1 = A^1_\nu \oplus A^1_{\deg}$
and
$A^n = A^1 \otimes \dots \otimes A^1$,
we get a decomposition
\begin{align*}
&A_\nu^n = A_\nu^1 \otimes \dots \otimes A_\nu^1,
\\
&A_{\deg}^n = 
\bigoplus_{\sigma \not\equiv \nu}
A_{\sigma(1)}^1 \otimes \dots \otimes A_{\sigma(n)}^1,
\end{align*}
where $\sigma$ ranges over all maps
$\{1, \dots, n \} \to \{ \nu, \deg \}$
except for the constant map with value $\nu$.\\
2) $A^\bullet$ has a canonical comonoid structure 
where the counit
and comultiplication
are respectively given by 
\begin{align}\label{eqproj}
&\pi^\bullet : A^\bullet \to A^0[0]=\un,
\quad
\pi^n = 0 ~(n>0) \text{ and }\pi^0 = \id_{A^0},\\
\label{eqdelta}
&\Delta^\bullet : A^\bullet \to 
\Tot(A^\bullet \otimes A^\bullet)
\end{align}
where $\Delta^n = \sum_{p+q=n} \Delta^{p, q}$ with
$\Delta^{p, q} : A^{p+q} \overset{=}{\longrightarrow} A^p \otimes A^q$.
In view of 1),
we see that $A^\bullet_\nu$ 
inherits the same structure:
\[ 
\pi^\bullet_{\nu} : A^\bullet_{\nu} \to \un,
\quad
\Delta^\bullet_{\nu} : A^\bullet_{\nu} \to 
\Tot(A^\bullet_{\nu} \otimes A^\bullet_{\nu}).
\]
\end{remark}

\subsection{Interval structure} 
Let $\sA$ be a unital symmetric monoidal category.
 Recall from Voevodsky \cite{H1} the notion of interval:

\begin{defn}\label{d4.1} Let $\un$ be the unit object of $\sA$. An \emph{interval} in $\sA$ is a quintuple $(I,p,i_0,i_1,\mu)$, with $I\in \sA$, $p:I\to \un$, $i_0,i_1:\un\to I$, $\mu:I\otimes I\to I$, verifying the identities
\begin{align*}
&pi_0=pi_1=1_\un, 
\\
&\mu\circ (1_I\otimes i_0)=
\mu\circ (i_0\otimes 1_I)=i_0p,
\\
&\mu\circ (1_I\otimes i_1)=
\mu\circ (i_1\otimes 1_I)=1_I.
\end{align*}
\end{defn}

\begin{defn}\label{rem:interval-cube}
Given an  interval 
$(I,p,i_0,i_1,\mu)$ in $\sA$,
we define 
a strict monoidal co-cubical object $A : \Cb \to \mathcal{A}$ by
\[A^n=I^{\otimes n},~
p^n_{i*}
=1_I^{\otimes(i-1)} \otimes p \otimes 1_I^{\otimes (n-i)},~
\delta^n_{i \epsilon*}
=1_I^{\otimes(i-1)} \otimes i_\epsilon
\otimes 1_I^{\otimes (n-i)}
\]
(this does not use the morphism $\mu$). When $\sA$ is pseudo-abelian,
we write $I^\bullet, ~I^\bullet_\nu, ~I^\bullet_{\deg}$
for the associated complexes introduced in 
Remark \ref{rem:obvious-dual}. 
\end{defn}

By definition and Remark \ref{rem:decom}, we have
\[ I_\nu^n=I_\nu\otimes \cdots\otimes I_\nu\;\;\text{ with } I_\nu=\Ker(I\rmapo{p} \un).\]

\begin{remark}
Conversely, Levine introduced in \cite{Levine}  a notion of \emph{extended co-cubical object} $A : \ECb \to \mathcal{A}$, where $\ECb$ is the smallest symmetric monoidal subcategory of $\Sets$ that contains $\Cb$ and the morphism
\[ \tilde{\mu} : \ul{2} \to \ul{1}; \quad
  (a, b) \mapsto ab.
\]
Given such a  (strict monoidal)  extended co-cubical object $A$, 
we may define an interval 
$(I,p,i_0,i_1,\mu)$ in $\sA$ by
\[ I=A(\ul{1}),~ p=p^1_{1*},~ 
i_0=\delta^0_{1, 0*},~ i_1=\delta^0_{1, 1*},~
\mu=\tilde{\mu}_*.
\]

Such intervals are not arbitrary, as $\mu$ makes $I$ a commutative monoid (because so does $\tilde \mu$ with $\ul{1}$). However, all intervals encountered in practice are commutative monoids, including in \cite{H1,voetri} and here (Lemma \ref{l7.1}). 
\end{remark}

\begin{defn}\label{def:i-local}
a) An object $X\in \sA$ is 
\emph{$I$-local at $Y\in \sA$}\footnote{This notion is useful in \cite{shuji-purity}.} if $p$ induces an isomorphism $\sA(Y,X)\iso \sA(Y\otimes I,X)$; $X$ is \emph{$I$-local} if it is $I$-local at $Y$ for any $Y\in \sA$. If $\sA$ is closed, it is equivalent to ask for the morphism
\[X\by{p^*} \uHom(I, X)\]
to be an isomorphism.
\\
b) A morphism $f:Y\to Z$ in $\sA$  is called an \emph{$I$-equivalence} if $\sA(Z,X)\by{f^*} \sA(Y,X)$ is an isomorphism for any $I$-local $X$.
\end{defn}

\begin{lemma}\label{l4.1} Let $X,Y\in \sA$. Then 
\begin{enumerate}
\item If $X$ is $I$-local at $Y$, the maps $1_Y\otimes i_0^*,1_Y\otimes i_1^*:\sA(Y\otimes I,X)\to \sA(Y,X)$  are equal.
\item If the maps $1_{Y\otimes I}\otimes i_0^*,1_{Y\otimes I}\otimes i_1^*:\sA(Y\otimes I\otimes I,X)\to \sA(Y\otimes I,X)$  are equal, then $X$ is $I$-local at $Y$.
\item $X$ is $I$-local if and only if the maps $i_0^*,i_1^*:\sA(Y\otimes I,X)\to \sA(Y,X)$  are equal for all $Y\in \sA$ (equivalently when $\sA$ is closed: if and only if the maps $i_0^*,i_1^*:\uHom(I,X)\to X$ are equal).
\end{enumerate}
\end{lemma}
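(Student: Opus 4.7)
The plan is to prove all three parts by elementary diagram-chasing in $\sA$, using only the identities in Definition~\ref{d4.1}. Throughout, we view $(1_Y\otimes i_\epsilon)^*$ and $(1_Y\otimes p)^*$ as the maps on Hom-sets induced by $1_Y\otimes i_\epsilon:Y\to Y\otimes I$ and $1_Y\otimes p:Y\otimes I\to Y$ (identifying $Y\otimes\un$ with $Y$).

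For (1), I would observe that $pi_0=pi_1=1_\un$ implies $(1_Y\otimes p)\circ(1_Y\otimes i_\epsilon)=1_Y$ for $\epsilon=0,1$. Passing to Hom-sets, both $(1_Y\otimes i_0)^*$ and $(1_Y\otimes i_1)^*$ are therefore left inverses of $(1_Y\otimes p)^*$. If $X$ is $I$-local at $Y$, the map $(1_Y\otimes p)^*$ is bijective and hence has a unique two-sided inverse, to which both $(1_Y\otimes i_\epsilon)^*$ must be equal. This gives the conclusion.

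For (2), the main point is to convert the hypothesis into both the injectivity and the surjectivity of $(1_Y\otimes p)^*$. Injectivity is immediate from the identities $pi_\epsilon=1_\un$, exactly as in (1), since having a one-sided left inverse forces injectivity. For surjectivity, I would use the multiplication $\mu$ as a ``homotopy'' realizing $I$-contractibility. The interval axioms give
\[
(1_Y\otimes\mu)\circ(1_{Y\otimes I}\otimes i_0)=(1_Y\otimes i_0)\circ(1_Y\otimes p),\qquad
(1_Y\otimes\mu)\circ(1_{Y\otimes I}\otimes i_1)=1_{Y\otimes I}.
\]
Given $f\in\sA(Y\otimes I,X)$, apply the hypothesis to $g=f\circ(1_Y\otimes\mu)\in\sA(Y\otimes I\otimes I,X)$; the equality $g\circ(1_{Y\otimes I}\otimes i_0)=g\circ(1_{Y\otimes I}\otimes i_1)$ becomes precisely
\[
(1_Y\otimes p)^*\bigl(f\circ(1_Y\otimes i_0)\bigr)=f,
\]
exhibiting $f$ in the image of $(1_Y\otimes p)^*$.

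Part (3) is then a formal consequence: $X$ is $I$-local by definition if and only if it is $I$-local at every $Y$, and (1) shows one implication while (2), applied after replacing $Y$ by $Y\otimes I$, shows the converse. The closed-category reformulation follows by adjunction: $X\to\uHom(I,X)$ is an isomorphism iff $\sA(Y,X)\to\sA(Y\otimes I,X)$ is bijective for every $Y$, and under this adjunction $i_\epsilon^*:\uHom(I,X)\to X$ corresponds to $(1_Y\otimes i_\epsilon)^*$ with $Y=\un$ (more precisely, testing against all $Y$). I do not expect any real obstacle; the only point to watch is the interval-axiom computation in (2), where one must use the asymmetric identities $\mu\circ(1_I\otimes i_0)=i_0p$ and $\mu\circ(1_I\otimes i_1)=1_I$ in the correct order so that $\mu$ plays the role of a contraction of $I$ to the point $i_0$.
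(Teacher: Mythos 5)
Your proof is correct and follows essentially the same route as the paper's (very terse) proof: the key step in (2) is applying the hypothesis to $g=f\circ(1_Y\otimes\mu)$, which via the last two interval identities yields $p^*i_0^*=\mathrm{id}$, while $i_0^*p^*=\mathrm{id}$ always holds from $pi_0=1_{\un}$; this is exactly what you do, just phrased as injectivity/surjectivity rather than two-sided inverse. Parts (1) and (3) are handled in the same way in both, with the converse of (3) following from (2) after replacing $Y$ by $Y\otimes I$.
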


\begin{proof} 
For (2), the last two identities of Definition \ref{d4.1} imply that $p^*i_0^*:\sA(Y\otimes I,X)\to \sA(Y\otimes I,X)$ is the identity, hence the claim since $i_0^*p^*$ is also the identity. (3) now follows from (1) and (2). 
\end{proof}

\begin{rk}\label{r4.1} Actually, Definition \ref{d4.1} is more general than Voevodsky's definition in \cite[2.2]{H1} or (with Morel) \cite[2.2.3]{mv}. There, the $\otimes$-category $\sA$ is a site with products (in \cite{H1}) or the category of sheaves on a site (in \cite{mv}), and the tensor structure is the one given by products of objects or of sheaves. Voevodsky constructs in \cite[loc. cit.]{H1} a universal cosimplicial object, whose general term is $I^n$.  Unfortunately, the formulas of loc. cit. implicitly use diagonal morphisms which are not available in general $\otimes$-categories,  in particular in the ones we use here (see Remark \ref{w2.1}). So, while one can develop a cubical theory out of Definition \ref{d4.1}, we do not know if this definition is sufficient to develop a simplicial theory. 
\end{rk}

\subsection{Homotopy equivalences}

\begin{prop}\label{phot2} Let $\sA$ be a pseudo-abelian $\otimes$-category, provided with an interval $I$. Let $I^\bullet$ be as in Definition \ref{rem:interval-cube}. 
Then the morphisms
\begin{align}\label{eq:1st-layer0}
&1 \otimes p^1_{1*} : 
I^\bullet \otimes I^1[0]\to I^\bullet,
\\\label{eq:1st-layer}
&1 \otimes p^1_{1*} : 
I^\bullet_\nu \otimes I^1[0]\to I^\bullet_\nu,
\\
\label{eq:delta}
&\Delta^\bullet_{\nu} : I^\bullet_{\nu} \to 
\Tot(I^\bullet_{\nu} \otimes I^\bullet_{\nu})
\end{align}
are homotopy equivalences.
\end{prop}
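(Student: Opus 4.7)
The plan is to prove the three claims in sequence: (1) and (2) via essentially the same explicit chain homotopy built out of $\mu$, and (3) by a more elaborate but structurally similar bicomplex argument.

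For (1), I would write down the chain homotopy explicitly. Set
\[
h^n := (-1)^n \cdot 1_{I^{n-1}} \otimes \mu \colon I^{n+1} \longrightarrow I^n \qquad (n\ge 1, \text{ with } h^0:=0),
\]
viewed as a map $(I^\bullet \otimes I^1[0])^n \to (I^\bullet \otimes I^1[0])^{n-1}$ that applies $\mu$ to the last two tensor factors. A direct cubical computation—using the interval identities $\mu\circ (1\otimes i_1)=1_I$ and $\mu\circ(1\otimes i_0)=i_0\, p$ to evaluate the $(n{+}1)$-st term of $h^{n+1}\circ d^n$, and observing that for $i\le n$ the insertion $\delta^n_{i,\epsilon\,*}$ commutes with the $\mu$ applied to the last two factors—shows that the contributions with $i\le n$ appear in $d^{n-1}h^n$ and $h^{n+1}d^n$ with signs $(-1)^n(-1)^i$ and $(-1)^{n+1}(-1)^i$ respectively, hence cancel. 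What survives is
\[
d^{n-1}h^n + h^{n+1}d^n \;=\; \id_{I^{n+1}} \;-\; 1_{I^n}\otimes i_0\,p.
\]
Combined with the trivial identity $(1\otimes p)\circ (1\otimes i_0)=\id$, this exhibits $1\otimes p$ as a chain homotopy equivalence with inverse $1\otimes i_0$.

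For (2), I would check that $1\otimes p$, $1\otimes i_0$, and $h^n$ all restrict to the normalized complex. Using the identification $I^n_\nu = I_\nu^{\otimes n}$ from Remark \ref{rem:decom} (1), the only point to verify is that $\mu(I_\nu\otimes I)\subseteq I$, which is trivial since $\mu$ targets $I$ (not $I_\nu$) in the last slot. The identity from (1) then restricts verbatim to give (2).

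For (3), which I expect to be the main obstacle, the idea is to imitate the comonoid-retraction argument from the classical Eilenberg--Zilber setup. First, one checks that $\Delta^\bullet_\nu$ is a chain map: unpacking $d_h+d_v$ on $\Tot(I^\bullet_\nu\otimes I^\bullet_\nu)$ and $\Delta^{p,q}_\nu\circ d^n_\nu$, the contributions of the terms $\tilde\delta^n_k$ with $k\le p$ match $d_h$, and those with $k>p$ match $d_v$ (the sign $(-1)^p$ in $d_v$ is precisely what is needed). The counit $\pi^\bullet_\nu\colon I^\bullet_\nu\to \un[0]$ supplies a canonical retraction, since $(\pi^\bullet_\nu\otimes 1)\circ\Delta^\bullet_\nu=\id_{I^\bullet_\nu}$ by the counit axiom. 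It then suffices to produce a chain homotopy from $\Delta^\bullet_\nu\circ(\pi^\bullet_\nu\otimes 1)$ to the identity on $\Tot(I^\bullet_\nu\otimes I^\bullet_\nu)$. I would construct this homotopy by a bi-cubical version of the contraction used in (1)--(2): on the $(p,q)$-summand with $p>0$, use $\mu$ applied to the pair of adjacent $I_\nu$-factors straddling the $(p,q)$-boundary to define a map into the $(p{-}1,q)$-summand, with a sign $(-1)^{p+?}$ chosen so that the resulting terms telescope correctly against both $d_h$ and $d_v$. The hard part is purely combinatorial: verifying that the alternating signs in the $(p,q)$-slices of the bicomplex add up to $\id-\Delta^\bullet_\nu\circ(\pi^\bullet_\nu\otimes 1)$; no input beyond the interval identities from (1) is required.
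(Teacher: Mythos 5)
Your treatment of \eqref{eq:1st-layer0} and \eqref{eq:1st-layer} is essentially the paper's: the chain homotopy you call $h^n$ is, up to indexing and signs, the paper's $\sigma^{n+1}$ (the composite $(s^n\otimes 1)$ followed by $1\otimes\mu$), and both arguments then pass to the normalized complex by observing that the homotopy respects the direct summand $I^\bullet_\nu\otimes I^1[0]\subseteq I^\bullet\otimes I^1[0]$.

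For \eqref{eq:delta} your sketch has a genuine gap. You propose a contraction that, on the $(p,q)$-summand $I_\nu^{\otimes p}\otimes I_\nu^{\otimes q}$ of the bicomplex (with $p>0$), applies $\mu$ to the two adjacent $I_\nu$-factors straddling the boundary, aiming at the $(p-1,q)$-summand. But $\mu:I\otimes I\to I$, so $\mu$ restricted to $I_\nu\otimes I_\nu$ lands in $I$, and $I\cong I_\nu\oplus\un$ (the $\un$-summand being $\Im(i_1)$, cf.\ Remark~\ref{rem:obvious-dual}) is \emph{not} $I_\nu$. Hence your proposed map has target $I_\nu^{\otimes(p-1)}\otimes I\otimes I_\nu^{\otimes(q-1)}$, which is not a summand of $\Tot(I_\nu^\bullet\otimes I_\nu^\bullet)$ at all --- neither $(p-1,q)$ nor $(p,q-1)$. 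This is a separate obstruction from the unfinished sign bookkeeping you flag: the homotopy is not well-defined before one even starts checking the relation $dh+hd=\id-\Delta^\bullet_\nu(\pi^\bullet_\nu\otimes 1)$. (In your steps (1)--(2) this problem did not arise because the last tensor factor of the source is the \emph{un}-normalized $I^1=I$, exactly where $\mu$'s output needs to go.)

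The paper's proof of \eqref{eq:delta} avoids any direct use of $\mu$ on normalized factors. From \eqref{eq:1st-layer0} one gets by induction that $\id\otimes(p^1_1\cdots p^q_1)_*:I^\bullet\otimes I^q[0]\to I^\bullet$ is a homotopy equivalence for all $q\ge 1$; since $I^q_\nu$ lies in $\Ker((p^1_1\cdots p^q_1)_*)$ (Remark~\ref{rem:obvious-dual}), the summand $I^\bullet\otimes I^q_\nu[0]$, and hence its direct summand $I^\bullet_\nu\otimes I^q_\nu[0]$, is contractible for $q>0$. One then invokes Lemma~\ref{lem:oesterle}(2) --- a morphism of locally finite double complexes which is a column-wise homotopy equivalence induces a homotopy equivalence on total complexes --- applied to $\Tot(1\otimes\pi^\bullet):\Tot(I^\bullet_\nu\otimes I^\bullet_\nu)\to I^\bullet_\nu$, and concludes because $\Delta^\bullet_\nu$ is a section of this map. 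If you wish to keep your overall strategy, that lemma on bicomplexes is the missing ingredient; as it stands, the explicit bi-cubical homotopy you describe cannot be written down.
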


\begin{proof}
For \eqref{eq:1st-layer0}, since $p^1_{1}\delta^0_{1, 0} = 1_{\ul{0}}$, the composition
$(1 \otimes p^1_{1*}) (1 \otimes \delta^0_{1, 0*}):I^\bullet\to I^\bullet$ is the identity. Let $s^{n}: I^{n+1}\iso I^n\otimes I^1$ be the tautological isomorphism. The identities
\[s^n \delta^{n}_{j,\epsilon*} =
\begin{cases}
((\delta^{n-1}_{j,\epsilon*}\otimes 1) s^{n-1} &\text{if $j<n+1$}\\
1_{I^n}\otimes i_\epsilon &\text{if $j=n+1$}
\end{cases}
 \]
yield
\[s^n d^n - (d^{n-1}\otimes 1) s^{n-1}=1\otimes i_1-1\otimes i_0.\]

Then the composition
\[\sigma^{n+1}: I^{n+1}\otimes I^1\by{s^n \otimes 1} I^{n}\otimes  I^1\otimes  I^1\by{1\otimes \mu} I^{n}\otimes  I^1\]
yields a chain homotopy from $1\otimes (\delta^0_{1, 0*}p^1_{1*})$ to $1\otimes 1$, which concludes the proof.
Now \eqref{eq:1st-layer} is also homotopy equivalence
as a direct summand of \eqref{eq:1st-layer0}.

Consider \eqref{eq:delta}.
By induction and the homotopy equivalence \eqref{eq:1st-layer0},
we find that for any $q > 0$
\begin{equation}\label{eq:qth-layer}
\id \otimes (p^1_1  p^2_1 \dots p^q_1)_{*} : 
I^\bullet \otimes I^q[0]\to I^\bullet
\end{equation}
is a homotopy equivalence.
Since $I^q_{\nu}$ is a direct summand of $I^q$
contained in $\Ker((p^1_1  p^2_1 \dots p^q_1)_{*})$
by Remark \ref{rem:obvious-dual},
we find that $I^\bullet \otimes I^q_{\nu}[0]$
is contractible for $q > 0$.
The same is true of
$I^\bullet_{\nu} \otimes I^q_{\nu}[0]$
because it is a direct summand of 
$I^\bullet \otimes I^q_{\nu}[0]$.
Lemma \ref{lem:oesterle} (2) below then shows
that
$\Tot(1 \otimes \pi^\bullet) 
: \Tot(I^\bullet_{\nu} \otimes I^\bullet_{\nu}) \to 
I^\bullet_{\nu}$
is a homotopy equivalence, where $\pi^\bullet$ is as in \eqref{eqproj}.
Since 
$\Tot(1 \otimes \pi^\bullet)$
is left inverse to $\Delta_\nu^\bullet$,
this shows that $\Delta_\nu^\bullet$ 
is a homotopy equivalence.
\end{proof}

\begin{lemma}\label{lem:oesterle}
Let $\sA$ be an additive category.
Let us call a double complex
$S^{\bullet, \bullet}$ in $\sA$
{\it locally finite} if
$\{ p \in \Z ~|~ S^{p, n-p} \not= 0 \}$
is a finite set for each $n \in \Z$.
\begin{enumerate}
\item
Let $S^{\bullet, \bullet}$ 
be a locally finite double complex in $\sA$.
Suppose that the single complex
$S^{\bullet, q}$ is contractible
for each $q \in \Z$.
Then $\Tot(S^{\bullet, \bullet})$ is contractible.

\item
Let $f^{\bullet, \bullet} : 
S^{\bullet, \bullet} \to T^{\bullet, \bullet}$
be a morphism of 
locally finite double complexes in $\sA$.
If $f^{\bullet, q}$ is a homotopy equivalence
for each $q \in \Z$,
then so is
$\Tot(f^{\bullet, \bullet})
: S^{\bullet, \bullet} \to T^{\bullet, \bullet}$.
\end{enumerate}
\end{lemma}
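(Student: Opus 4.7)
The plan is to prove (1) by a perturbation-of-homotopy argument, and to deduce (2) from (1) via a mapping cone. Throughout, the local finiteness hypothesis plays the crucial role of ensuring that every endomorphism of $\Tot(S^{\bullet,\bullet})$ that strictly increases the second index is nilpotent on each total degree, because $\Tot^n$ is a finite direct sum.

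For (1), let $h_q : S^{p,q} \to S^{p-1,q}$ be the contractions of the rows, so that $d_1 h_q + h_q d_1 = \id$, where $d_1,d_2$ denote the two differentials of $S^{\bullet,\bullet}$ and $d = d_1 + \epsilon d_2$ is the total differential (with the sign convention making $d^2 = 0$). Assembling the $h_q$ yields an endomorphism $H$ of $\Tot(S^{\bullet,\bullet})$ of degree $-1$. A direct computation shows that the error $E := dH + Hd - \id$ sends $S^{p,q}$ into $S^{p-1,q+1}$: the horizontal piece $d_1 h_q + h_q d_1 - \id$ vanishes by hypothesis, and only the vertical cross-terms $\pm(d_2 h_q + h_{q+1} d_2)$ survive. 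By local finiteness, for each $n$ the operator $E^k$ vanishes on $\Tot^n$ once $k$ exceeds the number of non-zero summands $S^{p,n-p}$. Moreover $E$ commutes with $d$: both $dE$ and $Ed$ equal $dHd - d$, since $d^2 = 0$. Setting $H' := \sum_{k \ge 0} (-1)^k H E^k$, a finite sum on each $\Tot^n$, we compute
\[
dH' + H'd = \sum_{k \ge 0} (-1)^k (dH + Hd) E^k = \sum_{k \ge 0} (-1)^k (\id + E) E^k = \id,
\]
the last equality by telescoping. Thus $H'$ is the desired contraction of $\Tot(S^{\bullet,\bullet})$.

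For (2), form the horizontal mapping cone of double complexes $\Cone(f)^{p,q} := S^{p+1,q} \oplus T^{p,q}$ with the standard differentials; this is again locally finite, and each row $\Cone(f)^{\bullet,q}$ is the mapping cone of the row-wise homotopy equivalence $f^{\bullet,q}$, hence contractible. Part (1) then yields the contractibility of $\Tot(\Cone(f))$. A sign check identifies $\Tot(\Cone(f))$ with the ordinary mapping cone $\Cone(\Tot(f))$ of complexes, from which it follows that $\Tot(f)$ is a homotopy equivalence.

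The main technical point lies in (1): one must verify both that $E$ commutes with $d$, so that the perturbation series $\sum_k (-1)^k H E^k$ telescopes correctly, and that $E$ is locally nilpotent. The former is a formal consequence of $d^2 = 0$; the latter is exactly where local finiteness is indispensable, as otherwise the correction series would fail to define an endomorphism within an additive category.
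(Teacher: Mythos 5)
Your proof is correct and follows essentially the same route as the paper's: both perturb the row-wise contraction $s$ (your $H$), observe that $d^S s + s d^S - \mathrm{id}$ has bidegree $(-1,1)$ and is therefore locally nilpotent by local finiteness, and deduce (2) from (1) by passing to the mapping cone. The only cosmetic difference is that the paper stops at ``$\mathrm{id} + u$ is an isomorphism,'' whereas you write out the inverse as the finite geometric series $\sum_k(-1)^k E^k$ and verify the telescoping identity explicitly, which is the same argument made fully explicit.
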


\begin{proof}
(1)\footnote{
We learned this proof from
J. Oesterl\'e. We thank him.
} Let us write 
$d_1^S : S^{\bullet, \bullet} \to S^{\bullet+1, \bullet},~
d_2^S : S^{\bullet, \bullet} \to S^{\bullet, \bullet+1}$
for the differentials of $S^{\bullet, \bullet}$,
and set
$d^S = d_1^S + d_2^S$.
By assumption we have a map $s : S^{\bullet, \bullet} \to S^{\bullet, \bullet}$ 
of bidegree $(-1, 0)$ such that $d_1^S s + sd_1^S = \id_{S^{\bullet, \bullet}}$.
Thus $d^Ss+sd^S-\id_{S^{\bullet, \bullet}}$ is an endomorphism of $S^{\bullet, \bullet}$
of bidegree $(-1, 1)$,
which defines an endomorphism $u$ of $\Tot(S^{\bullet, \bullet})$
of degree $0$.
By assumption, $u$ restricted to each degree
is nilpotent.
Hence $\id+u$ is an isomorphism,
which implies that $\Tot(S)$ is contractible.

(2) We shall use the following fact:
\begin{quote}
(*) A morphism $g$ of (simple) complexes
is a homotopy equivalence 
if and only if $\Cone(g)$ is contractible.
\end{quote}
Let $U^{\bullet, \bullet}$ be a cone of $f$,
that is,
$U^{p, q}=T^{p, q} \oplus S^{p+1, q}$
equipped with 
$d_1^U = 
\begin{pmatrix} d_1^T & f \\ 0 & d_1^S \end{pmatrix}
: U^{p, q} \to U^{p+1, q}$
and
$d_2^U = 
\begin{pmatrix} d_2^T & 0 \\ 0 & d_2^S \end{pmatrix}
: U^{p, q} \to U^{p, q+1}$.
For each $q \in \Z$,
we have 
$U^{\bullet, q} = \Cone(f^{\bullet, q})$,
as (single) complexes.
By assumption and (*), they are contractible.
Then (1) shows that $\Tot(U)$ is contractible.
Since we have $\Cone(\Tot(f)) = \Tot(U)$ by definition,
this implies that
$\Tot(f)$ is contractible by (*).
\end{proof}

\subsection{An adjunction}\label{s4.1} 

Let $\sT$ be a tensor triangulated category,  compactly generated (Definition \ref{dA.4}) and equipped with an interval \allowbreak 
$(I,p,i_0,i_1,\mu)$.  We assume: 

\begin{hyp}\label{hB.1}
The tensor structure of $\sT$ is strongly biadditive (i.e., $- \otimes -$ is strongly additive in each entry), and $-\otimes I$ preserves the full subcategory $\sT^c$ of compact objects.
\end{hyp}

By Theorem \ref{tA.3}, $\sT$ enjoys the Brown representability property  of Definition \ref{dbrown}. By Lemma \ref{lA.3}, $\otimes$ therefore has a right adjoint $\uHom$.

\begin{defn}\label{def:r_i}
Let $\sR_I \subset \sT$ be the localising subcategory generated by objects of the form $\Cone(X\otimes I \by{1\otimes p} X)$ for $X\in \sT$. We write $\sT_I$ for the Verdier quotient $\sT/\sR_I$.
\end{defn}

\begin{prop}\label{prop:right-adjoint-j-interval}\
\begin{enumerate}
\item The functor $\uHom_\sT(I,-)$ is strongly additive.
\item The category $\sT_I$ is compactly generated, hence has the Brown representability property.
\item The localisation functor $L^I :\sT \to \sT_I$ has a (fully faithful) right adjoint $j^I$, which also has a right adjoint $R^I$.
\item The essential image of $j^I$ consists of the $I$-local objects (Definition \ref{def:i-local} a)).
\item The tensor structure on $\sT$ induces a tensor structure on $\sT_I$. 
\end{enumerate}
\end{prop}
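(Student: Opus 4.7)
The plan is to address the five assertions in order, as each builds on the previous one. For (1), since $-\otimes I$ is the left adjoint of $\uHom_\sT(I,-)$ and preserves compact objects by Hypothesis \ref{hB.1}, I will invoke Neeman's criterion: the right adjoint of a triangulated functor between compactly generated categories is strongly additive if and only if the left adjoint preserves compacts.

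For (2), the key step is to show that $\sR_I$ is generated as a localising subcategory by the compact objects of the form $\Cone(X\otimes I \by{1\otimes p} X)$ with $X \in \sT^c$. Denote by $\sR'_I$ this smaller localising subcategory; by Hypothesis \ref{hB.1}, its generators are indeed compact in $\sT$. The functor $F(X) := \Cone(X\otimes I\to X)$ is triangulated and strongly additive (since $\id$ and $-\otimes I$ are, and cones of strongly additive natural transformations are strongly additive), so $F^{-1}(\sR'_I)$ is a localising subcategory of $\sT$. It contains all compacts, hence equals $\sT$ by Remark \ref{rA.2}. Therefore $\sR_I = \sR'_I$ is generated by a set of compact objects of $\sT$, and Theorem \ref{tA.4} shows that $\sT_I$ is compactly generated; Brown representability then follows from Theorem \ref{tA.3}. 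Part (3) is now an immediate application of Corollary \ref{cA.1}: both $L^I$ and its right adjoint $j^I$ admit right adjoints, and $j^I$ is fully faithful as a right adjoint to a Verdier localisation.

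For (4), full faithfulness of $j^I$ identifies its essential image with the right orthogonal $\sR_I^\perp$. An object $Y$ lies in $\sR_I^\perp$ if and only if $\sT(R,Y)=0$ for every $R \in \sR_I$; since $\sR_I$ is generated by the cones $\Cone(X\otimes I \to X)$ and $\sR_I^\perp$ is closed under (co)products and triangles, this is equivalent to the map
\[
(1\otimes p)^*:\sT(X,Y)\to \sT(X\otimes I, Y)
\]
being an isomorphism for every $X\in \sT$, which is the definition of $I$-locality (Definition \ref{def:i-local} a)).

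For (5), the point is that $\sR_I$ is a $\otimes$-ideal: for any $Y\in \sT$, the symmetry isomorphism yields a canonical identification $\Cone(X\otimes I\to X)\otimes Y \simeq \Cone((X\otimes Y)\otimes I \to X\otimes Y)$, which lies in $\sR_I$. Using the strong biadditivity of $\otimes$ (Hypothesis \ref{hB.1}), a standard argument (fix $Y$ and observe that $\{X \in \sT \mid \Cone(X\otimes I\to X)\otimes Y \in \sR_I\}$ is a localising subcategory of $\sT$, then swap the roles of $X$ and $Y$) shows that $\sR_I\otimes \sT \subseteq \sR_I$. Therefore the tensor structure descends uniquely to $\sT_I$ so that $L^I$ becomes a $\otimes$-functor. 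The main technical point of the whole proposition is (2): getting that $\sR_I$ is generated by a \emph{set} of compact objects, which is what powers Neeman–Thomason and unlocks (3)–(5).
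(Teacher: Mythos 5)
Your proof is correct and follows essentially the same route as the paper's: test strong additivity of $\uHom_\sT(I,-)$ on compact generators (the compacts-preserve criterion of Neeman you cite is just the packaged version of this computation), deduce via Hypothesis~\ref{hB.1} that $\sR_I$ is generated by a set of compact cones, apply Theorem~\ref{tA.4} and Corollary~\ref{cA.1}, identify $j^I(\sT_I)$ with $\sR_I^\perp$ by adjunction, and note that $\sR_I$ is a $\otimes$-ideal. Two small expository slips worth fixing: $X\mapsto\Cone(X\otimes I\to X)$ is not a \emph{functor} (cones are not functorial in a triangulated category), though the \emph{class} $\{X : \text{some cone of }X\otimes I\to X\text{ lies in }\sR'_I\}$ is still localising by a $3\times 3$ argument, which is all you need; and in (5) the localising-subcategory argument should be run on $\{A\in\sT : A\otimes Y\in\sR_I\}$ (which contains the generators of $\sR_I$ by your identification of cones, hence contains all of $\sR_I$) --- your displayed class is already all of $\sT$, and no ``swapping of $X$ and $Y$'' is needed.
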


\begin{proof} For $(X_j)_{j\in J}$ a family of objects of $\sT$, the invertibility of the map
\[\bigoplus  \uHom_\sT(I,X_j)\to \uHom_\sT(I,\bigoplus  X_j)\]
can be tested on a set of compact generators; it then follows from Hypothesis \ref{hB.1}. This also implies that $\sR_I$ is generated by a set of compact objects of $\sT$, hence (2) follows from Theorem \ref{tA.4}. Then (3) follows from Corollary \ref{cA.1}. (4) is obvious by adjunction, and (5) follows from the fact that if $A\in \sR_I$ and $B\in \sT$, then $A\otimes B\in \sR_I$. 
\end{proof}

\begin{rk} The functor $j^IR^I$ can be described by a double adjunction  as follows: for $X,Y\in \sT$, we have
\[\sT(X,j^IR^I Y)=\sT(L^I  X,R^I Y) =\sT(j^IL^I  X,Y). \]
\end{rk}

Our main result in this appendix, Theorem \ref{thm:10.34}, is a computation of the localisation functor $j^IL^I $ in terms of $I^\bullet_{\nu}$ (see Definition \ref{rem:interval-cube}). Ideally it should be expressed in the above framework. Unfortunately, we do not know how to totalise $I^\bullet_{\nu}$ into an object of $\sT$ in general (compare \cite[\S 3]{bn}). A nice setup would be to assume that $\sT$ is provided with a $t$-structure with heart $\sA$ for which $\otimes$ is $t$-exact and such that $I\in \sA$; unfortunately, the inclusion $\sA\inj \sT$ does not extend to $D(\sA)$ (or even $K(\sA)$) in this generality. So, for simplicity, we take refuge in the situation where $\sT$ is of the form $D(\sA)$ (and where $I\in \sA$).

The  proof of the following theorem will occupy the next two subsections
(see Theorem \ref{t22}).

\begin{thm}\label{thm:10.34} Under suitable additional hypotheses (\ref{h4.1} below), there is a canonical isomorphism
\[j^IL^I (K)\cong \uHom_{D(\sA)}(I^\bullet_{\nu},K)\]
for any $K\in D(\sA)$.
\end{thm}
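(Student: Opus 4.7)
The plan is to show that $F := \uHom_{D(\sA)}(I^\bullet_\nu, -)$, equipped with the natural transformation $\iota: \id \Rightarrow F$ induced by the counit $\pi^\bullet_\nu: I^\bullet_\nu \to \un[0]$ of Remark \ref{rem:decom}(2), realises the reflection onto the full subcategory of $I$-local objects; under the equivalence $j^I: \sT_I \iso \{I\text{-local objects}\}$, this identifies $FK$ with $j^I L^I K$ and $\iota_K$ with the unit. It suffices to verify three facts:
(a) $FK$ is $I$-local for every $K \in D(\sA)$;
(b) for every $I$-local $M$, $\iota_M: M \to FM$ is an isomorphism;
(c) the morphisms $\iota_{FK}, F(\iota_K): FK \to FFK$ coincide and are isomorphisms.
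Granted these, $\iota_K$ has the universal property of a map into an $I$-local object: given $g: K \to M$ with $M$ $I$-local, $\tilde g := \iota_M^{-1} \circ F(g)$ satisfies $\tilde g \iota_K = g$ by naturality of $\iota$ and (b); conversely, if $h\iota_K = 0$, then by naturality $\iota_M h = F(h) \iota_{FK} = F(h) F(\iota_K) = F(h\iota_K) = 0$, whence $h = 0$ by (b) and (c).

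For (a), since $p: I \to \un$ is split by $i_0$ the sequence $0 \to I_\nu \to I \to \un \to 0$ in $\sA$ is split exact, so $I \cong \un \oplus I_\nu$ and $X$ is $I$-local if and only if $\uHom(I_\nu, X) = 0$. Applied to $X = FK$, the tensor-hom adjunction gives $\uHom(I_\nu, FK) = \uHom(I_\nu \otimes I^\bullet_\nu, K)$, so (a) reduces to $I^\bullet_\nu \otimes I_\nu \simeq 0$ in $D(\sA)$. Tensoring the split exact sequence with $I^\bullet_\nu$ yields the distinguished triangle
\[I^\bullet_\nu \otimes I_\nu \to I^\bullet_\nu \otimes I \xrightarrow{1 \otimes p} I^\bullet_\nu \xrightarrow{+1},\]
and Proposition \ref{phot2} \eqref{eq:1st-layer} says that $1 \otimes p$ is a homotopy equivalence, whence the vanishing.

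For (b), the relation $\uHom(I_\nu, M) = 0$ iterates via tensor-hom into $\uHom(I^n_\nu, M) = \uHom(I_\nu^{\otimes n}, M) = 0$ for all $n \geq 1$, so $FM$ is the totalisation of a complex concentrated in degree $0$ where it equals $\uHom(\un, M) = M$, and $\iota_M$ is the identity under this identification. For (c), set $\mu_K := \uHom(\Delta^\bullet_\nu, K): FFK \to FK$, using $FFK = \uHom(I^\bullet_\nu \otimes I^\bullet_\nu, K)$; this is an isomorphism because $\Delta^\bullet_\nu$ is a homotopy equivalence by Proposition \ref{phot2} \eqref{eq:delta}. The comonoid counit axioms $(\pi^\bullet_\nu \otimes 1) \Delta^\bullet_\nu = \id = (1 \otimes \pi^\bullet_\nu) \Delta^\bullet_\nu$ from Remark \ref{rem:decom}(2) translate, after applying $\uHom(-, K)$ and the adjunction identifications, into $\mu_K \iota_{FK} = \id_{FK} = \mu_K F(\iota_K)$; both morphisms therefore equal the unique two-sided inverse $\mu_K^{-1}$ and coincide.

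The main obstacle will be checking that Proposition \ref{phot2}, formulated in $K(\sA)$, transports correctly to $D(\sA)$: in particular, that tensoring the split exact sequence $0 \to I_\nu \to I \to \un \to 0$ with $I^\bullet_\nu$ in $D(\sA)$ actually yields the distinguished triangle above, and that $\uHom(I^\bullet_\nu, \uHom(I^\bullet_\nu, K))$ computes $\uHom(I^\bullet_\nu \otimes I^\bullet_\nu, K)$ as needed in the adjunction step. This compatibility between the abstract interval calculations and the derived tensor/internal Hom structure is precisely the content of Hypothesis \ref{h4.1}, which is assumed in order to make the arguments above run uniformly.
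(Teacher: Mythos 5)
Your proof is correct and uses essentially the same ingredients as the paper's: your steps (a) and (b) are exactly parts a) and b) of the paper's Theorem \ref{t2-2}, and your step (c) is the paper's observation that the multiplication of the monad $RC_*^I$ is invertible (via Proposition \ref{phot2}, equation \eqref{eq:delta}). The only organizational difference is the final step: where the paper packages the conclusion via the abstract monadic Proposition \ref{pmon2} b) (after also noting that $j^IL^I$ has invertible multiplication because $j^I$ is fully faithful), you check directly that $\iota_K$ has the universal property of the unit of the Bousfield reflection onto $I$-local objects; the two formulations are equivalent, and your version makes the reflective-subcategory structure slightly more explicit at the cost of needing (c) for the uniqueness part. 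Your closing remark correctly locates the one nontrivial compatibility issue (passing Proposition \ref{phot2} from $K(\sA)$ to $D(\sA)$) in Hypothesis \ref{h4.1} (iv).
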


\subsection{Monadic intermezzo} Let $\sC$ be a category and $(C,\eta,\mu)$ be a monad in $\sC$ in the sense of \cite[Ch. VI]{mcl}. Recall what this means:
\begin{itemize}
\item $C$ is an endofunctor of $\sC$.
\item $\eta:\id\to C$ is a natural transformation (unit).
\item $\mu:C^2\to C$ is a natural transformation (multiplication).
\item  For any $X\in \sC$, we have the identities 
\begin{align}
\mu_X \circ C(\mu_X) &= \mu_X \circ \mu_{C(X)}\label{mon1-2}\\
\mu_X\circ C(\eta_X)&=\mu_X \circ \eta_{C(X)}=1_{C(X)}.\label{mon2-2}
\end{align}
\end{itemize}

We shall not use \eqref{mon1-2} in the sequel.

 Let $C(\sC)$ be the strictly full subcategory of $\sC$ generated by the image of $C$: an object of $\sC$ is in $C(\sC)$ if and only if it is isomorphic to $C(X)$ for some $X\in \sC$; the morphisms of $C(\sC)$ are the morphisms of $\sC$.

\begin{prop}\label{pmon2}  a) If $\mu$ is a natural isomorphism, the full embedding $j:C(\sC)\inj \sC$ has the left adjoint $C$.\\ 
b) Let $C_*$ be a second monad in $\sC$. Assume that the condition of a) holds for $C$ and $C_*$, and that
\begin{thlist}
\item $C_*(\sC)\subseteq C(\sC)$.
\item For any $X\in C(\sC)$, the unit map $X\to C_*(X)$ is an isomorphism.
\end{thlist}
Then there is a natural isomorphism $C\cong C_*$.
\end{prop}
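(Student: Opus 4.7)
For part a), the key observation is that equation \eqref{mon2-2} combined with $\mu$ being a natural isomorphism forces $\eta_{C(X)}$ to equal $\mu_X^{-1}$; indeed, $\mu_X \circ \eta_{C(X)} = 1_{C(X)}$ together with the invertibility of $\mu_X$ implies this. In particular, $\eta_{C(X)}$ is an isomorphism for every $X$. More generally, for any $Y \in C(\sC)$, choose an isomorphism $\phi : Y \iso C(Z)$; naturality of $\eta$ yields a commutative square showing $\eta_Y : Y \to C(Y)$ is an isomorphism as well (and independently of $\phi$). I can therefore set $\epsilon_Y := \eta_Y^{-1} : C(j Y) \to Y$ for $Y \in C(\sC)$. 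The first triangle identity $\epsilon_Y \circ \eta_Y = 1_Y$ is then tautological, while the second, $\epsilon_{C(X)} \circ C(\eta_X) = 1_{C(X)}$, becomes the identity $\mu_X \circ C(\eta_X) = 1_{C(X)}$ from \eqref{mon2-2} once we recognize $\epsilon_{C(X)} = \eta_{C(X)}^{-1} = \mu_X$.

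For part b), my plan is to first identify the two full subcategories $C(\sC)$ and $C_*(\sC)$. The inclusion $C_*(\sC) \subseteq C(\sC)$ is hypothesis (i). Conversely, if $X \in C(\sC)$, then by (ii) the unit $\eta^*_X : X \to C_*(X)$ is an isomorphism, so $X$ is isomorphic to an object in the image of $C_*$, i.e.\ $X \in C_*(\sC)$. Hence $C(\sC) = C_*(\sC)$ as full subcategories of $\sC$.

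Now part a) applied to the two monads produces left adjoints $C, C_* : \sC \to C(\sC) = C_*(\sC)$ to the \emph{same} inclusion functor $j$. Uniqueness of left adjoints up to unique natural isomorphism then yields $C \cong C_*$; explicitly, the natural isomorphism is the composite $C(X) \xrightarrow{C(\eta^*_X)} C(C_*(X)) \xrightarrow{\eta_{C_*(X)}^{-1}} C_*(X)$, using that $C_*(X) \in C(\sC)$ so that $\eta_{C_*(X)}$ is invertible by the argument of part a).

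There is no substantial obstacle here: the whole argument is essentially the observation that $\eta_Y$ is invertible on $C(\sC)$ (using only \eqref{mon2-2} and the hypothesis on $\mu$, not \eqref{mon1-2}), combined with the standard uniqueness of adjoints. The only place one must be slightly careful is in verifying that the natural isomorphism $C \cong C_*$ in (b) is canonical — but this is immediate from the uniqueness clause in the universal property of left adjoints.
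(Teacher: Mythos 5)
Your proof is correct and follows essentially the same route as the paper's. In part a) the paper defines $\epsilon_Y$ by the explicit formula $u^{-1}\circ\mu_X\circ C(u)$ and then observes it equals $\eta_Y^{-1}$, whereas you take $\epsilon_Y := \eta_Y^{-1}$ directly — same content; and in part b) the paper phrases the conclusion as the two units factoring through each other, which you repackage as uniqueness of left adjoints to the common inclusion $j$ — again the same underlying argument.
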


\begin{proof} a) Let $Y\in C(\sC)$ and choose an isomorphism $u:Y\iso C(X)$ with $X\in \sC$. By assumption,  $\eta_Y: Y \to C(Y)$ is an isomorphism, thus the second equality of \eqref{mon2-2} and the naturality of $\eta$ imply that the composite
\[ \epsilon_Y : C(Y) \rmapo {C(u)} C^2(X) \rmapo {\mu_X} C(X) \rmapo {u^{-1}} Y \]
is the inverse of $\eta_Y$, hence does not depend on the choice of $u,X$. One then easily checks that $\epsilon_Y$ for $Y\in C(\sC)$ defines a natural transformation $\epsilon: Cj\to \id$ and that $(\eta,\epsilon)$ 
provides the unit and counit of the desired adjunction.

In b), (i) implies that for any $X\in \sC$, the unit $X\to C_*(X)$ factors through the unit $X\to C(X)$ (use a)). On the other hand, (ii) implies that  $C(\sC)\subseteq C_*(\sC)$, so the same reasoning shows that, conversely, the unit $X\to C(X)$ factors through the unit $X\to C_*(X)$.
\end{proof}

\begin{rk}\label{rff2} The converse of a) is certainly false in general. The point is that a given endofunctor $C$ on $\sC$ might have two completely different monad structures. However, if $(\eta,\mu)$ yields an adjunction between $j$ and $C$, then $\mu$ must be a natural isomorphism because $j$ is fully faithful. In particular, if we start from an adjunction $(j,C)$ with $j$ fully faithful, then the multiplication of the monad $jC$ is a natural isomorphism.
\end{rk}

\subsection{A formula for $j^IL^I $}\label{formulaforjLC} Let $\sT$ be as in Subsection \ref{s4.1}.
We use the notation introduced in Definition \ref{def:r_i}. We assume here that $\sT$ is of the form $D(\sA)$ for some Grothendieck abelian category $\sA$, whence a canonical $t$-structure. To Hypothesis \ref{hB.1}, we add:

\begin{hyp}\label{h4.1}\
\begin{thlist}
\item $\otimes_{D(\sA)}$ is right $t$-exact, hence induces a right exact tensor structure on $\sA$ denoted by $\otimes_\sA$ \cite[Prop. 1.3.17 (i)]{bbd}. ($A\otimes_\sA B:= H_0(A[0]\otimes_{D(\sA)} B[0])$.)
\item Let $\otimes_{K(\sA)}$ be the canonical extension of $\otimes_\sA$ to $K(\sA)$. Then the localisation functor $\lambda:K(\sA)\to D(\sA)$ is lax monoidal, i.e., there is a collection of morphisms
\[\lambda C\otimes_{D(\sA)} \lambda D\to \lambda(C\otimes_{K(\sA)} D)\]
binatural in $(C,D)\in K(\sA)\times K(\sA)$ and commuting with the associativity and commutativity constraints.
\item $\un_{D(\sA)},I\in \sA$ (hence $I=\lambda I[0]$).
\item The map $(\lambda I[0])^{\otimes_{D(\sA)} n}\to \lambda(I^{\otimes_{\sA} n}[0])$ induced by (ii) is an isomorphism for all $n\ge 0$.
 \end{thlist}
 \end{hyp}
 
By adjunction,
the composed functor $j^IL^I $ 
has a canonical monad structure.
Note that its multiplication is an isomorphism 
because $j^I$ 
is fully faithful (compare Remark \ref{rff2}).

\begin{defn}\label{def:i-equiv}
For $K\in D(\sA)$, we let
\[RC_*^I (K) = \uHom_{D(\sA)}(I^\bullet_{\nu},K)\in D(\sA).\]
Here we view the complex $I_\nu^\bullet$ as an object of $D(\sA)$. We call $RC_*^I (K)$ the \emph{derived cubical Suslin complex of $K$} (relative to $I$).
\end{defn}

The comonoidal structure on $I^\bullet_{\nu}$: 
\[\pi^\bullet : I^\bullet_{\nu} \to \un,\quad
\Delta^\bullet : I^\bullet_{\nu} \to 
\Tot(I^\bullet_{\nu} \otimes I^\bullet_{\nu})\]
given by \eqref{eqproj}, \eqref{eqdelta}
induces a monad structure on $RC_*^I $. For example the multiplication is given by
\[\begin{aligned}
RC_*^I (RC_*^I (K)) &=\uHom_{D(\sA)}(I^\bullet_{\nu},\uHom_{D(\sA)}(I^\bullet_{\nu},K))\\
& \cong \uHom_{D(\sA)}(I^\bullet_{\nu}\otimes I^\bullet_{\nu},K) \\
& \rmapo{(\Delta^\bullet)^* } \uHom_{D(\sA)}(I^\bullet_{\nu},K)= RC_*^I (K)\\
\end{aligned}\]
Note that the last map is an isomorphism by Proposition \ref{phot2}. 
The following theorem completes
the proof of Theorem \ref{thm:10.34}. 

\begin{thm}\label{t22}
The two monads $j^IL^I $ and $RC_*^I $ are naturally isomorphic.
\end{thm}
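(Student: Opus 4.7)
The plan is to apply Proposition \ref{pmon2} b) to the two endofunctors $C=j^I L^I$ and $C_*=RC_*^I$ of $D(\sA)$, equipped with their canonical monad structures. By Proposition \ref{prop:right-adjoint-j-interval} (4), the essential image $j^IL^I(D(\sA))$ equals the class of $I$-local objects, so the two hypotheses of Proposition \ref{pmon2} b) translate to: (i) $RC_*^I(K)$ is $I$-local for every $K$; (ii) for every $I$-local $X$, the unit $X\to RC_*^I(X)$ is an isomorphism.

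First I would check that both multiplications are invertible, so that Proposition \ref{pmon2} a) applies. For $C=j^IL^I$ this is immediate from the full faithfulness of $j^I$ (cf.\ Remark \ref{rff2}). For $C_*$, the multiplication is the map $\uHom(I^\bullet_\nu,\uHom(I^\bullet_\nu,K))\cong\uHom(I^\bullet_\nu\otimes I^\bullet_\nu,K)\xrightarrow{(\Delta^\bullet_\nu)^*}\uHom(I^\bullet_\nu,K)$; it is an isomorphism because $\Delta^\bullet_\nu$ is a homotopy equivalence by Proposition \ref{phot2}, \eqref{eq:delta}.

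Next I would verify (i). By adjunction $\uHom(I,RC_*^I(K))\cong \uHom(I\otimes I^\bullet_\nu,K)$, so $I$-locality of $RC_*^I(K)$ reduces to showing that the projection $I\otimes I^\bullet_\nu\to I^\bullet_\nu$ induced by $p\colon I\to\un$ is an isomorphism in $D(\sA)$. This is exactly Proposition \ref{phot2}, \eqref{eq:1st-layer} (up to the commutativity constraint), which provides a homotopy equivalence in $K(\sA)$; Hypothesis \ref{h4.1} (ii)--(iv) ensures that this homotopy equivalence computes the derived tensor product in $D(\sA)$.

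Finally, (ii) will follow from showing that the kernel complex $K^\bullet:=\ker(\pi^\bullet_\nu)$, which has $K^n=I_\nu^{\otimes n}$ in degree $n\ge 1$ and $K^0=0$, belongs to $\sR_I$. Indeed, the distinguished triangle $K^\bullet\to I^\bullet_\nu\xrightarrow{\pi^\bullet_\nu}\un\xrightarrow{+1}$ then yields, after $\uHom(-,X)$ with $X$ $I$-local, the desired isomorphism $X=\uHom(\un,X)\iso\uHom(I^\bullet_\nu,X)=RC_*^I(X)$. To prove $K^\bullet\in\sR_I$, I would observe that the fibre $I_\nu$ of $p$ lies in $\sR_I$ (apply Definition \ref{def:r_i} with $X=\un$), that $\sR_I$ is a tensor ideal in $D(\sA)$ because its generators are (since $\Cone(X\otimes I\to X)\otimes Y=\Cone(X\otimes Y\otimes I\to X\otimes Y)$), and that $\sR_I$ is closed under countable homotopy colimits. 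Hence each $I_\nu^{\otimes n}\in\sR_I$, each bounded naive truncation $\sigma^{\le n}K^\bullet$ is an iterated extension of shifts of such objects, and $K^\bullet=\operatorname{hocolim}_n \sigma^{\le n}K^\bullet\in\sR_I$.

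The resulting natural isomorphism $j^IL^I\cong RC_*^I$ produced by Proposition \ref{pmon2} is compatible with the units on both sides by construction, and hence with the monad structures, since in both cases the multiplication is determined by the already-verified invertibility together with the unit. The main obstacle is the last step: correctly handling the unbounded complex $K^\bullet$ inside $\sR_I$ and justifying compatibility of the naive and derived tensor products on $I^\bullet_\nu$ through Hypothesis \ref{h4.1}.
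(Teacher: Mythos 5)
Your proof is correct and follows the same overall strategy as the paper: verify the hypotheses of Proposition \ref{pmon2} b) with $C=j^IL^I$ and $C_*=RC_*^I$. The check that both multiplications are invertible and the verification of condition (i) (that $RC_*^I(K)$ is $I$-local, via Proposition \ref{phot2} and Hypothesis \ref{h4.1}) match the paper. Where you diverge is in condition (ii). The paper argues directly by adjunction: if $K$ is $I$-local, then $\uHom(I_\nu,K)=0$ (since $I_\nu$ is a direct summand of $I$), hence $\uHom(I^n_\nu,K)\cong\uHom(I^{n-1}_\nu,\uHom(I_\nu,K))=0$ for all $n>0$, so $\uHom(I^\bullet_\nu,K)\cong K$. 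You instead show that the brutal truncation $K^\bullet=\Ker(\pi^\bullet_\nu)$ lies in $\sR_I$ (via $I_\nu\in\sR_I$, $\sR_I$ a $\otimes$-ideal, and closure under homotopy colimits) and then use that $I$-local objects are internally right-orthogonal to $\sR_I$. Both work. The paper's route is more direct and avoids any discussion of homotopy colimits; yours makes the mechanism behind the vanishing more transparent and generalises readily. One small point you leave implicit: to go from $K^\bullet\in\sR_I$ to $\uHom(K^\bullet,X)=0$ (not merely $\Hom(K^\bullet,X)=0$) you need $\Hom(Z\otimes K^\bullet,X)=0$ for all $Z$, which uses again that $\sR_I$ is a tensor ideal together with $X\in\sR_I^\perp$; it would be worth spelling that out.
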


For any $K \in D(\sA)$,
the monad structure on $RC_*^I $
provides us with a natural morphism in $D(\sA)$:
\begin{equation}\label{eq22}
\eta_K:K \to RC_*^I (K).
\end{equation}
We prove the following result together
with Theorem \ref{t22}.

\begin{thm}\label{t2-2} Let $K\in D(\sA$).\\
a) The complex $RC_*^I (K)$ is $I$-local
(Definition \ref{def:i-local} a)).\\
b) The morphism
\eqref{eq22} is an isomorphism if and only if $K$ is $I$-local.\\
c) The morphism \eqref{eq22} is an $I$-equivalence (Definition \ref{def:i-local} b)).
\end{thm}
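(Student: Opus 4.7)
The plan is to prove Theorems \ref{t2-2}(a), (b), (c) and \ref{t22} together, using Proposition \ref{pmon2} as the central tool. First I would observe that both monads $j^I L^I$ and $RC_*^I$ have invertible multiplication: for the former this follows from the full faithfulness of $j^I$ (Proposition \ref{prop:right-adjoint-j-interval}(3)), and for the latter from the fact that the coassociative comultiplication $\Delta_\nu^\bullet$ is a homotopy equivalence by \eqref{eq:delta}. Thus Proposition \ref{pmon2}(a) applies, making $RC_*^I$ a reflector onto its image and $j^I L^I$ a reflector onto the $I$-local objects (Proposition \ref{prop:right-adjoint-j-interval}(4)).

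For Theorem \ref{t2-2}(a), applying $\uHom_{D(\sA)}(-,K)$ to the homotopy equivalence $1 \otimes p: I_\nu^\bullet \otimes I \to I_\nu^\bullet$ from \eqref{eq:1st-layer} shows that $p^*: RC_*^I(K) \to \uHom_{D(\sA)}(I, RC_*^I(K))$ is an isomorphism, hence $RC_*^I(K)$ is $I$-local by the closed version of Definition \ref{def:i-local}. For the forward direction of (b) --- assuming $K$ is $I$-local, to show $\eta_K$ is an isomorphism --- I would argue in three steps. The splitting $i_0: \un \to I$ of $p$ yields a direct sum decomposition $I = I_\nu \oplus \un$ in $\sA$, hence in $D(\sA)$. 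Then $I$-locality gives $K \iso \uHom_{D(\sA)}(I, K) = \uHom_{D(\sA)}(I_\nu, K) \oplus K$, forcing $\uHom_{D(\sA)}(I_\nu, K) = 0$. Next, iterated application of the tensor-Hom adjunction together with Hypothesis \ref{h4.1}(iv) (which identifies $I_\nu^{\otimes_{D(\sA)} n}$ with the underived $I_\nu^{\otimes_\sA n}$) yields inductively $\uHom_{D(\sA)}(I_\nu^{\otimes n}, K) = 0$ for all $n \geq 1$. Finally, from the distinguished triangle $\sigma^{\geq 1} I_\nu^\bullet \to I_\nu^\bullet \to \un[0] \xrightarrow{+1}$, one reduces to showing $\uHom_{D(\sA)}(\sigma^{\geq 1} I_\nu^\bullet, K) = 0$; writing $\sigma^{\geq 1} I_\nu^\bullet$ as the filtered colimit of its bounded truncations $\sigma^{[1,n]} I_\nu^\bullet$ and using that $\uHom_{D(\sA)}(-,K)$ converts such colimits into homotopy limits, the vanishing reduces to the bounded case, which follows from the termwise vanishing by a standard dévissage along the stupid filtration.

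With (a) and the forward direction of (b) established, Proposition \ref{pmon2}(b) applied with $C = j^I L^I$ and $C_* = RC_*^I$ yields the natural isomorphism of monads in Theorem \ref{t22}: condition (i) is part (a), and condition (ii) is the forward direction of (b). The converse of (b) is then immediate from (a), since any isomorphism $K \iso RC_*^I(K)$ exhibits $K$ as $I$-local. For Theorem \ref{t2-2}(c), applying $L^I$ to $\eta_K: K \to RC_*^I(K) \cong j^I L^I K$ and using $L^I j^I \cong \id$ (counit of the adjunction) shows $L^I \eta_K$ is an isomorphism, which is exactly the $I$-equivalence property.

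The principal obstacle is the third step in the forward direction of (b): the unboundedness of $I_\nu^\bullet$ prevents a direct reduction from termwise to total vanishing, and one must argue that $\uHom_{D(\sA)}(-,K)$ converts the filtered colimit $\sigma^{\geq 1} I_\nu^\bullet = \colim_n \sigma^{[1,n]} I_\nu^\bullet$ into a homotopy limit of vanishing objects. This requires some care with the unbounded derived category, relying on the Grothendieck abelian nature of $\sA$ and the existence of $K$-injective resolutions to compute derived internal Homs termwise on a suitable model.
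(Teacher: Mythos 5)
Your proof follows the same route as the paper's: (a) by applying $\uHom_{D(\sA)}(-,K)$ to the homotopy equivalence \eqref{eq:1st-layer} of Proposition \ref{phot2}; (b) by deducing $\uHom(I_\nu,K)=0$ from the splitting $I\cong I_\nu\oplus\un$ and then $\uHom(I_\nu^n,K)=0$ via the tensor--Hom adjunction and Hypothesis \ref{h4.1}(iv); Theorem \ref{t22} via Proposition \ref{pmon2}(b) with $C=j^IL^I$, $C_*=RC_*^I$; and (c) by applying $L^I$ and the triangle identity. The only place you diverge is that the paper simply asserts ``which implies that \eqref{eq22} is an isomorphism'' after establishing the termwise vanishing $\uHom(I_\nu^n,K)=0$ for $n>0$; your dévissage of $\sigma^{\geq 1}I_\nu^\bullet$ via its bounded stupid truncations, converting the sequential colimit into a homotopy limit under $\uHom(-,K)$, is exactly the argument needed to justify that step for the unbounded complex $I_\nu^\bullet$.
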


\begin{proof}[Proof of Theorems \ref{t22} and \ref{t2-2}]
(Compare \cite[proof of Lemma 3.2.2]{voetri} or \cite[proof of Lemma 9.14]{mvw}.) 
We first prove Theorem \ref{t2-2} a) and b).
In view of Definition \ref{def:i-local} 
and Hypothesis \ref{h4.1} (iv), 
a) follows from Proposition \ref{phot2} by adjunction. 
In b),  if $K$ is $I$-local, we have $\uHom(I_\nu,K)=0$ and hence
\begin{multline*} \uHom_{D(\sA)}(I^n_{\nu},K)\cong 
\uHom_{D(\sA)}(I^{n-1}_\nu\otimes I_\nu,K)\\
\cong \uHom_{D(\sA)}(I^{n-1}_\nu,\uHom_{D(\sA)}(I_{\nu},K))=0 \;\;\text{for } n>0,
\end{multline*}
which implies that \eqref{eq22} is an isomorphism.
Conversely, if \eqref{eq22} is an isomorphism, then $K$ is $I$-local by a).

Next we prove Theorem \ref{t22}.
As mentioned before Definition \ref{def:i-equiv}, 
the multiplication 
of the monad $j^IL^I $ is an isomorphism,
and
the same is true for $RC_*^I $ as proven above.
Theorem \ref{t22} now follows from 
Theorem \ref{t2-2} a), b) and Proposition \ref{pmon2} b). 

Finally, Theorem \ref{t2-2} c) follows from 
Theorem \ref{t22}.
\end{proof}

\begin{cor}\label{c22} 
a) For any $K\in \sR_I$, $RC_*^I (K)=0$ in $D(\sA)$.\\
b) The functor $RC_*^I $ is strongly additive.\\
c) The localising subcategory $\sR_I\subset D(\sA)$ is generated by the cones of the $X\to RC_*^I (X)$ for $X\in D(\sA)$. 
In particular, $K\in D(\sA)$ is $I$-local if and only if the natural map
\[\Hom_{D(\sA)}(RC_*^I (X),K[i]) \to 
\Hom_{D(\sA)}(X, K[i])
\]
is an isomorphism for any $X\in D(\sA)$ and any $i\in \Z$.
\end{cor}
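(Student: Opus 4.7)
The central tool is Theorem \ref{t22}, which identifies $RC_*^I$ with the monad $j^I L^I$; given this, all three parts are bookkeeping on top of Proposition \ref{prop:right-adjoint-j-interval} and Theorem \ref{tA.5}. For (a): by Definition \ref{def:r_i}, $\sR_I$ is exactly the kernel of the localisation $L^I:\sT\to\sT_I$, so for $K\in\sR_I$ the identification $RC_*^I(K)\simeq j^I L^I(K)$ immediately gives $RC_*^I(K)=0$. For (b): Proposition \ref{prop:right-adjoint-j-interval} (3) exhibits $L^I$ as a left adjoint (to $j^I$) and $j^I$ as itself a left adjoint (to $R^I$); both therefore preserve arbitrary direct sums, and so does their composite $RC_*^I$.

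For the first assertion of (c), I would set $C_X:=\Cone(\eta_X:X\to RC_*^I(X))$. Applying the triangulated functor $L^I$ to the triangle $X\to RC_*^I(X)\to C_X\to X[1]$, and using that $L^I j^I\simeq\id$ (full faithfulness of $j^I$ in Proposition \ref{prop:right-adjoint-j-interval} (3), via the triangle identity of the adjunction), the map $L^I(\eta_X)$ becomes an isomorphism, so $L^I(C_X)=0$ and $C_X\in\sR_I$. Conversely, for any $K\in\sR_I$, (a) gives $RC_*^I(K)=0$, so the same triangle forces $C_K\simeq K[1]$; hence $K$ lies in the localising subcategory generated by the family $\{C_X\}_{X\in D(\sA)}$. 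The two inclusions combine to show that $\sR_I$ coincides with this subcategory.

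For the ``in particular'' statement, I would first invoke Theorem \ref{tA.5} (equivalently Proposition \ref{prop:right-adjoint-j-interval} (4)) to identify the $I$-local objects with $\sR_I^\perp$: so $K$ is $I$-local iff $\Hom_{D(\sA)}(Y,K[i])=0$ for all $Y\in\sR_I$ and $i\in\Z$. The contravariant cohomological functor $\Hom_{D(\sA)}(-,K[i])$ converts coproducts into products, so vanishing on a generating family of a localising subcategory propagates to the whole subcategory; by the first part of (c), the condition therefore reduces to $\Hom(C_X,K[i])=0$ for every $X$ and $i$. The long exact sequence attached to $X\to RC_*^I(X)\to C_X\to X[1]$ translates this final vanishing into bijectivity of $\Hom(RC_*^I(X),K[i])\to\Hom(X,K[i])$ for all $X,i$: the forward direction is immediate since both neighbouring terms in the long exact sequence vanish by hypothesis, while for the converse I would substitute $X\leadsto C_X$ and use (a) to get $RC_*^I(C_X)=0$, so the assumed isomorphism becomes $0\iso\Hom(C_X,K[i])$. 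The only mildly subtle point is this last ``propagation from generators to the full localising subcategory'', which is standard in the compactly generated setting; the rest is formal manipulation with the adjunctions.
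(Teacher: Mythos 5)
Your proof is correct, and (a), (b), and the forward inclusion of (c) are essentially the argument the paper intends (all resting on Theorem \ref{t22}, the strong additivity of $L^I$ and $j^I$, and the fact that $L^I(\eta_X)$ is an isomorphism / equivalently Theorem \ref{t2-2} c)). Where you genuinely diverge is the converse inclusion $\sR_I\subseteq\sR_I'$ in (c): the paper forms the commutative square relating $p\colon I[0]\otimes X\to X$ to $p'\colon RC_*^I(I[0]\otimes X)\to RC_*^I(X)$, deduces from (a) that $p'$ is an isomorphism, and then extracts $\Cone(p)\in\sR_I'$ from a $3\times 3$ argument on the generators of $\sR_I$. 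You instead observe directly that for any $K\in\sR_I$ the triangle $K\to RC_*^I(K)\to C_K\to K[1]$ collapses (by (a)) to $C_K\simeq K[1]$, so $K\simeq C_K[-1]$ is literally a shift of one of the generating cones. This is shorter and cleaner: it treats arbitrary objects of $\sR_I$ uniformly rather than verifying the inclusion on a generating family, and it sidesteps the $3\times 3$ manipulation entirely. Both arguments buy the same conclusion; yours costs one line. Your expansion of the ``in particular'' clause (identifying $I$-local with $\sR_I^\perp$, propagating vanishing from the generators $C_X$, and the substitution trick $X\leadsto C_X$ for the converse) is sound and supplies details the paper dismisses with ``the last statement follows.''
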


\begin{proof} a) This is obvious from Theorem \ref{t22} since $\sR_I \cap j^ID(\sA)_I=0$, the two categories being mutually orthogonal.
 
b) This follows from Theorem \ref{t22} and the strong additivity of $j^I$ and $L^I $ (Example \ref{exA.4}).

c) By Theorem \ref{t2-2} c), for any $X\in D(\sA)$
the cone of $X\to RC_*^I (X)$ vanishes in $D(\sA)_I$,
hence it is in $\sR_I$. 
Conversely, let $\sR_I'\subset D(\sA)$ be the localising subcategory generated by these cones. In the commutative diagram
\[\begin{CD}
I[0] \otimes_{D(\sA)} X@>p>> X\\
@VVV @VVV\\
RC_*^I (I[0] \otimes_{D(\sA)} X) @>p'>> RC_*^I (X)
\end{CD}\]
$p'$ is an isomorphism by  a), 
hence the cone of $p$ belongs to $\sR_I'$. The last statement follows.
\end{proof}

\subsection{Comparison of intervals}\label{s4.7} Let $(\sA,I),(\sA',I')$ be as in \S \ref{formulaforjLC}. We give ourselves a right exact cocontinuous monoidal functor $T:\sA\to \sA'$ sending $I$ to $I'$ and respecting the constants of structure of $I$ and $I'$. By \cite[Th. A.10.1 b)]{kmsy1}, $T$ has a right adjoint $S$. We assume that $T$ has a total left derived functor $LT:D(\sA)\to D(\sA')$, which is strongly additive, a monoidal functor and sends $I[0]$ to $I'[0]$ (this is automatic if $T$ is exact). By Brown representability (Lemma \ref{lA.3} and \cite[Th. A.2.1 a)]{kmsy2}), $LT$ has a right adjoint $RS$, which is the total right derived functor of $S$. Then $LT$ induces a triangulated monoidal functor $\ol{LT}:D(\sA)_I\to D(\sA')_{I'}$ via $L^I$ and $L^{I'}$.

The following lemma is obvious:

\begin{lemma} Let $j^I$ and $j^{I'} $ be the right adjoints of the localisation functors $L^I :D(\sA)\to D(\sA)_I$ and $L^{I'}:D(\sA')\to D(\sA')_{I'}$. Then $RS$ sends $j^{I'} D(\sA')_{I'}$ into $j^I D(\sA)_I$, and the induced functor $\ol{RS}:D(\sA')_{I'}\to D(\sA)_I$ is right adjoint to $\ol{LT}$.
\end{lemma}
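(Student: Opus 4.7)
There are two things to prove: (a) that $RS$ sends $I'$-local objects to $I$-local objects, and (b) that the resulting functor $\overline{RS}$ is right adjoint to $\overline{LT}$. Before either, I would first verify that $\overline{LT}$ is well defined as a functor $D(\sA)_I \to D(\sA')_{I'}$. Since $LT$ is monoidal and sends $I[0]$ to $I'[0]$, it maps $\Cone(X\otimes I[0]\to X)$ to $\Cone(LT(X)\otimes I'[0]\to LT(X))$, which lies in $\sR_{I'}$. Combined with the fact that $LT$ is triangulated and strongly additive, this shows $LT(\sR_I)\subset \sR_{I'}$, so the universal property of $L^I$ produces $\overline{LT}$ satisfying $L^{I'}\circ LT \cong \overline{LT}\circ L^I$.

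For (a), the key identity is the Yoneda characterisation of $I$-local objects: $K\in D(\sA)$ is $I$-local iff for every $Z\in D(\sA)$, the map $\Hom(Z,K)\to\Hom(Z\otimes I,K)$ induced by $p:I\to \un$ is an isomorphism (this is Definition \ref{def:i-local} a), rephrased by Yoneda). Given $Y$ that is $I'$-local and any $Z\in D(\sA)$, I would chase through adjunction and monoidality:
\[
\Hom_{D(\sA)}(Z\otimes I, RS(Y)) \cong \Hom_{D(\sA')}(LT(Z)\otimes I', Y) \cong \Hom_{D(\sA')}(LT(Z),Y) \cong \Hom_{D(\sA)}(Z,RS(Y)),
\]
where the middle step is $I'$-locality of $Y$ (applied to $LT(Z)$). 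The composite is induced by $p$, so $RS(Y)$ is $I$-local. This gives (a), i.e.\ $RS$ restricts to a functor $j^{I'}D(\sA')_{I'}\to j^I D(\sA)_I$; composing with the equivalences induced by the full embeddings $j^I, j^{I'}$ defines $\overline{RS}$.

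For (b), the adjunction is then a purely formal computation. For $K\in D(\sA)_I$ and $K'\in D(\sA')_{I'}$:
\begin{align*}
\Hom_{D(\sA)_I}(K,\overline{RS}(K')) &\cong \Hom_{D(\sA)}(j^I K, RS(j^{I'}K')) && (j^I \text{ fully faithful, definition of } \overline{RS})\\
&\cong \Hom_{D(\sA')}(LT(j^I K), j^{I'}K') && (LT\dashv RS)\\
&\cong \Hom_{D(\sA')_{I'}}(L^{I'}LT(j^I K), K') && (L^{I'}\dashv j^{I'})\\
&\cong \Hom_{D(\sA')_{I'}}(\overline{LT}(K), K') && (L^{I'}LT \cong \overline{LT}L^I,\ L^I j^I\cong \id).
\end{align*}
Naturality is inherited from each step, so this chain of isomorphisms exhibits $\overline{RS}$ as right adjoint to $\overline{LT}$.

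The only genuine content is (a); (b) is then automatic. No step is really an obstacle here — as the authors note, the lemma is ``obvious'' — the only thing one must resist is conflating $\overline{RS}$ with $RS$ on the nose instead of with its factorisation through the essential image of $j^I$.
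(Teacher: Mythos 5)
Your proof is correct and carries out exactly the formal argument the paper omits (the lemma is stated as ``obvious'' with no proof given). Note that the well-definedness of $\ol{LT}$, which you re-derive at the start, is already asserted in the sentence immediately preceding the lemma, so the genuine content is precisely your parts (a) and (b): the Yoneda chase showing $RS$ preserves locality (using monoidality of $LT$, $LT(I[0])=I'[0]$, and Proposition \ref{prop:right-adjoint-j-interval}(4) identifying the essential image of $j^I$ with the $I$-local objects), and the adjunction computation using $L^I j^I\cong\id$ and $L^{I'}LT\cong\ol{LT}L^I$.
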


By construction, we have a natural isomorphism
\begin{equation}\label{eq4.0}
RS j^{I'} \simeq j^I \ol{RS}
\end{equation}
from which we deduce two ``base change morphisms''
\begin{align}
L^I \circ RS &\Rightarrow \ol{RS} \circ L^{I'}\label{eq4.1}\\
LT \circ j^I  &\Rightarrow j^{I'} \circ \ol{LT}.\label{eq4.1a}
\end{align}

\begin{thm}\label{t7.1} \eqref{eq4.1} is an isomorphism.
\end{thm}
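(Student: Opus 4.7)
The plan is to reduce the assertion to a computation of derived internal Homs using Theorem \ref{t22}, after which it becomes a standard monoidal adjunction formula. Since $j^I$ is fully faithful (Proposition \ref{prop:right-adjoint-j-interval}(3)), proving \eqref{eq4.1} is an isomorphism is equivalent to proving that $j^I \circ L^I \circ RS \Rightarrow j^I \circ \overline{RS} \circ L^{I'}$ is an isomorphism. Using the tautological identity \eqref{eq4.0}, namely $j^I \circ \overline{RS} \simeq RS \circ j^{I'}$, this reduces to showing that the natural morphism
\[
j^I L^I \circ RS \Longrightarrow RS \circ j^{I'} L^{I'}
\]
is an isomorphism of endofunctors from $D(\sA')$ to $D(\sA)$.

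Now apply Theorem \ref{t22} to both $(\sA,I)$ and $(\sA',I')$: we have natural isomorphisms of monads $j^I L^I \simeq RC_*^I$ and $j^{I'} L^{I'} \simeq RC_*^{I'}$. Thus for any $K \in D(\sA')$ it suffices to construct a canonical isomorphism
\[
RC_*^I(RS(K)) \iso RS\bigl(RC_*^{I'}(K)\bigr).
\]
Unwinding the definition $RC_*^I(-) = \uHom_{D(\sA)}(I^\bullet_\nu,-)$ and $RC_*^{I'}(-) = \uHom_{D(\sA')}({I'}^\bullet_\nu,-)$, this is precisely the conclusion one gets by applying Lemma \ref{l1.5} to the adjunction $(LT, RS)$ with $X = I^\bullet_\nu \in D(\sA)$ and $Y = K \in D(\sA')$, provided that (a) $LT$ is strong (not merely lax) monoidal, so that the comparison morphism $LT(A)\otimes LT(B) \to LT(A\otimes B)$ is an isomorphism, and (b) there is a canonical identification $LT(I^\bullet_\nu) \simeq {I'}^\bullet_\nu$ in $D(\sA')$. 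The strong monoidality in (a) is part of the standing assumptions on $LT$.

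For (b), the key point is that the complex $I^\bullet_\nu$ is built out of the objects $I^n_\nu$ via operations preserved by $LT$. By assumption $LT(I[0]) = I'[0]$ and $LT$ is additive, so the splitting $I[0] = I_\nu[0] \oplus \un$ induced by $(p,i_0)$ (which $T$ preserves by assumption on the constants of structure) gets sent to the analogous splitting $I'[0] = I'_\nu[0] \oplus \un'$, yielding $LT(I_\nu[0]) \simeq I'_\nu[0]$. Using Hypothesis \ref{h4.1}(iv) to identify $I^n[0] \simeq I[0]^{\otimes n}$ in $D(\sA)$ (and similarly for the primed version) and the strong monoidality of $LT$, we get $LT(I^n[0]) \simeq {I'}^n[0]$ compatibly with the decomposition of Remark \ref{rem:decom}, so $LT(I^n_\nu[0]) \simeq {I'}^n_\nu[0]$. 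The face and structure maps defining the differentials of $I^\bullet_\nu$ and ${I'}^\bullet_\nu$ are built from $p, i_0, i_1$ via the monoidal structure, which $T$ respects; so these termwise isomorphisms assemble into an isomorphism of complexes $LT(I^\bullet_\nu) \simeq {I'}^\bullet_\nu$.

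The main obstacle I expect is a careful verification of point (b), namely that the identification $LT(I^n_\nu[0]) \simeq {I'}^n_\nu[0]$ truly promotes to an identification of the \emph{complexes} $LT(I^\bullet_\nu)$ and ${I'}^\bullet_\nu$ in $D(\sA')$, as opposed to merely a termwise match. Once this bookkeeping is done, everything else is formal: it is only a matter of chasing the adjunction between $LT$ and $RS$ via Lemma \ref{l1.5} and invoking Theorem \ref{t22}.
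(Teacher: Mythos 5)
Your proof is correct and follows essentially the same route as the paper's: Lemma \ref{l1.5} applied to $X=I^\bullet_\nu$, Theorem \ref{t22} to identify $j^I L^I$ with $RC_*^I$, and finally \eqref{eq4.0} together with the full faithfulness of $j^I$ to descend to the quotient categories. You are more explicit than the paper about why $LT(I^\bullet_\nu)\simeq {I'}^\bullet_\nu$, a point the paper treats as immediate from the standing hypothesis that $T$ respects the constants of structure and is strong monoidal; the logical content is the same.
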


\begin{proof} The monoidality of $LT$ yields the following identity, for $(X,K)\in D(\sA)\times D(\sA')$ (Lemma \ref{l1.5}):
\begin{equation}\label{eq4.2}
\uHom_{D(\sA)}(X,RS K)\cong RS\uHom_{D(\sA')}(LTX,K).
\end{equation}

Apply \eqref{eq4.2} to $X=I_\nu^\bullet$: we get an isomorphism
\[RC_*^I (RSK)\cong RS RC'_*(K).\]

In view of Theorem \ref{t22}, this converts to an isomorphism
\[j^I L^I RS(K)\cong RSj^{I'} L^{I'} (K)\]
hence to an isomorphism $L^I RS(K)\cong \ol{RS}L^{I'} (K)$ in view of \eqref{eq4.0} and the full faithfulness of $j^I$. One checks that this isomorphism coincides with \eqref{eq4.1}.
\end{proof}

\begin{defn} We say that $T$ \emph{verifies Condition (V)} if \eqref{eq4.1a} is an isomorphism.
\end{defn}

\begin{lemma}\label{l4.4} $T$ verifies Condition (V) if and only if $LT(j^ID(\sA)_I) \subseteq j^{I'} D(\sA')_{I'}$.
\end{lemma}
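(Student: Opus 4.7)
The plan is to verify both implications, the content being essentially formal once the construction of \eqref{eq4.1a} is unwound.

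The ``only if'' direction is immediate. If \eqref{eq4.1a} is a natural isomorphism, then for every $Y \in D(\sA)_I$ we have $LT(j^I Y) \simeq j^{I'}(\ol{LT}\, Y)$, which visibly belongs to $j^{I'} D(\sA')_{I'}$.

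For the converse, the first step is to trace through the construction of \eqref{eq4.1a}. It is the composite
\[ LT \circ j^I \xrightarrow{\eta\, LT j^I} j^{I'} L^{I'} LT j^I \simeq j^{I'} \ol{LT} L^I j^I \xrightarrow{\;\sim\;} j^{I'} \ol{LT}, \]
where $\eta$ is the unit of the adjunction $(L^{I'}, j^{I'})$, the middle isomorphism uses the defining identity $L^{I'} LT \simeq \ol{LT} L^I$, and the last one uses that $L^I j^I \iso \id$ (full faithfulness of $j^I$, Proposition \ref{prop:right-adjoint-j-interval}(3)). Applying $L^{I'}$ and using $L^{I'} j^{I'} \iso \id$ (which is the inverse of $\eta$ after restriction to the essential image of $j^{I'}$), one checks that $L^{I'}$ of \eqref{eq4.1a} becomes the identity natural transformation on $\ol{LT}\, L^I$; in particular it is an isomorphism.

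Now assume $LT(j^I D(\sA)_I) \subseteq j^{I'} D(\sA')_{I'}$. Then both the source $LT \circ j^I$ and the target $j^{I'} \circ \ol{LT}$ of \eqref{eq4.1a} take values in the essential image of $j^{I'}$, the target by construction and the source by hypothesis. Since $j^{I'}$ is fully faithful, it induces an equivalence between $D(\sA')_{I'}$ and its essential image in $D(\sA')$, with quasi-inverse given by $L^{I'}$ restricted to that image. Consequently $L^{I'}$ reflects isomorphisms between objects in $j^{I'} D(\sA')_{I'}$, and since we have just shown that $L^{I'}$ of \eqref{eq4.1a} is an isomorphism, \eqref{eq4.1a} itself must be an isomorphism, i.e.\ $T$ verifies Condition (V).

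There is no real obstacle here; the only point requiring care is the first step of the converse, namely correctly identifying how \eqref{eq4.1a} is built and observing that its image under $L^{I'}$ is the identity.
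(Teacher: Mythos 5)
Your proof is correct and follows essentially the same route as the paper's: both show that applying $L^{I'}$ to \eqref{eq4.1a} yields an isomorphism and then, under the hypothesis that the source also lands in $j^{I'} D(\sA')_{I'}$, use the full faithfulness of $j^{I'}$ to conclude. In fact your version is slightly more complete, since where the paper leaves ``one checks that this is induced by \eqref{eq4.1a}'' to the reader, you verify it directly by unwinding the construction of the base change morphism and invoking the triangle identity (one small slip: $L^{I'}$ of \eqref{eq4.1a} is the identity on $\ol{LT}$, not on $\ol{LT}\,L^I$, since the domain category is $D(\sA)_I$).
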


\begin{proof} ``Only if'' is obvious. Conversely, let $X\in D(\sA)_I$ be such that $LT j^I(X)\cong j^{I'}  Y$ for some $Y\in D(\sA')_{I'}$. Applying $L^{I'} $, we get
\[Y\cong L^{I'} j^{I'}  Y\cong L^{I'} LT j^I(X)\cong \ol{LT} L^I j^I(X)\cong \ol{LT}(X).\]

Applying $j^{I'} $, it gives an isomorphism
\[LTj^I(X)\cong j^{I'} Y \cong j^{I'} \ol{LT}(X)\]
and one checks that this is induced by \eqref{eq4.1a}.
\end{proof}

\begin{ex}[see also \protect{\cite[Remark (c) in 4.4]{be-vo}}]\label{exB.1}  Applying $L^I$ to the right of \eqref{eq4.1a} and using Theorem \ref{t22}, one gets a natural transformation
\begin{equation}\label{eqB.1}
LT\circ RC_*^I\Rightarrow RC_*^{I'}\circ  LT.
\end{equation}

Take $\sA=\PST$, $\sA'=\NST$, $T=a_\Nis^V$, $I=I'=\Z_\tr^V(\A^1)$. Then the condition of Lemma \ref{lA.4} translates as: the sheafification of an $\A^1$-invariant complex of presheaves with transfers is $\A^1$-invariant. When $k$ is perfect, this is a theorem of Voevodsky \cite[Th. 5.6]{voepre}, which can then be used to prove the equivalence of categories mentioned in \S \ref{s6.1}.  Moreover, $RC_*^I$ yields the \emph{na\"\i ve} Suslin complex, because $\Z_\tr^V((\A^1)^n)$ is projective in $\PST$ for any $n\ge 0$. Thus, the invertibility of \eqref{eqB.1} means in this case that the derived Suslin complex is quasi-isomorphic to the sheafification of the na\"\i ve Suslin complex.

So, while the invertibility of \eqref{eq4.1}  is a formal and general fact, this is far from being the case for  \eqref{eq4.1a}.
If we take $\sA=\MPST$, $\sA'=\MNST$, $T=a_\Nis$ and $I=I'=\Z_\tr(\bcube)$, results in this direction have been obtained by the third author in \cite[Th. 0.4 and 0.6]{shuji-purity}.
\end{ex}

\enlargethispage*{20pt}

\end{document}